\renewcommand{\slimits@}{\limits}
\renewcommand{\nmlimits@}{\limits}
\DeclareMathOperator{\dom}{dom}
\DeclareMathOperator{\simm}{\sim}
\newcommand{\simmm}{\simm_m \limits }
\newcommand{\R}{\mathbb{R}}
\newcommand{\lsym}{\lambda_{\text{sym}}}
\newcommand{\lskew}{\lambda_{\text{skew}}}
\renewenvironment{proof}{{\bfseries Proof:}}
\newtheorem{theorem}{Theorem}
\newtheorem{lemma}[theorem]{Lemma}
\newtheorem{corollary}[theorem]{Corollary}
\newtheorem{conjecture}[theorem]{Conjecture}
\numberwithin{equation}{section}
\numberwithin{figure}{section}
\numberwithin{theorem}{section}
\title{Polynomials on the Sierpinski Gasket with Respect to Different Laplacians which are Symmetric and Self-Similar}
\author{Christian Loring, W. Jacob Ogden, Ely Sandine\thanks{Supported by the Summer Program for Undergraduate Research at Cornell University.}, Robert S. Strichartz} 
\date{September 21, 2019}
\begin{document}

\maketitle
\begin{center}
\textbf{Abstract}
\end{center}
We study the analogue of polynomials (solutions to $\Delta^{n+1} u =0$ for some $n$) on the Sierpinski gasket ($SG$) with respect to a family of symmetric, self-similar Laplacians constructed by Fang, King, Lee, and Strichartz, extending the work of Needleman, Strichartz, Teplyaev, and Yung on the polynomials with respect to the standard Kigami Laplacian. We define a basis for the space of polynomials, the monomials, characterized by the property that a certain ``derivative" is 1 at one of the
boundary points, while all other ``derivatives" vanish, and we compute the values of the monomials at the boundary points of $SG$. We then present some data which suggest surprising relationships between the values of the monomials at the boundary and certain Neumann eigenvalues of the family of symmetric self-similar Laplacians. Surprisingly, the results for the general case are quite different from the results for the Kigami Laplacian. \\ \par

\noindent \textbf{Keywords}  Sierpinski gasket, Analysis on fractals.  \par 
\noindent \textbf{Mathematics Subject Classification (2010)}  28A80 \\ \par 

\noindent First published in: Loring Christian, Ogden W., Sandine Ely, Strichartz Robert, \textit{Polynomials on the Sierpinski Gasket with Respect to Different Laplacians which are Symmetric and Self-Similar}. J. Fractal Geom. [forthcoming]. \copyright \ European Mathematical Society.

\section{Introduction} 
\indent The Sierpinski gasket ($SG$) can be thought of as the simplest nontrivial example of a fractal which supports a theory of differential calculus. The study of this theory began with Kigami's analytic construction of a Laplacian on $SG$ (\cite{k1},\cite{k2}) and has expanded to a wide scope encompassing the fractal analogues of many results in standard analysis (\cite{k3}, \cite{str}). With this theory of differential calculus comes a theory of polynomials, functions which are annihilated by taking sufficiently many Laplacians. The polynomials with respect to the standard Kigami Laplacian have been studied in detail in \cite{nsty}. This paper will generalize these results by studying polynomials with respect to a larger class of Laplacians on $SG$ constructed in \cite{fkls}.

\par 
A webpage accompanying this paper, which contains numerical data, graphs, and the programs used to generate them, can be found at \url{http://pi.math.cornell.edu/~reuspurweb/} (\cite{loss}).
\par

This paper begins with a short overview of the construction of the standard Laplacian on $SG$, as presented in \cite{str}, followed by a brief discussion of the polynomials with respect to the standard Laplacian. Here, the coefficients $\alpha_j, \beta_j, \gamma_j$ are defined, and the relationship between $\beta_j$ and the Neumann eigenvalues of the standard Laplacian which was discovered in \cite{nsty} is mentioned. The construction of the standard Laplacian motivates the construction of the one-parameter family of symmetric self-similar Laplacians which is the focus of \cite{fkls}; their construction is summarized here and accompanied by the appropriate definitions of the normal and tangential derivatives which make sense for functions in the domain of a Laplacian in this family. This section concludes with the definition of a polynomial with respect to a Laplacian in the family of Laplacians from \cite{fkls} and the definition of the corresponding monomials, which form a basis for the space of polynomials. \par

The focus of section 2 is the computation of the values of the monomials with respect to a Laplacian in the family of Laplacians at the boundary of $SG$. These values are functions of the parameter $r$ which parametrizes the family of Laplacians and the generalizations of the coefficients $\alpha_j, \beta_j, \gamma_j$. Two slightly different methods for computing these coefficients are presented; both are based on the same idea, which is that composing a polynomial with one of the contraction mappings used in the construction of $SG$ yields another polynomial which can be expressed uniquely as a linear combination of monomials. One of these methods results in a system of linear equations which can be solved recursively to obtain the values of $\alpha_j(r), \beta_j(r) ,$ and $\gamma_j(r)$ for $j$ as large as desired. The other method yields explicit recurrence relations which require only knowledge of the boundary values of all lower degree monomials to find $\alpha_j(r), \beta_j(r),$ or $\gamma_j(r)$. These recurrence relations are quite complicated and seem to not be very computationally useful. \par

Section 3 generalizes several results of \cite{nsty} about polynomials, monomials, and their derivatives to the family of symmetric self-similar Laplacians. Several properties of the standard polynomials extend to this context easily, but there are notable differences in their behavior.  \par 

The coefficients $\alpha_j(r), \beta_j(r)$, and $\gamma_j(r)$ are rational functions of the parameter $r$ whose degree grows rapidly with $j$, so numerical investigation seems to be the only tractable method to learn anything about the long term behavior of these sequences. Section 4 presents a variety of numerical data such as graphs of $\alpha_j(r), \beta_j(r) , \gamma_j(r)$ as well as ratios between successive terms of these sequences and graphs of the monomials themselves. Interesting patterns emerge in these graphs. These data demonstrate that among this family of Laplacians, the standard Laplacian is exceptional, and they suggest relationships between the Neumann eigenvalues of the Laplacians and the ratios of successive terms of the sequences $\{\alpha_j(r)\}$ and $\{\beta_j(r)\}$. 
\par
Section 5 discusses questions that remain and states conjectures based on the numerical data presented in section 4.
 \par 
For completeness, and since it does not appear elsewhere, the calculation of the Neumann eigenvalues of the family of Laplacians is described in an appendix. This computation uses the spectral decimation established in \cite{fkls} to obtain the eigenvalues of the Laplacians as a limit of the eigenvalues of discrete approximations to the Laplacians.

\par

The Sierpinski gasket is usually defined as the unique nonempty compact subset of the plane satisfying the self-similar identity
\begin{equation}
SG = \bigcup_{i=0}^2 F_i SG
\end{equation}
where $F_i$ is the contraction of the plane 
\begin{equation} F_ix = \frac{1}{2}(x-q_i)+q_i
\end{equation}
which fixes $q_i$, and $q_0, q_1, q_2$ are the vertices of a triangle $T$. By iterating this decomposition of $SG$, a more general self-similar identity 
\begin{equation}
SG = \bigcup_{|w|=m} F_wSG ,
\label{selfsim}
\end{equation}
is obtained,
where $w = (w_1, w_2, \dots, w_m)$ is a \textit{word of length $|w|=m$} with $w_i\in \{0,1,2\}$ and $F_w = F_{w_1}  \circ\dots \circ F_{w_m}$. The image $F_w SG $ where $|w|=m$ is a \textit{cell of level $m$}.
It is useful to consider $SG$ as being approximated by a sequence of graphs $\{\Gamma_m\}$ where $\Gamma_0$ is the complete graph on $V_0 = \{q_0, q_1, q_2\}$ (the vertices of $T$), and $\Gamma_m$ has the vertices $V_m = \bigcup_{i=0}^2 F_i V_{m-1} $ and edges $x \simmm y $ if $x = F_wq_j$ and $y = F_wq_k$ for some $j \neq k$. The vertices $q_0, q_1, q_2$ form the \textit{boundary} of $SG$. The set $V_*=\bigcup_{m=0}^\infty V_m$ is the set of \textit{vertices}, and the $V_*\setminus V_0$ is the set of \textit{junction points}.\par 

Given these definitions, it is natural to define a self-similar probability measure on $SG$ which satisfies 
\begin{equation}
\mu(F_iA) = \frac13 \mu( A ).
 \end{equation}
A more general self-similar probability measure which is compatible with the IFS consisting of three mappings is defined by requiring that
 \begin{equation}
 \mu(F_iA)=  \mu_i \mu( A ) 
 \end{equation}
 for weights $\mu_i$ satisfying $\sum_{i=0}^2 \mu_i =1$, but it is easy to see that the only such measure which is symmetric in the sense that measures of sets are preserved under the action of the dihedral symmetry group $D_3$ on $SG$ is the standard measure in which $\mu_i = \frac13$ for all $i$.  \par

 The other preliminary construction necessary to construct a Laplacian is a self-similar energy, an energy form satisfying 
 \begin{equation}
 \mathcal{E} (u) = \sum_{ i=0}^2 r_i^{-1} \mathcal{E}(u \circ F_i).
 \end{equation}
 Utilizing the graph approximations of $SG$, a sequence of graph energies $\mathcal{E}_m(u)$ can be defined for functions $u$ defined on $V_m$ by setting
 \begin{equation}
 \mathcal{E}_m (u) = \sum_{x \simmm y}  
 {r_m(x,y)}^{-1} (u(x)-u(y))^2 
 \end{equation}
 where $r_m(x,y)$ is the \textit{resistance} assigned to the edge $\{x,y\}$ in $\Gamma_m$ and $r_{m+1} ( F_i x , F_i y ) = r_i r_m ( x, y ), r_i >0. $ This energy $\mathcal{E}$ for functions defined on all of $SG$ arises as 
 \begin{equation}
 \mathcal{E}(u) = \lim_{m \to \infty } \mathcal{E}_m (u). \end{equation}
 The energy is a quadratic form, and there is an associated bilinear form 
 \begin{equation} \mathcal{E}(u,v) = \lim_{m \to \infty} \sum_{x \simmm y}  
 {r_m(x,y)}^{-1} (u(x)-u(y))(v(x) - v(y)).
 \end{equation} 
  The standard energy on $SG$ is obtained by setting $r_i=\frac{3}{5}$ and $r_0(x,y) =1$ for all edges in $V_0$, and up to a constant multiple it is clear that this standard energy is the only symmetric ($\mathcal{E} (u) = \mathcal{E}(u \circ \sigma)$ for $\sigma \in D_3$) self-similar energy form which is compatible with the 3-element IFS. Given a function $u$ defined on $V_m$, the \textit{harmonic extension} of $u$ to $V_{m+1}$ is the extension of $u$ which minimizes $\mathcal{E}_{m+1}(u)$. A \textit{harmonic function} is the harmonic extension of a function defined on $V_0$ to $V_*$ and extended to all of $SG$ by continuity. Requiring that $\mathcal{E}(u) = \mathcal{E}_0(u) $ for all harmonic functions $u$ restricts the possible values of $r_i$ to a surface in $\R^3$, for more on this see \cite{cs}. This requirement is called the renormalization condition and will be enforced here. \par

 Kigami's construction of a Laplacian relates a self-similar measure and a self-similar energy via the weak formulation of the Laplacian: $\Delta u$ is a function such that
 \begin{equation}
 - \mathcal{E} (u,v) = \int_{SG} v \Delta u \: d \mu
 \end{equation} for all $v$ with finite energy vanishing at the boundary,
and this Laplacian is self-similar, meaning that there are constants $L_i= r_i \mu_i$ such that 
\begin{equation}
\Delta(u \circ F_i) = L_i (\Delta u ) \circ F_i
\end{equation} 
(in this case $L_i = \frac{1}5$), and symmetric, meaning that the Laplacian commutes with the $D_3$ action on $SG$.  Up to a constant, the standard Laplacian is the only self-similar symmetric Laplacian on $SG$ which is compatible with the 3-element IFS. The Laplacian can be computed at junction points of $SG$ by a pointwise formula reminiscent of the second difference quotient from analysis on the line and extended by continuity to all of $SG$. If $u$ is a function defined on $SG$ and $x$ is a junction point of two cells of level $m$, then the \textit{standard level $m$ graph Laplacian} of $u$ at $x$, $\Delta^{(m)} u (x) $ is defined as 
\begin{equation}
\Delta^{(m)} u (x) = \sum_{x \simmm y } u(y) - u(x) 
\end{equation}
and the standard Laplacian is a renormalized limit of these graph Laplacians:
\begin{equation}\label{pointwise}
\Delta u (x)  = \frac{3}{2} \lim_{m \to \infty } 5^m \Delta^{(m)} u(x) .
\end{equation}
 \par 

In addition to the Laplacian, there are two other ``differential operators" for functions on $SG$, the \textit{normal derivative}, $\partial_n$, and \textit{tangential derivative}, $\partial_T$. However, these operators are defined only at the boundary points of $SG$--they cannot be extended to operators on all of $SG$ (local versions defined at the junction points exist and are discussed later). Their definitions depend on the energy; in the standard case they are defined as 
\begin{equation}
\partial_n u ( q_i) = \lim_{m \to \infty } \left ( \frac{ 3}{5} \right )^{-m} (2 u (q_i) - u(F_i^m q_{i+1}) - u (F_i^m q_{i-1})) 
\end{equation}
\begin{equation}
\partial_T u (q_i) = \lim_{m \to \infty} \left ( \frac{1}{5}\right )^{-m} (u (F_i^m q_{i+1} ) - u ( F_i^m q_{i-1}) )
\end{equation}
where cyclic notation is used for the indices. The normal derivative interacts with the Laplacian through the Gauss-Green formula:
\begin{equation} 
\int_{SG} u \Delta v - v \Delta u \: d \mu = \sum_{ V_0} u \partial _n v - v \partial_n u ,
\end{equation}
while the significance of the tangential derivative is less clear. \par 

Given a Laplacian $\Delta$, it makes sense to define a multiharmonic function, or \textit{polynomial} on $SG$ as any function $P$ satisfying 
\begin{equation}
\Delta^{n+1} P =0
\end{equation}
for some $n$, in analogy with the fact that polynomials on the line vanish if a sufficient number of derivatives, or equivalently a sufficiently high power of the Laplacian, is taken. The \textit{degree} of a polynomial $P$ on $SG$ is the minimal $n$ such that $\Delta^{n+1} P =0$. The space of polynomials of degree $n$ has dimension $3n+3$, hence a polynomial of degree $n$ is uniquely determined by its \textit{$n$-jet} at $q_0$, the list of values
$$ (P(q_0), \partial_nP(q_0), \partial_TP(q_0), \Delta P(q_0) , \partial_n \Delta P(q_0) , \partial_T \Delta P(q_0), \dots ,  \Delta^n P(q_0) , \partial_n \Delta^n P(q_0) , \partial_T \Delta^n P(q_0)).$$
Further extending the analogy with polynomials on the line, it is natural to define a basis for the space of polynomials characterized by the property that the jet of each basis element has only a single nonzero entry which is a 1. That is, the \textit{monomials} on $SG$, $P_{j,1}, P_{j,2}, P_{j,3}$, are defined by 
\begin{equation} 
\Delta^nP_{j,k} (q_0) = \delta_{jn} \delta_{k1} \end{equation}
\begin{equation}
\partial_n\Delta^nP_{j,k}(q_0) = \delta_{jn} \delta_{k2} \end{equation}
\begin{equation}
\partial_T \Delta^n P_{j,k} (q_0) = \delta_{jn} \delta_{k3}.
\end{equation}
These monomials are the analogue of the functions $\frac{x^n}{n!}$ on the interval. \par 

The monomials with respect to the standard Laplacian are the focus of \cite{nsty}. Specifically, emphasis is placed on three sequences of values $\{\alpha_j\}, \{\beta_j\}, \{\gamma_j\}$ defined as 
\begin{equation}
\alpha_j = P_{j,1}(q_1), \quad \beta_j=P_{j,2}(q_1), \quad \gamma_j = P_{j,3}(q_1).
\end{equation}
These values are the analogues of the coefficients 
$\frac{1}{n!}$ on the interval. The fast decay of $\frac{1}{n!}$ as $n$ increases (faster than any exponential) is important as it means that there is a wide class of permissible sequences of coefficients for Taylor series which define functions which are entire analytic. With this idea in mind, it is reasonable to consider the decay rates of the sequences $\{\alpha_j\}, \{\beta_j\},$ and $ \{\gamma_j\}$ as $j$ increases with an aim towards a reasonable definition of entire analytic functions on $SG$. In \cite{nsty}, it was shown that $\alpha_j$ and $\gamma_j$ approach 0 faster than any exponential as $j$ increases, while the values of $\beta_j$ decay as $(- \lambda_2)^{-j}$ where $\lambda_2$ is the second nonzero Neumann eigenvalue of the standard Laplacian. \par 

Since the standard Laplacian is the only Laplacian which is self-similar and symmetric and compatible with the 3-element IFS, it is necessary to alter the iterated function system which defines $SG$ in order to obtain more Laplacians which are self-similar and symmetric. In \cite{fkls}, the standard IFS is replaced with 
\begin{equation} \{ F_{ij}=F_i \circ F_j \: | \: F_i = \frac{ 1}{2} (x - q_i ) + q_i\}. \end{equation}
This IFS consists of 9 contractions, and similar to before the Sierpinski gasket is defined as 
\begin{equation} 
SG= \bigcup_{0 \leq i,j \leq 2 } F_{ij} SG.
\end{equation}
This definition of $SG$ is a special case of \eqref{selfsim} when $m=2$.
However, this iterated function system has the property that its images form 2 distinct equivalence classes under the $D_3$ action on $SG$; this allows self-similar symmetric measures and energy forms to be defined that do not assign the same measure and resistance to every cell. The images $F_{ii}SG $ contain the boundary points of $SG$ and will be called \textit{outer cells} and images $F_{ij}SG$, $i \neq j$ will be called \textit{inner cells}. With this IFS, \textit{cells of level $m$} will be defined as the images $F_wSG$ where $w=(w_1, w_2, \dots , w_m ), \ w_i \in \{ij \: | \: i ,j \in \{ 0,1,2\}\}$ and $F_w = F_{w_1} \circ \dots \circ F_{w_m}$. The inner cells and the outer cells are the two equivalence classes under the action of $D_3$. From this IFS, a 1-parameter family of symmetric self-similar Laplacians was constructed in \cite{fkls}; their construction is summarized here.\par 
A self-similar symmetric probability measure on $SG$ with this 9-element IFS must assign the same measure to each of the outer cells and the same measure to each of the inner cells, so 
\begin{equation}
 \mu(F_{ij} A) = \begin{cases}
 \mu_0 \mu(A) & i = j, \\
 \mu_1 \mu(A) & i \neq j
 \end{cases}
 \end{equation}
 where $ 3 \mu_0 + 6 \mu_1 =1.$ This leaves one parameter free to vary. These measures will be denoted $\mu_s$ where $s = \frac{\mu_1}{\mu_0}$.\par 
 
 Similarly, different resistance can be assigned to the outer and inner cells to obtain a set of self-similar symmetric energy forms. This is achieved by setting
 \begin{equation}
 r_{m+1}(F_{ij} x, F_{ij}y ) = \begin{cases} 
 r_0 r_m (x,y) & i = j, \\
 r_1 r_m (x,y) & i \neq j. \end{cases}
 \end{equation}
 
 Again this leaves one parameter free to vary: $r = \frac{r_0}{r_1}$. Imposing the renormalization condition forces 
 \begin{equation} r_0(r) = \frac{6r(r+2) }{9r^2 + 26r + 15 }, \quad r_1(r) = \frac{ 6(r+2) }{9r^2 + 26r + 15 },
 \end{equation}
 see \cite{fkls} for the computation of these values using the $\Delta-Y$ transform. The graph energies 
 \begin{equation} 
 \mathcal{E}_{r,m}(u)= \sum_{x \simmm y}  
 {r_m(x,y)}^{-1} (u(x)-u(y))^2
 \end{equation}
 are defined as before.
 These assignments of resistance give rise to the energy form 
 \begin{equation} \mathcal{E}_r (u) = \lim_{m \to \infty} \mathcal{E}_{r,m}(u) \end{equation}
  and the associated bilinear energy. The set of functions $u$ for which $\mathcal{E}_r(u)$ is finite is $\dom \mathcal{E}_r$. 
 
 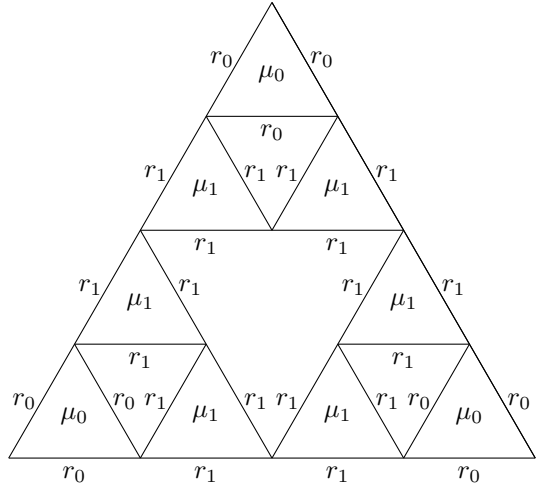
\begin{figure}
 \begin{center}
 \begin{tikzpicture}[scale=7]
  
\draw[ -] (-.5,0) -- (.5,0);
\draw[ -](-.5,0) --(0,.86603);
\draw[ -](.5,0) --(0,.86603);
\draw[ -](.5,0) --(0,.86603);
\draw[ -](0,0) --(.25, .43302);
\draw[ -](0,0) --(-.25, .43302);
\draw[ -](.25, .43302) --(-.25, .43302);
\draw[ -](0, .43302) --(-.125, .64952);
\draw[ -](0, .43302) --(.125, .64952);
\draw[ -](.125, .64952) --(-.125, .64952);
\draw[ -](-.25, 0)-- (-.125,.21651);
\draw[ -](-.25, 0)-- (-.375,.21651);
\draw[ -](-.375,.21651)-- (-.125,.21651);
\draw[ -](.25, 0)-- (.125,.21651);
\draw[ -](.25, 0)-- (.375,.21651);
\draw[ -](.375,.21651)-- (.125,.21651);
\filldraw (.125,-.03) circle (0pt) node {$r_1$};
\filldraw (-.125,-.03) circle (0pt) node {$r_1$};
\filldraw (.375,-.03) circle (0pt) node {$r_0$};
\filldraw (-.375,-.03) circle (0pt) node {$r_0$};
\filldraw ( .47, .10825) circle (0pt) node {$r_0$};
\filldraw ( .345, .32476) circle (0pt) node {$r_1$};
\filldraw ( .22, .54127) circle (0pt) node {$r_1$};
\filldraw ( .095, .75778) circle (0pt) node {$r_0$};
\filldraw ( -.47, .10825) circle (0pt) node {$r_0$};
\filldraw ( -.345, .32476) circle (0pt) node { $r_1$};
\filldraw ( -.22, .54127) circle (0pt) node { $r_1$};
\filldraw ( -.095, .75778) circle (0pt) node { $r_0$};

\filldraw (.28, .10825) circle (0pt) node { $r_0$};
\filldraw (.22, .10825) circle (0pt) node { $r_1$};
\filldraw(.25, .18651) circle (0pt) node { $r_1$};
\filldraw(.03, .10825) circle (0pt) node { $r_1$};
\filldraw(.155, .32476) circle (0pt) node { $r_1$};
\filldraw(.125, .40302) circle (0pt) node { $r_1$};
\filldraw(.03, .54127) circle (0pt) node { $r_1$};

\filldraw(0,.61952) circle (0pt) node { $r_0$};

\filldraw (-.28, .10825) circle (0pt) node { $r_0$};
\filldraw (-.22, .10825) circle (0pt) node { $r_1$};
\filldraw(-.25, .18651) circle (0pt) node { $r_1$};
\filldraw(-.03, .10825) circle (0pt) node { $r_1$};
\filldraw(-.155, .32476) circle (0pt) node { $r_1$};
\filldraw(-.125, .40302) circle (0pt) node { $r_1$};
\filldraw(-.03, .54127) circle (0pt) node { $r_1$};

\filldraw(0, .73) circle (0pt) node { $\mu_0$};

\filldraw(.25, .29699) circle (0pt) node {$\mu_1$};
\filldraw(.125, .51350) circle (0pt) node {$ \mu_1$};
\filldraw(.375, .08048) circle (0pt) node {$\mu_0$};
\filldraw(.125, .0848) circle (0pt) node {$\mu_1$};

\filldraw(-.25, .29699) circle (0pt) node {$\mu_1$};
\filldraw(-.125, .51350) circle (0pt) node {$ \mu_1$};
\filldraw(-.375, .08048) circle (0pt) node {$\mu_0$};
\filldraw(-.125, .0848) circle (0pt) node {$\mu_1$};

\end{tikzpicture}
\caption{The assignment of measure and resistance to the level 1 cells of $SG$.}
\end{center}
\end{figure}
\par 
Again, the weak formulation gives rise to a Laplacian given a measure and energy of this form: $\Delta_{r,s} u$ is a function such that
\begin{equation}
 - \mathcal{E}_r (u,v) = \int_{SG} v \Delta_{r, s} u \: d \mu_s
\end{equation}
for all $v\in \dom \mathcal{E}_r$ which vanish at the boundary. The subscripts $r, s$ denote the dependence on $r$ and on the measure $\mu_s$. 
 \par 
 Finally, in \cite{fkls} the condition that the Laplacian renormalization constants $L_{ij}$ are uniform over all cells is imposed, that is $  r_0 \mu_0= r_1 \mu_1.$ Thus $r = s$, so $r$ is the only free parameter and
 \begin{equation}
 \mu_0(r) = \frac{ 1}{3(2r+1)}, \quad \mu_1(r) = \frac{ r}{3(2r+1)} ,
 \end{equation}
 so 
 \begin{equation} L_{ij}= L(r) = \frac{ 2r(r+2) }{ (2r+1) (9r^2 +26r +15)}.
 \end{equation}
 Laplacians in this 1-parameter family will be denoted $\Delta_r$ to emphasize the dependence on $r$ and the set of functions $u$ for which $\Delta_r u$ exists and is continuous will be denoted $\dom \Delta_r$. These Laplacians satisfy 
 \begin{equation} \Delta_r ( u \circ F_{ij} ) = L(r) ( \Delta_ r u ) \circ F_{ij}. \label{Laplacian scaling}
 \end{equation} 
 \par

 For a given value of $r$, $\Delta_r u(x)$ can be calculated at a junction point $x$ by a pointwise formula in the spirit of \eqref{pointwise}. Let $x\in V_m\setminus V_0$. Then the \textit{level $m$ graph Laplacian} $\Delta_{r}^{(m)}$ is defined by 
 \begin{equation}
 \Delta_{r}^{(m)} u ( x) =  \frac{\sum_{ y \simmm x } r_m(x,y)^{-1} (u(y)-u(x))}{ \int_{SG} \psi_{r,x} ^{(m)} \: d \mu_r }\end{equation}
 where $\psi_{r,x} ^{(m)} $ is defined to be piecewise harmonic with respect to $\mathcal{E}_r$ in $SG\setminus V_m$ and satisfies $\psi_{r,x} ^{(m)}(y) = \delta_{xy}$ for $y \in V_m$ (\cite{su}). 
 The Laplacian can be computed as a limit of graph Laplacians by 
 \begin{equation} 
 \Delta_r u (x) = \lim_{m \to \infty} \Delta_r^{(m)} u (x) \end{equation}
 and it can be shown that the convergence is uniform for $u \in \dom \Delta_r$.
  Another graph Laplacian, $\tilde \Delta_r^{(m)} $, will also be useful. 
Let $x \in V_m \setminus V_0,$ $x= F_w q_i = F_{w^\prime } q_j $, $j \neq i$, be the junction point between the two cells of level $m$ $F_wSG$ and $F_{w^\prime } SG$. Then $x$ has neighbors in the level $m$ graph approximation of $SG$ $y_1 = F_{w}q_{i-1},\  y_2 = F_w q _{i+1},\  y_3 = F_{w^\prime } q _{j-1} ,\  y_4 = F_{w^\prime } q_{j+1}.$ Define
\begin{equation} 
\tilde \Delta _r^{(m)} u (x) = \begin{cases} 
u(y_1) + u(y_2) + u(y_3) + u(y_4) - 4u(x) & r_m(x,y_1) = r_m (x,y_3), \\
ru(y_1) + r u(y_2 ) + u(y_3) + u(y_4) - (2+2r) u(x) &r r_m (x,y_1) = r_m (x,y_3). \end{cases} \label{graphlaplacian} \end{equation}
From equations (7.1), (7.2), (7.3) of \cite{fkls}, it can be seen that 
\begin{equation} 
\Delta_r^{(m)} u(x) = \begin{cases}\frac{3}{2} L(r) ^{-m} \tilde \Delta_r^{(m)} u(x) &  r_m(x,y_1) = r_m (x,y_3) \\
\frac{3}{r+1} L(r)^{-m} \tilde \Delta_r^{(m)} u(x) & r r_m (x,y_1) = r_m (x,y_3) .\end{cases} \label{graph laplacian} \end{equation}

 \par 
 In the case $r=1$, $\Delta_1$ is the standard Laplacian, and when $r=1$, $\mu_0 = \mu_1 = \frac{1}{9} = \left (\frac{1}{3} \right )^2,$ and $r_0 = r_1 = \frac{9}{25} = \left ( \frac{3}{5}\right ) ^2. $
 \par 
 As mentioned previously, the definitions of the normal and tangential derivatives depend on the energy, so for each value of $r$ there are associated normal and tangential derivatives $\partial_{n,r}, \partial_{T,r}.$ To define these derivatives, it is first necessary to consider the harmonic functions with respect to this family of energy forms. Let $u(q_0) =1, u ( q_1) = u ( q_2 ) =0$. Consider an extension of $u$ to $V_1$, with values on $V_1 \setminus V_0$ labelled $u_1,u_2, u_3, u_4, u_5, u_6, u_7$ as shown in figure 1.2. 

 \begin{figure} 
 \begin{center}
 \begin{tikzpicture}[scale=7]
\draw[ -] (-.5,0) -- (.5,0);
\draw[ -](-.5,0) --(0,.86603);
\draw[ -](.5,0) --(0,.86603);
\draw[ -](.5,0) --(0,.86603);
\draw[ -](0,0) --(.25, .43302);
\draw[ -](0,0) --(-.25, .43302);
\draw[ -](.25, .43302) --(-.25, .43302);
\draw[ -](0, .43302) --(-.125, .64952);
\draw[ -](0, .43302) --(.125, .64952);
\draw[ -](.125, .64952) --(-.125, .64952);
\draw[ -](-.25, 0)-- (-.125,.21651);
\draw[ -](-.25, 0)-- (-.375,.21651);
\draw[ -](-.375,.21651)-- (-.125,.21651);
\draw[ -](.25, 0)-- (.125,.21651);
\draw[ -](.25, 0)-- (.375,.21651);
\draw[ -](.375,.21651)-- (.125,.21651);
\filldraw (.55,.-.05) circle (0pt) node {$0$};
\filldraw (-.55,.-.05) circle (0pt) node { $0$};
\filldraw (0,.91603) circle (0pt) node { $1$};
\filldraw (.175,.69952) circle (0pt) node {$u_1$};
\filldraw (-.175,.69952) circle (0pt) node {$u_1$};
\filldraw (.3,.45) circle (0pt) node {$u_2$};
\filldraw (-.3,.45) circle (0pt) node {$u_2$};
\filldraw (0,.375) circle (0pt) node {$u_3$};
\filldraw (.425,.25) circle (0pt) node {$u_4$};
\filldraw (-.425,.25) circle (0pt) node {$u_4$};
\filldraw (.075,.25) circle (0pt) node {$u_5$};
\filldraw (-.075,.25) circle (0pt) node {$u_5$};
\filldraw (.25,.-.05) circle (0pt) node {$u_6$};
\filldraw (-.25,.-.05) circle (0pt) node { $u_6$};
\filldraw (0,.-.05) circle (0pt) node {$u_7$};
\end{tikzpicture}
\caption{The labelling of the values of the extension of $u$ to $V_1$.}
\end{center}
\end{figure}
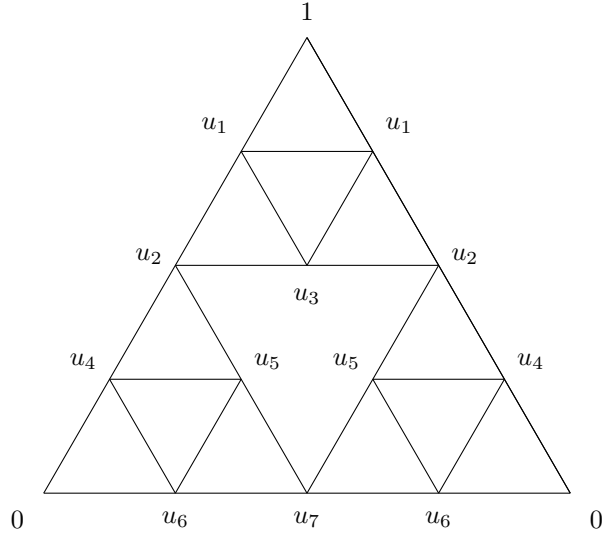
The values of the harmonic extension $\tilde u$ of $u$ to $V_1$ are the values which minimize 
\begin{center} \begin{gather*} 
\mathcal{E}_{r,1}(\tilde u )  =2 \bigg (  \frac{(1-u_1)^2 }{r_0} +  \frac{(u_1-u_2)^2 }{r_1}+ \frac{(u_1-u_3)^2}{r_1}+ \frac{(u_2-u_3)^2 }{r_1}+ \frac{(u_2-u_4)^2 }{r_1}+ \frac{(u_2-u_5)^2 }{r_1}+ \frac{u_4^2 }{r_0}+  \frac{(u_4-u_5)^2 }{r_1}\\+
 \frac{(u_4-u_6)^2 }{r_0} + \frac{(u_5-u_6)^2 }{r_1}+
 \frac{(u_5-u_7)^2 }{r_1}+  \frac{u_6^2 }{r_0}+  \frac{(u_6-u_7)^2 }{r_1}\bigg )   \tag{\stepcounter{equation}\theequation}
 \end{gather*}
 \end{center}
 and are obtained by solving 
 \begin{equation} \nabla \mathcal{E}_{r,1} ( \tilde u )  = 0.\label{harmext} \end{equation}
 The solution to \eqref{harmext} is 
 \begin{equation}
 u_1= \frac{3r^2 + 14r + 15 }{9r^2 + 26r+15} ,\ 
  u_2= \frac{3r^2 + 10r + 7 }{9r^2 + 26r+15} ,\ 
   u_3= \frac{3r^2 + 12r + 11 }{9r^2 + 26r+15} ,\ 
    u_4= \frac{3r^2 + 7r }{9r^2 + 26r+15}  , \ \end{equation}
$$     u_5= \frac{3r^2 + 7r + 2 }{9r^2 + 26r+15}, \ 
      u_6= \frac{3r^2 + 5r }{9r^2 + 26r+15},\ 
       u_7= \frac{3r^2 + 6r + 1 }{9r^2 + 26r+15}. 
$$
       \par     
 The space $\mathcal{H}_r$ of harmonic functions with respect to the energy $\mathcal{E}_r $ is a 3-dimensional vector space with a simple basis given by $\{u, u \circ \rho, u \circ \rho^{-1}\} $ where $\rho \in D_3$ is the rotation which sends $q_i$ to $q_{i+1}$ (cyclic notation). The factors of $\frac{ 3}{5} $ and $\frac{1}{5} $ in the definitions of the standard normal and tangential derivatives, respectively, arise as the eigenvalues of the mapping from $\mathcal{H}_1 \to \mathcal{H}_1$ given by 
 $u \mapsto u \circ F_0$. It is easy to see that the functions $h_1 = u \circ \rho + u \circ \rho^{-1} $ and $h_2= u \circ \rho - u \circ \rho^{-1} $, which are symmetric and skew-symmetric, respectively, with respect to the reflection $R_0 \in D_3$ which fixes $q_0$, are eigenfunctions of the mapping 
$ u \mapsto u \circ F_{00}.$ The associated eigenvalues are 
\begin{equation} 
\lsym(r) = u_4 + u_6 = \frac{6r (r + 2 )}{9r^2 + 26 r + 15 }=r_0, \quad \lskew(r)= u_4 - u_6 = \frac{2r}{9r^2 + 26 r + 15}.
\end{equation}
Then the normal derivative and tangential derivative with respect to $\mathcal{E}_r$ are defined as 
\begin{equation}
\partial_{n,r} u ( q_i ) = \lim_{m \to \infty } \lsym(r)^{-m} ( 2 u (q_i ) - u ( F_{ii}^m q_{i+1} ) - u ( F_{ii}^m q _ { i-1} ) ), \end{equation} \begin{equation}
\partial_{T, r} u ( q_i)  = \lim_{m \to \infty} \lskew(r)^{-m} ( u ( F_{ii}^m q_{i+1} ) - u ( F_{ii}^m q_{i-1} ) ) ,
\end{equation}
respectively. \par 

\begin{theorem} Suppose $u \in \dom \Delta_r$ for some $r$. Then $\partial_{n,r} u (x) $ exists for all $x \in V_0$, and the Gauss-Green formula 
\begin{equation}
\mathcal{E}_r(u,v) = - \int_{SG} ( \Delta_r u ) v \: d \mu_r+ \sum_{ V_0 } v (x) \partial_{n,r} u (x)\label{Gauss-Green}
\end{equation}
 holds for all $v \in \dom \mathcal{E}_r$. 
\end{theorem}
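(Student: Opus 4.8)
The plan is to obtain \eqref{Gauss-Green} as the limit of an elementary summation-by-parts identity on the graph approximations, working along the even levels $\Gamma_{2m}$ (the level-$m$ approximations for the $9$-element IFS), since the resistances $r_{2m}(x,y)$ and the normal derivative (defined through $F_{ii}^m = F_i^{2m}$) live naturally on these levels. Writing $\mathcal{L}_m u(x) = \sum_{y\sim_{2m} x} r_{2m}(x,y)^{-1}(u(y)-u(x))$ for the weighted graph Laplacian, I would reindex the edge-sum defining $\mathcal{E}_{r,2m}(u,v)$ by vertices to get the exact identity $\mathcal{E}_{r,2m}(u,v) = -\sum_{x\in V_{2m}} v(x)\,\mathcal{L}_m u(x)$, valid for all $u,v$ and every $m$, and then split it into a boundary sum over $V_0$ and an interior sum over $V_{2m}\setminus V_0$.

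For the boundary sum, the only neighbors of $q_i$ in $\Gamma_{2m}$ are $F_{ii}^m q_{i+1}$ and $F_{ii}^m q_{i-1}$, and each connecting edge is the $F_{ii}^m$-image of a $V_0$-edge, hence carries conductance $\lsym(r)^{-m}$ (recall $\lsym(r)=r_0$). Thus $-v(q_i)\,\mathcal{L}_m u(q_i) = v(q_i)\,\lsym(r)^{-m}(2u(q_i) - u(F_{ii}^m q_{i+1}) - u(F_{ii}^m q_{i-1}))$, whose final factor $(2u(q_i) - u(F_{ii}^m q_{i+1}) - u(F_{ii}^m q_{i-1}))$ is exactly the $m$-th term in the definition of $\partial_{n,r}u(q_i)$.

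For the interior sum I would use the definition of $\Delta_r^{(m)}$ to write $\mathcal{L}_m u(x) = \Delta_r^{(m)}u(x)\int_{SG}\psi_{r,x}^{(m)}\,d\mu_r$, turning the interior sum into $-\sum_{x\in V_{2m}\setminus V_0} v(x)\,\Delta_r^{(m)}u(x)\int_{SG}\psi_{r,x}^{(m)}\,d\mu_r$. Since the bumps $\psi_{r,x}^{(m)}$ form a partition of unity (their sum is piecewise harmonic and equals $1$ on $V_{2m}$, hence is the constant function $1$), this is a Riemann-type approximation to $-\int_{SG} v\,\Delta_r u\,d\mu_r$, and I would bound the error in three pieces: replacing $\Delta_r^{(m)}u$ by $\Delta_r u$ costs at most $\|v\|_\infty\|\Delta_r^{(m)}u-\Delta_r u\|_\infty\,\mu_r(SG)\to 0$ by the uniform convergence $\Delta_r^{(m)}u\to\Delta_r u$ for $u\in\dom\Delta_r$; replacing the sampled value $v(x)\Delta_r u(x)$ by $v\,\Delta_r u$ inside the integral costs at most the oscillation of $v\,\Delta_r u$ over level-$2m$ cells, which vanishes by uniform continuity as the cells shrink; and the omitted boundary bumps contribute $\int_{SG}\psi_{r,q_i}^{(m)}\,d\mu_r\to 0$.

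Assembling the limits, the left side satisfies $\mathcal{E}_{r,2m}(u,v)\to\mathcal{E}_r(u,v)$ by definition of the energy (using $\dom\Delta_r\subset\dom\mathcal{E}_r$) and the interior sum converges as above, so the boundary sum must converge too. To deduce existence of the individual normal derivatives I would first specialize $v$ to a harmonic function with $v(q_i)=1$ and $v(q_j)=0$ for $j\neq i$; the boundary sum then collapses to the single term $\lsym(r)^{-m}(2u(q_i) - u(F_{ii}^m q_{i+1}) - u(F_{ii}^m q_{i-1}))$, whose limit therefore exists and equals $\partial_{n,r}u(q_i)$. With existence established, passing to the limit in the identity for an arbitrary $v\in\dom\mathcal{E}_r$ yields \eqref{Gauss-Green}. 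I expect the interior-sum step to be the main obstacle: proving that the $\mu_r$-weighted sum of graph Laplacians converges to $\int_{SG} v\,\Delta_r u\,d\mu_r$ is where the uniform convergence $\Delta_r^{(m)}u\to\Delta_r u$ and the reading of $\int_{SG}\psi_{r,x}^{(m)}\,d\mu_r$ as discretized mass are both indispensable, whereas the summation-by-parts identity and the boundary computation are comparatively routine.
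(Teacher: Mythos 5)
Your proposal is correct and follows essentially the same route as the paper's proof: summation by parts on the graph energy, splitting into a boundary sum (which is term-by-term the difference quotient defining $\partial_{n,r}u(q_i)$) and an interior sum rewritten via $\int_{SG}\psi_{r,x}^{(m)}\,d\mu_r$ as an integral converging to $-\int_{SG}(\Delta_r u)v\,d\mu_r$, then deducing existence of the normal derivatives by testing against $v$ supported near a single boundary point. You simply spell out the error estimates for the interior sum in more detail than the paper, which dispatches that step with ``it is not difficult to see.''
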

\begin{proof} Begin with the definition of the bilinear graph energy
$$
\mathcal{E}_{r,m}(u,v) = \sum_{ x \simmm y  }r_m (x,y ) ^{-1} (u (y) - u(x)) (v(y) - v(x))  $$ and collect terms that appear with a factor of $v(x) $ to obtain 
\begingroup
\allowdisplaybreaks
\begin{center}
\begin{gather*} 
\mathcal{E}_{r,m} (u,v) =- \sum_{V_m } v(x) \sum_{ y \simmm x } r_m ( x, y )^{-1} (u(y) - u(x) ) \\ = - \sum_{V_m \setminus V_0} v(x) \Delta_r^{(m)} u(x) \int_{SG} \psi_{r,x}^{(m)} \: d \mu_r + \sum_{V_0} v(x) \sum_{ y \simmm x }r_0^{-m} (u(x) - u (y) )  .    \tag{\stepcounter{equation}\theequation} 
 \end{gather*}
 \end{center}
 \endgroup
 The first term can be rewritten as 
 \begin{equation} 
 - \sum_{V_m \setminus V_0} v(x) \Delta_r^{(m)} u(x) \int_{SG} \psi_{r,x}^{(m)} \: d \mu_r=-\int_{SG} \left (\sum_{V_m \setminus V_0} v(x) \Delta_r^{(m)} u(x) \psi_{r,x}^{(m)} \right )  \: d\mu_r . \end{equation}
 It is not difficult to see that $\sum_{V_m \setminus V_0} v(x) \Delta_r^{(m)} u(x) \psi_{r,x}^{(m)}$ converges to $( \Delta_r u ) v $ except on $V_0$ which is a set of measure zero, so, taking the limit as $m \to \infty$, (1.44) becomes 
 \begin{equation} 
 \mathcal{E}_r (u,v) = - \int_{SG} ( \Delta_r u ) v \: d \mu_r + \lim_{m \to \infty}  \sum_{V_0} v(x) \sum_{ y \simmm x }r_0^{-m} (u(x) - u (y) ) .\label{1.45} \end{equation} 
 This shows that the limit  $ \lim_{m \to \infty}  \sum_{V_0} v(x) \sum_{ y \simmm x }r_0^{-m} (u(x) - u (y) )$ exists regardless of $v$, so choosing $v \in \dom \mathcal{E}_r$ vanishing at all boundary points except for $q_i$ shows that $\partial_{n,r} u(q_i)$ exists and that \eqref{Gauss-Green} holds. \qed 
  \end{proof} \par \vspace{4mm}

The normal and tangential derivatives can also be defined at points in $V_m$ by viewing them as the boundary points of a cell of level $m$. The normal derivative of $u$ at $F_w q_i$ with respect to the cell $F_w SG$ is 
\begin{equation} 
\partial_{n,r}^w u (F_w q_i) = \lsym(r) ^{-|w|} \partial_{n,r} (u \circ F_w ) (q_i),
\label{local normal}
\end{equation}
and, similarly, the tangential derivative with respect to $F_wSG$ is 
\begin{equation} 
\partial_{T,r}^w u (F_w q_i) = \lskew(r) ^{-|w|} \partial_{T,r} (u \circ F_w) (q_i) .
\label{local tangential}
\end{equation}

\begin{theorem}
Suppose $u \in \dom \Delta_r $. Then at each junction point $x = F_wq_i = F_{w^\prime} q_j $, $|w|= |w^\prime| $, the local normal derivatives exist and 
\begin{equation}
\partial_{n,r}^w u (F_w q_i ) + \partial_{n,r}^{w^\prime} u (F_{w^\prime} q_j) =0 
\end{equation} 
if $r_{|w|}(x,y_1)=r_{|w|}(x,y_3)$, and 
\begin{equation} 
r\partial_{n,r}^w u (F_w q_i) + \partial_{n,r}^{w^\prime} u (F_{w^\prime} q_j) =0 
\end{equation}
if $rr_{|w|}(x,y_1)=r_{|w|}(x,y_3)$, where $y_1, y_2, y_3, y_4 $ are as in \eqref{graphlaplacian}. This is called the matching condition for normal derivatives. \end{theorem}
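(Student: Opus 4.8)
The plan is to prove the matching condition by localizing the Gauss-Green formula of Theorem 1 to individual cells and summing the contributions of the two level-$m$ cells that meet at $x$. The only global input is Theorem 1; everything else is bookkeeping of the self-similar scalings, and the two cases will emerge automatically from a single unified identity. First I would note that the local normal derivatives are well defined: since $u\in\dom\Delta_r$ and $\Delta_r(u\circ F_w)=L(r)^{|w|}(\Delta_r u)\circ F_w$ is continuous, we have $u\circ F_w\in\dom\Delta_r$, so Theorem 1 applied to $u\circ F_w$ gives the existence of $\partial_{n,r}(u\circ F_w)(q_i)$, and hence of $\partial_{n,r}^w u(F_w q_i)$ by \eqref{local normal}; likewise for $w'$.

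Next I would establish a localized Gauss-Green formula on a single cell $F_wSG$. Writing $\tilde u=u\circ F_w$ and $\tilde v=v\circ F_w$ and feeding them into \eqref{Gauss-Green}, I would substitute four scaling relations: the energy scales as $\mathcal{E}_r(\tilde u,\tilde v)=R_w\,\mathcal{E}_r^w(u,v)$, where $R_w=\prod_l r_{w_l}$ equals the boundary resistance $\rho_w:=r_{|w|}(x,y_1)$ and $\mathcal{E}_r^w$ is the energy restricted to $F_wSG$ normalized so that $\mathcal{E}_r=\sum_{|w|=m}\mathcal{E}_r^w$; the Laplacian scales by $L(r)^{|w|}$ via \eqref{Laplacian scaling}; the measure term satisfies $\int_{SG}(\Delta_r\tilde u)\tilde v\,d\mu_r=\frac{L(r)^{|w|}}{\mu_w}\int_{F_wSG}(\Delta_r u)v\,d\mu_r$ with $\mu_w=\prod_l\mu_{w_l}$; and the boundary term uses $\partial_{n,r}\tilde u(q_l)=\lsym(r)^{|w|}\partial_{n,r}^w u(F_wq_l)$. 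The crucial simplification is the uniform-Laplacian identity $L(r)=r_0\mu_0=r_1\mu_1$, which gives $L(r)^{|w|}=R_w\mu_w$, hence $L(r)^{|w|}/\mu_w=R_w=\rho_w$. Dividing through by $\rho_w$ then yields, for every $v\in\dom\mathcal{E}_r$,
\[
\mathcal{E}_r^w(u,v)=-\int_{F_wSG}(\Delta_r u)\,v\,d\mu_r+\frac{\lsym(r)^{|w|}}{\rho_w}\sum_{l=0}^2 v(F_wq_l)\,\partial_{n,r}^w u(F_wq_l).
\]
Here the only place the cell type enters is the factor $\lsym(r)^{|w|}/\rho_w$, because $\lsym(r)=r_0$ while $\rho_w$ is a product of $r_0$'s and $r_1$'s. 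I expect this scaling computation, in particular getting the factor $\lsym(r)^{|w|}/\rho_w$ correct, to be the main obstacle, since it is exactly what generates the two cases of the theorem.

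Finally I would choose a cutoff test function and add. Let $v$ be the function equal to $1$ at $x$, equal to $0$ at $y_1,y_2,y_3,y_4$ and at every vertex outside $F_wSG\cup F_{w'}SG$, and piecewise $\mathcal{E}_r$-harmonic in between; then $v\in\dom\mathcal{E}_r$, and $v$ vanishes on $V_0$ because $x$ is a junction point and the corners of each cell are exactly $\{x,y_1,y_2\}$ and $\{x,y_3,y_4\}$. Applying the global formula \eqref{Gauss-Green} gives $\mathcal{E}_r(u,v)=-\int_{SG}(\Delta_r u)v\,d\mu_r$, while both the energy and the measure integral split as sums over the two cells since $v$ is supported there. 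Adding the two localized formulas (with $|w|=|w'|=m$) and using that only the corner $x$ carries $v=1$ makes the $\int(\Delta_r u)v$ terms cancel against the global one, leaving
\[
\frac{\lsym(r)^{m}}{\rho_w}\,\partial_{n,r}^w u(F_wq_i)+\frac{\lsym(r)^{m}}{\rho_{w'}}\,\partial_{n,r}^{w'} u(F_{w'}q_j)=0.
\]
Cancelling $\lsym(r)^m$ and clearing denominators gives the unified identity $\rho_{w'}\,\partial_{n,r}^w u(F_wq_i)+\rho_w\,\partial_{n,r}^{w'} u(F_{w'}q_j)=0$. Substituting $\rho_{w'}=\rho_w$ (i.e. $r_{|w|}(x,y_1)=r_{|w|}(x,y_3)$) gives the first assertion, and $\rho_{w'}=r\rho_w$ (i.e. $rr_{|w|}(x,y_1)=r_{|w|}(x,y_3)$) gives the second. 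A purely discrete alternative would split the numerator of the graph Laplacian $\Delta_r^{(m+k)}u(x)$ into its two cell halves and identify each half, after scaling, with the corresponding local normal derivative as $k\to\infty$, mirroring the two cases of $\tilde\Delta_r^{(m)}$ in \eqref{graphlaplacian}; but that route needs a separate convergence argument which the Gauss-Green approach avoids.
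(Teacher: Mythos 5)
Your proof is correct, but it takes a genuinely different route from the paper's. The paper argues directly at the level of the pointwise graph Laplacian: since $u \in \dom \Delta_r$, the limit $\lim_{m\to\infty}\Delta_r^{(m)}u(x)$ exists while $\int_{SG}\psi_{r,x}^{(m)}\,d\mu_r \to 0$, so the numerator $\sum_{y\sim_m x}r_m(x,y)^{-1}(u(y)-u(x))$ tends to $0$; splitting that sum into the contributions from $F_wSG$ and $F_{w'}SG$ and rescaling by $\lsym^{-|w|}$ times the appropriate edge resistance identifies each half with a local normal derivative (whose existence comes from Theorem 1.1 applied to $u\circ F_w$), giving the two cases according to the relative resistances. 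This is essentially the ``purely discrete alternative'' you mention and set aside at the end -- and the extra convergence input it needs is only Theorem 1.1 again, so it is not really more costly than your route. Your argument instead localizes the Gauss--Green formula to the two cells meeting at $x$ and tests against a piecewise-harmonic cutoff; the scaling bookkeeping is handled correctly (in particular $L(r)^{|w|}=R_w\mu_w$ so that $L(r)^{|w|}/\mu_w=R_w=r_{|w|}(x,y_1)$, which is exactly where the case distinction is generated), the cutoff does vanish on $V_0$ since $x$ is a junction point and the remaining corners of the two cells are among $y_1,\dots,y_4$, and the unified identity $\rho_{w'}\,\partial_{n,r}^w u(F_wq_i)+\rho_w\,\partial_{n,r}^{w'}u(F_{w'}q_j)=0$ specializes to both stated cases. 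What your approach buys is more than the theorem: it establishes the full local Gauss--Green identity on a single cell, which is of independent use (e.g.\ for splines and for gluing arguments), and it packages both cases into one formula. What the paper's approach buys is brevity and the fact that it never needs the bilinear/self-similar decomposition of the energy or the measure identity $L(r)^{|w|}=R_w\mu_w$, only the already-proved convergence facts about $\Delta_r^{(m)}u(x)$.
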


\begin{proof}
The existence follows from theorem 1.1. Since $u \in \dom \Delta_r$, 
$$ \lim_{m \to \infty } \Delta_r^{(m)} u(x) $$ exists. Since $$\lim_{m \to \infty } \int_{SG} \psi_{r,x}^{(m)} \: d \mu _r =0,$$ it follows that $$\lim_{m \to \infty }  \sum_{y \simmm x } r_m(x,y)^{-1} ( u(y) - u(x) ) =0.$$
If all edges containing $x$ have equal resistance, then 
$$ \lim_{m \to \infty } \lsym^{-|w| } r_{|w|} (x,y_1) \sum_{y \simmm x } r_m (x,y) ^{-1} (u(x) - u(y) ) = 0 = \partial_{n,r}^w u (F_w q_i ) + \partial_{n,r}^{w^\prime} u (F_{w^\prime} q_j).$$
If $rr_{|w|}(x,y_1)=r_{|w|}(x,y_3)$, then 
$$ \lim_{m \to \infty } \lsym^{-|w|} \left (rr_{|w|} ( x,y_1 ) \sum_{\substack{ y \simmm x  \\ y \in F_w SG }} r_m ( x,y) ^{-1} (u(x) - u(y) ) + r_{|w|} (x,y_3) \sum_{ \substack{ y \simmm x \\ y \in F_{w^\prime }SG}} r_m (x,y) ^{-1} (u(x) - u(y) ) \right)$$$$ = 0 = r\partial_{n,r} ^wu (F_w q_i) + \partial_{n,r} ^{w^\prime}u (F_{w^\prime} q_j). \qed$$
 \end{proof}
\par 
\vspace{4mm}

The functions $P$ which satisfy 
\begin{equation} \Delta_r ^{n+1} P =0 \end{equation}
for some $n$ are the polynomials with respect to $\Delta_r$. The monomials $P_{j,1}^{(r)}, P_{j,2}^{(r)}, P_{j,3}^{(r)}$ are defined by 

\begin{equation} 
\Delta_r^nP_{j,k}^{(r)} (q_0) = \delta_{jn} \delta_{k1} \label{pj1 definition} \end{equation}
\begin{equation}
\partial_{n,r}\Delta^nP_{j,k}^{(r)}(q_0) = \delta_{jn} \delta_{k2} \label{pj2 definition} \end{equation}
\begin{equation}
\partial_{T,r} \Delta^n P_{j,k}^{(r)} (q_0) = \delta_{jn}  \label{pj3 definition} \delta_{k3}.
\end{equation}
The values of the monomials at the boundary of $SG$ now define three sequences of functions $(0, \infty) \to \R$: $\{ \alpha_j(r)\}, \{\beta_j(r)\}, \{\gamma_j(r) \} $ with 
\begin{equation}
\alpha_j (r)= P_{j,1}^{(r)} (q_1), \quad \beta_j(r)=P_{j,2}^{(r)}(q_1), \quad \gamma_j (r)= P_{j,3}^{(r)}(q_1). 
\end{equation}
The next section discusses how the boundary values of the monomials are calculated. \par

\section{Boundary Values of Monomials}
On the interval, computing the values of polynomials at any given point is trivial because of the algebraic structure of polynomials. However, polynomials on the Sierpinski Gasket lack an algebraic structure, in fact, for most values of $r$, the product of two nonconstant polynomials is not even in the domain of the Laplacian (\cite{bst}). However, the values of polynomials on $SG$ can be computed to any desired accuracy by taking advantage of the fact that if $P$ is a polynomial, then $P \circ F_{ij}$ is also a polynomial and hence has a unique representation as a linear combination of monomials. The values of the polynomials with respect to the standard Laplacian are computed in \cite{nsty} and \cite{cq}. The method here is based on the method in \cite{cq}. \par

Polynomials of degree 0 are harmonic functions, so (1.39) gives an algorithm for computing the degree 0 polynomials.
\begin{theorem} Let $h$ be a harmonic function with respect to $\mathcal{E}_r$. Then \begin{equation}\Delta_r h =0. \end{equation} \end{theorem}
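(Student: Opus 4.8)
The plan is to exploit the two equivalent characterizations of a harmonic function: as the energy minimizer with prescribed boundary data, and as a function whose restriction to every cell is again harmonic. Either characterization forces the graph Laplacian of $h$ to vanish at every vertex of every approximating graph, and the desired conclusion $\Delta_r h = 0$ then follows immediately from the definition of $\Delta_r$ as a limit of graph Laplacians.

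First I would recall that, by definition, $h$ is the harmonic extension of its boundary values, so for every $m$ the restriction $h|_{V_m}$ minimizes $\E_{r,m}$ among all functions on $V_m$ agreeing with $h$ on $V_0$. Writing $\E_{r,m}$ as a sum of squared differences weighted by $r_m(x,y)^{-1}$ and setting $\partial \E_{r,m}/\partial h(x) = 0$ for each interior vertex $x \in V_m \setminus V_0$ yields the first-order condition
\[
\sum_{y \simmm x} r_m(x,y)^{-1}\bigl(h(y) - h(x)\bigr) = 0 .
\]
Comparing with the definition of $\tilde\Delta_r^{(m)}$ shows that this is exactly the statement $\tilde\Delta_r^{(m)} h(x) = 0$, in both resistance cases. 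Invoking the relation between $\Delta_r^{(m)}$ and $\tilde\Delta_r^{(m)}$ recorded earlier, which exhibits $\Delta_r^{(m)} h(x)$ as a nonzero scalar multiple of $\tilde\Delta_r^{(m)} h(x)$, I conclude that $\Delta_r^{(m)} h(x) = 0$ for every junction point $x$ and every level $m$. It then remains only to pass to the limit: since $\Delta_r^{(m)} h \equiv 0$ on $V_m \setminus V_0$ for all $m$, the sequence of graph Laplacians converges trivially and uniformly to the zero function, so $h \in \dom \Delta_r$ and $\Delta_r h(x) = \lim_{m\to\infty} \Delta_r^{(m)} h(x) = 0$ at every junction point. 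As the junction points are dense in $SG$ and $\Delta_r h$ is continuous, this forces $\Delta_r h \equiv 0$.

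The main obstacle I anticipate is justifying that the first-order condition holds at \emph{every} level $m$, and not merely at the level where $x$ is first introduced: a priori the harmonic extension only minimizes energy over the newly added vertices $V_m \setminus V_{m-1}$. This is resolved by the energy-consistency of harmonic functions, namely that $\E_{r,m}(h) = \E_{r,0}(h)$ for all $m$, which is equivalent to $h|_{V_m}$ minimizing $\E_{r,m}$ subject only to the $V_0$ data, and to the self-similar fact that $h \circ F_w$ is harmonic on each cell $F_w SG$. Alternatively, one can bypass the graph computation entirely and argue variationally: the first variation of the energy at the minimizer $h$ gives $\E_r(h,v) = 0$ for every $v \in \dom \E_r$ vanishing on $V_0$, and the weak formulation $-\E_r(h,v) = \int_{SG} v\,\Delta_r h \, d\mu_r$ then forces $\int_{SG} v\,\Delta_r h \, d\mu_r = 0$ for all such $v$, whence $\Delta_r h = 0$ by continuity together with the density of the admissible test functions.
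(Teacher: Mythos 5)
Your argument is correct, and it reaches the same intermediate fact as the paper --- that the graph Laplacian of $h$ vanishes at every vertex of every level, after which one passes to the limit --- but it gets there by a different route. The paper's proof is computational: it takes the specific harmonic function with boundary data $(1,0,0)$, plugs the explicit level-one values $u_1,\dots,u_7$ into the formula for $\tilde\Delta_r^{(1)}$ to verify that it vanishes on $V_1\setminus V_0$, and then propagates this to all of $V_*$ by linearity, symmetry, and the self-similarity of harmonic functions (each $h\circ F_w$ is again harmonic, so the level-one check applies inside every cell). You instead derive the vanishing abstractly from the first-order condition $\partial\mathcal{E}_{r,m}/\partial h(x)=0$ for the energy minimizer, which is exactly $\sum_{y\sim_m x} r_m(x,y)^{-1}(h(y)-h(x))=0$, and you correctly identify and resolve the one real subtlety --- that the iterated harmonic extension a priori only gives this condition at the newly added vertices --- by appealing to the renormalization/energy-consistency property, where the paper handles the same issue via self-similarity. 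Your approach buys generality and avoids any explicit formulas; the paper's buys concreteness (the explicit values $u_1,\dots,u_7$ are needed anyway for the initial data in Theorem 2.7). Your second, purely variational argument ($\mathcal{E}_r(h,v)=0$ for all admissible $v$, hence $\Delta_r h=0$ by the weak formulation) is genuinely different from anything in the paper and is arguably the cleanest proof, since it never mentions graph Laplacians at all; its only cost is that it identifies $\Delta_r h$ only up to a $\mu_r$-null set before invoking continuity. One small caveat on your first argument: concluding $h\in\dom\Delta_r$ from uniform convergence of the graph Laplacians uses the converse direction of the pointwise-formula theorem, which the paper also uses implicitly, so this is not a gap.
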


\begin{proof} Suppose $h(q_0)=1, h(q_1) = h(q_2) = 0$. Then it is not hard to check using (1.39) that for all $x \in V_1\setminus V_0$, $\tilde \Delta_{r}^{(1)} h(x) =0.$ By symmetry and linearity, it follows that $\tilde \Delta_{r}^{(m)} h(x) =0 $ for all $x \in V_*$, so $\Delta_r h=0.$ \qed
\end{proof}
\par 
\vspace{4mm}
The harmonic functions which have both normal derivatives and tangential derivatives vanishing at $q_0$ are constant. Therefore 
\begin{equation}
P_{0,1}^{(r)} (x) =1.
\end{equation}
It is easy to see that the harmonic function $h$  with $h(q_0) =0 , h(q_1)= h(q_2) = -\frac{1}{2}$ has $\partial_{n,r} h(q_0) =1$ and $\partial_{T,r} h(q_0)=0$, hence \begin{equation}
h = P_{0,2}^{(r)}. 
\end{equation}
Similarly, the harmonic function $h^\prime$ with $h^\prime(q_0) = 0, h^\prime(q_1)=-h^\prime(q_2) =\frac{1}{2} $ has 
$\partial_{n,r} h(q_0) =0$ and $\partial_{T,r} h(q_0)=1$, so 
\begin{equation}
h^\prime= P_{0,3}^{(r)}.
\end{equation} It is then clear that 
\begin{equation} 
\alpha_0(r) =1, \quad \beta_0(r) = - \frac{1}{2}, \quad \gamma_0(r) = \frac{1}{2}.
\end{equation}
\par

The monomials on the line are all even or odd functions. Similarly, the monomials on $SG$ have symmetries with respect to the reflection $R_0 \in D_3$ which fixes $q_0$. 
\begin{lemma} The monomials satisfy 
\begin{equation}
P_{j,k}^{(r)}\circ R_0 = \begin{cases} P_{j,k}^{(r)} & k =1,2, \\
-P_{j,k}^{(r)} & k=3.
\end{cases}
\end{equation}
\end{lemma}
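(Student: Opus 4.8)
The plan is to use the fact, recorded above, that a polynomial is uniquely determined by its jet at $q_0$, so it suffices to show that $P_{j,k}^{(r)}\circ R_0$ and $\pm P_{j,k}^{(r)}$ have the same jet there. The three ingredients I would assemble are: (i) $R_0$ preserves $\dom \Delta_r$ and commutes with $\Delta_r$, i.e.\ $\Delta_r(u\circ R_0)=(\Delta_r u)\circ R_0$, which is exactly the statement that $\Delta_r$ is symmetric under the $D_3$ action; (ii) $R_0$ fixes $q_0$ and interchanges $q_1$ and $q_2$; and (iii) $R_0$ commutes with the homothety $F_0$ (hence with $F_{00}$ and with every $F_{00}^m$), because $F_0$ is a dilation centered at the fixed point $q_0$ of the isometry $R_0$. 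From (i) it follows immediately that $P_{j,k}^{(r)}\circ R_0$ is again a polynomial of degree $j$ and that $\Delta_r^n(P_{j,k}^{(r)}\circ R_0)=(\Delta_r^n P_{j,k}^{(r)})\circ R_0$ for every $n$.

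The heart of the argument is to track how each type of jet entry transforms. First, since $R_0 q_0=q_0$, the value entries are unchanged: $\Delta_r^n(P_{j,k}^{(r)}\circ R_0)(q_0)=(\Delta_r^n P_{j,k}^{(r)})(q_0)$. Next I would feed the definitions of $\partial_{n,r}$ and $\partial_{T,r}$ at $q_0$ through (ii) and (iii). Using $R_0 F_{00}^m=F_{00}^m R_0$ together with $R_0 q_1=q_2$ and $R_0 q_2=q_1$, one gets
\[
2(u\circ R_0)(q_0)-(u\circ R_0)(F_{00}^m q_1)-(u\circ R_0)(F_{00}^m q_2)=2u(q_0)-u(F_{00}^m q_2)-u(F_{00}^m q_1),
\]
so the symmetric combination defining $\partial_{n,r}$ is invariant, while
\[
(u\circ R_0)(F_{00}^m q_1)-(u\circ R_0)(F_{00}^m q_2)=-\big(u(F_{00}^m q_1)-u(F_{00}^m q_2)\big),
\]
so the skew combination defining $\partial_{T,r}$ changes sign. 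Taking limits, $\partial_{n,r}(u\circ R_0)(q_0)=\partial_{n,r}u(q_0)$ and $\partial_{T,r}(u\circ R_0)(q_0)=-\partial_{T,r}u(q_0)$. Combining this with the commutation $\Delta_r^n(u\circ R_0)=(\Delta_r^n u)\circ R_0$ and the defining relations \eqref{pj1 definition}--\eqref{pj3 definition}, the jet of $P_{j,k}^{(r)}\circ R_0$ at $q_0$ agrees with that of $P_{j,k}^{(r)}$ except that every tangential entry is negated.

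To finish, I would read off the three cases. For $k=1$ and $k=2$ the only nonvanishing jet entry of $P_{j,k}^{(r)}$ is a value entry or a normal entry (all tangential entries vanish), which is left untouched by the sign flip, so $P_{j,k}^{(r)}\circ R_0$ has exactly the jet of $P_{j,k}^{(r)}$ and uniqueness gives $P_{j,k}^{(r)}\circ R_0=P_{j,k}^{(r)}$. For $k=3$ the only nonvanishing entry is the tangential entry $\partial_{T,r}\Delta_r^j P_{j,3}^{(r)}(q_0)=1$, which becomes $-1$; this is precisely the jet of $-P_{j,3}^{(r)}$, so $P_{j,3}^{(r)}\circ R_0=-P_{j,3}^{(r)}$. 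The step I expect to require the most care is establishing the derivative transformations rigorously, namely justifying $R_0 F_{00}^m=F_{00}^m R_0$ and the interchange of $q_1,q_2$ inside the defining limits, and confirming that $u\circ R_0$ stays in the relevant domains so that the normal and tangential derivatives (and the Laplacian powers) of the composition are legitimately defined.
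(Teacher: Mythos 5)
Your proposal is correct and follows essentially the same route as the paper: both arguments track how the value, normal-derivative, and tangential-derivative entries of the jet at $q_0$ transform under composition with $R_0$ (using the symmetry of $\Delta_r$ and the swap of $q_1,q_2$ inside the defining limits) and then invoke the uniqueness of a polynomial given its jet. Your version simply spells out the commutation $R_0 F_{00}^m = F_{00}^m R_0$ and the limit computations that the paper leaves implicit.
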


\begin{proof}
By the symmetry of $\Delta_r$, 
$$ \Delta_r u(q_0) = \Delta_r (u \circ R_0)(q_0).$$ 
It is also easy to see from the definition of the normal derivative that 
$$ \partial_{n,r} u(q_0) = \partial_{n,r} (u\circ R_0) (q_0), $$
and similarly the definition of the tangential derivative implies 
$$ \partial_{T,r} u(q_0) = - \partial_{T,r} ( u \circ R_0) (q_0),$$ 
and the desired conclusion follows. 
\qed
\end{proof}
\par
\vspace{4mm}

The degree 0 monomials are all eigenfunctions of the mapping $u \mapsto u \circ F_{00}$, and from the definition of monomials in terms of the jet at $q_0$ it is reasonable to suspect that all the monomials are eigenfunctions of this mapping. Indeed, by \eqref{Laplacian scaling} and the definitions of the normal and tangential derivatives, all the terms of the jet of a function are scaled by constants under composition with $F_{00}$, so, since the jet of a monomial has only a single nonzero entry, all the monomials are eigenfunctions of $u \mapsto u \circ F_{00}$. 
\begin{lemma} The monomials satisfy
\begin{equation}
P_{j,1}^{(r)} \circ F_{00} = L(r)^j P_{j,1}^{(r)},
\end{equation}
\begin{equation}
P_{j,2}^{(r)} \circ F_{00} = L(r)^j \lsym(r) P_{j,2}^{(r)},
\end{equation}
\begin{equation}
P_{j,3}^{(r)} \circ F_{00} = L(r)^j \lskew(r) P_{j,3}^{(r)}.
\end{equation}
\end{lemma}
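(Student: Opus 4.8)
The plan is to exploit the fact, noted in the excerpt, that a polynomial with respect to $\Delta_r$ is uniquely determined by its jet at $q_0$. It therefore suffices, for each $k\in\{1,2,3\}$, to show that $P_{j,k}^{(r)}\circ F_{00}$ is again a polynomial and that its jet at $q_0$ is precisely the claimed scalar multiple of the jet of $P_{j,k}^{(r)}$. The whole argument rests on two scaling facts: how powers of $\Delta_r$ transform under composition with $F_{00}$, and how $\partial_{n,r}$ and $\partial_{T,r}$ transform at $q_0$ under the same composition.

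For the Laplacian I would iterate the self-similarity identity \eqref{Laplacian scaling}. A one-line induction gives $\Delta_r^n(u\circ F_{00})=L(r)^n(\Delta_r^n u)\circ F_{00}$ for every $n$; in particular $P_{j,k}^{(r)}\circ F_{00}$ is annihilated by $\Delta_r^{j+1}$, hence is a polynomial of degree at most $j$ and has a well-defined jet with only finitely many nonzero entries. Because $F_{00}$ fixes $q_0$, evaluation gives $\Delta_r^n(u\circ F_{00})(q_0)=L(r)^n(\Delta_r^n u)(q_0)$. For the boundary derivatives I would substitute $u\circ F_{00}$ into the limit definitions of $\partial_{n,r}$ and $\partial_{T,r}$ at $q_0$ and use $(u\circ F_{00})(F_{00}^m q_i)=u(F_{00}^{m+1}q_i)$ together with $(u\circ F_{00})(q_0)=u(q_0)$. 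Reindexing $m\mapsto m+1$ turns $\lsym(r)^{-m}$ into $\lsym(r)\cdot\lsym(r)^{-(m+1)}$ (and likewise $\lskew(r)^{-m}$ into $\lskew(r)\cdot\lskew(r)^{-(m+1)}$), so the limits yield
\begin{equation*}
\partial_{n,r}(u\circ F_{00})(q_0)=\lsym(r)\,\partial_{n,r}u(q_0),\qquad \partial_{T,r}(u\circ F_{00})(q_0)=\lskew(r)\,\partial_{T,r}u(q_0).
\end{equation*}

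With these two facts the jet computation is mechanical. Applying the derivative-scaling identities to $\Delta_r^n(u\circ F_{00})=L(r)^n(\Delta_r^n u)\circ F_{00}$ and evaluating at $q_0$ gives, for any $u\in\dom\Delta_r$,
\begin{equation*}
\Delta_r^n(u\circ F_{00})(q_0)=L(r)^n\Delta_r^n u(q_0),\qquad \partial_{n,r}\Delta_r^n(u\circ F_{00})(q_0)=L(r)^n\lsym(r)\,\partial_{n,r}\Delta_r^n u(q_0),
\end{equation*}
and similarly $\partial_{T,r}\Delta_r^n(u\circ F_{00})(q_0)=L(r)^n\lskew(r)\,\partial_{T,r}\Delta_r^n u(q_0)$. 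Feeding in the defining jets \eqref{pj1 definition}, \eqref{pj2 definition}, \eqref{pj3 definition}, every entry of the jet of $P_{j,k}^{(r)}\circ F_{00}$ equals the corresponding entry of the jet of $P_{j,k}^{(r)}$ multiplied by one constant: for $k=1$ the lone surviving entry sits at $n=j$, where the factor $L(r)^n$ collapses to $L(r)^j$; for $k=2$ the factor is $L(r)^j\lsym(r)$; for $k=3$ it is $L(r)^j\lskew(r)$. Uniqueness of the polynomial with a given jet then identifies $P_{j,k}^{(r)}\circ F_{00}$ with the asserted multiple of $P_{j,k}^{(r)}$.

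I expect the only genuine subtlety to be the derivative-scaling step: one must know the limits defining $\partial_{n,r}$ and $\partial_{T,r}$ of $u\circ F_{00}$ exist before reindexing them, which holds here because $P_{j,k}^{(r)}\circ F_{00}\in\dom\Delta_r$ and theorem 1.1 guarantees the existence of the boundary derivatives of each of its Laplacian powers. The bookkeeping that the single nonzero jet entry occurs at level $n=j$, so that the $n$-dependent factor $L(r)^n$ becomes the constant $L(r)^j$, is the other point to record carefully, but it is routine once the two scaling identities are in place.
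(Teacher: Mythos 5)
Your proof is correct and takes essentially the same route as the paper: the paper's argument is precisely the jet-scaling observation (every entry of the jet at $q_0$ is multiplied by a constant under $u \mapsto u \circ F_{00}$, and a monomial's jet has a single nonzero entry) sketched in the paragraph preceding the lemma and then condensed in the displayed proof. If anything your version is more careful, since you verify all entries of the jet and invoke uniqueness of a polynomial with a given jet, whereas the paper's written proof only matches the top Laplacian power $\Delta_r^j$ and concludes ``by linearity,'' leaving the vanishing of the lower-order jet entries implicit.
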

\begin{proof} By definition, $$\Delta_r^j P_{j,1}^{(r)} =P_{0,1}^{(r)}=1,$$ so \eqref{Laplacian scaling} implies that $$\Delta_r^j( P_{j,1}^{(r)} \circ F_{00} ) = L(r)^j.$$ Because the Laplacian is a linear operator, it follows that $$P_{j,1}^{(r)} \circ F_{00} = L(r)^j P_{j,1}^{(r)}.$$ Again using the definitions of the monomials, 
$$\Delta_r^j P_{j,2}^{(r)} = P_{0,2}^{(r)},$$
so by \eqref{Laplacian scaling}
$$\Delta_r^j (P_{j,2}^{(r)} \circ F_{00}) = L(r)^j (P_{0,2}^{(r)} \circ F_{00}) = L(r)^j \lsym P_{0,2}^{(r)}, $$
so 
$$P_{j,2}^{(r)} \circ F_{00} =  L(r)^j \lsym(r) P_{j,2}^{(r)}.$$
Finally, 
$$\Delta_r^j P_{j,3}^{(r)} = P_{0,3}^{(r)}$$ 
so 
$$\Delta_r^j (P_{j,3}^{(r)} \circ F_{00}) = L(r)^j (P_{0,3}^{(r)} \circ F_{00}) = L(r)^j \lskew P_{0,3}^{(r)} $$
which implies 
$$P_{j,3}^{(r)} \circ F_{00} = L(r)^j \lskew(r) P_{j,3}^{(r)}. \qed$$

\end{proof} \par 
\vspace{4mm}

This lemma expresses the values of the monomials at the points $F_{00} q_i$ in terms of their boundary values. The next lemma expresses the values of the monomials at all points in $V_1$ in terms of the boundary values of $P_{j,1}^{(r)}$. 

\begin{lemma}
Let $x \in V_1 \setminus V_0$ and let $P$ satisfy $\Delta_r^{k+1} P=0$. Then 
\begin{equation}
\tilde \Delta_r^{(1)} P(x)= \begin{cases}
4 \sum_{i=1}^k L(r)^i \Delta_r^i P(x) \alpha_i(r)  & x \neq F_{jj} q_\ell\ \text{for all }j, \ell,\\
(2+2r) \sum_{i=1}^k L(r)^i \Delta_r^i P(x) \alpha_i(r) & x = F_{jj} q_\ell\ \text{for some }j, \ell. \end{cases} \end{equation}
\end{lemma}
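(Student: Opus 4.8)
The plan is to decompose $\tilde\Delta_r^{(1)}P(x)$ into the contributions of the two level-$1$ cells that meet at the junction point $x$, and to evaluate each contribution by expanding the restriction of $P$ to that cell in monomials. Write $x = F_w q_\ell = F_{w'}q_{\ell'}$ with $|w|=|w'|=1$, so that the four neighbors of $x$ in $\Gamma_1$ split as $y_1 = F_w q_{\ell-1},\, y_2 = F_w q_{\ell+1}$ inside $F_wSG$ and $y_3 = F_{w'}q_{\ell'-1},\, y_4 = F_{w'}q_{\ell'+1}$ inside $F_{w'}SG$, as in \eqref{graphlaplacian}. Since $\Delta_r^{k+1}P=0$, each of $Q := P\circ F_w$ and $Q' := P\circ F_{w'}$ is again a polynomial of degree at most $k$, hence a unique linear combination of the monomials based at the relevant boundary vertex.

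First I would treat a single cell, say $F_wSG$. Expanding $Q=P\circ F_w$ in the monomials based at $q_\ell$ gives coefficients $c_{i,m}$ equal to the jet of $Q$ at $q_\ell$. Evaluating at the three boundary points and using $P_{i,1}^{(r)}(q_{\ell\pm1})=\alpha_i(r)$, $P_{i,2}^{(r)}(q_{\ell\pm1})=\beta_i(r)$, $P_{i,3}^{(r)}(q_{\ell+1})=-P_{i,3}^{(r)}(q_{\ell-1})=\gamma_i(r)$ and $P_{i,m}^{(r)}(q_\ell)=\delta_{i0}\delta_{m1}$, I obtain
\[
P(y_1)+P(y_2)-2P(x) = 2\sum_{i=0}^k\big(c_{i,1}\alpha_i(r)+c_{i,2}\beta_i(r)\big)-2c_{0,1}.
\]
Two simplifications are built in here: the tangential ($\gamma_i$) terms cancel within the cell because $P_{i,3}^{(r)}$ is skew-symmetric under the reflection fixing the base vertex, and the $i=0$ term of the $\alpha$-sum cancels $-2c_{0,1}$ because $\alpha_0(r)=1$.

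Next I would translate the jet coefficients into data at $x$. By the Laplacian scaling \eqref{Laplacian scaling}, $c_{i,1}=\Delta_r^iQ(q_\ell)=L(r)^i\,\Delta_r^iP(x)$, which is exactly the quantity in the statement and is the same for both cells since it depends only on $\Delta_r^iP(x)$. The local normal derivative \eqref{local normal} gives $c_{i,2}=L(r)^i\lsym(r)\,\partial_{n,r}^w(\Delta_r^iP)(x)$, with the analogous formula for $Q'$. I would then add the two cell contributions with the weights dictated by \eqref{graphlaplacian}: weights $(1,1)$ when all four edges at $x$ have equal resistance, and weights $(r,1)$ on $(F_wSG,F_{w'}SG)$ in the case $x=F_{jj}q_\ell$, where the two cells carry resistances $r_1$ and $r_0=r\,r_1$.

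The main obstacle — and the reason the final formula involves only $\alpha_i(r)$ and $\Delta_r^iP(x)$ — is the cancellation of every normal-derivative ($\beta_i$) term once the two cells are combined. This is precisely the matching condition for normal derivatives (Theorem 1.2) applied to each polynomial $\Delta_r^iP\in\dom\Delta_r$, $0\le i\le k$: in the equal-resistance case $\partial_{n,r}^w(\Delta_r^iP)(x)+\partial_{n,r}^{w'}(\Delta_r^iP)(x)=0$ kills $c_{i,2}^{(w)}+c_{i,2}^{(w')}$, while in the case $x=F_{jj}q_\ell$ the weighted relation $r\,\partial_{n,r}^w(\Delta_r^iP)(x)+\partial_{n,r}^{w'}(\Delta_r^iP)(x)=0$ kills $r\,c_{i,2}^{(w)}+c_{i,2}^{(w')}$. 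What survives is the $\alpha$-sum: in the first case $2\big(c_{i,1}^{(w)}+c_{i,1}^{(w')}\big)=4L(r)^i\Delta_r^iP(x)$, producing the factor $4$, and in the second $2\big(r\,c_{i,1}^{(w)}+c_{i,1}^{(w')}\big)=2(1+r)L(r)^i\Delta_r^iP(x)$, producing $2+2r$. The one genuinely error-prone bookkeeping point is checking that the orientation of the weights $(r,1)$ in \eqref{graphlaplacian} matches the orientation of the matching condition, since both are stated with $F_wSG$ carrying the factor $r$; reconciling these conventions is where care is required.
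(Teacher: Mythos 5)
Your argument is correct and is essentially the paper's own proof: both expand $P\circ F_w$ and $P\circ F_{w'}$ in the monomial basis at the junction point, use the scaling relations \eqref{Laplacian scaling}, \eqref{local normal}, \eqref{local tangential} together with Lemma 2.2 to reduce the boundary values to $\alpha_i,\beta_i,\gamma_i$ (the $\gamma_i$ terms cancelling by skew-symmetry), and then invoke the matching condition of Theorem 1.2 to cancel the normal-derivative terms, with $\alpha_0(r)=1$ absorbing the $-4P(x)$ (resp.\ $-(2+2r)P(x)$) term. Your per-cell bookkeeping and the weight-orientation check are consistent with the conventions in \eqref{graphlaplacian} and Theorem 1.2, so no gap remains.
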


\begin{proof} Without loss of generality, suppose $x = F_{j_1 j_2} q_0$, this can always be accomplished by composing $P$ with a rotation. Because the monomials form a basis for the polynomials, 
\begin{align*}
P \circ F_{j_1j_2 } =& ( P \circ F_{j_1j_2})(q_0) P_{0,1}^{(r)} + \partial_{n,r} ( P \circ F_{j_1j_2})(q_0) P_{0,2}^{(r)} + \partial_{T,r} ( P \circ F_{j_1j_2})(q_0)P_{0,3}^{(r)} \\
& + \Delta_r ( P \circ F_{j_1j_2})(q_0) P_{1,1}^{(r)} + \partial_{n,r} \Delta_r ( P \circ F_{j_1j_2})(q_0)  P_{1,2}^{(r)} + \partial_{T,r} \Delta_r ( P \circ F_{j_1j_2})(q_0) P_{1,3}^{(r)} \\
& + \dots   \tag{\stepcounter{equation}\theequation}\\
& + \Delta_r ^k( P \circ F_{j_1j_2})(q_0) P_{k,1}^{(r)} + \partial_{n,r} \Delta_r ^k( P \circ F_{j_1j_2})(q_0)  P_{k,2}^{(r)} + \partial_{T,r} \Delta_r ^k( P \circ F_{j_1j_2})(q_0) P_{k,3}^{(r)}.
\end{align*}
By \eqref{Laplacian scaling}, \eqref{local normal}, and \eqref{local tangential}, (2.11) becomes 
\begin{align*}
P \circ F_{j_1j_2 } =&P(x) P_{0,1}^{(r)} + \lsym(r) \partial_{n,r}^{j_1j_2} P(x) P_{0,2}^{(r)} + \lskew(r) \partial_{T,r} ^{j_1j_2} P(x) P_{0,3}^{(r)} \\
& + L(r) \Delta_r P(x) P_{1,1}^{(r)} + L(r) \lsym(r) \partial_{n,r}^{j_1j_2}  \Delta_r P(x) P_{1,2}^{(r)} + L(r) \lskew (r) \partial_{T,r}^{j_1j_2}  \Delta_r P(x) P_{1,3}^{(r)} \\
& + \dots   \tag{\stepcounter{equation}\theequation}\\
&+L(r)^k \Delta_r ^kP(x) P_{k,1}^{(r)} + L(r)^k \lsym(r) \partial_{n,r}^{j_1j_2}  \Delta_r^k P(x) P_{k,2}^{(r)} + L(r)^k \lskew (r) \partial_{T,r}^{j_1j_2}  \Delta_r^k P(x) P_{k,3}^{(r)}.
\end{align*}
The cell $F_{j_1j_2 } SG$ intersects another cell, $F_{j_3 j_4} SG$ at $x$, so $x = F_{j_3 j_4} q_\ell$. Let $\sigma \in D_3$ be a rotation sending $q_0 \mapsto q_\ell$. Then 
\begin{align*}
P \circ F_{j_3 j_4} \circ \sigma  = & (P \circ F_{j_3 j_4} \circ \sigma )(q_0) P_{0,1}^{(r)} + \partial_{n,r} (P \circ F_{j_3 j_4} \circ \sigma )(q_0)  P_{0,2}^{(r)} + \partial_{T,r} (P \circ F_{j_3 j_4} \circ \sigma )(q_0) P_{0,3}^{(r)} \\
&+ \Delta_r(P \circ F_{j_3 j_4} \circ \sigma )(q_0) P_{1,1}^{(r)} + \partial_{n,r} \Delta_r(P \circ F_{j_3 j_4} \circ \sigma )(q_0)  P_{1,2}^{(r)} + \partial_{T,r}\Delta_r (P \circ F_{j_3 j_4} \circ \sigma )(q_0) P_{1,3}^{(r)} \\
&+ \dots \tag{\stepcounter{equation}\theequation}\\
&\Delta_r^k(P \circ F_{j_3 j_4} \circ \sigma )(q_0) P_{k,1}^{(r)} + \partial_{n,r} \Delta_r^k(P \circ F_{j_3 j_4} \circ \sigma )(q_0)  P_{k,2}^{(r)} + \partial_{T,r}\Delta_r^k (P \circ F_{j_3 j_4} \circ \sigma )(q_0) P_{k,3}^{(r)} \\
=&P(x) P_{0,1}^{(r)} + \lsym(r) \partial_{n,r}^{j_3j_4} P(x) P_{0,2}^{(r)} + \lskew(r) \partial_{T,r} ^{j_3j_4} P(x) P_{0,3}^{(r)} \\
& + L(r) \Delta_r P(x) P_{1,1}^{(r)} + L(r) \lsym(r) \partial_{n,r}^{j_3j_4}  \Delta_r P(x) P_{1,2}^{(r)} + L(r) \lskew (r) \partial_{T,r}^{j_3j_4}  \Delta_r P(x) P_{1,3}^{(r)} \\
& +\dots \\
&+L(r)^k \Delta_r ^kP(x) P_{k,1}^{(r)} + L(r)^k \lsym(r) \partial_{n,r}^{j_3j_4}  \Delta_r^k P(x) P_{k,2}^{(r)} + L(r)^k \lskew (r) \partial_{T,r}^{j_3j_4}  \Delta_r^k P(x) P_{k,3}^{(r)}.
\end{align*}
The values of $P$ at the neighbors of $x$ are obtained by substituting $q_1$ and $q_2$ for $x$ in (2.12) and (2.13). Therefore if all edges which meet at $x$ have equal resistance, that is, $x \neq F_{jj}q_\ell $ for all $j,\ell$, then 
\begin{equation} 
\tilde \Delta _r^{(1)} P(x) = (P \circ F_{j_1j_2} )(q_1) + (P\circ F_{j_1 j_2} ) (q_2 ) + (P \circ F_{j_3 j_4} \circ \sigma )(q_1)+ (P \circ F_{j_3 j_4} \circ \sigma )(q_2)- 4 P(x) \end{equation}
By lemma 2.2, $P_{i,1}^{(r)} (q_1) = P_{i,1}^{(r)} (q_2)= \alpha_i(r)$, $P_{i,2} ^{(r)} (q_1) = P_{i,2}^{(r)} (q_2) = \beta_{i}(r) $ and $P_{i,3}^{(r)} (q_1) = - P_{i,3}^{(r)} (q_2) = \gamma_i(r) $, so (2.14) becomes 
\begin{equation} 
\tilde \Delta_r^{(1)} P(x) = \sum_{i=0}^k L(r)^i\left ( 4 \Delta_r^i P(x) \alpha_i(r) + 2  \lsym \partial_{n,r}^{j_1j_2} \Delta^i P(x) \beta_i(r) + 2  \lsym \partial_{n,r}^{j_3j_4} \Delta^iP(x) \beta_i(r) \right )- 4 P(x) 
\end{equation}
since all the terms involving tangential derivatives cancel. Theorem 1.2 implies $\partial_{n,r} ^{j_1j_2}\Delta^i P(x) = - \partial_{n,r} ^ {j_3j_4} \Delta^i P(x) $ so, since $\alpha_0(r) =1$,
$$
\tilde \Delta_r^{(1)} P(x) = 4 \sum_{i=0}^k L(r)^i \Delta_r^i P(x) \alpha_i(r) - 4 P(x) =4 \sum_{i=1}^k L(r)^i \Delta_r ^i P(x) \alpha_i(r) .
$$
Now consider the case where not all resistances of edges meeting at $x$ are equal.  Then by similar reasoning as before invoking theorem 1.2 and lemma 2.2
$$
\tilde \Delta_r ^{(1)} P(x) = (2+2r) \sum_{i=1}^k L(r)^i \Delta_r^i P(x) \alpha_i(r) . \qed
$$
 \end{proof}
 \par 
\vspace{4mm}
Lemma 2.4 is the main workhorse in the computation of the values of monomials. It gives a method that allows the values of $P_{n,k}^{(r)}$ to be computed if the values of the $P_{j,k}^{(r)} $ for $0\leq j<n$ and $\alpha_j(r)$ for $0\leq j \leq n$ are known. \par

\begin{lemma} \begin{equation} \alpha_1(r) = \frac{1}{6}. \end{equation} \end{lemma}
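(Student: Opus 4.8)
The function in question is $P := P_{1,1}^{(r)}$, and the plan is to extract its boundary value $\alpha_1(r) = P(q_1)$ by pairing it against the harmonic monomial $P_{0,2}^{(r)}$ in the Gauss--Green formula \eqref{Gauss-Green} and then using the symmetry of the bilinear energy. First I would record what comes directly from the definition \eqref{pj1 definition}--\eqref{pj3 definition}: since $P$ has degree $1$ we have $\Delta_r P = P_{0,1}^{(r)} = 1$ (the constant function), while $P(q_0) = 0$, $\partial_{n,r} P(q_0) = 0$, and $\partial_{T,r} P(q_0) = 0$. By Lemma 2.2 (with $k=1$) $P$ is invariant under $R_0$, so $P(q_1) = P(q_2) = \alpha_1(r)$ and likewise $\partial_{n,r} P(q_1) = \partial_{n,r} P(q_2)$.

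Next I would pin down the two sets of normal derivatives that the pairing will require. Applying \eqref{Gauss-Green} with $v \equiv 1$, so that $\mathcal{E}_r(P,1) = 0$, gives $\sum_{V_0}\partial_{n,r}P(x) = \int_{SG}\Delta_r P\,d\mu_r = \mu_r(SG) = 1$; together with $\partial_{n,r}P(q_0) = 0$ and the $R_0$-symmetry above this forces $\partial_{n,r}P(q_1) = \partial_{n,r}P(q_2) = \tfrac12$. The same argument applied to $P_{0,2}^{(r)}$, whose Laplacian vanishes, gives $\sum_{V_0}\partial_{n,r}P_{0,2}^{(r)}(x) = 0$; since $\partial_{n,r}P_{0,2}^{(r)}(q_0)=1$ by construction and $P_{0,2}^{(r)}$ is also $R_0$-symmetric, I get $\partial_{n,r}P_{0,2}^{(r)}(q_1) = \partial_{n,r}P_{0,2}^{(r)}(q_2) = -\tfrac12$.

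Now I would evaluate $\mathcal{E}_r(P, P_{0,2}^{(r)})$ in two ways. Reading \eqref{Gauss-Green} with $u = P_{0,2}^{(r)}$ (harmonic, so the Laplacian term drops) and $v = P$, the boundary sum collapses, using $P(q_0)=0$ and the values above, to $-\alpha_1(r)$. Reading \eqref{Gauss-Green} the other way, with $u = P$ and $v = P_{0,2}^{(r)}$, and recalling $\Delta_r P = 1$ together with $P_{0,2}^{(r)}(q_0) = 0$ and $P_{0,2}^{(r)}(q_1)=P_{0,2}^{(r)}(q_2) = -\tfrac12$, the same quantity equals $-\int_{SG}P_{0,2}^{(r)}\,d\mu_r - \tfrac12$. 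Since the bilinear form is symmetric, equating the two expressions yields $\alpha_1(r) = \int_{SG}P_{0,2}^{(r)}\,d\mu_r + \tfrac12$.

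The crux is thus to compute $\int_{SG}P_{0,2}^{(r)}\,d\mu_r$, and the point that makes $\alpha_1$ independent of $r$ is that this integral equals $-\tfrac13$ for every $r$. I would establish the averaging identity $\int_{SG} h\,d\mu_r = \tfrac13\sum_{i}h(q_i)$ for $h \in \mathcal{H}_r$: writing $h = \sum_i h(q_i) g_i$ with $g_i$ the harmonic function satisfying $g_i(q_j)=\delta_{ij}$, the $D_3$-invariance of $\mu_r$ forces $\int g_0\,d\mu_r = \int g_1\,d\mu_r = \int g_2\,d\mu_r$, and since $\sum_i g_i \equiv 1$ each integral is $\tfrac13\mu_r(SG) = \tfrac13$. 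Applied to $P_{0,2}^{(r)}$, whose boundary values sum to $-1$, this gives $\int_{SG}P_{0,2}^{(r)}\,d\mu_r = -\tfrac13$, whence $\alpha_1(r) = -\tfrac13 + \tfrac12 = \tfrac16$. The only place one might fear genuine $r$-dependence is this last integral, yet it is governed purely by the $D_3$-symmetry of $\mu_r$ and not by its finer structure; that is the main (and essentially the only) obstacle, and it dissolves as soon as the averaging identity is in hand.
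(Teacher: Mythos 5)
Your proof is correct, and it takes a genuinely different route from the paper. The paper's argument is a doubling trick: it glues two copies of $SG$ at $q_0$ to form $SG \vee SG$, observes that the function equal to $P_{1,1}^{(r)}$ on each copy satisfies $\Delta_r u = 1$ at the (now interior) point $q_0$, and then evaluates the pointwise graph Laplacian there using the eigenfunction property $P_{1,1}^{(r)} \circ F_{00} = L(r) P_{1,1}^{(r)}$ of Lemma 2.3, which yields $6\alpha_1(r)=1$ in one line. Your argument instead stays on $SG$ itself and uses only machinery already established: Gauss--Green (Theorem 1.1) applied to the pair $(P_{1,1}^{(r)}, P_{0,2}^{(r)})$ in both orders, the symmetry of the bilinear energy, and the $D_3$-invariance of $\mu_r$ to get the averaging identity $\int_{SG} h\,d\mu_r = \tfrac13\sum_i h(q_i)$ for harmonic $h$. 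I checked the details: the normal derivatives $\partial_{n,r}P_{1,1}^{(r)}(q_1)=\partial_{n,r}P_{1,1}^{(r)}(q_2)=\tfrac12$ and $\partial_{n,r}P_{0,2}^{(r)}(q_1)=\partial_{n,r}P_{0,2}^{(r)}(q_2)=-\tfrac12$ are right (the former is consistent with $n_{1,1}(r)=3\alpha_1(r)$ from Theorem 2.9), the two evaluations $-\alpha_1(r)$ and $-\int P_{0,2}^{(r)}d\mu_r - \tfrac12$ match, and the averaging identity correctly gives $\int P_{0,2}^{(r)}d\mu_r=-\tfrac13$. What your approach buys is that it avoids the paper's somewhat informal extension of the energy, measure, and Laplacian to $SG\vee SG$; what the paper's approach buys is brevity and a direct view of why the answer is the combinatorial constant $\tfrac16$ (six edges meeting at the doubled point). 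Both correctly isolate the reason $\alpha_1$ is independent of $r$: in your version it is the purely symmetry-driven value of $\int_{SG}P_{0,2}^{(r)}d\mu_r$, in the paper's it is the cancellation of $L(r)^{\pm 1}$ in the renormalized graph Laplacian.
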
 

\begin{proof} Consider the space $SG \vee SG$ where the basepoints are the point $q_0$ in each copy of $SG$. By extending the definition of the energy $ \mathcal{E}_r$ to this space in the natural way and extending the measure $\mu_r$, the weak formulation of the Laplacian can be used to naturally extend the definition of the Laplacian. Define a function $u$ on $SG \vee SG$ which is $P_{1,1}^{(r)}$ on each copy of $SG$. Then $\Delta_r u =1$ identically on $SG \vee SG$. Therefore 
$$ \lim_{m \to \infty} \Delta_r^{(m)} u (q_0) =1.$$
By \eqref{graph laplacian} and lemma 2.3, this says that 
$$ 6 \alpha_1(r) =1,$$ so $\alpha_1(r) =\frac{1}{6}.$ \qed
\end{proof} \par 
\vspace{4mm}

Lemma 2.5 combined with the other initial data $\alpha_0(r) =1$, $\beta_0(r) = -\frac{1}{2}$ and $\gamma_0(r) = \frac{1}{2}$ is sufficient to compute the boundary values of all the monomials using lemma 2.4.  \par 

For simplicity, label the values of $P_{j,k}^{(r)}$ on $V_1 \setminus V_0$, taking advantage of the symmetry and self-similarity of the monomials established in lemmas 2.2 and 2.3, as shown in figure 2.1. \par 

 \begin{figure} 
 \begin{center}
 \begin{tikzpicture}[scale=7]
\draw[ -] (-.5,0) -- (.5,0);
\draw[ -](-.5,0) --(0,.86603);
\draw[ -](.5,0) --(0,.86603);
\draw[ -](.5,0) --(0,.86603);
\draw[ -](0,0) --(.25, .43302);
\draw[ -](0,0) --(-.25, .43302);
\draw[ -](.25, .43302) --(-.25, .43302);
\draw[ -](0, .43302) --(-.125, .64952);
\draw[ -](0, .43302) --(.125, .64952);
\draw[ -](.125, .64952) --(-.125, .64952);
\draw[ -](-.25, 0)-- (-.125,.21651);
\draw[ -](-.25, 0)-- (-.375,.21651);
\draw[ -](-.375,.21651)-- (-.125,.21651);
\draw[ -](.25, 0)-- (.125,.21651);
\draw[ -](.25, 0)-- (.375,.21651);
\draw[ -](.375,.21651)-- (.125,.21651);
\filldraw (.51,.-.05) circle (0pt) node {$\alpha_j(r)$};
\filldraw (-.51,.-.05) circle (0pt) node { $\alpha_j(r)$};
\filldraw (0,.91603) circle (0pt) node { $\delta_{j0}$};
\filldraw (-.52,.91603) circle (0pt) node { $P_{j,1}^{(r)}$};
\filldraw (.23,.69952) circle (0pt) node {$L(r)^j\alpha_{j}(r)$};
\filldraw (-.23,.69952) circle (0pt) node {$L(r)^j\alpha_{j}(r)$};
\filldraw (.3,.45) circle (0pt) node {$a_{j,1}$};
\filldraw (-.3,.45) circle (0pt) node {$a_{j,1}$};
\filldraw (0,.375) circle (0pt) node {$b_{j,1}$};
\filldraw (.425,.25) circle (0pt) node {$c_{j,1}$};
\filldraw (-.425,.25) circle (0pt) node {$c_{j,1}$};
\filldraw (.075,.25) circle (0pt) node {$d_{j,1}$};
\filldraw (-.075,.25) circle (0pt) node {$d_{j,1}$};
\filldraw (.25,.-.05) circle (0pt) node {$e_{j,1}$};
\filldraw (-.25,.-.05) circle (0pt) node { $e_{j,1}$};
\filldraw (0,.-.05) circle (0pt) node {$f_{j,1}$};
\end{tikzpicture}
 \begin{tikzpicture}[scale=7]
\draw[ -] (-.5,0) -- (.5,0);
\draw[ -](-.5,0) --(0,.86603);
\draw[ -](.5,0) --(0,.86603);
\draw[ -](.5,0) --(0,.86603);
\draw[ -](0,0) --(.25, .43302);
\draw[ -](0,0) --(-.25, .43302);
\draw[ -](.25, .43302) --(-.25, .43302);
\draw[ -](0, .43302) --(-.125, .64952);
\draw[ -](0, .43302) --(.125, .64952);
\draw[ -](.125, .64952) --(-.125, .64952);
\draw[ -](-.25, 0)-- (-.125,.21651);
\draw[ -](-.25, 0)-- (-.375,.21651);
\draw[ -](-.375,.21651)-- (-.125,.21651);
\draw[ -](.25, 0)-- (.125,.21651);
\draw[ -](.25, 0)-- (.375,.21651);
\draw[ -](.375,.21651)-- (.125,.21651);
\filldraw (.51,.-.05) circle (0pt) node {$\beta_j(r)$};
\filldraw (-.51,.-.05) circle (0pt) node { $\beta_j(r)$};
\filldraw (0,.91603) circle (0pt) node { $0$};
\filldraw (-.52,.91603) circle (0pt) node { $P_{j,2}^{(r)}$};
\filldraw (.28,.69952) circle (0pt) node {$\lsym L(r)^j\beta_{j}(r)$};
\filldraw (-.28,.69952) circle (0pt) node {$\lsym L(r)^j\beta_{j}(r)$};
\filldraw (.3,.45) circle (0pt) node {$a_{j,2}$};
\filldraw (-.3,.45) circle (0pt) node {$a_{j,2}$};
\filldraw (0,.375) circle (0pt) node {$b_{j,2}$};
\filldraw (.425,.25) circle (0pt) node {$c_{j,2}$};
\filldraw (-.425,.25) circle (0pt) node {$c_{j,2}$};
\filldraw (.075,.25) circle (0pt) node {$d_{j,2}$};
\filldraw (-.075,.25) circle (0pt) node {$d_{j,2}$};
\filldraw (.25,.-.05) circle (0pt) node {$e_{j,2}$};
\filldraw (-.25,.-.05) circle (0pt) node { $e_{j,2}$};
\filldraw (0,.-.05) circle (0pt) node {$f_{j,2}$};

\end{tikzpicture}\\ \vspace{4mm}
 \begin{tikzpicture}[scale=7]
\draw[ -] (-.5,0) -- (.5,0);
\draw[ -](-.5,0) --(0,.86603);
\draw[ -](.5,0) --(0,.86603);
\draw[ -](.5,0) --(0,.86603);
\draw[ -](0,0) --(.25, .43302);
\draw[ -](0,0) --(-.25, .43302);
\draw[ -](.25, .43302) --(-.25, .43302);
\draw[ -](0, .43302) --(-.125, .64952);
\draw[ -](0, .43302) --(.125, .64952);
\draw[ -](.125, .64952) --(-.125, .64952);
\draw[ -](-.25, 0)-- (-.125,.21651);
\draw[ -](-.25, 0)-- (-.375,.21651);
\draw[ -](-.375,.21651)-- (-.125,.21651);
\draw[ -](.25, 0)-- (.125,.21651);
\draw[ -](.25, 0)-- (.375,.21651);
\draw[ -](.375,.21651)-- (.125,.21651);
\filldraw (.51,.-.05) circle (0pt) node {$-\gamma_j(r)$};
\filldraw (-.51,.-.05) circle (0pt) node { $\gamma_j(r)$};
\filldraw (0,.91603) circle (0pt) node { $0$};
\filldraw (-.52,.91603) circle (0pt) node { $P_{j,3}^{(r)}$};
\filldraw (.295,.69952) circle (0pt) node {$-\lskew L(r)^j\gamma_{j}(r)$};
\filldraw (-.285,.69952) circle (0pt) node {$\lskew L(r)^j\gamma_{j}(r)$};
\filldraw (.3,.45) circle (0pt) node {$-a_{j,3}$};
\filldraw (-.3,.45) circle (0pt) node {$a_{j,3}$};
\filldraw (0,.375) circle (0pt) node {$0$};
\filldraw (.425,.25) circle (0pt) node {$-c_{j,3}$};
\filldraw (-.425,.25) circle (0pt) node {$c_{j,3}$};
\filldraw (.075,.25) circle (0pt) node {$-d_{j,3}$};
\filldraw (-.075,.25) circle (0pt) node {$d_{j,3}$};
\filldraw (.25,.-.05) circle (0pt) node {$-e_{j,3}$};
\filldraw (-.25,.-.05) circle (0pt) node { $e_{j,3}$};
\filldraw (0,.-.05) circle (0pt) node {$0$};
\end{tikzpicture}
\caption{The labelling of the values of $P_{j,k}^{(r)}$ on $V_1 $.}
\end{center}
\end{figure}
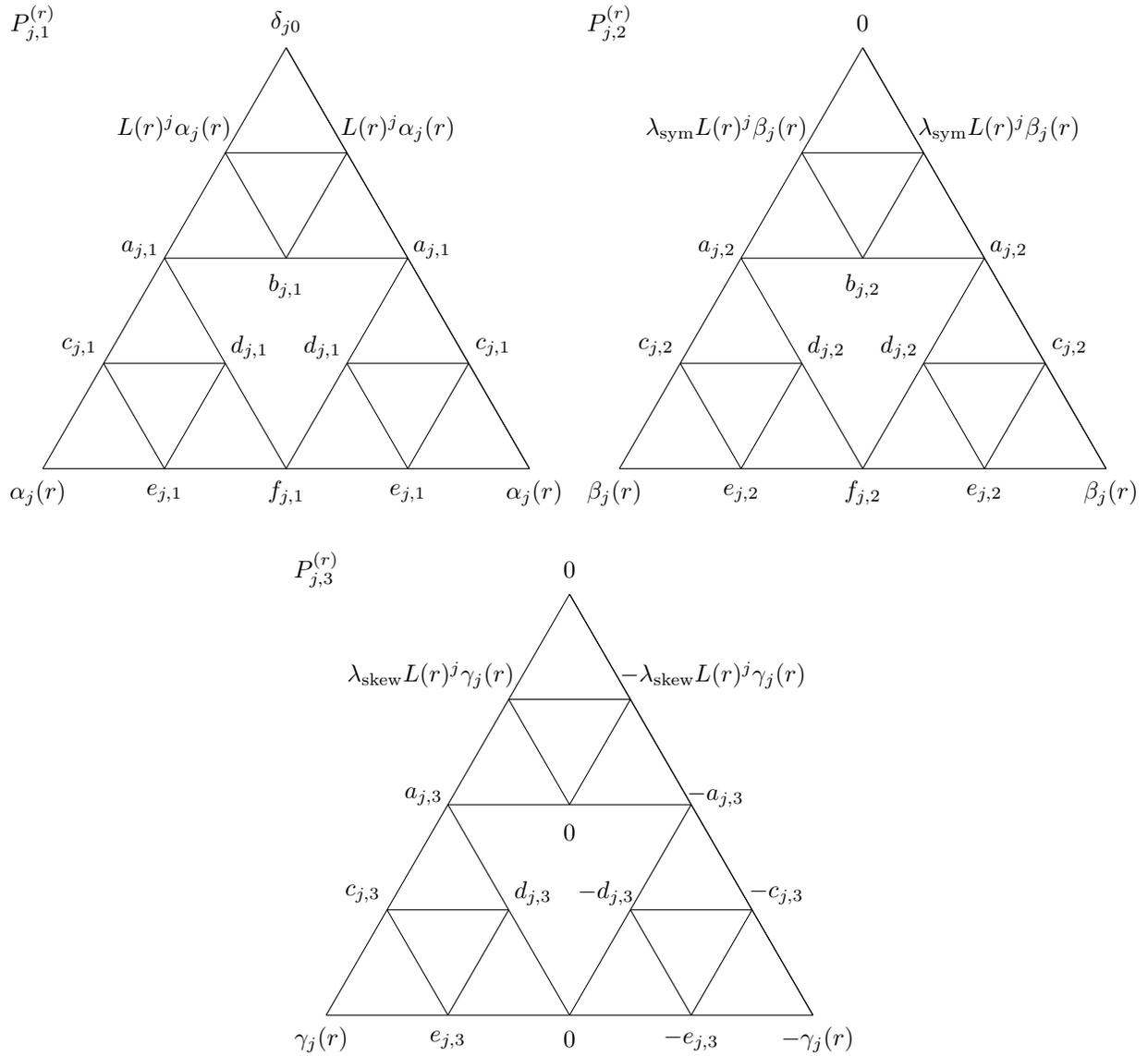

\begin{theorem} The terms of the sequence $\{ \alpha_j(r) \} _{j=0}^\infty$ satisfy the system 
 \begin{align*}
\delta_{j0} + L(r)^j \alpha_j(r)+ra_{j,1} + rb_{j,1} =& (2+2r) \sum_{i=0}^jL(r)^j \alpha_{i}(r) \alpha_{j-i}(r),\\
  L(r)^j \alpha_j(r) + b_{j,1}+c_{j,1}+d_{j,1} =& 4 \sum_{i=0}^j L(r)^i \alpha_{i}(r) a_{j-i,1}, \\
   2L(r)^j \alpha_j(r) + 2 a_{j,1}=&4 \sum_{i=0}^j L(r)^i \alpha_i(r) b_{j-i,1} ,\\
    \alpha_j(r) + r a_{j,1} + r d_{j,1} + e_{j,1}=& (2+2r) \sum_{i=0}^j L(r)^i \alpha_i(r) c_{j-i,1}\tag{\stepcounter{equation}\theequation},\\
     a_{j,1}+c_{j,1}+e_{j,1}+f_{j,1}=&4 \sum_{i=0}^j L(r)^i \alpha_i(r) d_{j-i,1}, \\
      \alpha_{j}(r)+c_{j,1}+rd_{j,1}+r f_{j,1}=& (2+2r)\sum_{i=0}^j L(r)^i \alpha_i(r) e_{j-i,1} ,\\
       2d_{j,1}+2e_{j,1}=& 4 \sum_{i=0}^j L(r)^i \alpha_i(r) f_{j-i,1} ,\\
       \end{align*}
with initial data $$\alpha_0(r) = a_{0,1} = b_{0,1} = c_{0,1} = d_{0,1} = e_{0,1}= f_{0,1}= 1, \quad \alpha_{1}(r) = \frac{1}{6}.$$
\label{alphasystemtheorem}
\end{theorem}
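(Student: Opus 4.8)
The plan is to read off the seven identities as the seven instances of Lemma 2.4 obtained by applying the graph Laplacian $\tilde\Delta_r^{(1)}$ to the single monomial $P_{j,1}^{(r)}$ at the seven junction points of $V_1\setminus V_0$. Using the reflection symmetry $P_{j,1}^{(r)}\circ R_0 = P_{j,1}^{(r)}$ from Lemma 2.2, these points split into seven $R_0$-classes, one for each of the labels $F_{00}q_1$, $a$, $b$, $c$, $d$, $e$, $f$ in Figure 2.1, and it suffices to treat one representative of each class.

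Before doing this I would record two facts. First, the jet of $\Delta_r^i P_{j,1}^{(r)}$ at $q_0$ coincides with the jet of $P_{j-i,1}^{(r)}$ (every entry vanishes except the value of the $(j-i)$-th Laplacian, which is $1$), so by the uniqueness of a polynomial given its jet, $\Delta_r^i P_{j,1}^{(r)} = P_{j-i,1}^{(r)}$; in particular $\Delta_r^{j+1}P_{j,1}^{(r)}=0$, so Lemma 2.4 applies with $k=j$. Second, this means that at any junction point $x$ the quantity $\Delta_r^i P_{j,1}^{(r)}(x)$ occurring in Lemma 2.4 is simply $P_{j-i,1}^{(r)}(x)$, i.e. the value carried by the lower monomial at $x$, which Figure 2.1 records as the corresponding label with index $j-i$.

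For each representative $x$ I would then match two expressions. On one side I expand $\tilde\Delta_r^{(1)}P_{j,1}^{(r)}(x)$ directly from \eqref{graphlaplacian}, inserting the value of $P_{j,1}^{(r)}$ at $x$ and at its four $\Gamma_1$-neighbors from Figure 2.1; here I choose the symmetric form $u(y_1)+u(y_2)+u(y_3)+u(y_4)-4u(x)$ when $x$ is a junction of two inner cells (all four incident edges carry resistance $r_1$) and the asymmetric form $ru(y_1)+ru(y_2)+u(y_3)+u(y_4)-(2+2r)u(x)$ when $x$ is a junction of an outer cell with an inner cell, the weight $r$ falling on the two neighbors reached through the inner cell. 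On the other side I substitute $\Delta_r^i P_{j,1}^{(r)}(x)=P_{j-i,1}^{(r)}(x)$ into Lemma 2.4; the $i=0$ term of its sum equals $4\alpha_0(r)P_{j,1}^{(r)}(x)=4u(x)$ (resp. $(2+2r)u(x)$), which is exactly the multiple of $u(x)$ subtracted in $\tilde\Delta_r^{(1)}$, so folding it in turns Lemma 2.4's $\sum_{i=1}^j$ into the $\sum_{i=0}^j$ of the statement. Equating the two sides at the seven points yields the seven equations, and the initial data come from $P_{0,1}^{(r)}\equiv 1$ (forcing $\alpha_0(r)=a_{0,1}=\dots=f_{0,1}=1$) together with $\alpha_1(r)=\tfrac16$ from Lemma 2.5.

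The one point needing separate handling, and the step I expect to be the main obstacle, is $x=F_{00}q_1$, which gives the first equation. There the monomial values are governed by Lemma 2.3 rather than by a plain label: $P_{m,1}^{(r)}(F_{00}q_1)=L(r)^m\alpha_m(r)$, so $\Delta_r^i P_{j,1}^{(r)}(F_{00}q_1)=L(r)^{j-i}\alpha_{j-i}(r)$ and the factor $L(r)^{j-i}$ combines with the $L(r)^i$ of Lemma 2.4 to produce the uniform power $L(r)^j$ appearing in the first equation, in contrast to the $L(r)^i$ in the other six. Apart from this, the only genuinely delicate bookkeeping is the resistance classification: for each of the seven points one must decide whether it is an inner--inner or an outer--inner junction and, in the latter case, identify which two neighbors lie in the outer cell. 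Once that is settled, every equation reduces to reading values off Figure 2.1 and simplifying, and I would expect no analytic difficulty beyond the combinatorics.
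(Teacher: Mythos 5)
Your proposal is correct and follows exactly the paper's (very terse) proof: apply Lemma 2.4 at each $R_0$-orbit of junction points in $V_1\setminus V_0$, using $\Delta_r^i P_{j,1}^{(r)}=P_{j-i,1}^{(r)}$, the labels of Figure 2.1, Lemma 2.3 at $F_{00}q_1$ to produce the uniform $L(r)^j$ in the first equation, and the absorption of the $i=0$ term to pass from $\sum_{i=1}^j$ to $\sum_{i=0}^j$. You have simply supplied the bookkeeping that the paper leaves implicit, including the correct placement of the weight $r$ on the inner-cell neighbors at outer--inner junctions.
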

\begin{proof} These equations are obtained by applying lemma 2.4 to evaluate the graph Laplacian $\tilde \Delta_r^{(1)} P_{j,1}^{(r)}$ at each point in $V_1 \setminus V_0$. The initial data have already been established. \qed \end{proof} \par 
\vspace{4mm}
For fixed $j$ and fixed $r$, the system in theorem 2.6 is an affine system of 7 equations in 7 unknowns ($\alpha_j(r), a_{j,1}, b_{j,1}, c_{j,1},d_{j,1}, e_{j,1}, f_{j,1})$, so this system can be solved to recursively compute $\alpha_j(r)$ for $j$ as large as desired. The values $\beta_j(r)$ and $\gamma_j(r)$ can be computed in the same way, but their computation requires the knowledge of the values $\alpha_j(r)$. \par 

\begin{theorem}  The terms of the sequence $\{ \beta_j(r) \} _{j=0}^\infty$ satisfy the system 
\begingroup
\allowdisplaybreaks
 \begin{align*}
\lsym L(r)^j \beta_j(r)+ra_{j,2} + rb_{j,2} =& (2+2r) \sum_{i=0}^j \lsym L(r)^j \alpha_{i}(r) \beta_{j-i}(r),\\
 \lsym  L(r)^j \beta_j(r) + b_{j,2}+c_{j,2}+d_{j,2} =& 4 \sum_{i=0}^j L(r)^i \alpha_{i}(r) a_{j-i,2} ,\\
   2 \lsym L(r)^j \beta_j(r) + 2 a_{j,2}=&4 \sum_{i=0}^j L(r)^i \alpha_i(r) b_{j-i,2}, \\
    \beta_j(r) + r a_{j,2} + r d_{j,2} + e_{j,2}=& (2+2r) \sum_{i=0}^j L(r)^i \alpha_i(r) c_{j-i,2}\tag{\stepcounter{equation}\theequation},\\
     a_{j,2}+c_{j,2}+e_{j,2}+f_{j,2}=&4 \sum_{i=0}^j L(r)^i \alpha_i(r) d_{j-i,2} ,\\
      \beta_{j}(r)+c_{j,2}+rd_{j,2}+r f_{j,2}=& (2+2r)\sum_{i=0}^j L(r)^i \alpha_i(r) e_{j-i,2} ,\\
       2d_{j,2}+2e_{j,2}=& 4 \sum_{i=0}^j L(r)^i \alpha_i(r) f_{j-i,2}, \\
       \end{align*}
       \endgroup
with initial data 
$$
\beta_0(r) = - \frac{1}{2}, \ a_{0,2} = - \frac{ 3r^2 + 8r+4}{9r^2 + 26 r  + 15}, \  b_{0,2}=- \frac{ 3r^2 + 7r+2}{9r^2 + 26 r  + 15}, \ c_{0,2}=  - \frac{ 6r^2 + 19r+15}{2(9r^2 + 26 r  + 15)}, $$$$ d_{0,2} =  - \frac{ 6r^2 + 19r+13}{2(9r^2 + 26 r  + 15)}, \ e_{0,2} =  - \frac{ 6r^2 + 21r+15}{2(9r^2 + 26 r  + 15)},\ f_{0,2} =  - \frac{ 3r^2 + 10r+7}{9r^2 + 26 r  + 15}
.$$
The terms of the sequence $\{ \gamma_j(r) \} _{j=0}^\infty$ satisfy the system 
\begingroup
\allowdisplaybreaks
 \begin{align*}
-\lskew L(r)^j \gamma_j(r)+ra_{j,3} =& (2+2r) \sum_{i=0}^j \lskew L(r)^j \alpha_{i}(r) \gamma_{j-i}(r)\\
  \lskew L(r)^j \gamma_j(r) +c_{j,3}+d_{j,3} =& 4 \sum_{i=0}^j L(r)^i \alpha_{i}(r) a_{j-i,3} ,\\
    \gamma_j(r) + r a_{j,3} + r d_{j,3} + e_{j,3}=& (2+2r) \sum_{i=0}^j L(r)^i \alpha_i(r) c_{j-i,3}\tag{\stepcounter{equation}\theequation},\\
     a_{j,3}+c_{j,3}+e_{j,3}=&4 \sum_{i=0}^j L(r)^i \alpha_i(r) d_{j-i,3} ,\\
      \gamma_{j}(r)+c_{j,3}+rd_{j,3}=& (2+2r)\sum_{i=0}^j L(r)^i \alpha_i(r) e_{j-i,3} ,\\
       \end{align*}
       \endgroup
with initial data
$$
\gamma_0(r) =  \frac{1}{2}, \ a_{0,3} =  \frac{ 2r+3}{9r^2 + 26 r  + 15},  \ c_{0,2}=   \frac{ 9r+15}{2(9r^2 + 26 r  + 15)}, \ d_{0,2} =   \frac{ 5r+9}{2(9r^2 + 26 r  + 15)}, $$$$e_{0,2} =   \frac{ 7r +15}{2(9r^2 + 26 r  + 15)}
.$$
\label{betagammasystemtheorem}
\end{theorem}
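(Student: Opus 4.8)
The plan is to follow the proof of theorem 2.6 essentially verbatim, replacing the monomial $P_{j,1}^{(r)}$ by $P_{j,2}^{(r)}$ for the $\beta$ system and by $P_{j,3}^{(r)}$ for the $\gamma$ system. Fix $j$; the polynomial $P = P_{j,2}^{(r)}$ satisfies $\Delta_r^{j+1}P = 0$, so lemma 2.4 applies at every junction point $x \in V_1\setminus V_0$, giving $\tilde\Delta_r^{(1)}P(x) = c_r\sum_{i=1}^j L(r)^i \Delta_r^i P(x)\,\alpha_i(r)$ with $c_r = 4$ at inner junctions and $c_r = 2+2r$ at the outer-cell junctions $x = F_{jj}q_\ell$ (the latter also carrying the edge weights $r$ prescribed by the second case of \eqref{graphlaplacian}). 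On the other hand $\tilde\Delta_r^{(1)}P(x)$ is, by definition, the appropriately weighted sum of the values of $P$ at the neighbours of $x$ minus the weighted centre value. Since $\Delta_r^i P_{j,2}^{(r)} = P_{j-i,2}^{(r)}$, the factor $\Delta_r^i P(x)$ is just the value of the degree-$(j-i)$ monomial at $x$; absorbing the centre term into the $i=0$ summand (using $\alpha_0(r)=1$) leaves, on one side, the appropriately weighted sum of the neighbour values of $x$, and on the other, $c_r\sum_{i=0}^j L(r)^i\alpha_i(r)\,P_{j-i,2}^{(r)}(x)$. Reading both sides off the middle diagram of Figure 2.1 at one representative of each of the seven $R_0$-orbits of junction points yields precisely the seven displayed $\beta$-equations.

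The only structural departure from the $\alpha$ system occurs at the two points $F_{00}q_1, F_{00}q_2$: wherever one of them appears — as the point being differenced in the first equation, or as a neighbour in the next two — lemma 2.3 gives its $P_{j,2}^{(r)}$-value as $\lsym(r)L(r)^j\beta_j(r)$ rather than $L(r)^j\alpha_j(r)$, which is the source of every factor $\lsym(r)$. Note also that $P_{j,2}^{(r)}(q_0)=0$, so the corner $q_0$ simply drops out of the first equation.

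The $\gamma$ system is obtained the same way from $P = P_{j,3}^{(r)}$, now using the bottom diagram of Figure 2.1 and the eigenvalue $\lskew(r)$ from lemma 2.3 at $F_{00}q_1, F_{00}q_2$. The reduction from seven to five equations is forced by the skew-symmetry $P_{j,3}^{(r)}\circ R_0 = -P_{j,3}^{(r)}$ of lemma 2.2: the two junction points fixed by $R_0$ (those bearing the labels $b$ and $f$ in the $\alpha$/$\beta$ diagrams) must carry the value $0$, and at each of them the four neighbours split into $R_0$-conjugate pairs with opposite values, so the graph-Laplacian identity reads $0=0$ and contributes nothing. The remaining five orbits give the five stated $\gamma$-equations, with the signs of the $\lskew(r)$-terms dictated by the skew-symmetric labelling and the tangential contributions across each shared junction cancelling exactly as inside the proof of lemma 2.4.

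Finally, the initial ($j=0$) data are the values of the degree-$0$ monomials on $V_1$, read off the harmonic extension of Figure 1.2. Since the three harmonic functions equal to $1$ at a single vertex sum to the constant $1$, the harmonic function $P_{0,2}^{(r)}$ with boundary data $(0,-\tfrac12,-\tfrac12)$ equals $-\tfrac12 + \tfrac12 u$; evaluating at the junction points with the listed values $u_1,\dots,u_7$ gives $a_{0,2},\dots,f_{0,2}$ and $\beta_0(r)=-\tfrac12$ (for instance $a_{0,2}= -\tfrac12+\tfrac12 u_2 = -\tfrac{3r^2+8r+4}{9r^2+26r+15}$). Likewise $P_{0,3}^{(r)} = \tfrac12(u\circ\rho^{-1}-u\circ\rho)$ is harmonic with data $(0,\tfrac12,-\tfrac12)$, giving $\gamma_0(r)=\tfrac12$ and the remaining initial values. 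I expect the argument to be routine bookkeeping, since lemma 2.4 already absorbs the normal-derivative matching (theorem 1.2) and the tangential cancellation; the one genuinely error-prone step is the $\gamma$ initial data, where one must track how $\rho$ permutes the twelve junction points of $V_1\setminus V_0$ in order to evaluate $\tfrac12(u\circ\rho^{-1}-u\circ\rho)$ consistently with the skew-symmetric signs, whereas the $\beta$ data follow immediately from $P_{0,2}^{(r)}=-\tfrac12+\tfrac12 u$.
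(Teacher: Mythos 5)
Your proposal is correct and takes essentially the same approach as the paper: the paper's own proof is a single sentence invoking Lemma 2.4 at each junction point of $V_1$ together with (1.39) for the initial data, and your write-up accurately fills in exactly those details (the $\lsym$/$\lskew$ factors from Lemma 2.3, the collapse from seven to five $\gamma$-equations via the skew-symmetry of Lemma 2.2, and the evaluation of $-\tfrac12+\tfrac12 u$ and $\tfrac12(u\circ\rho^{-1}-u\circ\rho)$ on $V_1\setminus V_0$). Nothing further is needed.
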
 
\begin{proof} The initial data follow from (1.39), and again the systems of equations are obtained by applying lemma 2.4 at each junction point in $V_1$. \qed 
\end{proof} \par 
\vspace{4mm}
Again this expresses the values $\beta_j(r)$ and $\gamma_j(r)$ via a system of affine equations with the same number of equations and unknowns, so this gives a method to solve for the boundary values of $P_{j,2}^{(r)} $ and $P_{j,3}^{(r)}$. It can be seen from theorems 2.6 and 2.7 that $\alpha_j(r), \beta_j(r),$ and $\gamma_j(r)$ are rational functions. More properties of these functions are discussed in section 4. \par 
The method to compute the boundary values of monomials established in theorems 2.6 and 2.7 is naive in some sense, requiring the solution of a system of equations at each step to compute the value of the next higher degree monomial at the boundary. While this method has the advantage of also providing the values of the monomials on all points in $V_1$, one might desire explicit recurrence relations for the sequences $\{ \alpha_j(r)\}_{j=0}^\infty$, $\{ \beta_j(r)\}_{j=0}^\infty$, and $\{ \gamma_j(r)\}_{j=0}^\infty$ which don't involve the values at all points in $V_1$ like those which are presented for the monomials with respect to the standard Laplacian in \cite{nsty} and \cite{cq}. It turns out that such relations can indeed be written down. 

\begin{theorem} The terms of the sequence $\{\alpha_j(r)\}_{j=0}^\infty$ satisfy the recurrence relation
$$128(r+1)^3 \sum_{\substack{i_1 + i_2 + i_3 + i_4 + i_5 + i_6 = j\\0\leq i_1,i_2,i_3,i_4,i_5,i_6} }\alpha_{i_1} \alpha_{i_2} \alpha_{i_3} \alpha_{i_4} \alpha_{i_5} \alpha_{i_6}$$
$$- (128r^3+448r^2+512r+192)\sum_{\substack{i_1 + i_2 + i_3 + i_4 + i_5  = j\\0\leq i_1,i_2,i_3,i_4,i_5} }\alpha_{i_1} \alpha_{i_2} \alpha_{i_3} \alpha_{i_4} \alpha_{i_5} 
$$\begin{equation}+(-72r^3-88r+8r+24)\sum_{\substack{i_1 + i_2 + i_3 + i_4 = j\\0\leq i_1,i_2,i_3,i_4} }\alpha_{i_1} \alpha_{i_2} \alpha_{i_3} \alpha_{i_4} +(64r^3+164r^2+144r+60)\sum_{\substack{i_1 + i_2 + i_3  = j\\0\leq i_1,i_2,i_3} }\alpha_{i_1} \alpha_{i_2} \alpha_{i_3}
\end{equation}$$+(14r^3+2r^2-10r-18)\sum_{i=0}^{j}\alpha_{i}   \alpha_{j-i}-(6r^3+9r^2+3) \alpha_j-(8r^2+8r)\sum_{\substack{i_1 + i_2 + i_3  = j\\0\leq i_1,i_2,i_3} } \frac{\alpha_{i_1} \alpha_{i_2} \alpha_{i_3}}{L(r)^{i_3}}
$$$$-4r \sum_{i=0}^j \frac{ \alpha_i \alpha_{j-i}}{L(r)^{j-i}} +2r^2 \frac{\alpha_j}{L(r)^j}=0.$$

The terms of the sequence $\{\beta_j(r)\}_{j=0}^\infty$ satisfy the recurrence relation
$$ (384r^4 +1920r^3 +3456r^2 +2688r +768) \sum_{\substack{i_1 + i_2 + i_3 + i_4 + i_5 + i_6 = j\\0\leq i_1,i_2,i_3,i_4,i_5,i_6} }\alpha_{i_1} \alpha_{i_2} \alpha_{i_3} \alpha_{i_4} \alpha_{i_5} \beta_{i_6}
$$$$-(384r^4+2112r^3+4224r^2+3648r+1152)  \sum_{\substack{i_1 + i_2 + i_3 + i_4 + i_5 = j\\0\leq i_1,i_2,i_3,i_4,i_5} }\alpha_{i_1} \alpha_{i_2} \alpha_{i_3} \alpha_{i_4} \beta_{i_5} 
$$\begin{equation}+(-216r^4-504r^3+264r^2+1080r+528) \sum_{\substack{i_1 + i_2 + i_3 + i_4  = j\\0\leq i_1,i_2,i_3,i_4} }\alpha_{i_1} \alpha_{i_2} \alpha_{i_3} \beta_{i_4}
\label{beta recurrence}\end{equation}$$+(192r^4+684r^3+648r^2+84r-24) \sum_{\substack{i_1 + i_2 + i_3 = j\\0\leq i_1,i_2,i_3} }\alpha_{i_1} \alpha_{i_2} \beta_{i_3}
+(42r^4+54r^3-42r^2+18r-36)\sum_{i=0}^{j}\alpha_{i}   \beta_{j-i}
$$$$+(-18r^4-15r^3+30r^2-21r+6)\beta_j
-(36r^3+140 r^2+164 r+60) \sum_{\substack{i_1 + i_2 + i_3  = j\\0\leq i_1,i_2,i_3} }\frac{\alpha_{i_1} \alpha_{i_2}  \beta_{i_3}}{L(r)^{i_3}}
$$$$-(18r^2+52r+30)\sum_{i=0}^j \frac{ \alpha_i \beta_{j-i}}{L(r)^{j-i}}+(9r^3+26r^2+15r)\frac{\beta_j}{L(r)^j}=0.$$

The terms of the sequence $\{\gamma_j(r)\}_{j=0}^\infty$ satisfy the recurrence relation
\begin{equation}(9r^2+26r+15)\frac{\gamma_j}{L(r)^j}- 32(r+1)^2 \sum_{\substack{i_1 + i_2 + i_3 + i_4= j\\0\leq i_1,i_2,i_3,i_4} }\alpha_{i_1} \alpha_{i_2} \alpha_{i_3}\gamma_{i_4}+16(r+1)^2 \sum_{\substack{i_1 + i_2 + i_3= j\\0\leq i_1,i_2,i_3} }\alpha_{i_1} \alpha_{i_2} \gamma_{i_3} \label{gamma recurrence}
\end{equation}$$+(10r^2+4r+2)\sum_{i=0}^{j}\alpha_{i}   \gamma_{j-i}-(3r^2-2r+1)\gamma_j=0.$$

\end{theorem}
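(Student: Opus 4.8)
The plan is to eliminate, from the affine systems of Theorems~\ref{alphasystemtheorem} and~\ref{betagammasystemtheorem}, the auxiliary boundary values $a_{j,k},b_{j,k},c_{j,k},d_{j,k},e_{j,k},f_{j,k}$, leaving a single closed relation in each target sequence. The convenient bookkeeping device is generating functions. Fix $r$ and set $A(t)=\sum_{j\ge 0}\alpha_j(r)\,t^j$, and likewise introduce generating functions $\mathsf a,\mathsf b,\dots,\mathsf f$ for the auxiliary sequences $\{a_{j,1}\},\{b_{j,1}\},\dots,\{f_{j,1}\}$. The point is that every convolution appearing on the right-hand sides, $\sum_{i=0}^{j}L(r)^i\alpha_i(r)\,x_{j-i}$, is exactly the coefficient of $t^j$ in the product $A(L(r)t)\,X(t)$, while an isolated term $L(r)^j\alpha_j(r)$ is the coefficient of $t^j$ in $A(L(r)t)$, and a term $\alpha_j(r)/L(r)^j$ is the coefficient in $A(t/L(r))$. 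Under this dictionary the seven scalar equations of Theorem~\ref{alphasystemtheorem}, one per junction point of $V_1\setminus V_0$, become seven \emph{linear} equations in the seven series $A,\mathsf a,\dots,\mathsf f$, whose coefficients are polynomials in the single series $A(L(r)t)$ and rational in $r$; crucially, $A(t)$ itself enters each equation only linearly.

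First I would treat the $\alpha$ case. Eliminating the six auxiliaries $\mathsf a,\dots,\mathsf f$ from the seven equations — either by Cramer's rule on the $7\times 7$ coefficient matrix, or by solving six equations for the auxiliaries and substituting into the seventh — produces one relation between $A(t)$ and $A(L(r)t)$. Because $A(t)$ occurs linearly while each of the six auxiliary series couples to $A(L(r)t)$ through one convolution, the resulting relation is linear in $A(t)$ and of degree at most six in $A(L(r)t)$. I would then apply the substitution $t\mapsto t/L(r)$, which sends $A(L(r)t)\mapsto A(t)$ and $A(t)\mapsto A(t/L(r))$; this converts the relation into one that is linear in $A(t/L(r))$ and of degree six in $A(t)$, which is precisely the shape of the stated recurrence — the six-fold convolution $A(t)^6$ is the top-degree term, and the $L(r)^{-i_3},L(r)^{-(j-i)},L(r)^{-j}$ factors are exactly the signature of the $A(t/L(r))$ terms. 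Reading off the coefficient of $t^j$ restores the convolution sums.

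The recurrences for $\beta$ and $\gamma$ follow the identical route, with one structural simplification: in Theorem~\ref{betagammasystemtheorem} the sequences $\{\beta_j\}$ and $\{\gamma_j\}$ enter their systems only linearly, and every convolution coefficient is built from the already-determined series $A(L(r)t)$. Consequently the elimination yields a relation that is \emph{linear} in the generating function of $\{\beta_j\}$ (respectively $\{\gamma_j\}$) with coefficients polynomial in $A(t)$, which is why the convolutions take the form $\alpha^{\cdots}\beta$ and $\alpha^{\cdots}\gamma$. For $\gamma$ the skew-symmetry of $P_{j,3}^{(r)}$ under $R_0$ forces the two on-axis values of $V_1\setminus V_0$ to vanish (the entries labelled $0$ in Figure~2.1), so its system has only five equations in the five unknowns $\gamma_j,a_{j,3},c_{j,3},d_{j,3},e_{j,3}$; eliminating the four auxiliaries leaves sums only up to four-fold, i.e. degree three in $A(t)$ times one factor of the $\gamma$ series, matching \eqref{gamma recurrence}.

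The genuinely hard part is the symbolic elimination itself, not its shape. Each determinant must be expanded and then reduced using the explicit values $L(r)=\frac{2r(r+2)}{(2r+1)(9r^2+26r+15)}$, $\lsym(r)=\frac{6r(r+2)}{9r^2+26r+15}$, and $\lskew(r)=\frac{2r}{9r^2+26r+15}$, after which the resulting large rational functions of $r$ must be cleared of denominators and collapsed to the integer-polynomial coefficients displayed in the statement. I expect verifying these exact constants — for instance that the four-fold $\alpha$–$\beta$ convolution in \eqref{beta recurrence} carries precisely the coefficient $-216r^4-504r^3+264r^2+1080r+528$, and that all spurious common factors between numerator and denominator cancel — to be the main obstacle; it is most safely carried out with a computer algebra system, since the structural argument above guarantees only the form of the recurrences, and not the individual coefficients.
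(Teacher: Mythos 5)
Your proposal is essentially the paper's own proof: the paper encodes the sequences as infinite semicirculant (lower-triangular Toeplitz) matrices, whose algebra under multiplication is exactly the convolution algebra of your formal power series, and its operator $\mathbf{L}$ is precisely your substitution $t\mapsto t/L(r)$; it then eliminates the six (resp.\ four) auxiliary series from the linear system to obtain a relation in $\boldsymbol{\alpha}$ and $\mathbf{L}\boldsymbol{\alpha}$ alone, read off entrywise as the stated recurrence. Like you, the paper leaves the symbolic elimination and the verification of the explicit polynomial coefficients implicit, so your account matches it in both structure and level of detail.
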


\begin{proof} First consider the sequence $\{\alpha_j(r) \} _{j=0}^\infty$. Introduce infinite semicirculant matrices $\boldsymbol{\alpha}$, $A_1$, $B_1$, $C_1$, $D_1$, $E_1$, $F_1$ with 
$$
\boldsymbol {\alpha}_{ij}= \begin{cases} \alpha_{i-j}(r) & i - j \geq 0, \\ 0 & i-j <0,
\end{cases} \ 
(A_1)_{ij}= \begin{cases} \frac{a_{i-j,1}}{L(r)^{i-j}} & i - j \geq 0 ,\\ 0 & i-j <0,
\end{cases} \ 
(B_1)_{ij}= \begin{cases} \frac{b_{i-j,1}}{L(r)^{i-j}}& i - j \geq 0 ,\\ 0 & i-j <0,
\end{cases}$$

\begin{equation}(C_1)_{ij}= \begin{cases} \frac{c_{i-j,1}}{L(r)^{i-j}}& i - j \geq 0 ,\\ 0 & i-j <0,
\end{cases} \ 
(D_1)_{ij}= \begin{cases} \frac{d_{i-j,1}}{L(r)^{i-j}}& i - j \geq 0 ,\\ 0 & i-j <0,
\end{cases} \ 
(E_1)_{ij}= \begin{cases} \frac{e_{i-j,1}}{L(r)^{i-j}}& i - j \geq 0, \\ 0 & i-j <0,
\end{cases} \end{equation}
$$(F_1)_{ij}= \begin{cases}\frac{f_{i-j,1}}{L(r)^{i-j}} & i - j \geq 0 ,\\ 0 & i-j <0.
\end{cases} $$
Let $I$ be the infinite identity matrix, with 1 on the diagonal and 0 elsewhere. Let $\bf{L}$ be the operator on such matrices defined by 
\begin{equation} 
\bf{L} \begin{pmatrix} x_0 & 0 & 0 &0& \dots \\
x_1 & x_0 & 0 &0& \dots \\
x_2 & x_1 & x_0&0 & \dots \\ 
x_3 & x_2 & x_1 & x_0  & \dots\\
\vdots & \vdots& \vdots  & \vdots & \ddots \\
\end{pmatrix} = 
\begin{pmatrix} x_0 & 0 & 0&0 &\dots \\
L(r)^{-1} x_1 & x_0 & 0& 0 &\dots \\
L(r)^{-2} x_2 & L(r)^{-1}x_1 & x_0 &0& \dots \\
L(r)^{-3} x_3 & L(r)^{-2} x_2 & L(r)^{-1} x_1 & x_0 & \dots \\
\vdots & \vdots & \vdots & \vdots & \ddots \end{pmatrix}.
\end{equation}
In this notation, (2.17) becomes 
 \begin{align*}
I+  \boldsymbol{\alpha}+rA_1 + rB_1 =& (2+2r)  \boldsymbol{\alpha}^2,\\
 \boldsymbol{\alpha} + B_1+C_1+D_1 =& 4\boldsymbol{\alpha} A_1,\\
   2 \boldsymbol{\alpha} + 2A_1=&4 \boldsymbol{\alpha} B_1 ,\\
  \textbf{L} \boldsymbol{\alpha} + r A_1 + r D_1 + E_1=& (2+2r)  \boldsymbol{\alpha} C_1\tag{\stepcounter{equation}\theequation},\\
     A_1 + C_1+ E_1+F_1 =&4  \boldsymbol{\alpha} D_1,\\
     \textbf{L} \boldsymbol{\alpha} +C_1+rD_1+r F_1=& (2+2r) \boldsymbol{\alpha} E_1,\\
     2D_1 + 2E_1=& 4 \boldsymbol{\alpha}F_1. \\
       \end{align*}
Manipulating these equations to obtain an equation in only the variables $\boldsymbol{\alpha}$ and $ \textbf{L} \boldsymbol{\alpha}$ yields the equation 
$$
128(r+1)^3 \boldsymbol{\alpha}^6
- (128r^3+448r^2+512r+192)\boldsymbol{\alpha}^5 +(-72r^3-88r+8r+24)\boldsymbol{\alpha}^4
+(64r^3+164r^2+144r+60)\boldsymbol{\alpha}^3$$\begin{equation}
+(14r^3+2r^2-10r-18)\boldsymbol{\alpha}^2-(6r^3+9r^2+3) \boldsymbol{\alpha} -(8r^2+8r)\boldsymbol{\alpha}^2 \textbf{L} \boldsymbol {\alpha}-4r \boldsymbol{\alpha} \textbf{L} \boldsymbol{\alpha} +2r^2 \textbf{L} \boldsymbol{\alpha}=0.\end{equation}
Rewriting (2.26) as an equation involving the entries of the involved matrices using the fact that 
$$ \left (  \begin{pmatrix} x_0 & 0 & 0 &0& \dots \\
x_1 & x_0 & 0 &0& \dots \\
x_2 & x_1 & x_0&0 & \dots \\ 
x_3 & x_2 & x_1 & x_0  & \dots\\
\vdots & \vdots& \vdots  & \vdots & \ddots \\
\end{pmatrix}  \begin{pmatrix} y_0 & 0 & 0 &0& \dots \\
y_1 & y_0 & 0 &0& \dots \\
y_2 & y_1 & y_0&0 & \dots \\ 
y_3 & y_2 & y_1 & y_0  & \dots\\
\vdots & \vdots& \vdots  & \vdots & \ddots \\  
\end{pmatrix} \right ) _{ij} = \begin{cases}  \sum_{k=0}^{i-j} x_k y_{i-j-k} & i - j \geq 0, \\ 0 & i-j < 0 \end{cases}$$
gives (2.20). \par 
To obtain the recurrence for $\{\beta_j(r)\}_{j=0}^\infty,$ introduce matrices 
$\boldsymbol{\beta}, A_2, B_2, C_2, D_2, E_2, F_2$ with 

$$\boldsymbol {\beta}_{ij}= \begin{cases} \beta_{i-j}(r) & i - j \geq 0, \\ 0 & i-j <0,
\end{cases} \ 
(A_2)_{ij}= \begin{cases} \frac{a_{i-j,2}}{L(r)^{i-j} }& i - j \geq 0, \\ 0 & i-j <0,
\end{cases} \ 
(B_2)_{ij}= \begin{cases} \frac{b_{i-j,2}}{L(r)^{i-j} } & i - j \geq 0 ,\\ 0 & i-j <0,
\end{cases} \ $$
\begin{equation}(C_2)_{ij}= \begin{cases} \frac{c_{i-j,2}}{L(r)^{i-j} } & i - j \geq 0 ,\\ 0 & i-j <0,
\end{cases} \ 
(D_2)_{ij}= \begin{cases} \frac{d_{i-j,2}}{L(r)^{i-j} } & i - j \geq 0 ,\\ 0 & i-j <0,
\end{cases} \
(E_2)_{ij}= \begin{cases} \frac{e_{i-j,2}}{L(r)^{i-j} } & i - j \geq 0 ,\\ 0 & i-j <0,
\end{cases} \end{equation}
$$(F_2)_{ij}= \begin{cases} \frac{f_{i-j,2}}{L(r)^{i-j} }& i - j \geq 0 ,\\ 0 & i-j <0.
\end{cases} $$
Then (2.18) becomes 
 \begin{align*}
\lsym(r) \boldsymbol{\beta}+rA_2 + rB_2 =& (2+2r) \lsym(r) \boldsymbol{\alpha} \boldsymbol \beta ,\\
\lsym(r) \boldsymbol{\beta} + B_2+C_2+D_2 =& 4\boldsymbol{\alpha} A_2,\\
   2  \lsym(r) \boldsymbol{\beta} + 2A_2=&4 \boldsymbol{\alpha} B_2 ,\\
  \textbf{L} \boldsymbol{\beta} + r A_2 + r D_2 + E_2=& (2+2r)  \boldsymbol{\alpha} C_2\tag{\stepcounter{equation}\theequation},\\
     A_2 + C_2+ E_2+F_2 =&4  \boldsymbol{\alpha} D_2,\\
     \textbf{L} \boldsymbol{\beta} +C_2+rD_2+r F_2=& (2+2r) \boldsymbol{\alpha} E_2,\\
     2D_2 + 2E_2=& 4 \boldsymbol{\alpha}F_2. \\
       \end{align*}
This system yields an equation relating $\boldsymbol \beta , \textbf{L} \boldsymbol \beta ,\boldsymbol \alpha$: 
$$
( 384r^4 + 1920 r^3 + 3456r^2 + 2688r + 768 ) \boldsymbol \alpha^5  \boldsymbol \beta
+ -(384r^4+2112r^3+4224r^2+3648r+1152) \boldsymbol \alpha^4  \boldsymbol \beta 
$$\begin{equation}
+(-216 r^4 -504r^3 +264r^2 +1080r + 528 ) \boldsymbol \alpha^3 \boldsymbol \beta
+ ( 192 r^4 + 684 r^3 + 648r^2 +84r -24) \boldsymbol \alpha^2 \boldsymbol \beta
\label{beta relation} \end{equation} $$
+ ( 42r^4 + 54r^3 -42r^2 +18r-36) \boldsymbol \alpha \boldsymbol \beta 
+ (-18r^4-15r^3 +30r^2 -21r + 6 ) \boldsymbol \beta 
- (36r^3 + 140r^2 + 164r + 60 ) \boldsymbol \alpha^2 \textbf{L} \boldsymbol \beta 
$$$$
- (18r^2 + 52r+30 ) \boldsymbol \alpha \textbf{L} \boldsymbol \beta 
+ (9r^3 + 26r^2 +15r) \textbf{L} \boldsymbol \beta =0.
$$
Rewriting \eqref{beta relation} in terms of the entries of the matrices yields \eqref{beta recurrence}. \par 

Finally, the same method gives the recurrence relation for $\{ \gamma_j(r)\} _{j=0}^ \infty .$ Define matrices 
$ \boldsymbol \gamma, A_3, C_3, D_3, E_3 $ with 
\begin{equation}\boldsymbol {\gamma}_{ij}= \begin{cases} \gamma_{i-j}(r) & i - j \geq 0 ,\\ 0 & i-j <0,
\end{cases} \ 
(A_3)_{ij}= \begin{cases} \frac{a_{i-j,3}}{L(r)^{i-j} }& i - j \geq 0 ,\\ 0 & i-j <0,
\end{cases} \ 
(C_3)_{ij}= \begin{cases} \frac{c_{i-j,3}}{L(r)^{i-j} } & i - j \geq 0 ,\\ 0 & i-j <0,
\end{cases} \end{equation}
$$(D_3)_{ij}= \begin{cases} \frac{d_{i-j,3}}{L(r)^{i-j} } & i - j \geq 0 ,\\ 0 & i-j <0,
\end{cases} \ 
(E_3)_{ij}= \begin{cases} \frac{e_{i-j,3}}{L(r)^{i-j} } & i - j \geq 0 ,\\ 0 & i-j <0.
\end{cases} $$

In this notation, (2.19) becomes 

\begin{align*}
-\lskew(r) \boldsymbol{\gamma}+rA_3  =& (2+2r) \lskew(r) \boldsymbol{\alpha} \boldsymbol \gamma, \\
\lskew(r) \boldsymbol{\gamma} +C_3+D_3 =& 4\boldsymbol{\alpha} A_3,\\
  \textbf{L} \boldsymbol{\gamma} + r A_3 + r D_3 + E_3=& (2+2r)  \boldsymbol{\alpha} C_3\tag{\stepcounter{equation}\theequation},\\
     A_3 + C_3+ E_3 =&4  \boldsymbol{\alpha} D_3,\\
     \textbf{L} \boldsymbol{\gamma} +C_3+rD_3=& (2+2r) \boldsymbol{\alpha} E_3.\\
          \end{align*}
          
          These equations imply the relation 
          
          \begin{equation} \label{gamma relation} 
          (9r^2 + 26 r + 15 ) \textbf L \boldsymbol \gamma - 32 (r+1)^2 \boldsymbol \alpha^3 + 16 (r+1)^2 \boldsymbol \alpha^2 + (10r^2+4r +2 ) \boldsymbol \alpha - ( 3r^2 -2r + 1 ) \boldsymbol \gamma =0 .
          \end{equation} 
          Writing \eqref{gamma relation} in terms of the entries of the matrices gives \eqref{gamma recurrence}.
\qed \end{proof} \par \vspace{4mm}

This section concludes with the calculation of the normal and tangential derivatives of the monomials at the boundary point $q_1$. This computation is necessary to determine the value of a polynomial at points in $V_\ast \setminus V_1$ since extending a polynomial from $V_m$ to $V_{m+1}$ requires knowledge of the jet at each point in $V_m$. Let $\partial_{n,r} P_{j,k}^{(r)} (q_1) = n_{j,k} (r) $ and $\partial_{T,r} P_{j,k}^{(r)} (q_1 ) = t_{j,k} (r). $ Knowledge of the jets of monomials at $q_1$ and $q_2$ is also necessary for changing between various bases for the space of polynomials as in \cite{nsty} and for constructing multiharmonic splines such as in \cite{su}. \par 

\begin{theorem} The normal and tangential derivatives of $P_{j,k} ^{(r)}$ at $q_1$ are given by 
\begin{equation} 
n_{j,k} (r) = \begin{cases}  \label{normal derivative values}
2 \sum_{i=0} ^ j \alpha _i(r) \alpha_{j-i}(r)  + 2 \sum_{i=0}^{j-1} n _{i,1} (r) \beta_{j-i}(r) - \alpha_j(r) - \delta_{j0} & k= 1,\\
2 \sum_{i=0} ^ j \alpha_i (r) \beta_{j-i}(r) + 2 \sum_{ i = 0 }^{j-1} n_{i,2} (r) \beta_{j-i}(r) - \beta_j(r) & k=2, \\
2 \sum_{ i=0}^{j-1} n_{i,3} (r) \beta_{j-i}(r) + \gamma_j(r) & k=3, 
\end{cases}
\end{equation}

\begin{equation} \label{tangential derivative values}
t_{j,k} (r)  = \begin{cases} 
\alpha_j(r) - \delta_{j0} - 2 \sum_{i=0}^{j-1} t_{i,1} (r)\gamma_{j-i} (r) & k=1 ,\\
\beta_j(r) - 2 \sum_{ i=0}^{j-1} t_{i,2} (r) \gamma_{j-i}(r) & k=2 ,\\
-\gamma_{ j}(r) - 2 \sum_{ i=0} ^{j-1} t_{i,3}(r) \gamma_{j-i}(r) &k=3.
\end{cases}
\end{equation}

\end{theorem}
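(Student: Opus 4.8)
The plan is to treat the normal derivatives by a Green's-identity argument and the tangential derivatives by a direct self-similar computation, with both recursions arising from expanding a composition in the monomial basis exactly as in the proof of Lemma 2.4. Throughout I will induct on $j$, taking all boundary values $\alpha_i(r),\beta_i(r),\gamma_i(r)$ (Theorems \ref{alphasystemtheorem} and \ref{betagammasystemtheorem}) and all lower-degree derivatives $n_{i,k}(r),t_{i,k}(r)$ with $i<j$ as known.

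For the normal derivatives I would first antisymmetrize the Gauss-Green formula of Theorem 1.1: subtracting the identity for the pair $(u,v)$ from the one for $(v,u)$ yields Green's second identity $\int_{SG}(u\Delta_r v - v\Delta_r u)\,d\mu_r = \sum_{V_0}\big(u\,\partial_{n,r}v - v\,\partial_{n,r}u\big)$ for polynomials. I would then pair $u=P_{j,k}^{(r)}$ against a degree-zero monomial chosen to match its $R_0$-symmetry type (Lemma 2.1): the symmetric $P_{0,2}^{(r)}$ when $k=1,2$ and the skew $P_{0,3}^{(r)}$ when $k=3$. Since $\Delta_r P_{0,l}^{(r)}=0$ and $\Delta_r P_{j,k}^{(r)}=P_{j-1,k}^{(r)}$, and since the boundary jets of the degree-zero monomials are explicit while the values $P_{j,k}^{(r)}(q_i)$ are the known $\delta_{j0},\alpha_j(r),\beta_j(r),\pm\gamma_j(r)$, the identity solves for $n_{j,k}(r)$ in terms of $\alpha_j(r),\beta_j(r),\gamma_j(r)$ and a single integral $\int_{SG}P_{0,l}^{(r)}P_{j-1,k}^{(r)}\,d\mu_r$; using Lemma 2.1 to combine the $q_1$ and $q_2$ contributions (and, in the skew case, to see which terms survive) keeps the reduction clean.

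The integral is evaluated by self-similarity. Decomposing $SG=\bigcup_{a,b}F_{ab}SG$ and writing $\int_{SG}P_{0,l}^{(r)}P_{j-1,k}^{(r)}\,d\mu_r=\sum_{a,b}\mu_r(F_{ab}SG)\int_{SG}(P_{0,l}^{(r)}\circ F_{ab})(P_{j-1,k}^{(r)}\circ F_{ab})\,d\mu_r$, I would expand each composition in the monomial basis through its jet at $q_0$, as in the proof of Lemma 2.4; by the scaling relations of Lemma 2.3 and the local-derivative definitions, the jet entries are products of $L(r),\lsym(r),\lskew(r)$ with strictly lower-order boundary values and derivatives, and the matching condition of Theorem 1.2 disposes of the junction contributions. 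Collecting terms turns the integral into a bilinear combination of lower-order data which, after re-expressing the lower-order integral-moments through the same Green reduction, assembles into the convolutions $\sum_i\alpha_i\alpha_{j-i}$ and $\sum_i n_{i,k}\beta_{j-i}$ of \eqref{normal derivative values}. For the tangential derivatives there is no Gauss-Green analog, so I would compute $t_{j,k}(r)$ from its defining limit $\lim_{m\to\infty}\lskew(r)^{-m}\big(P_{j,k}^{(r)}(F_{11}^m q_2)-P_{j,k}^{(r)}(F_{11}^m q_0)\big)$, legitimate because $q_1$ lies in the single level-one cell $F_{11}SG$; writing this as a telescoping sum over refinement levels and evaluating each increment by expanding $P_{j,k}^{(r)}\circ F_{11}$ in the monomial basis, the eigenvalue $\lskew(r)$ controls the geometric decay while the $\gamma_{j-i}(r)$ enter as the per-level corrections, yielding \eqref{tangential derivative values} with leading term the level-zero difference $P_{j,k}^{(r)}(q_2)-P_{j,k}^{(r)}(q_0)$.

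The main obstacle I anticipate is the bookkeeping in the self-similar step: showing that the bilinear expansion of the integrals, and of the telescoped tangential increments, collapses to precisely these convolution coefficients, and that the apparent self-reference of $n_{j,k}$ and $t_{j,k}$ inside the sums is only a reference to strictly lower degrees, so that the recursions are genuinely lower-triangular and well posed. The tangential case carries the additional analytic burden of justifying the interchange of the limit with the monomial expansion, which I expect to rest on the uniform convergence of the graph derivatives for $u\in\dom\Delta_r$ used in Theorem 1.1.
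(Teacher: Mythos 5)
Your plan shares one germ with the paper's argument --- expanding a composed polynomial in the monomial basis and using the $R_0$-symmetry of Lemma 2.2 to combine the $q_1$ and $q_2$ contributions --- but the route you take through Gauss--Green and a defining-limit computation leaves genuine gaps precisely where the content of the theorem lives. The paper's proof is much shorter and purely algebraic: since $\rho(q_0)=q_1$, the jet of $P_{j,k}^{(r)}\circ\rho$ at $q_0$ equals the jet of $P_{j,k}^{(r)}$ at $q_1$, namely the list $(P_{i,k}^{(r)}(q_1),\,n_{i,k}(r),\,t_{i,k}(r))_{i\le j}$; writing $P_{j,k}^{(r)}\circ\rho$ in the monomial basis with these coefficients and evaluating the resulting identity at $q_1$ and $q_2$ (where the left side is $P_{j,k}^{(r)}(q_2)$ and $P_{j,k}^{(r)}(q_0)$, both known) gives, after adding and subtracting and applying Lemma 2.2, six convolution identities that are solved for $n_{j,k}(r)$ and $t_{j,k}(r)$ using $\beta_0(r)=-\tfrac12$ and $\gamma_0(r)=\tfrac12$. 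No integrals and no limits are required.

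The concrete gap in your version is that both of your computational engines fail to close over boundary data. To evaluate $\int_{SG}P_{0,l}^{(r)}P_{j-1,k}^{(r)}\,d\mu_r$ by decomposing over the nine cells $F_{ab}SG$, or to expand $P_{j,k}^{(r)}\circ F_{11}$ for the telescoped tangential limit, you must expand $P^{(r)}_{\cdot,k}\circ F_{ab}$ in the monomial basis, and the coefficients of that expansion are the jet of the polynomial at $F_{ab}q_0$. For all but one cell this is an \emph{interior} vertex of $V_1$, so the coefficients are the quantities $a_{j,k},b_{j,k},\dots$ of Theorems 2.6--2.7 together with their (so far uncomputed) normal and tangential derivatives --- data that appears nowhere in the claimed formulas and is not in your induction hypothesis. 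Your assertion that ``collecting terms'' collapses everything to the convolutions $\sum_i\alpha_i\alpha_{j-i}$, $\sum_i n_{i,k}\beta_{j-i}$, $\sum_i t_{i,k}\gamma_{j-i}$ is exactly the statement to be proved, and nothing in the sketch forces it. Two further obstructions: $P_{j,k}^{(r)}\circ F_{11}$ is \emph{not} a scalar multiple of $P_{j,k}^{(r)}$ (Lemma 2.3 applies only at $q_0$, where the jet is concentrated), so the iteration over $m$ does not acquire the clean geometric structure your telescoping assumes; and the proposed ``Green reduction'' of lower-order moments would naturally invoke identities such as $n_{m,2}(r)=-\alpha_m(r)$, which the paper derives \emph{from} this theorem (Theorem 3.4), risking circularity. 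The repair is to base all expansions at $q_1$ via the rotation $\rho$, at which point both formulas fall out directly and your integrals and limits become unnecessary.
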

\begin{proof} Let $\rho \in D_3$ be the rotation sending $q_i \mapsto q_{i+1} $. Then $\partial_{n,r} P_{j,k}^{(r)} (q_1) = \partial_{n,r} (P_{j,k} ^{(r)} \circ \rho ) (q_0)$ and $\partial_{T,r} P_{j,k}^{(r)} (q_1) = \partial_{T,r} (P_{j,k} ^{(r)} \circ \rho ) (q_0).$ Therefore the values of $n_{j,k}(r) $ and $t_{j,k} (r) $ are determined by the expression of $P_{j,k}^{(r)}  \circ \rho$ in the monomial basis. Written as a linear combination of monomials, 
\begin{align*}
P_{j,k}^{(r)}  \circ \rho=
& (P_{j,k}^{(r)}  \circ \rho)(q_0) P_{0,1}^{(r)} + \partial_{n,r} (P_{j,k}^{(r)}  \circ \rho)(q_0) P_{0,2}^{(r)} + \partial_{T,r} ( P_{j,k}^{(r)}  \circ \rho)(q_0)P_{0,3}^{(r)} \\
& + (P_{j-1,k}^{(r)}  \circ \rho)(q_0) P_{1,1}^{(r)} + \partial_{n,r}  ( P_{j-1,k}^{(r)}  \circ \rho)(q_0)  P_{1,2}^{(r)} + \partial_{T,r} ( P_{j-1,k}^{(r)}  \circ \rho)(q_0) P_{1,3}^{(r)} \\
& + \dots   \tag{\stepcounter{equation}\theequation}\\
& + (P_{0,k}^{(r)}  \circ \rho)(q_0) P_{j,1}^{(r)} + \partial_{n,r} ( P_{0,k}^{(r)}  \circ \rho)(q_0)  P_{j,2}^{(r)} + \partial_{T,r} ( P_{0,k}^{(r)}  \circ \rho)(q_0) P_{j,3}^{(r)} \\
=& P_{j,k}^{(r)} (q_1) P_{0,1} ^{(r)} + n_{j,k}(r)P_{0,2}^{(r)} + t_{j,k}(r) P_{0,3}^{(r)} \\  
& + P_{j-1,k}^{(r)} (q_1)P_{1,1}^{(r)} + n_{j-1,k}(r)P_{1,2}^{(r)} + t_{j-1,k}(r) P_{1,3}^{(r)} \\
& + \dots \\
& +  P_{0,k}^{(r)} (q_1)P_{j,1}^{(r)} + n_{0,k}(r)P_{j,2}^{(r)} + t_{0,k}(r) P_{j,3}^{(r)}.
\end{align*}
Therefore 
\begin{equation}
(P_{j,k}^{(r)}  \circ \rho)(q_1) =
 \sum_{i=0}^j (P_{i,k}^{(r)}  (q_1) \alpha_{j-i}(r)  +n_{i,k}(r)  \beta_{j-i}(r)  + t_{i,k}(r) \gamma_{j-i}(r) ),
\end{equation}
and 
\begin{equation}
(P_{j,k}^{(r)}  \circ \rho)(q_2) =
 \sum_{i=0}^j (P_{i,k}^{(r)}  (q_2) \alpha_{j-i}(r)  +n_{i,k}(r)  \beta_{j-i}(r)  - t_{i,k}(r) \gamma_{j-i}(r) ).
 \end{equation}
 Adding and subtracting these two equations yields 
 \begin{equation} 
 (P_{j,k}^{(r)}  \circ \rho)(q_1)+ (P_{j,k}^{(r)}  \circ \rho)(q_2) = \sum_{i=0}^j (2n_{i,k} (r)\beta_{j-i}(r) +  \alpha_{j-i}(r) (P_{i,k} ^{(r)} (q_1) + P_{i,k}^{(r)} (q_2) )), 
 \end{equation} 
  \begin{equation} 
 (P_{j,k}^{(r)}  \circ \rho)(q_1)-(P_{j,k}^{(r)}  \circ \rho)(q_2) = 2\sum_{i=0}^j t_{i,k} (r) \gamma_{j-i}(r) .
 \end{equation} 
Since $(P_{j,k}^{(r)}  \circ \rho)(q_1)= P_{j,k} ^{(r)} (q_2)$ and $(P_{j,k} ^{(r)} \circ \rho ) (q_2) = P_{j,k} ^{(r)} (q_0) $, applying lemma 2.2 shows that 
\begin{equation}  \alpha_j(r) + \delta _{j0} = 2 \sum_{i=0}^j( \alpha_{i}(r)  \alpha_{j-i} (r) + n_{i,1}(r) \beta_{j-i}(r)) , 
\end{equation} 
\begin{equation}  \beta_j(r)  = 2 \sum_{i=0}^j( \alpha_{i}(r)  \beta_{j-i} (r) + n_{i,2}(r) \beta_{j-i}(r) ), 
\end{equation} 
\begin{equation}  -\gamma_j(r)  = 2 \sum_{i=0}^j n_{i,3}(r) \beta_{j-i}(r) , 
\end{equation} 
\begin{equation} \label{alpha gamma relation}
\alpha_j(r) - \delta_{j0} = 2 \sum_{i=0}^j t_{i,1} (r) \gamma _{j-i}(r) , 
\end{equation}
\begin{equation}
\beta_j(r)  = 2 \sum_{i=0}^j t_{i,2} (r) \gamma _{j-i}(r) , 
\end{equation}
\begin{equation}
-\gamma_j(r) = 2 \sum_{i=0}^j t_{i,3} (r) \gamma _{j-i}(r).
\end{equation}
Recalling that $\beta_{0}(r) = - \frac12$ and $\gamma_0(r) = \frac12$ and solving for $n_{j,k}(r),$ $t_{j,k}(r)$ gives \eqref{normal derivative values} and \eqref{tangential derivative values}.
\qed
\end{proof}

\section{Properties of Polynomials}
This section discusses the generalizations to the family of self-similar symmetric Laplacians of several results of \cite{nsty} regarding general polynomials, monomials, and the derivatives of the monomials at the boundary. Some of these results are purely consequences of the symmetry of the standard Laplacian and hence continue to hold for any value of $r$, while some results only hold for $r=1$. \par 

Let $R_0, R_1, R_2\in D_3$ be the reflections of $SG$ which fix $q_0, q_1, q_2$, respectively. Let $\rho \in D_3$ be the rotation sending $q_i \mapsto q_{i+1}$.\par 
\begin{theorem} For all $j \geq 0$, and for all $0 <r < \infty$,
\begin{equation}P_{j,3}^{(r)} (x) + P_{j,3}^{(r)} (\rho x)+P_{j,3}^{(r)} (\rho^{-1}x)=0.\end{equation} \end{theorem}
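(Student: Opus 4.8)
The plan is to prove the identity by induction on $j$, reducing it at each step to the uniqueness of a harmonic function with prescribed boundary values. Set
$g_j := P_{j,3}^{(r)} + P_{j,3}^{(r)}\circ\rho + P_{j,3}^{(r)}\circ\rho^{-1}$, so that the assertion $P_{j,3}^{(r)}(x)+P_{j,3}^{(r)}(\rho x)+P_{j,3}^{(r)}(\rho^{-1}x)=0$ is exactly $g_j\equiv 0$.

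First I would record two elementary facts about $g_j$. Because $\Delta_r$ commutes with the $D_3$ action and, directly from the jet conditions \eqref{pj1 definition}--\eqref{pj3 definition}, $\Delta_r P_{j,3}^{(r)} = P_{j-1,3}^{(r)}$, applying $\Delta_r$ termwise gives $\Delta_r g_j = g_{j-1}$ for $j\geq 1$, while $\Delta_r g_0 = 0$ since $P_{0,3}^{(r)}$ is harmonic. Second, I would compute the boundary values: by \eqref{pj3 definition} one has $P_{j,3}^{(r)}(q_0)=0$, and Lemma 2.1 (skew-symmetry under $R_0$) together with $P_{j,3}^{(r)}(q_1)=\gamma_j(r)$ forces $P_{j,3}^{(r)}(q_2)=-\gamma_j(r)$. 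Since $\rho$ cycles the vertices, at each of $q_0,q_1,q_2$ the three summands defining $g_j$ take the values $0,\ \gamma_j(r),\ -\gamma_j(r)$ in some order, so $g_j$ vanishes identically on $V_0$.

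The induction then closes immediately. Assuming $g_{j-1}\equiv 0$ (with base case $g_0$, where $\Delta_r g_0=0$ holds directly), the first fact gives $\Delta_r g_j = g_{j-1} = 0$, so $g_j$ is harmonic with respect to $\mathcal{E}_r$; a harmonic function is determined by its values on $V_0$ (Theorem 2.1 and uniqueness of the energy-minimizing extension), and $g_j$ vanishes there, whence $g_j\equiv 0$.

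The point I would be most careful to flag is that a purely group-theoretic argument does \emph{not} suffice here. One checks that $g_j$ is invariant under $\rho$ and skew under $R_0$, hence skew under every reflection in $D_3$; but such functions (the sign-isotypic component of the $D_3$ action) need not vanish, so symmetry alone only kills the value and normal-derivative entries of the jet of $g_j$ at $q_0$, leaving the tangential entries $\partial_{T,r}\Delta_r^n g_j(q_0)$ unconstrained. The essential extra input is the harmonicity produced by the induction, which rests on the degree-lowering relation $\Delta_r P_{j,3}^{(r)} = P_{j-1,3}^{(r)}$; verifying this relation and the commutation of $\Delta_r$ with $\rho$ is the step that genuinely drives the argument.
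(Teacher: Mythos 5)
Your proof is correct and follows essentially the same route as the paper: induct on $j$, use the symmetry of $\Delta_r$ (via $\Delta_r P_{j,3}^{(r)}=P_{j-1,3}^{(r)}$) to conclude $g_j$ is harmonic once $g_{j-1}\equiv 0$, and kill $g_j$ by checking it vanishes on $V_0$ using the skew-symmetry of $P_{j,3}^{(r)}$ under $R_0$. Your explicit verification of the boundary values and your remark on why symmetry alone is insufficient are slightly more detailed than the paper's two-line argument, but the substance is identical.
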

\begin{proof} For $j=0,$ the function 
$$ P_{0,3}^{(r)}+ P_{0,3}^{(r)}\circ \rho+P_{0,3}^{(r)}\circ \rho^{-1}$$ is harmonic, and by the skew-symmetry of $P_{j,3}^{(r)},$ the value of this function on each boundary point is zero, hence 
$$ P_{0,3}^{(r)}+ P_{0,3}^{(r)}\circ \rho+P_{0,3}^{(r)}\circ \rho^{-1}= 0$$ identically.
Assume that 
$$P_{j-1,3}^{(r)} (x) + P_{j-1,3}^{(r)} (\rho x)+P_{j-1,3}^{(r)} (\rho^{-1}x)=0.$$
Then 
$$\Delta_r (  P_{j,3}^{(r)}+ P_{j,3}^{(r)}\circ \rho +P_{j,3}^{(r)}\circ \rho ^{-1})= P_{j-1,3}^{(r)} + P_{j-1,3}^{(r)} \circ \rho +P_{j-1,3}^{(r)} \circ \rho^{-1}=0$$
by the symmetry of $\Delta_r$, so $$ P_{j,3}^{(r)}+ P_{j,3}^{(r)}\circ \rho +P_{j,3}^{(r)}\circ \rho ^{-1}$$ is harmonic and vanishes on the boundary, hence it vanishes identically. \qed \end{proof}  \par \vspace{4mm} 

\begin{theorem} Any polynomial $P$, with $\Delta_r^{n+1} P=0$ for some $n$, satisfies the identity 
\begin{equation}P(x) + P ( \rho x ) + P(\rho^{-1} x ) = P(R_0x) + P(R_1x) + P (R _2 x) ,\end{equation}
and the local version 
\begin{equation} P(x_0) + P(x_1) + P(x_2) = P (y_0) + P ( y_1) + P (y_2 ) \end{equation}
where $$x_ 0 = F_w x, \ x_1 = F_w \rho x ,\ x_2 = F_w \rho ^{-1} x,\ y_0 = F_w R_0 x ,\ y_1 = F_w R_1x ,\ y_2 = F_w R_2 x .$$ \end{theorem}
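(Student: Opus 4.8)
The plan is to recast the global identity as a single operator equation and exploit the $D_3$-symmetry of $\Delta_r$ together with induction on the degree of $P$. Define the linear operator
\[ TP = P + P\circ\rho + P\circ\rho^{-1} - P\circ R_0 - P\circ R_1 - P\circ R_2, \]
so that the claimed identity is precisely the statement $TP \equiv 0$. The first thing I would record is that $T$ commutes with $\Delta_r$: because $\Delta_r$ is symmetric, $\Delta_r(P\circ\sigma) = (\Delta_r P)\circ\sigma$ for every $\sigma \in D_3$, and summing over the three rotations and three reflections with the appropriate signs gives $\Delta_r(TP) = T(\Delta_r P)$.

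The second ingredient is that $TP$ vanishes on $V_0$ for \emph{every} function $P$, with no hypothesis on $P$. Evaluating at $q_0$, the rotations send $q_0$ to the triple $\{q_0,q_1,q_2\}$, and each reflection also permutes the boundary, so that $\{R_0 q_0, R_1 q_0, R_2 q_0\} = \{q_0,q_1,q_2\}$ as well; hence the two sums of boundary values agree and $(TP)(q_0)=0$, and by symmetry the same holds at $q_1$ and $q_2$.

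With these two facts I would induct on the smallest $n$ with $\Delta_r^{n+1}P = 0$. For the base case $\Delta_r P = 0$, commutation gives $\Delta_r(TP) = T(\Delta_r P) = 0$, so $TP$ is harmonic; since it vanishes on $V_0$ and a harmonic function is determined by its boundary values, $TP \equiv 0$. For the inductive step, if $\Delta_r^{n+1}P = 0$ then $\Delta_r^{n}(\Delta_r P)=0$, so $\Delta_r P$ is a lower-degree polynomial and $T(\Delta_r P)=0$ by the inductive hypothesis; thus $\Delta_r(TP)=0$, so $TP$ is harmonic and vanishes on $V_0$, giving $TP\equiv 0$ once more. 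This proves the global identity.

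For the local version I would observe that $P\circ F_w$ is again a polynomial: iterating \eqref{Laplacian scaling} yields $\Delta_r^{n+1}(P\circ F_w) = L(r)^{(n+1)|w|}(\Delta_r^{n+1}P)\circ F_w = 0$. Applying the global identity to $Q = P\circ F_w$ evaluated at the point $x$ gives $Q(x)+Q(\rho x)+Q(\rho^{-1}x) = Q(R_0 x)+Q(R_1 x)+Q(R_2 x)$, which is exactly the asserted local identity after substituting the definitions of $x_0,x_1,x_2,y_0,y_1,y_2$. The only genuine obstacle here is making sure the commutation relation $\Delta_r T = T\Delta_r$ is justified cleanly from the symmetry of $\Delta_r$ and that the induction invokes the correct uniqueness statement for harmonic functions; everything else is formal bookkeeping, and the boundary-cancellation computation is the conceptual heart of the argument.
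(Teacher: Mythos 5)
Your proof is correct, but it takes a genuinely different route from the paper's. The paper reduces to the monomial basis by linearity and then argues case by case: for $P_{j,1}^{(r)}$ and $P_{j,2}^{(r)}$ it uses the $R_0$-symmetry from Lemma 2.2 together with the relations $R_0\circ\rho=R_1$ and $R_0\circ\rho^{-1}=R_2$ to match the six terms pairwise, and for the skew-symmetric monomials $P_{j,3}^{(r)}$ it observes that both sides are negatives of each other and then invokes Theorem 3.1 to conclude both vanish. You instead work with the operator $T$ directly on an arbitrary polynomial: commutation of $T$ with $\Delta_r$ (from the $D_3$-symmetry of the Laplacian), the unconditional vanishing of $TP$ on $V_0$, and induction on the degree, closing each step with the uniqueness of harmonic functions with prescribed boundary values. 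In effect you have lifted the proof technique the paper uses for Theorem 3.1 (harmonic plus zero boundary values implies identically zero, propagated by induction on the power of the Laplacian) up to Theorem 3.2 itself, which makes your argument self-contained: it needs neither Lemma 2.2 nor Theorem 3.1 as inputs, and it avoids the basis decomposition entirely. The paper's version is shorter given the structural results already in hand and makes visible exactly which symmetry of which monomial is responsible for each cancellation; yours is more uniform and would generalize to any linear combination of compositions with elements of $D_3$ whose induced permutation identity holds on $V_0$. Your handling of the local version (iterating the scaling identity \eqref{Laplacian scaling} to see that $P\circ F_w$ is again a polynomial and then applying the global identity to it) is exactly the paper's one-line justification, just spelled out. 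The one point you flag yourself, the uniqueness statement for harmonic functions, is the same fact the paper relies on silently in the proof of Theorem 3.1, so you are on equal footing there.
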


\begin{proof} It suffices to consider the case where $P$ is a monomial. If $P = P_{j,1}^{(r)} $ or $P= P_{j,2}^{(r)}$, then $P(x) = P(R_0 x )$, so $P( \rho x) = P (R_1 x ),$ and $P( \rho^{-1} x ) = P( R_2 x )$ since 
$R_0 \circ \rho = R_1$ and $ R_0 \circ \rho^{-1} = R_2$. If $P = P_{j,3}^{(r)} ,$ then $P(x) =- P(R_0 x )$, $P( \rho x) = -P (R_1 x ),$ and $P( \rho^{-1} x ) = -P( R_2 x )$, so \begin{equation} P(x) + P(\rho x) + P (\rho^{-1} x ) = - (P(R_0x) + P(R_1x) + P (R _2 x)), \end{equation} but by theorem 3.1 the left hand side is zero, so 
$$P(x) + P ( \rho x ) + P(\rho^{-1} x ) = P(R_0x) + P(R_1x) + P (R _2 x).$$
The local version follows from the fact that $P \circ F_w $ is a polynomial. \qed \end{proof} \par \vspace{4mm}

\begin{corollary} Any polynomial $P$ satisfies 
$$ \partial_{T,r} P (q_0) + \partial_{T,r} P(q_1) + \partial_{T,r} P(q_2) =0.$$
More generally, the sum of the tangential derivatives at the boundary points of any cell vanishes. \end{corollary}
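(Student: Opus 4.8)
The plan is to collapse the three-term sum into a single tangential derivative at $q_0$ of a symmetrized polynomial, and then to read Theorem 3.2 as a reflection symmetry of that polynomial. First I would note that rotations transport the tangential derivative from one vertex to another: with $\rho$ the rotation $q_i \mapsto q_{i+1}$, the conjugation $\rho F_{00}^m = F_{11}^m \rho$ (which comes from $\rho F_0 \rho^{-1} = F_1$) together with the definition of $\partial_{T,r}$ gives $\partial_{T,r}P(q_1) = \partial_{T,r}(P\circ\rho)(q_0)$ and $\partial_{T,r}P(q_2) = \partial_{T,r}(P\circ\rho^{-1})(q_0)$; this is precisely the orientation-preserving identity already used in the proof of Theorem 2.9. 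Hence, setting $Q = P + P\circ\rho + P\circ\rho^{-1}$, the quantity to be shown to vanish is exactly $\partial_{T,r}Q(q_0)$.

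Next I would prove that $Q$ is invariant under the reflection $R_0$ fixing $q_0$. By Theorem 3.2 one has $Q = P\circ R_0 + P\circ R_1 + P\circ R_2$. On the other hand, composing $Q$ in its original form with $R_0$ gives $Q\circ R_0 = P\circ R_0 + P\circ(\rho R_0) + P\circ(\rho^{-1}R_0)$, and since multiplying a reflection by the three rotations $e,\rho,\rho^{-1}$ produces the three distinct reflections, $\{R_0,\rho R_0,\rho^{-1}R_0\} = \{R_0,R_1,R_2\}$; therefore $Q\circ R_0 = P\circ R_0 + P\circ R_1 + P\circ R_2 = Q$. Because the tangential derivative is anti-symmetric under $R_0$, namely $\partial_{T,r}u(q_0) = -\partial_{T,r}(u\circ R_0)(q_0)$ (the relation established in the proof of Lemma 2.2), the $R_0$-invariance of $Q$ forces $\partial_{T,r}Q(q_0) = -\partial_{T,r}Q(q_0)$, whence $\partial_{T,r}Q(q_0) = 0$. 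This gives the global identity.

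For the local version I would pass to $u\circ F_w$. Iterating the scaling rule \eqref{Laplacian scaling} shows that $u\circ F_w$ is again a polynomial whenever $u$ is, so the global identity applies to it:
\[
\partial_{T,r}(u\circ F_w)(q_0) + \partial_{T,r}(u\circ F_w)(q_1) + \partial_{T,r}(u\circ F_w)(q_2) = 0.
\]
Multiplying through by $\lskew(r)^{-|w|}$ and invoking the definition \eqref{local tangential} of the local tangential derivative $\partial_{T,r}^w u(F_w q_i) = \lskew(r)^{-|w|}\partial_{T,r}(u\circ F_w)(q_i)$ yields the vanishing of the sum of local tangential derivatives over the boundary of the cell $F_w SG$.

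The main obstacle is the group-theoretic bookkeeping in the middle step. One must carefully verify both that $\rho$ transports $\partial_{T,r}$ with the correct orientation (so that no sign is lost when rewriting the sum as $\partial_{T,r}Q(q_0)$) and that left multiplication by $\rho^{\pm 1}$ permutes the three reflections, so that Theorem 3.2 can legitimately be reinterpreted as the statement $Q\circ R_0 = Q$ rather than as a mere numerical coincidence. Once this reorganization is in place, the anti-symmetry of the tangential derivative finishes the argument in one line.
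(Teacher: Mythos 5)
Your proof is correct and is essentially the paper's argument in different clothing: the identity $Q\circ R_0=Q$ evaluated at the points $F_{00}^m q_1$ is exactly the rearranged equation the paper extracts from Theorem 3.2, and your appeal to the antisymmetry $\partial_{T,r}u(q_0)=-\partial_{T,r}(u\circ R_0)(q_0)$ packages the paper's final step of multiplying by $\lskew(r)^{-m}$ and passing to the limit. Both arguments rest on the same key input (Theorem 3.2 applied along the sequences $F_{ii}^m q_j$), so this is the same approach, just organized more algebraically.
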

\begin{proof} Let $x = F_{00}^m(q_1)$ and apply the previous theorem to obtain 
$$ P (F_{00}^m q_1 ) + P (F_{11}^mq_2 ) + P(F_{22}^mq_0) - (P(F_{00}^m q_2 ) + P (F_{11}^m q_0 ) + P ( F_{22}^m q_1) ) = 0 .$$ 
Rearranging terms gives 
$$ P (F_{00}^m q_1 ) - P(F_{00}^m q_2 )+ P (F_{11}^mq_2 )-P (F_{11}^m q_0 ) + P(F_{22}^mq_0)  - P ( F_{22}^m q_1)  = 0,$$
and multiplying by $(\lambda_{\text{skew}}(r))^{-m}$ and taking the limit as $m \to \infty$ shows 
$$\partial_{T,r} P (q_0) + \partial_{T,r} P(q_1) + \partial_{T,r} P(q_2) =0.\ \qed$$ \end{proof} \par \vspace{4mm}

The skew-symmetry of $P_{j,3}^{(r)}$ implies that $\partial_{T,r} P_{j,3}^{(r)}(q_1) = \partial_{T,r} P_{j,3}^{(r)} (q_2) $, and combining this fact with corollary 3.3 implies that for $j\geq 1$, 
\begin{equation} 
\partial_{T,r} P_{j,3}^{(r)} (q_1) = t_{j,3}(r) = 0.
\end{equation} 
This fact can also be seen directly from \eqref{tangential derivative values}, by substituting $\gamma_0(r) = \frac12 $ and $t_{0,3} (r) = - \frac{1}{2} $ and inductively assuming that $t_{i,3}(r) =0$ for $1 \leq i <j$. 

These identities are the direct extensions of the analogous results for the standard polynomials since their proofs use nothing other than the symmetries of the Laplacian and the monomials. The next result is again the natural generalization of a result about the standard polynomials and is reminiscent of the property of the monomials on the real line that the derivative of $\frac{x^n}{n!}$ at $x=1$ is the value of the monomial of degree $n-1$ at $x=1$. 
\begin{theorem} For $j\geq 1$,
\begin{equation} 
n_{j,2} (r) = - \alpha_j(r) .
\end{equation}
\end{theorem}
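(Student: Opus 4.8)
The plan is to derive $n_{j,2}(r) = -\alpha_j(r)$ directly from the convolution identity already produced inside the proof of Theorem 2.9, rather than from the explicit recurrence \eqref{normal derivative values}. That proof establishes, for every $j \geq 0$, the relation
$$\beta_j(r) = 2\sum_{i=0}^j \big(\alpha_i(r) + n_{i,2}(r)\big)\,\beta_{j-i}(r).$$
My first step is to recognize the right-hand side as a Cauchy product. Introducing the formal power series $A(x) = \sum_{i\geq 0}\alpha_i(r)x^i$, $B(x) = \sum_{i\geq 0}\beta_i(r)x^i$, and $N(x) = \sum_{i\geq 0}n_{i,2}(r)x^i$ in $\R[[x]]$, this entire family of identities is equivalent to the single equation $B = 2(A+N)B$.

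The second step uses the initial datum $\beta_0(r) = -\tfrac12$. Since $B(0) = \beta_0(r) \neq 0$, the series $B$ is a unit in $\R[[x]]$, so I may cancel it to obtain $A + N = \tfrac12$ identically. Comparing coefficients then yields $\alpha_0(r) + n_{0,2}(r) = \tfrac12$ (which, as $\alpha_0(r)=1$, recovers $n_{0,2}(r) = -\tfrac12$) and, for every $j \geq 1$, $\alpha_j(r) + n_{j,2}(r) = 0$, which is precisely the claim. If one prefers to avoid power series, the identical conclusion follows by induction on $j$: writing $c_i := \alpha_i(r) + n_{i,2}(r)$, the $j=0$ case gives $c_0 = \tfrac12$; for $j \geq 1$ the $i=0$ term $2c_0\beta_j(r) = \beta_j(r)$ cancels the left-hand side, leaving $\sum_{i=1}^j c_i\,\beta_{j-i}(r) = 0$, and the nonvanishing of $\beta_0(r)$ forces $c_j = 0$ one index at a time.

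There is no serious obstacle in this argument; the only points that must be verified are that the convolution identity being invoked is exactly the one recorded in the proof of Theorem 2.9 and that $\beta_0(r) = -\tfrac12$ is invertible (equivalently, that it is the nonzero leading term driving the induction). It is worth stressing that the hypothesis $j \geq 1$ is genuinely necessary, since $n_{0,2}(r) = -\tfrac12 \neq -1 = -\alpha_0(r)$: the degree-zero term is exactly what makes $A+N$ equal the constant $\tfrac12$ rather than $0$, and excluding it is what converts the relation $A+N=\tfrac12$ into the stated vanishing for all positive degrees.
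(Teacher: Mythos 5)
Your argument is correct, and it rests on exactly the same identity the paper uses: the convolution relation $\beta_j(r) = 2\sum_{i=0}^j\big(\alpha_i(r)+n_{i,2}(r)\big)\beta_{j-i}(r)$ coming out of the proof of Theorem 2.9 (of which the recurrence \eqref{normal derivative values} is simply the version solved for $n_{j,2}$), together with the nonvanishing of $\beta_0(r)=-\tfrac12$. The difference is one of organization rather than substance. The paper substitutes the inductive hypothesis $n_{i,2}(r)=-\alpha_i(r)$ into \eqref{normal derivative values} and watches the telescoping cancellation happen term by term; you instead keep the identity in its raw form $B=2(A+N)B$ and invoke invertibility of $B$ in $\R[[x]]$ to conclude $A+N=\tfrac12$ in one stroke. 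Your packaging is cleaner and makes the mechanism transparent: the result is forced by the fact that $2(A+N)$ acts as the identity on the unit $B$, and the $j\geq 1$ restriction is exactly the statement that a constant power series has no positive-degree coefficients. Your elementary fallback induction on $c_j=\alpha_j(r)+n_{j,2}(r)$ is, after unwinding, the paper's induction in different notation, so nothing is lost if one prefers to avoid formal power series. The only point worth flagging is that you should make sure the convolution identity is quoted for all $j\geq 0$ (it is: the $j=0$ case reads $-\tfrac12 = 2(-\tfrac12+\tfrac14)$ and is consistent), since your cancellation of $B$ uses the full family of identities at once rather than one degree at a time.
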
 

\begin{proof} For $j=1$, \eqref{normal derivative values} says 
$$ n_{1,2}(r) = 2 \alpha_0(r) \beta_1(r) +2 \alpha_1(r) \beta_0(r)  + 2 n_{0,2} (r) \beta_1(r) - \beta_{1}(r). $$
Substituting $ \alpha_0(r) = 1, \beta_0(r) = -\frac{1}{2} , n_{0,2} (r) =- \frac12$ gives 
$$ n_{1,2} (r) = - \alpha_1(r) .$$
Assume inductively that $n_{i,2} (r) = - \alpha_i(r) $ for $1 \leq i < j$. Then \eqref{normal derivative values} becomes 
$$ n_{j,2}(r) = 2 \sum_{ i=0} ^j \alpha_i(r) \beta_{j-i} (r) - 2 \sum_{ i=1}^{j-1} \alpha_i(r) \beta_{j-i} (r)+ 2 n _{0,2} (r) \beta_j(r) - \beta_{j}(r) $$
$$ = 2 \alpha_0(r) \beta_j(r) + 2 \alpha_j(r) \beta_0(r) + 2 n_{0,2} (r) \beta_j(r) -2 \beta_j(r) .
$$
Substituting the values of  $ \alpha_0(r),\  \beta_0(r) ,$ and $ n_{0,2} (r)$ simplifies this to $- \alpha_j(r).$ 
\qed
\end{proof}

\begin{figure} 
\begin{center} 
\includegraphics[width=50mm]{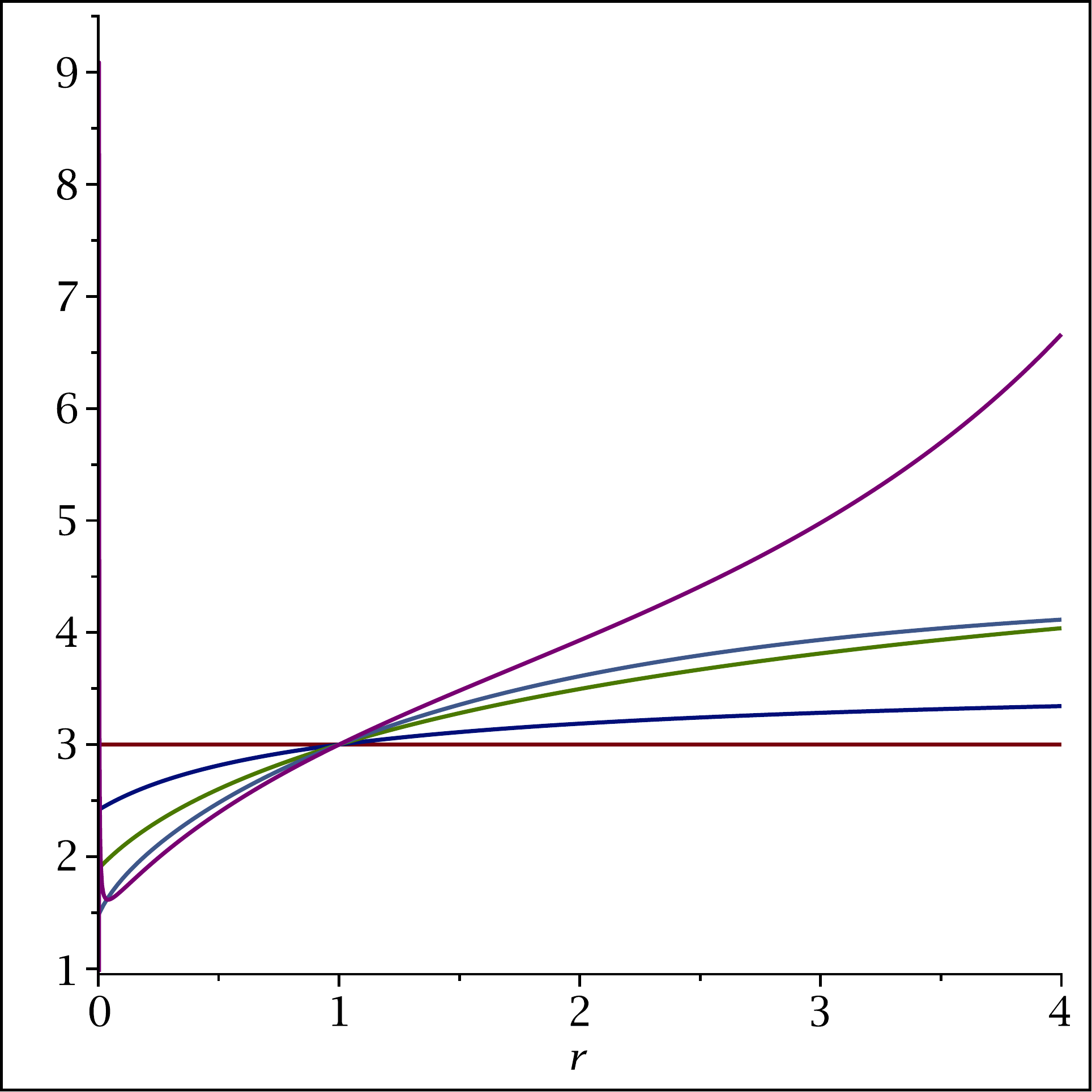}
\caption{Graphs of $\frac{ \gamma_j(r)}{\alpha_{j+1}(r) } $ for $j =0,1,2,3,4$. }
\end{center}
\end{figure}

So far, the theme of this section seems to be that the polynomials with respect to $\Delta_r$ behave quite like the standard polynomials in many ways. However, the results presented so far seem to be about the extent of this similarity. In fact, the standard polynomials seem to be quite exceptional in this more general context, and sections 4 and 5 explore some observed differences between the general behavior of polynomials with respect to $\Delta_r$ and the standard polynomials. This section concludes with a generalization of a result from \cite{nsty} which surprisingly does not extend as one might expect to this setting. \par 

In theorem 2.3 of \cite{nsty}, it is noted that $\gamma_j(1) = 3 \alpha_{j+1} (1)$ and that this is a simple consequence of the fact that $\gamma_j(1)$ and $\alpha_{j+1}(1)$ satisfy the same recurrence relation with initial data $\alpha_1(1) = \frac16 = \frac{1}{3}\gamma_0(1) .$ Here, the recurrence relations obtained for $\alpha_{j+1}(r)$ and $\gamma_{j} (r)$ are quite different, and the ratio $\frac{ \gamma_{j}(r) }{\alpha_{j+1} (r) } $ varies with $j$, as seen in figure 3.1. \par

The generalization of the equation $\gamma_j(1) = 3 \alpha_{j+1} (1)$ which holds for all values of $r$ is \eqref{alpha gamma relation}. Note that $t_{0,1} (r) =0$ since $P_{0,1}^{(r)} $ is constant, so this says 
\begin{equation} \label{alpha gamma relation generalization}
 \alpha_{j+1} = 2 \sum_{i=0}^{j+1} t_{j+1-i,1}(r) \gamma_i(r) = 2 \sum_{ i=0} ^j t_{j+1-i,1}(r) \gamma_i(r).
\end{equation}
When $r=1$, it happens that $t_{j,1}(r)$ vanishes for all $j \neq 1$, and $t_{1,1} (1) =\frac16$, so the only term which remains in the sum on the right side of \eqref{alpha gamma relation generalization} is $2 t_{1,1}(1) \gamma_j(1) = \frac13 \gamma_j(1)$, but $t_{j,1}(r)$ does not vanish for all values of $r$, so the relationship between $\alpha_{j+1}(r)$ and $\gamma_j(r)$ is much less simple. Indeed, when $r=1$ this simple relationship implies that the long term behavior of the sequences $\{ \alpha_j(1) \}$ and $\{ \gamma_j(1)\}$ is exactly the same, but the next two sections will demonstrate that for other values of $r$ the long term behavior of $\{ \alpha_j(r) \} $ seems to be different from that of $\{ \gamma_j(r)\}$.

\section{Numerical Data}

\begin{figure}[p]
    \centering
    \includegraphics[width=30mm]{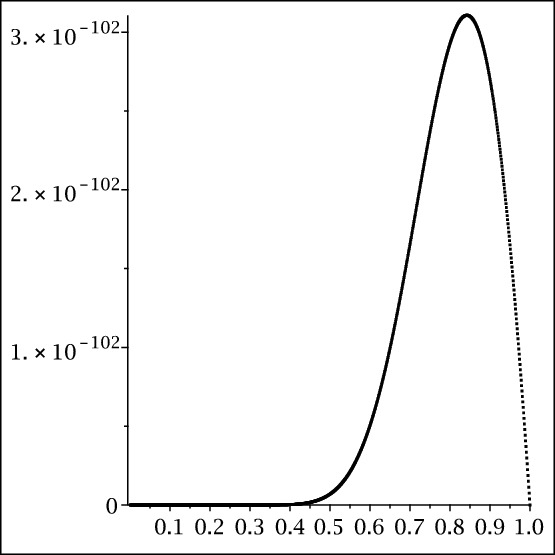}    \includegraphics[width=30mm]{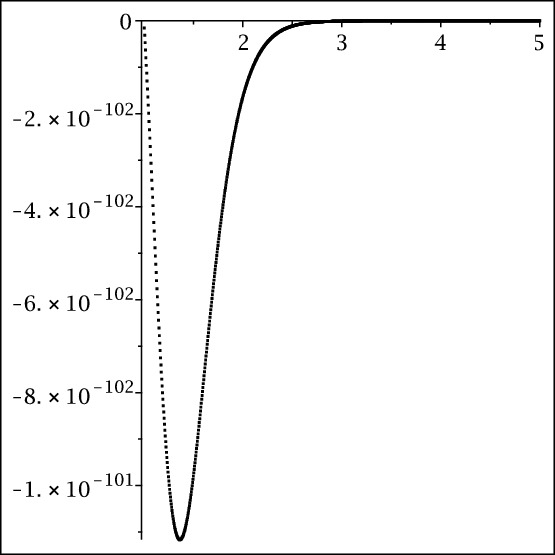}    \includegraphics[width=30mm]{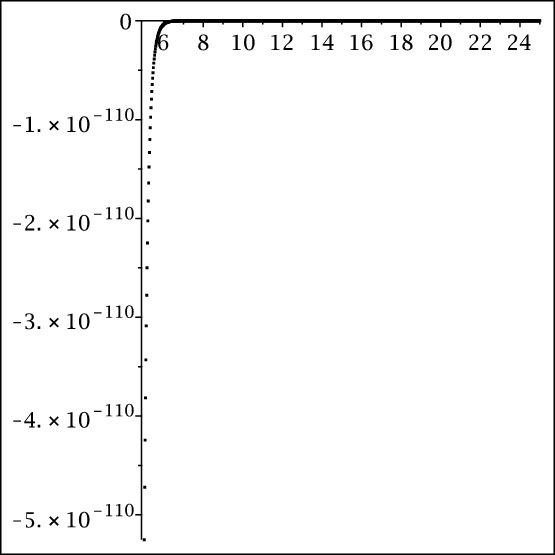}\\
    \includegraphics[width=30mm]{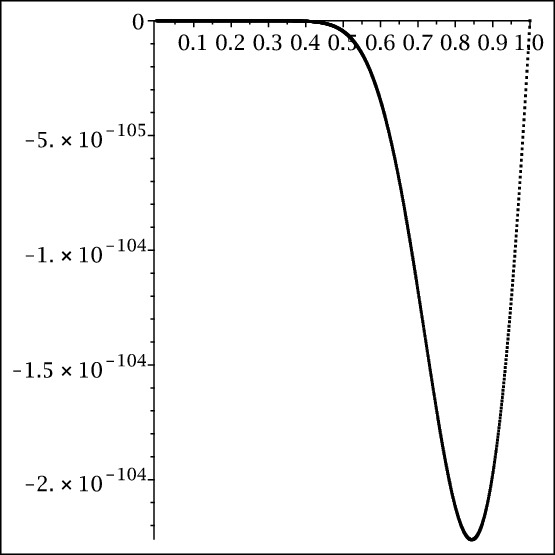}    \includegraphics[width=30mm]{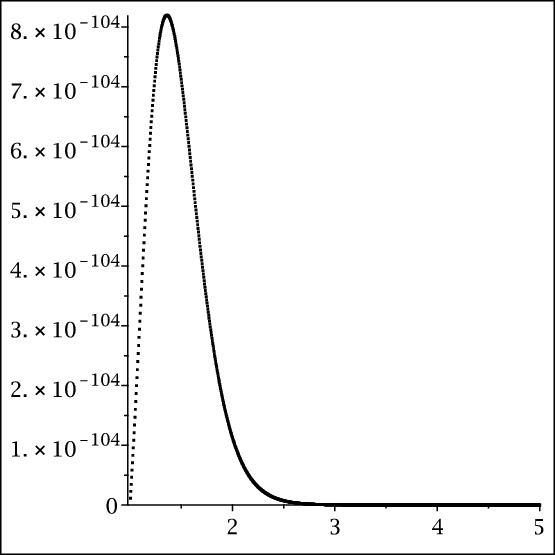}    \includegraphics[width=30mm]{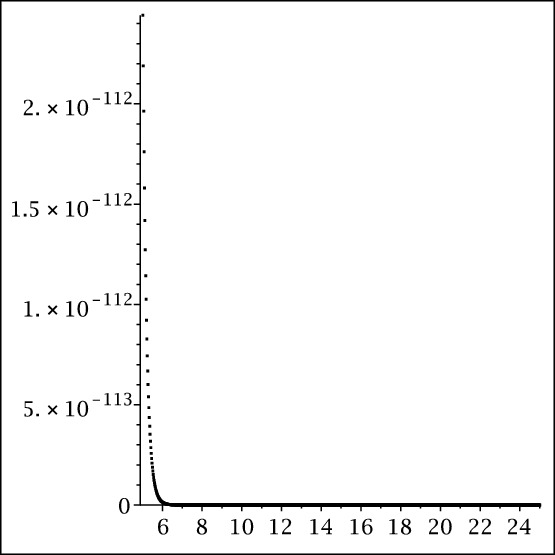}\\    \includegraphics[width=30mm]{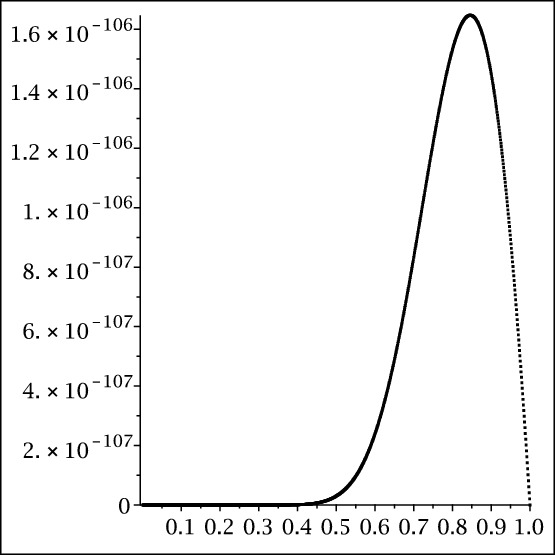}    \includegraphics[width=30mm]{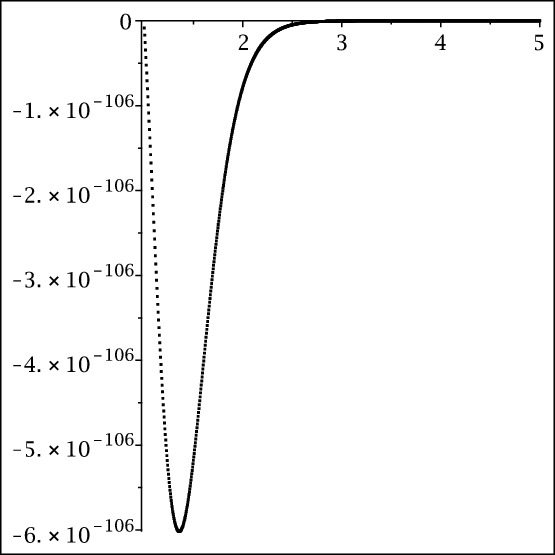}   \includegraphics[width=30mm]{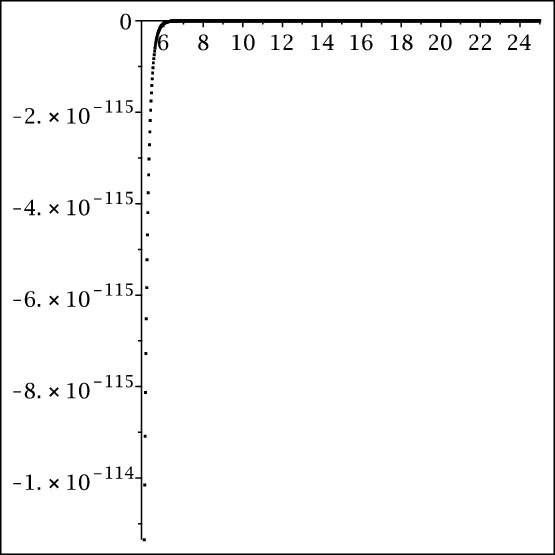}
    \caption{Numerical graphs of $\alpha_{47}(r)$, $\alpha_{48}(r)$, and $\alpha_{49}(r)$ as functions of $r$. For $\alpha_0(r),\dots,\alpha_{46}(r)$ see \protect\url{http://pi.math.cornell.edu/~reuspurweb/numericalAlphas.html} }
    \label{alphagraphs}
\end{figure}
\begin{figure}[p]
    \centering
    \includegraphics[width=30mm]{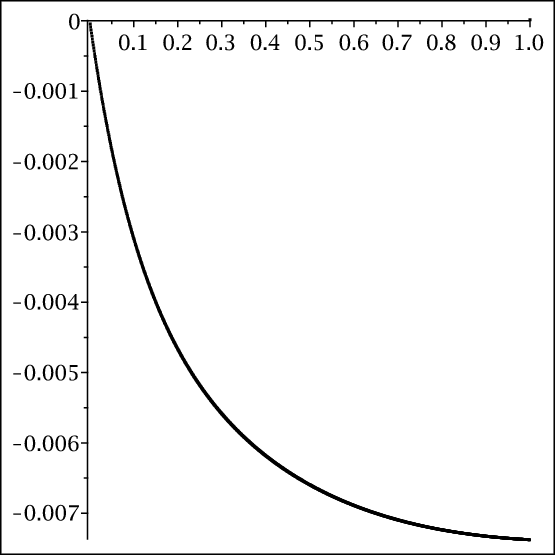}    \includegraphics[width=30mm]{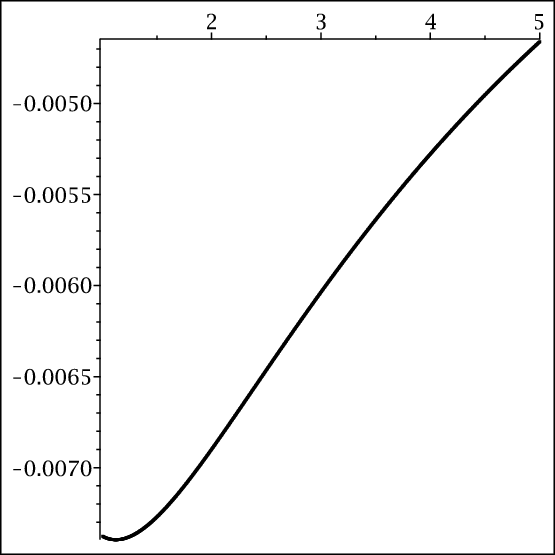}    \includegraphics[width=30mm]{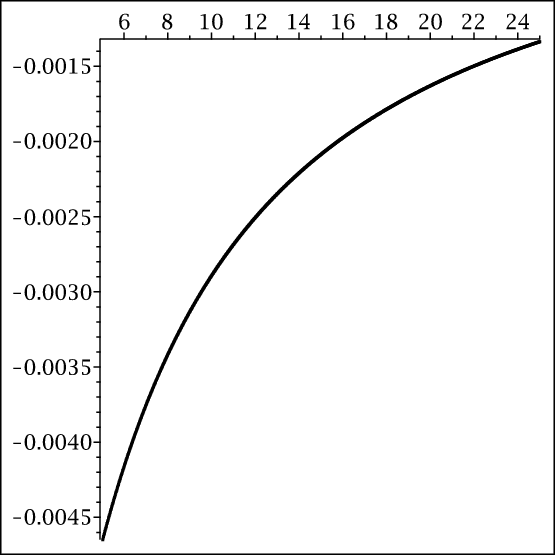}\\
    \includegraphics[width=30mm]{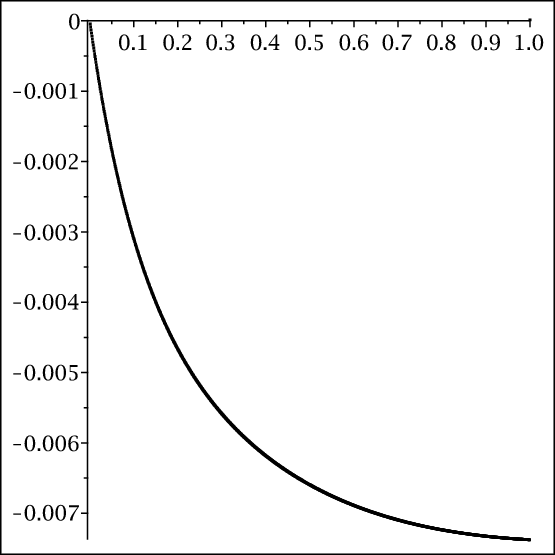}    \includegraphics[width=30mm]{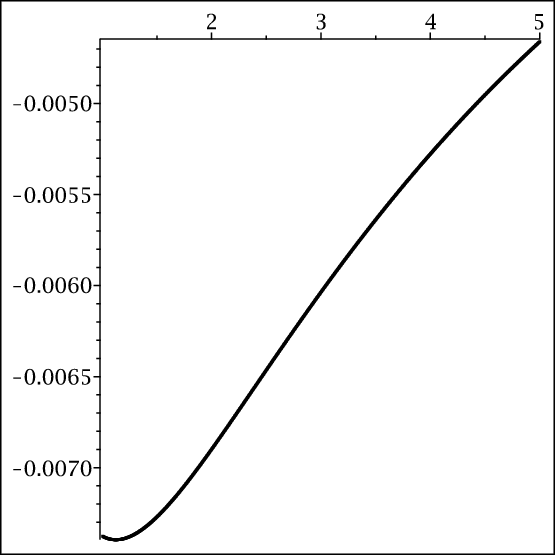}    \includegraphics[width=30mm]{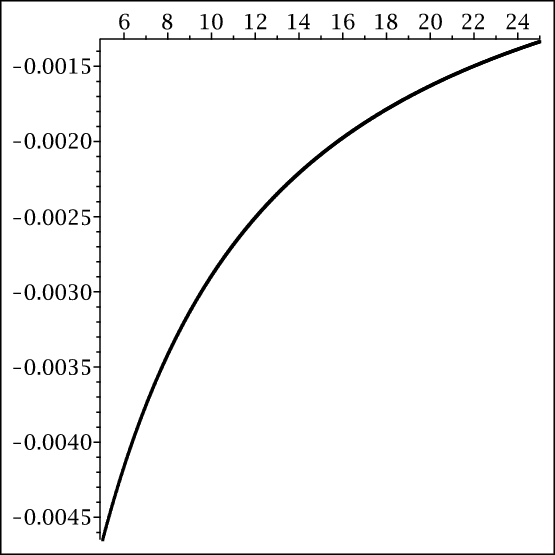}\\    \includegraphics[width=30mm]{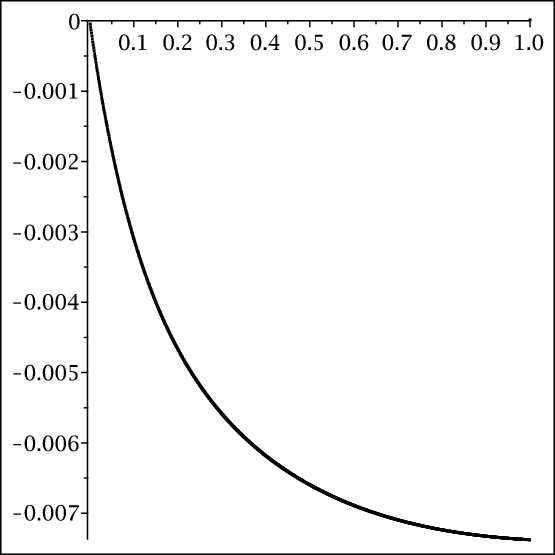}    \includegraphics[width=30mm]{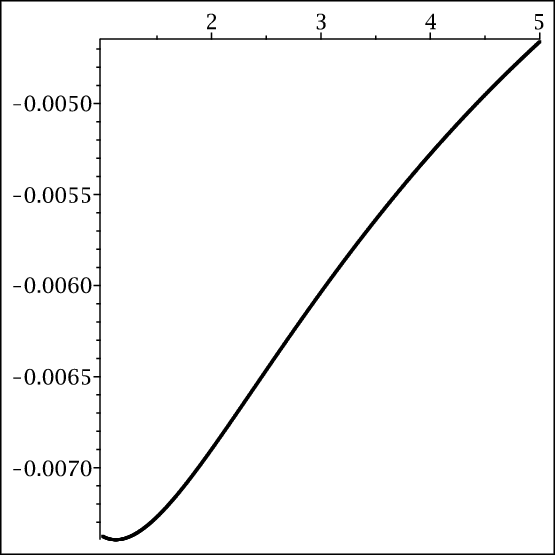}   \includegraphics[width=30mm]{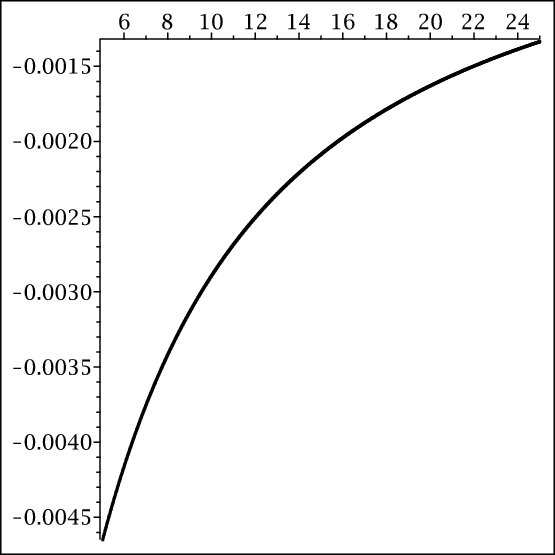}
    \caption{Numerical graphs of $\frac{\alpha_{48}(r)}{\alpha_{47}(r)}$, $\frac{\alpha_{49}(r)}{\alpha_{48}(r)}$, and $\frac{\alpha_{50}(r)}{\alpha_{49}(r)}$. For $\frac{ \alpha_1(r)}{\alpha_0(r)} , \dots , \frac{\alpha_{47}(r) } { \alpha_{46}(r)}$, see \protect\url{http://pi.math.cornell.edu/~reuspurweb/numericalAlphaRatios.html} }
    \label{alpharatios}
\end{figure}
\begin{figure}[p]
    \centering
    \includegraphics[width=30mm]{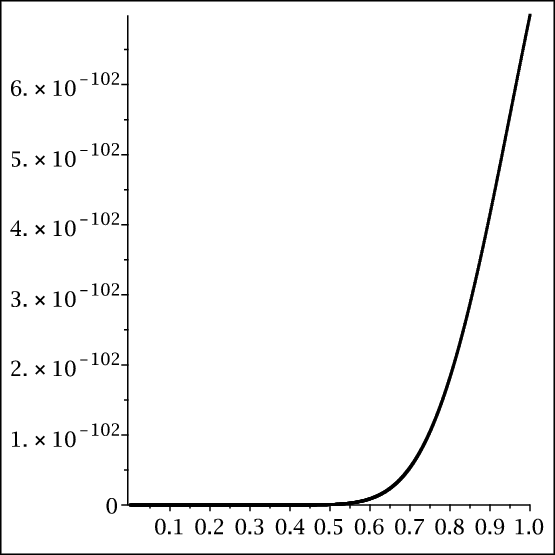}    \includegraphics[width=30mm]{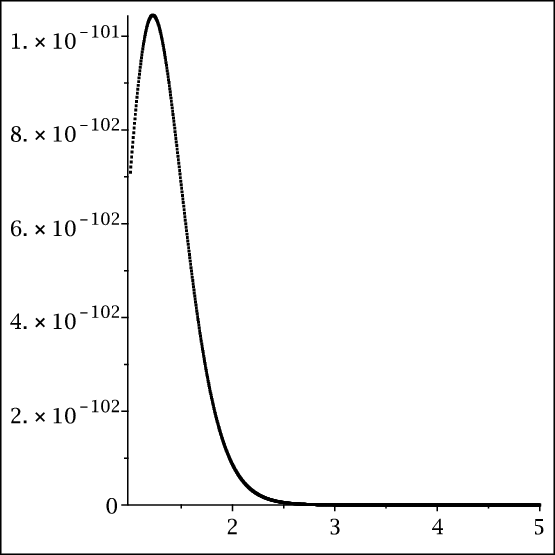}    \includegraphics[width=30mm]{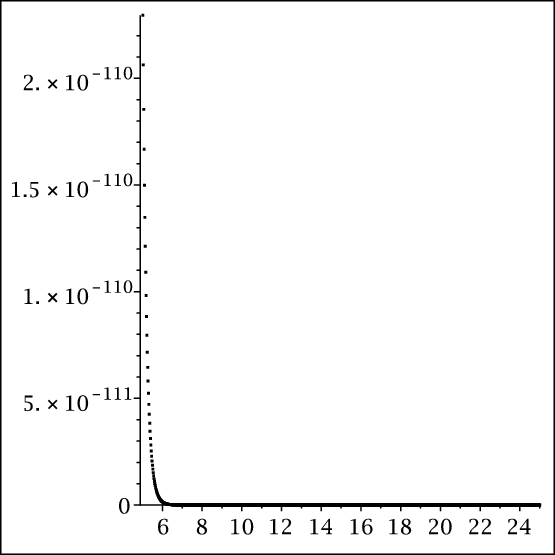}\\
    \includegraphics[width=30mm]{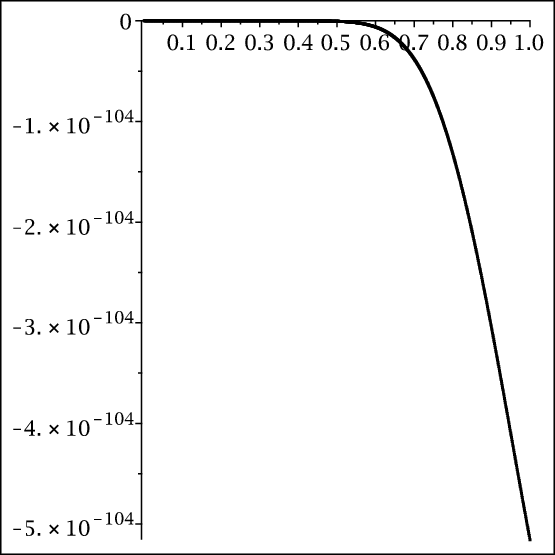}    \includegraphics[width=30mm]{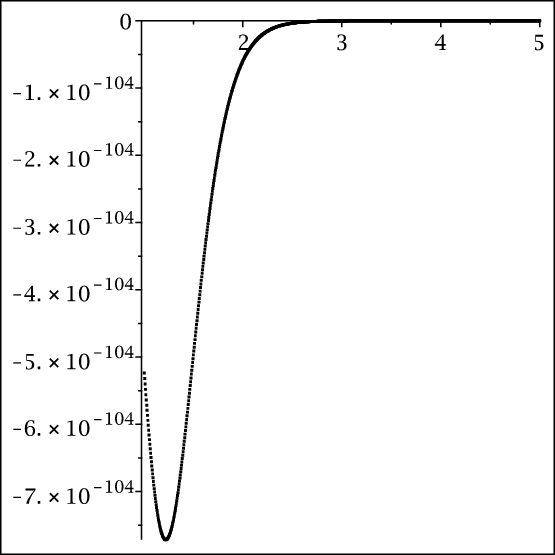}    \includegraphics[width=30mm]{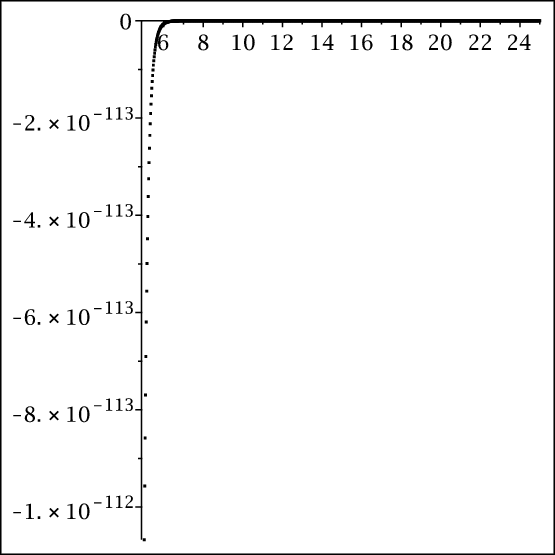}\\    \includegraphics[width=30mm]{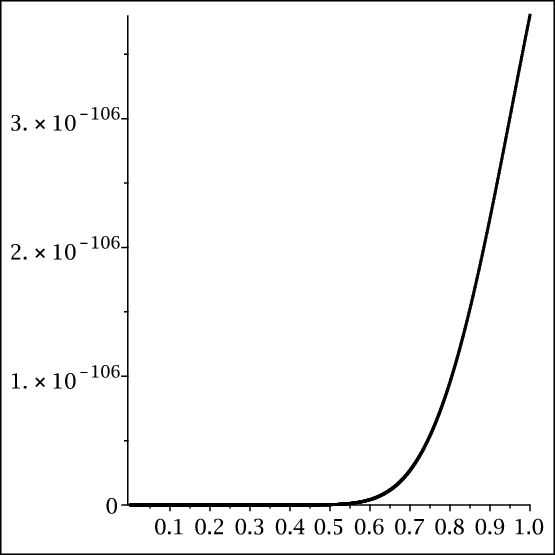}    \includegraphics[width=30mm]{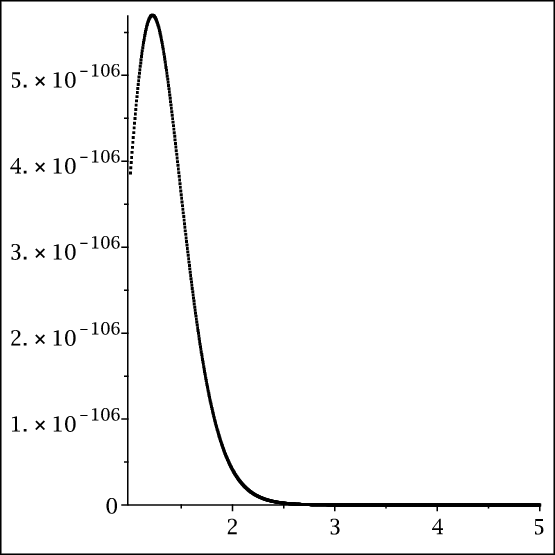}   \includegraphics[width=30mm]{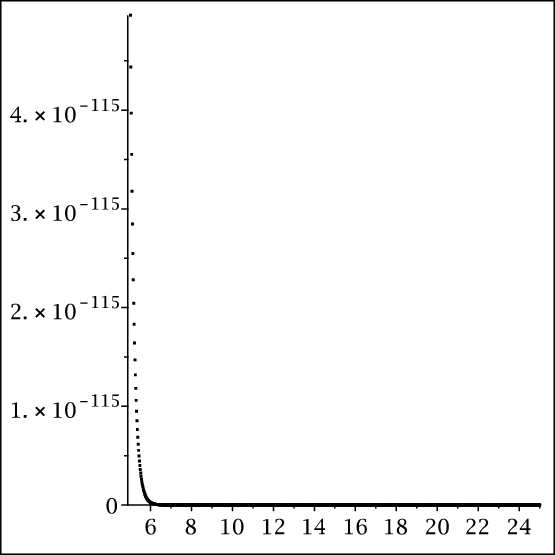}
    \caption{Numerical graphs of $\beta_{47}(r)$, $\beta_{48}(r)$, and $\beta_{49}(r)$. For $\beta_0(r),\dots,\beta_{46}(r)$ see \protect\url{http://pi.math.cornell.edu/~reuspurweb/numericalBetas.html} }
    \label{betagraphs}
\end{figure}

\begin{figure}[p]
    \centering
    \includegraphics[width=30mm]{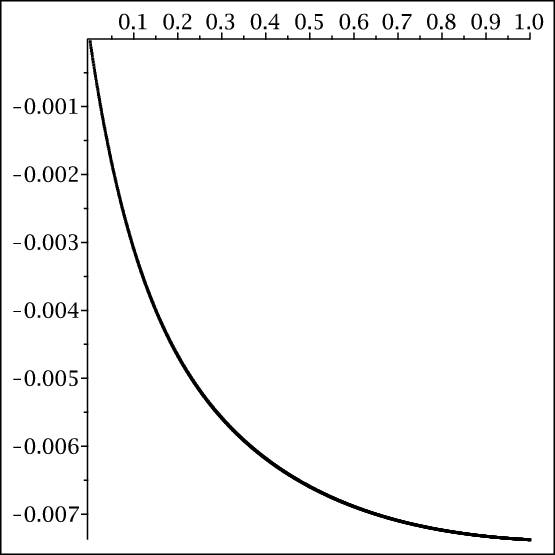}    \includegraphics[width=30mm]{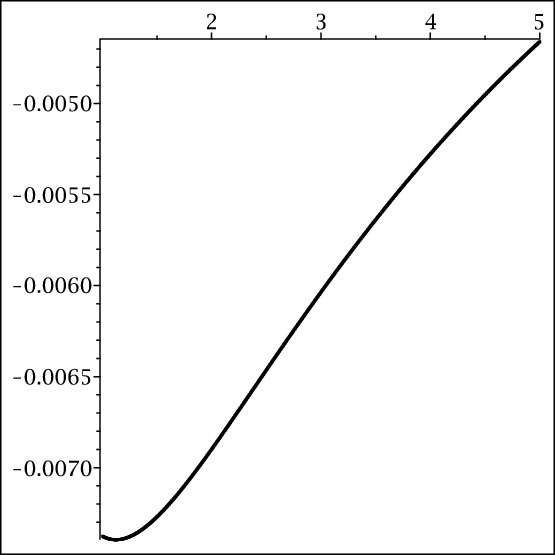}    \includegraphics[width=30mm]{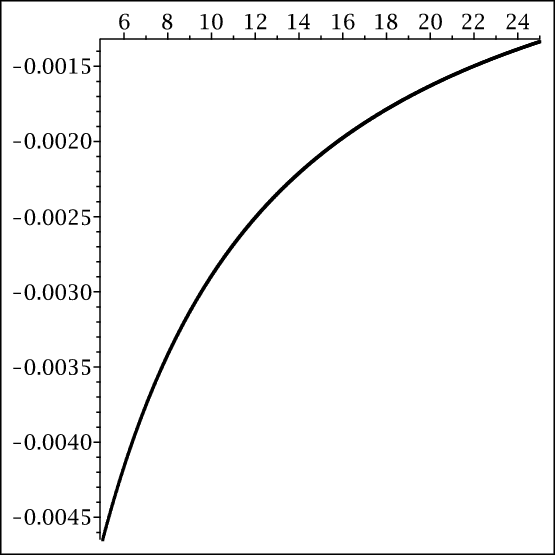}\\
    \includegraphics[width=30mm]{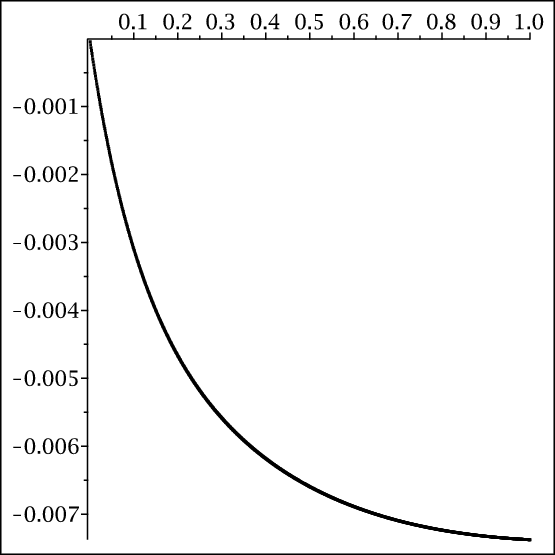}    \includegraphics[width=30mm]{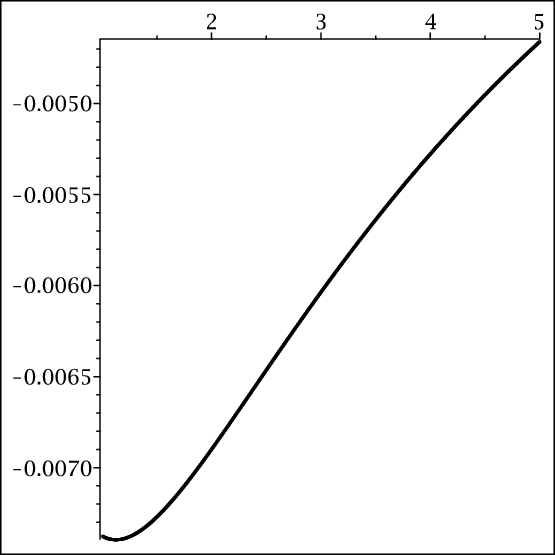}    \includegraphics[width=30mm]{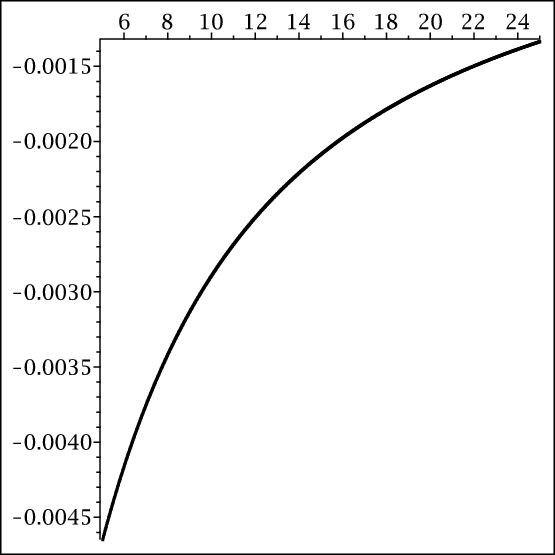}\\    \includegraphics[width=30mm]{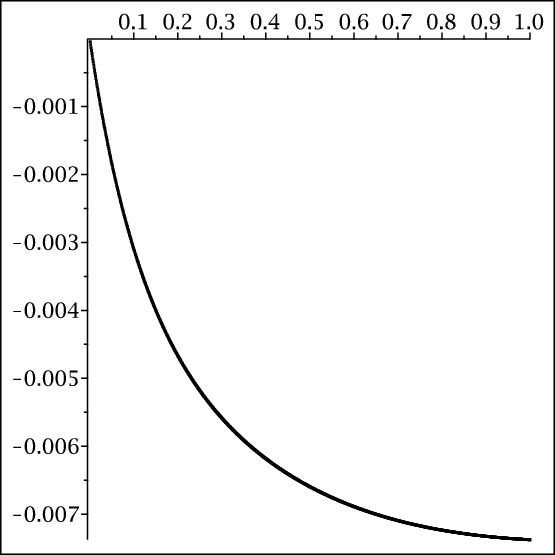}    \includegraphics[width=30mm]{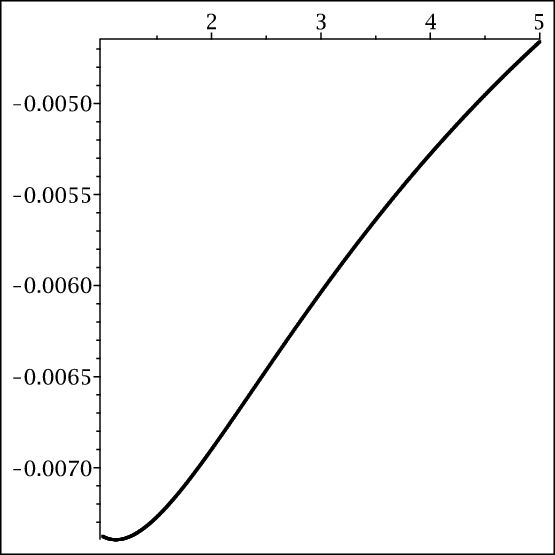}   \includegraphics[width=30mm]{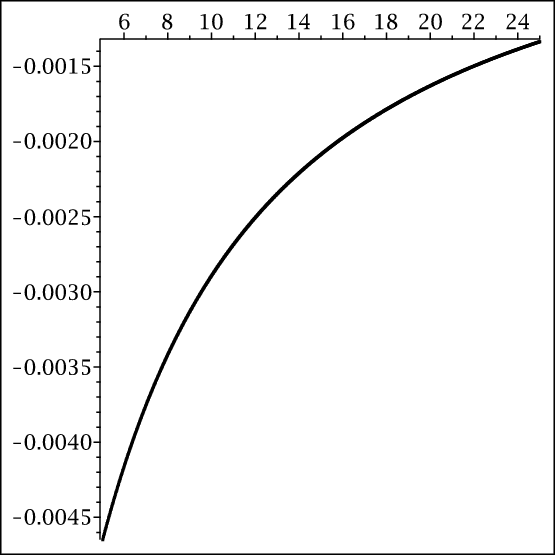}
    \caption{Numerical graphs of $\frac{\beta_{48}(r)}{\beta_{47}(r)}$, $\frac{\beta_{49}(r)}{\beta_{48}(r)}$, and $\frac{\beta_{50}(r)}{\beta_{49}(r)}$. For $\frac{\beta_1(r)}{\beta_{0}(r)},\dots,\frac{\beta_{47}(r)}{\beta_{46}(r)}$ see \protect\url{http://pi.math.cornell.edu/~reuspurweb/numericalBetaRatios.html}}
    \label{betaratios}
\end{figure}
\begin{figure}[p]
    \centering
    \includegraphics[width=30mm]{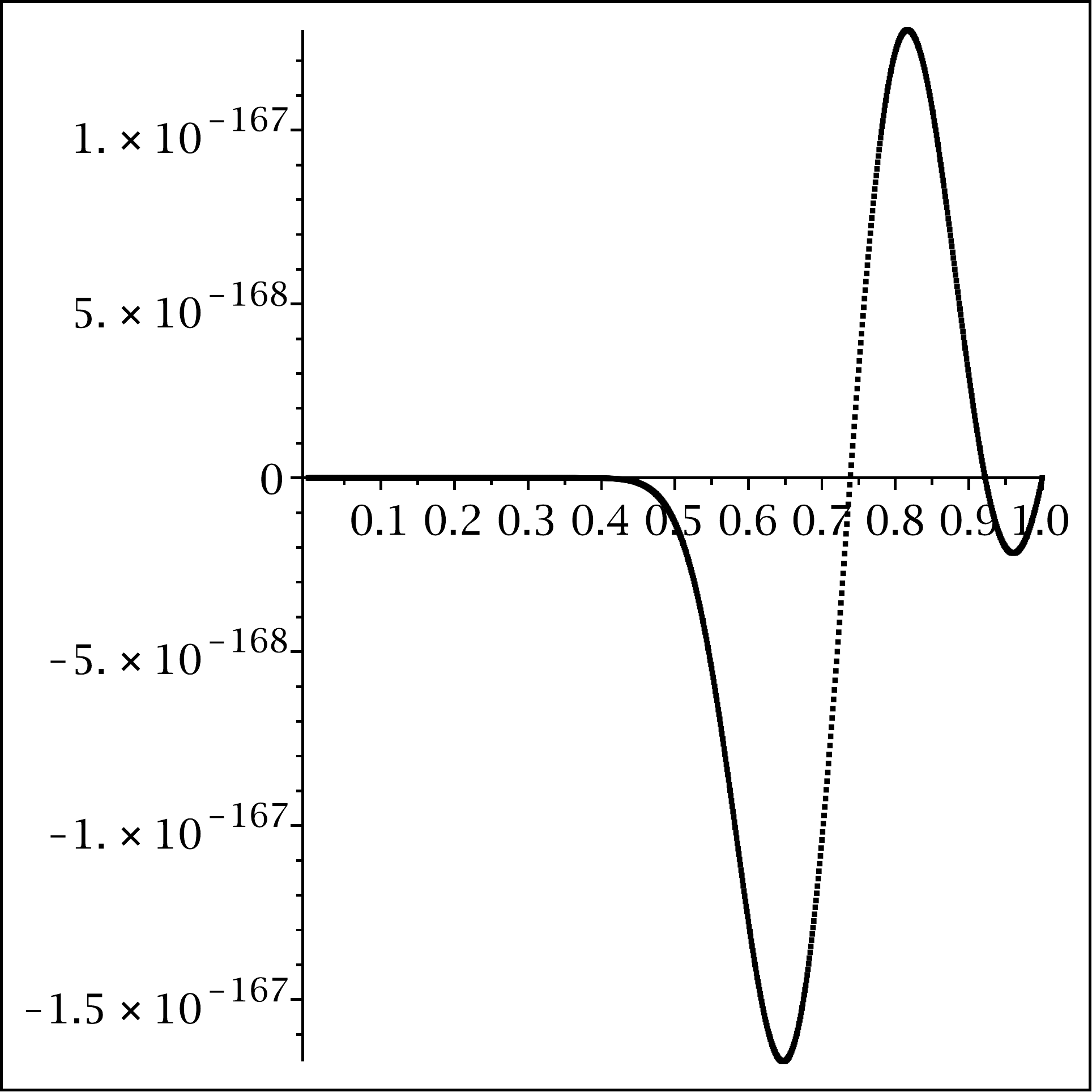}    \includegraphics[width=30mm]{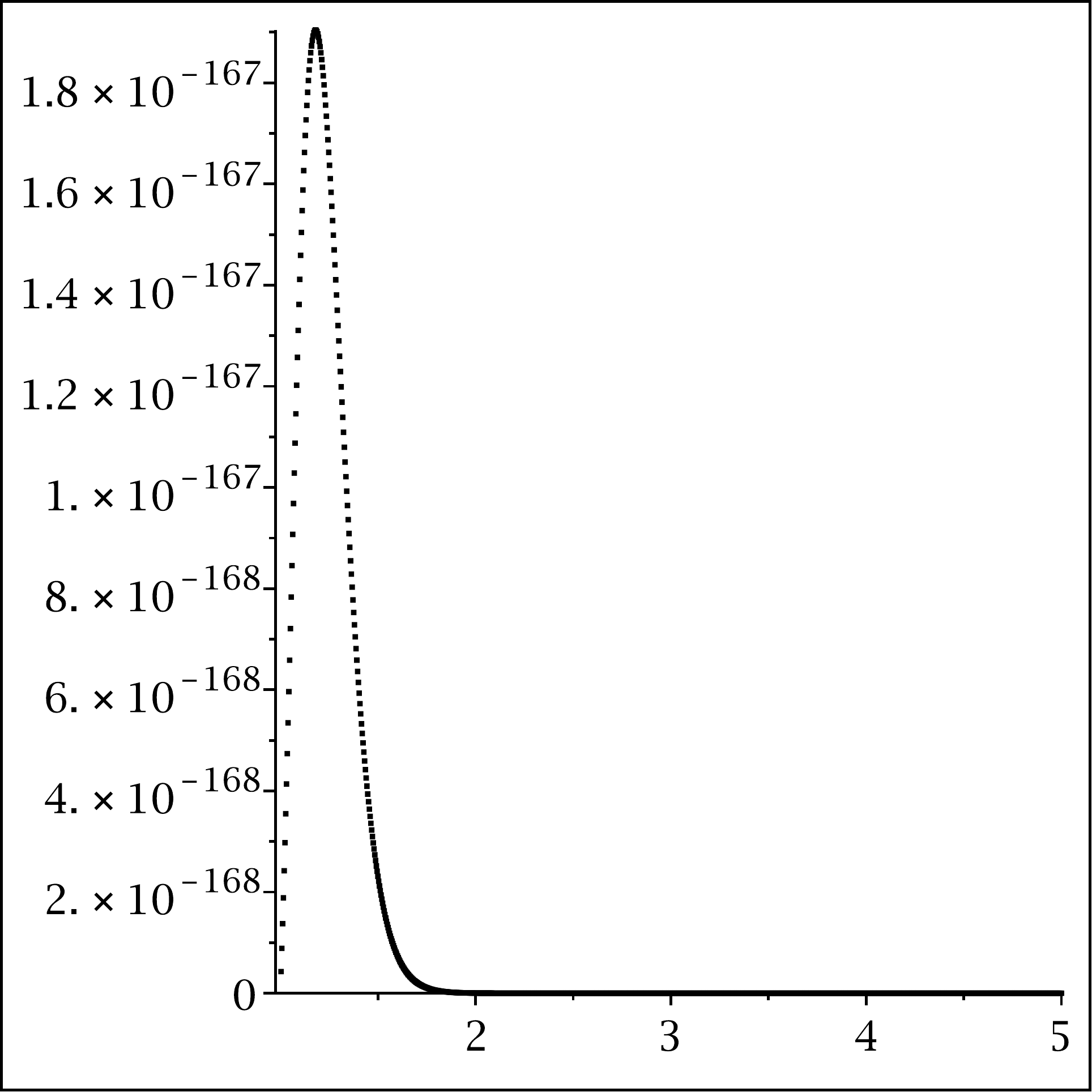}    \includegraphics[width=30mm]{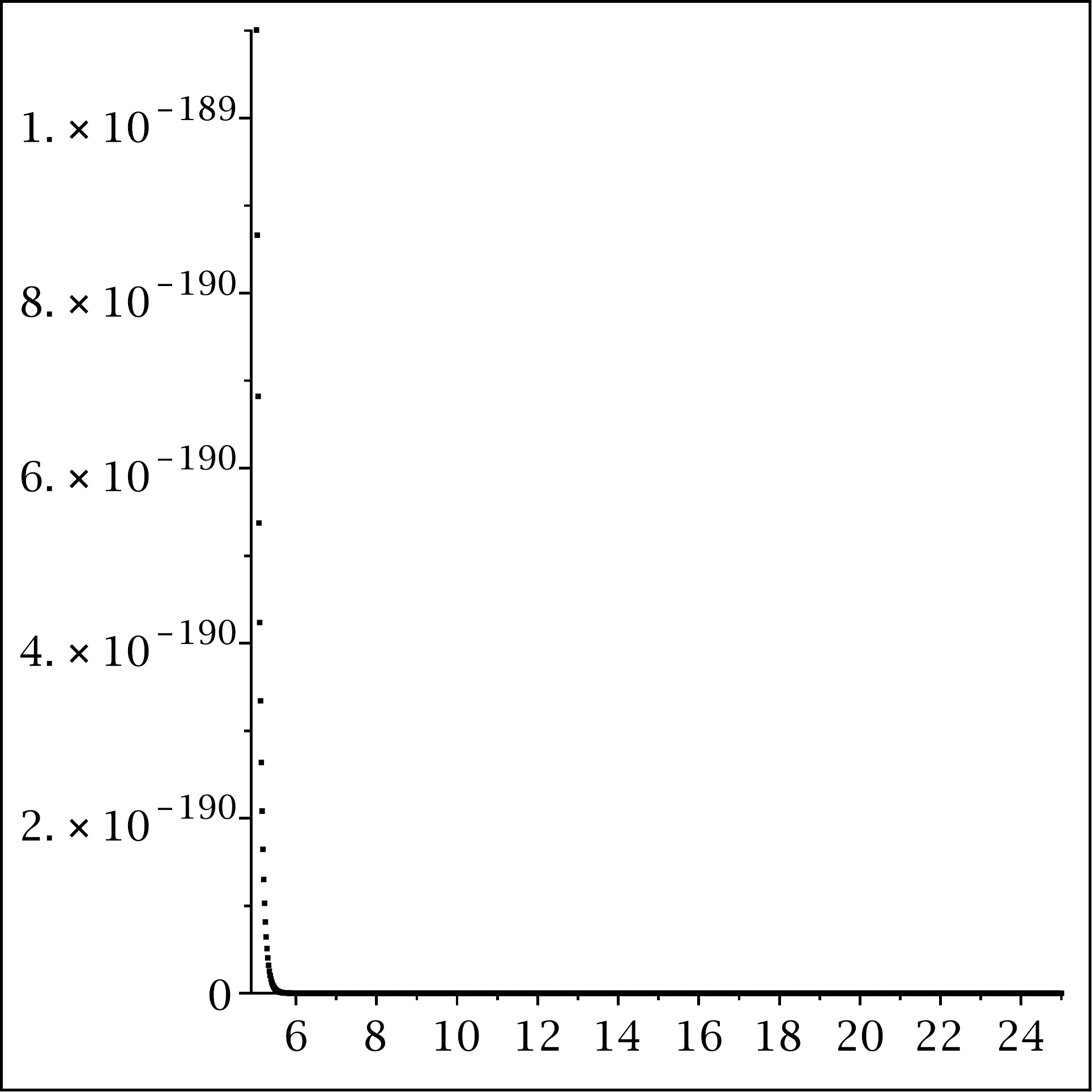}\\
    \includegraphics[width=30mm]{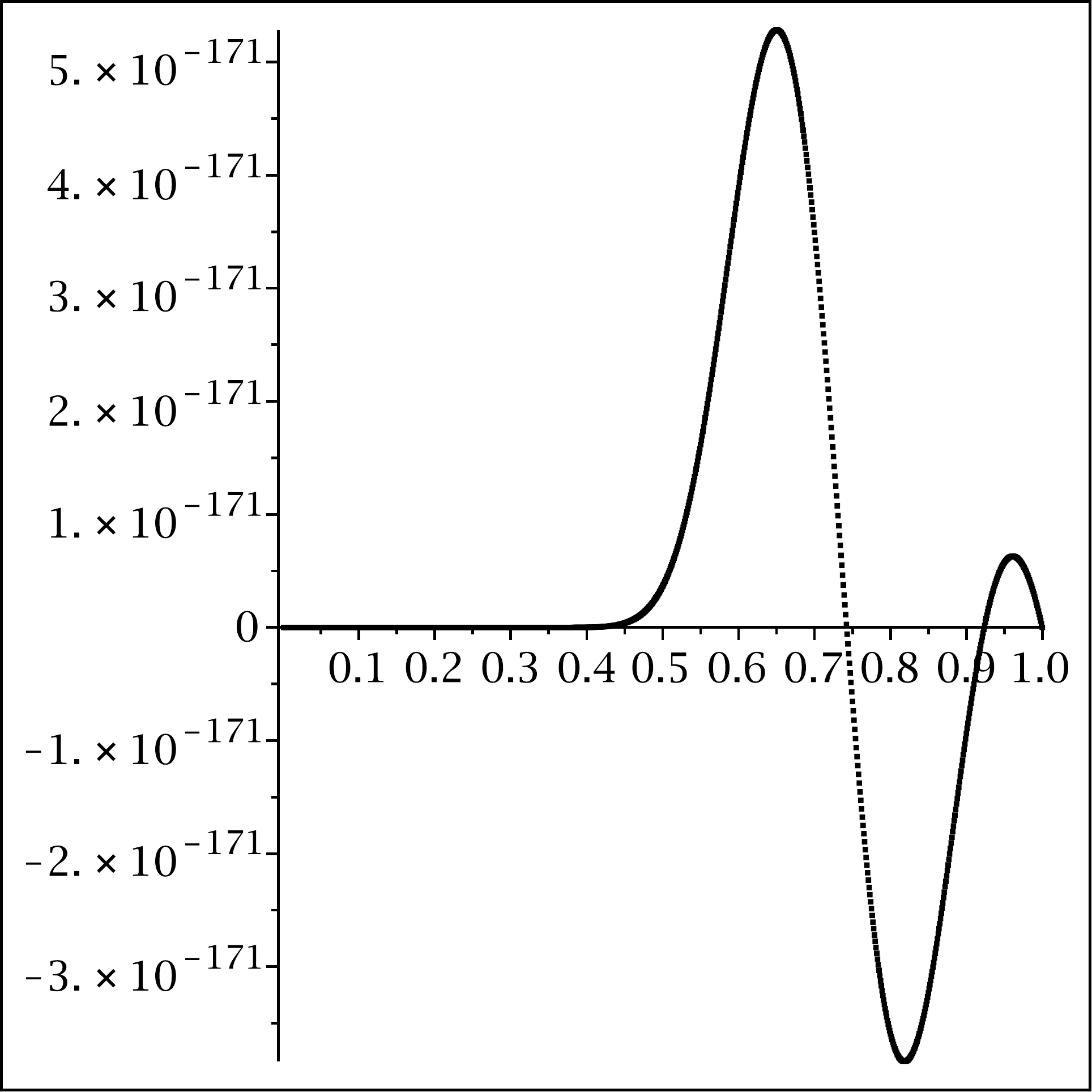}    \includegraphics[width=30mm]{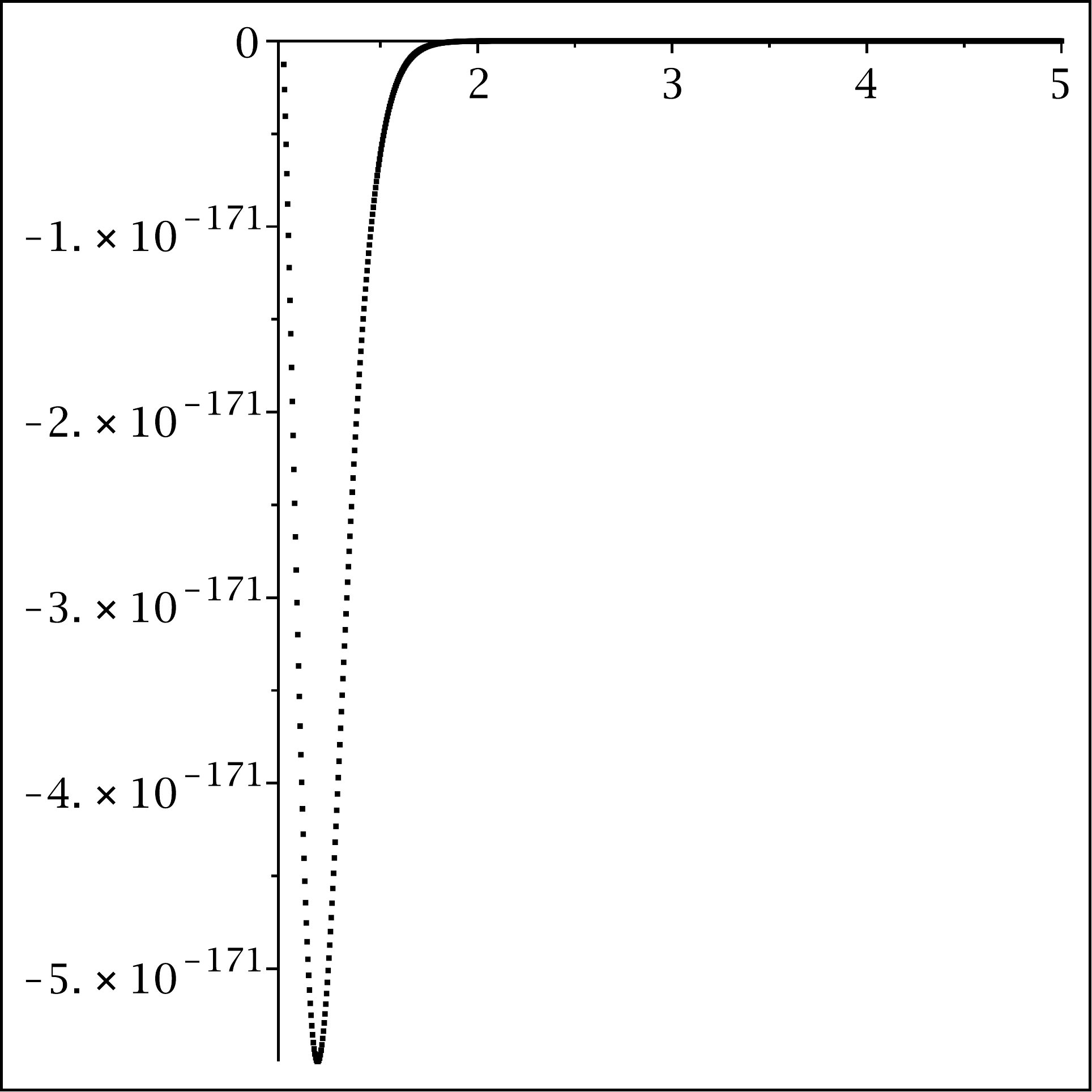}    \includegraphics[width=30mm]{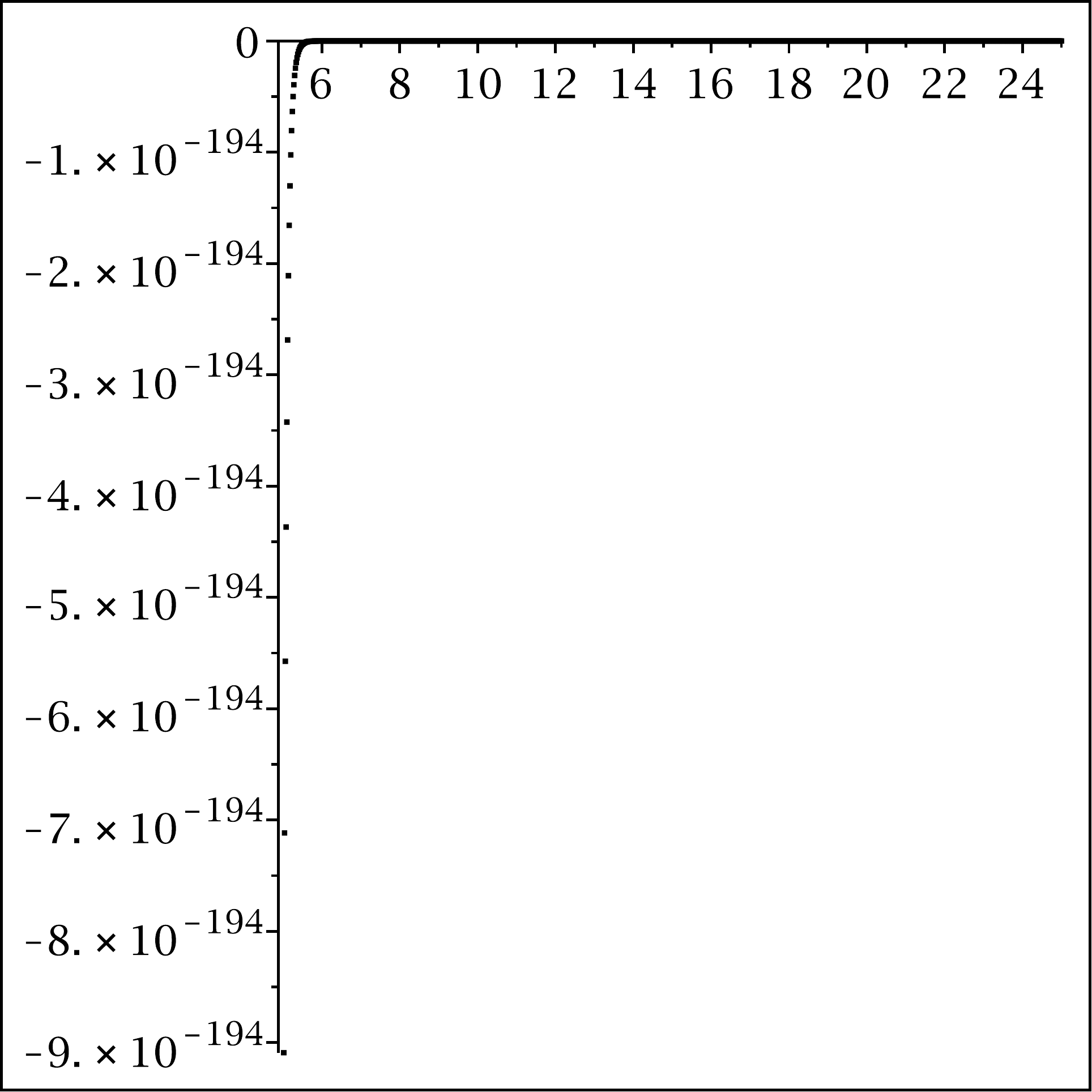}\\    \includegraphics[width=30mm]{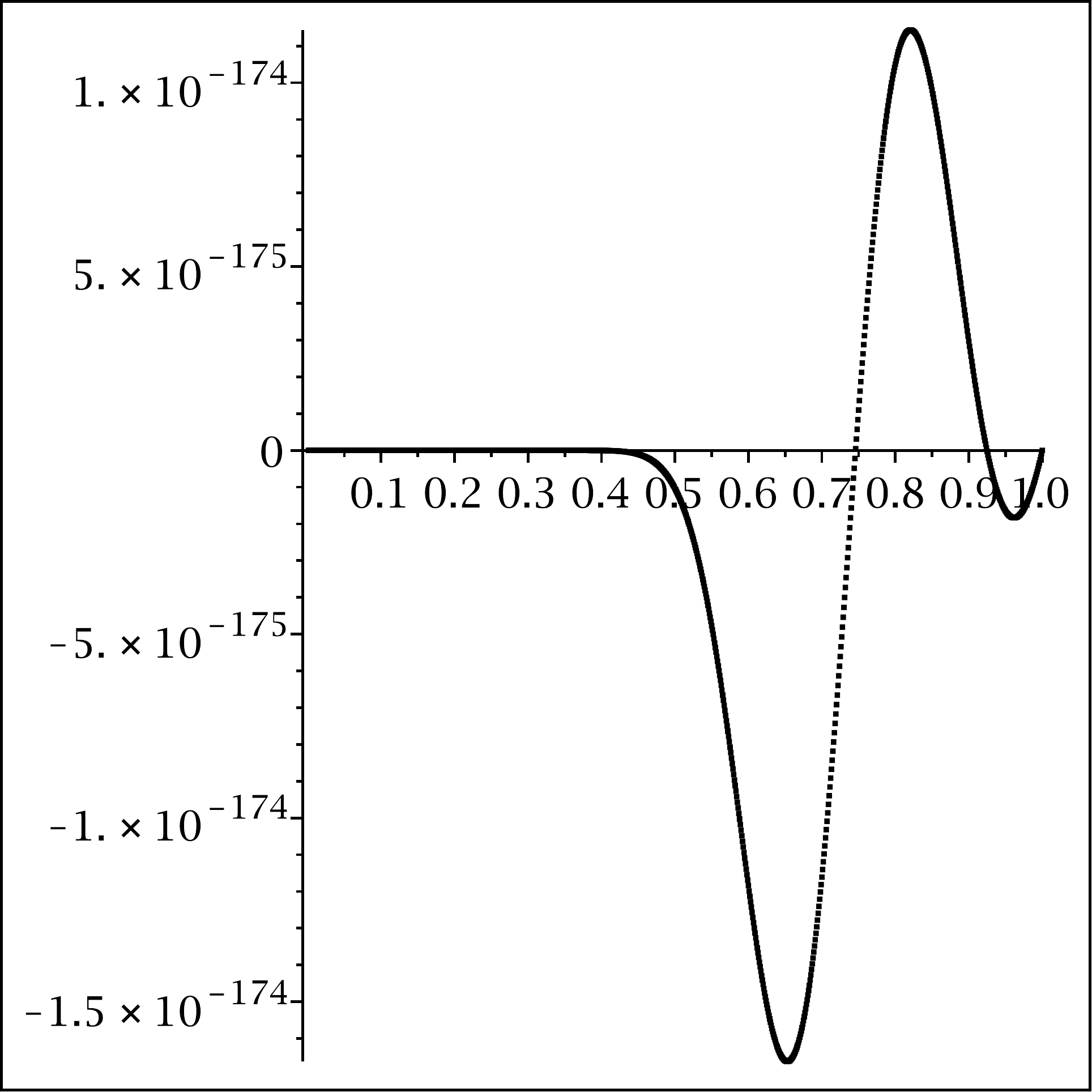}    \includegraphics[width=30mm]{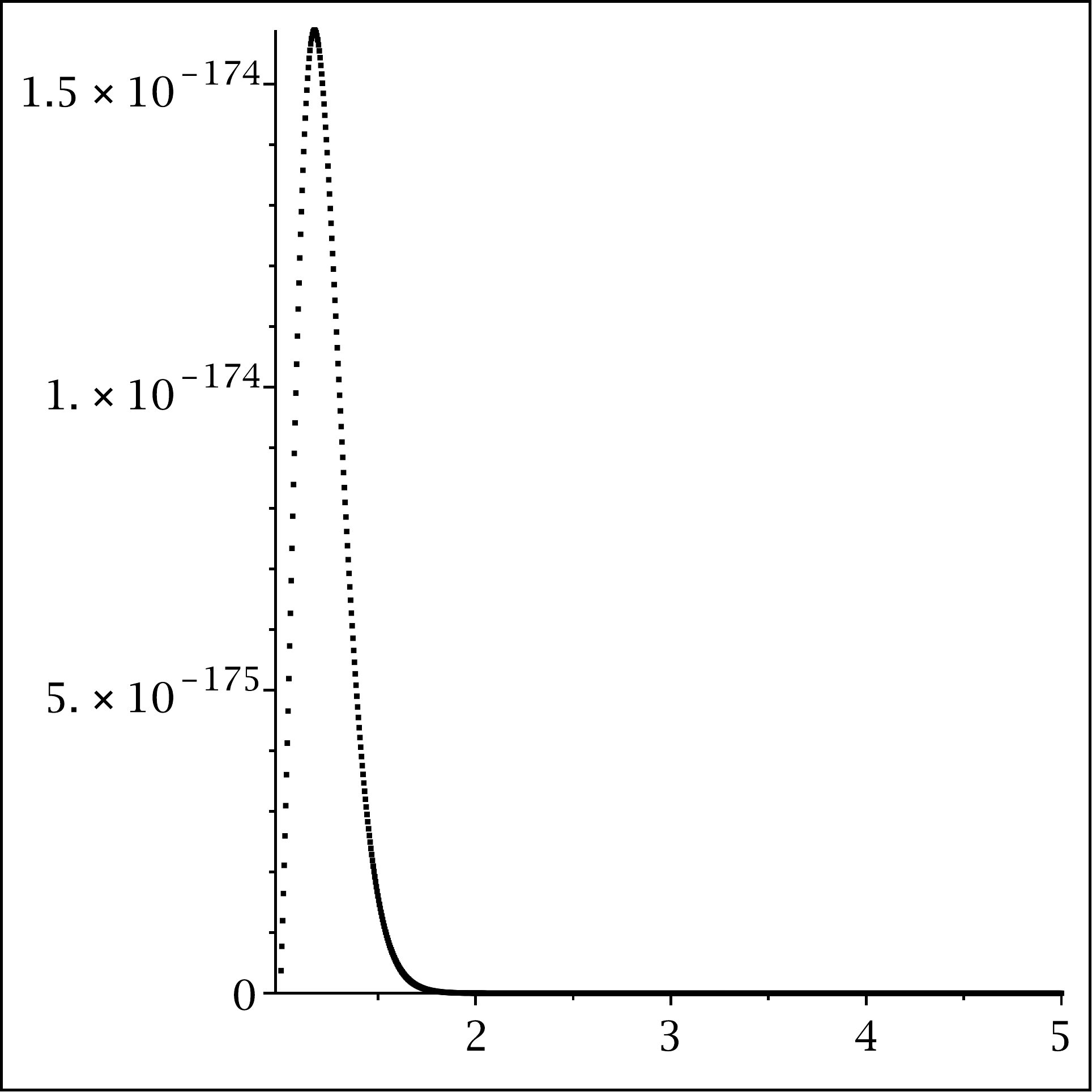}   \includegraphics[width=30mm]{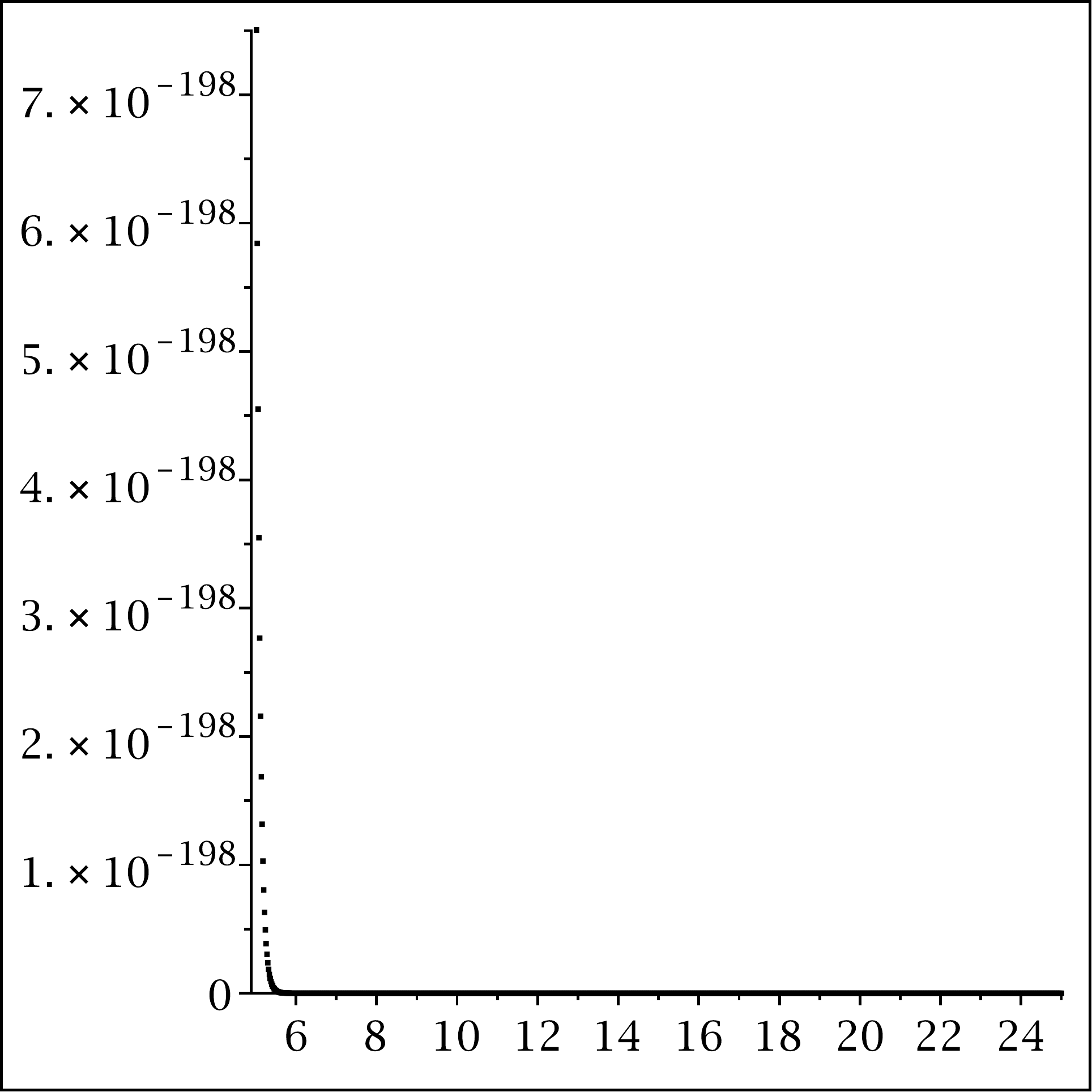}
    \caption{Numerical graphs of $\gamma_{47}(r)$, $\gamma_{48}(r)$, and $\gamma_{49}(r)$. For $\gamma_{0}(r),\dots,\gamma_{47}(r)$ see \protect\url{http://pi.math.cornell.edu/~reuspurweb/gammaPlotsNumerical.html}}
    \label{gammagraphs}
\end{figure}

\begin{figure}[p]
    \centering
    \includegraphics[width=30mm]{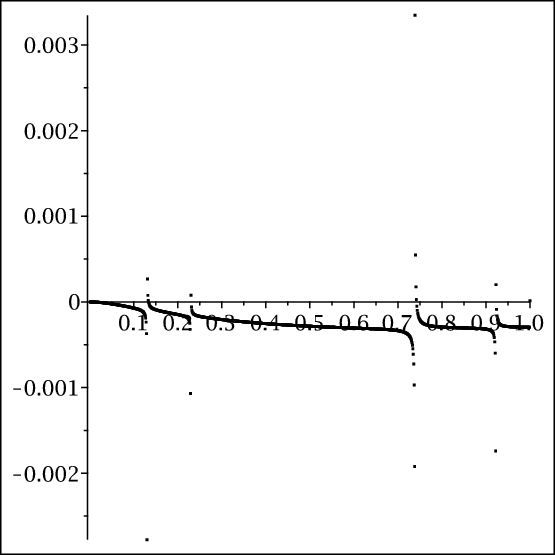}    \includegraphics[width=30mm]{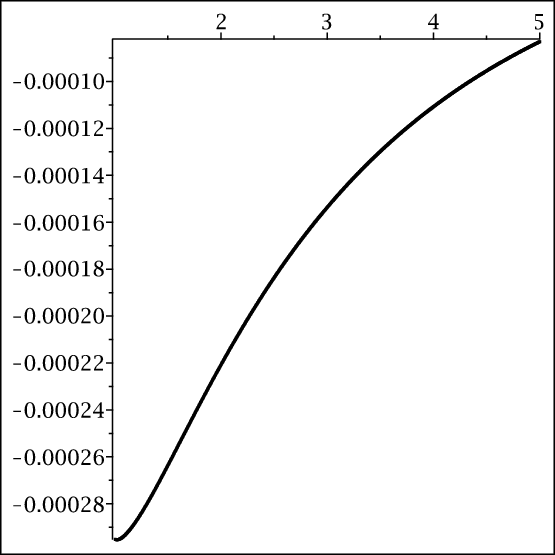}    \includegraphics[width=30mm]{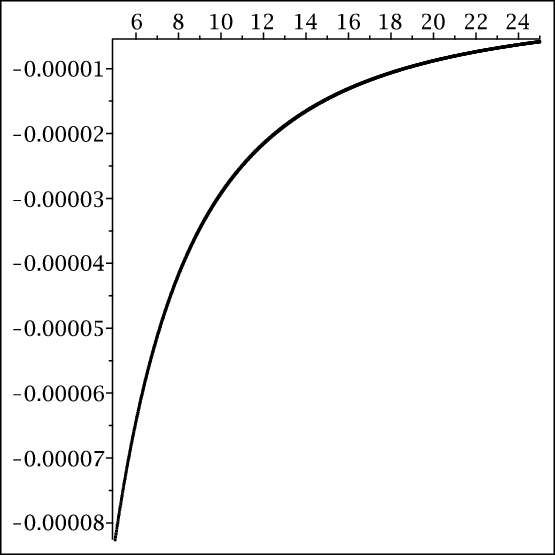}\\
    \includegraphics[width=30mm]{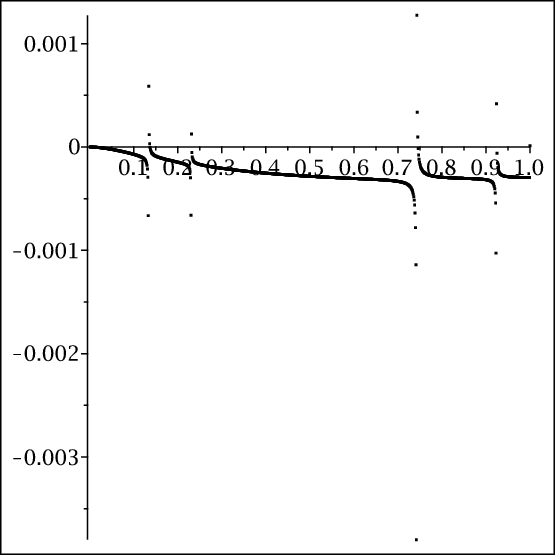}    \includegraphics[width=30mm]{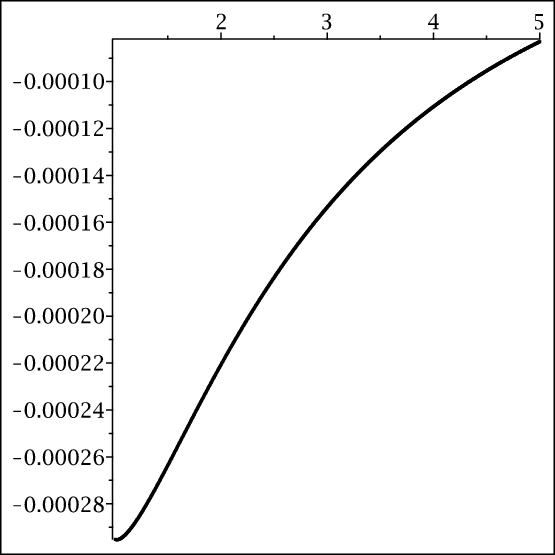}    \includegraphics[width=30mm]{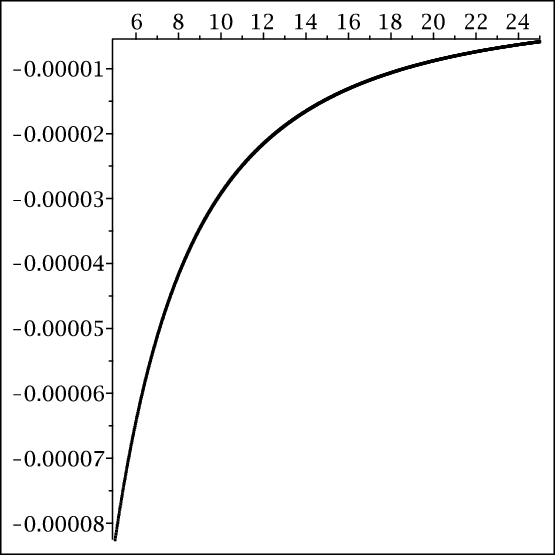}\\    \includegraphics[width=30mm]{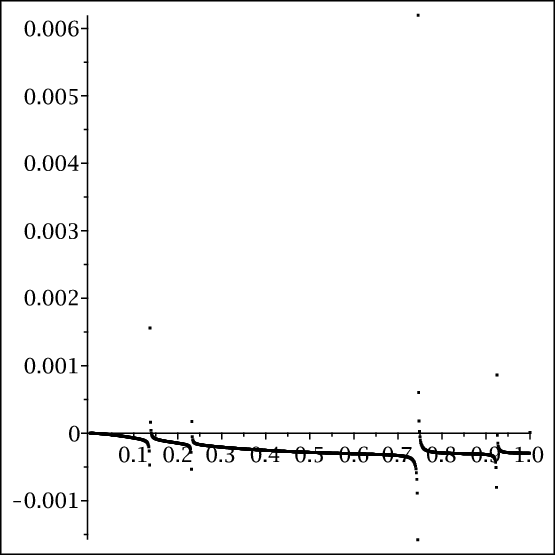}    \includegraphics[width=30mm]{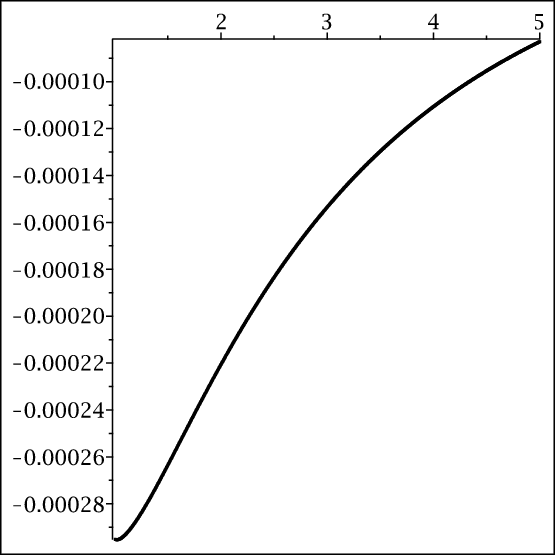}   \includegraphics[width=30mm]{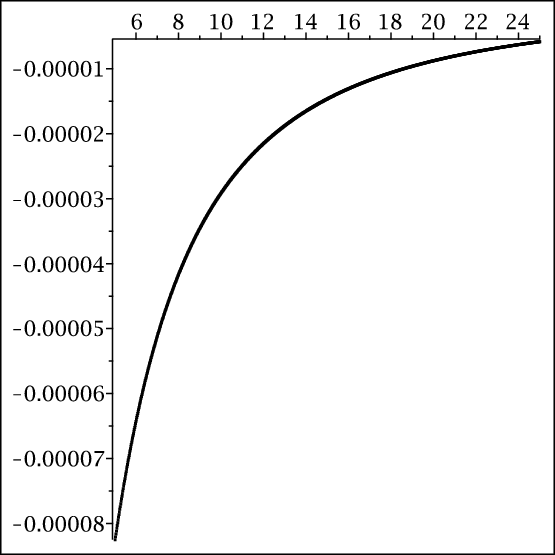}
    \caption{Numerical graphs of $\frac{\gamma_{48}(r)}{\gamma_{47}(r)}$, $\frac{\gamma_{49}(r)}{\gamma_{48}(r)}$, and $\frac{\gamma_{50}(r)}{\gamma_{49}(r)}$. For $\frac{\gamma_1(r)}{\gamma_{0}(r)},\dots,\frac{\gamma_{47}(r)}{\gamma_{46}(r)}$ see \protect\url{http://pi.math.cornell.edu/~reuspurweb/numericalGammaRatios.html}}
    \label{gammaratios}
\end{figure}
\begin{figure}[p]
    \centering
    \includegraphics[width=30mm]{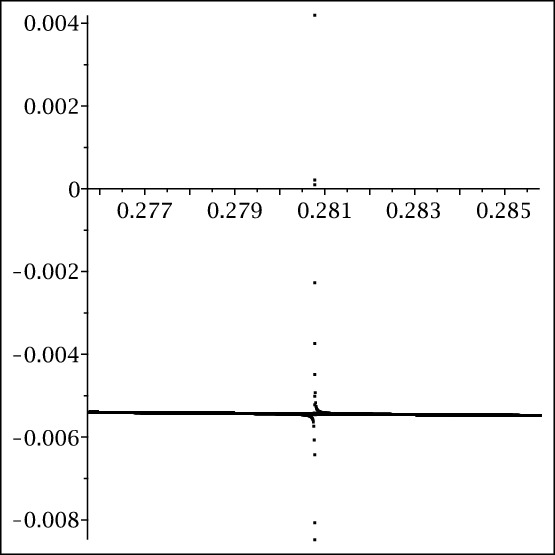}
    \includegraphics[width=30mm]{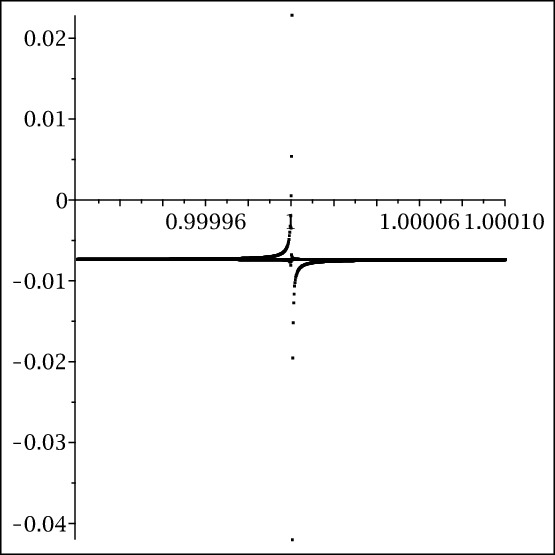}
    \caption{Simultaneous graphs of $\frac{ \alpha_{j+1}(r)}{\alpha_j(r)}$ for $j= 14, \dots, 24$ near  $r=\frac{\sqrt{17}-3}{4}$ and $r=1$.}
    \label{alphazoomplot}
\end{figure}
\begin{figure}[p]
    \centering
    \includegraphics[width=30mm]{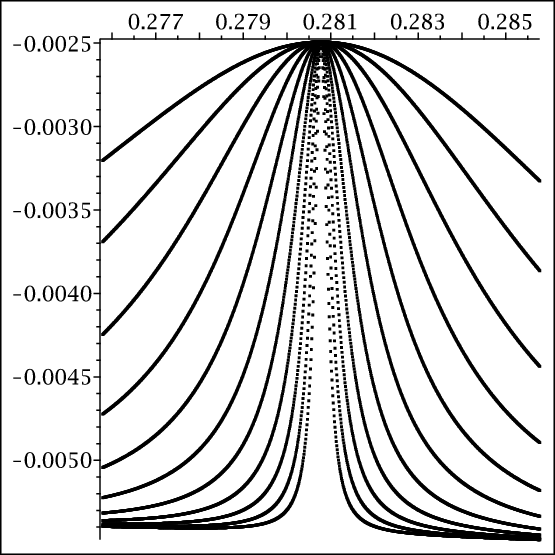}
    \includegraphics[width=30mm]{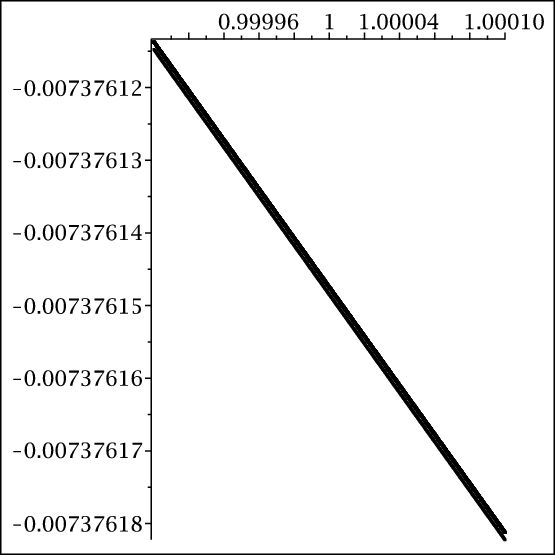}
    \caption{Simultaneous graphs of $\frac{ \beta_{j+1}(r)}{\beta_j(r)}$ for $j= 14, \dots, 24$ near  $r=\frac{\sqrt{17}-3}{4}$ and $r=1$.}
    \label{betazoomplot}
\end{figure}
\begin{figure}[p]
    \centering
    \includegraphics[width=50mm]{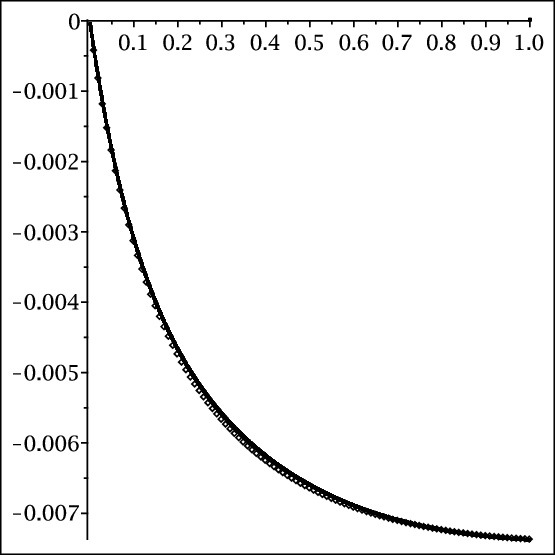}
\includegraphics[width=50mm]{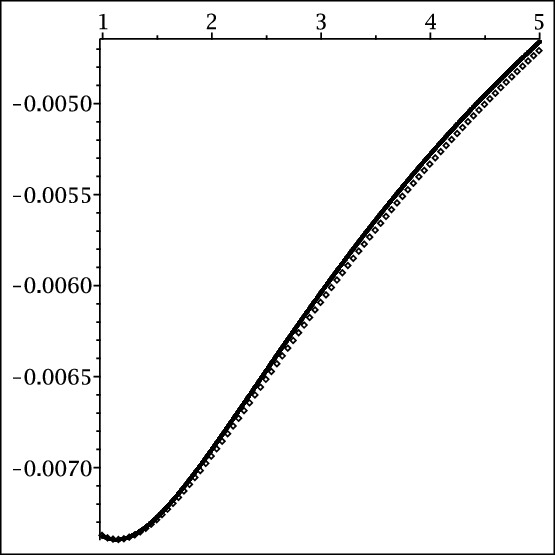}
\includegraphics[width=50mm]{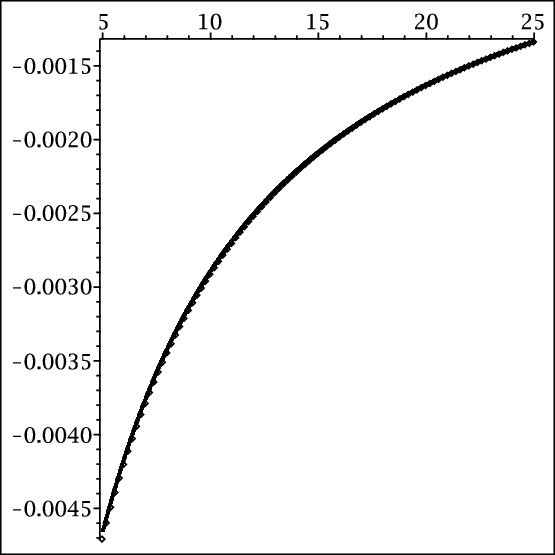}
    \caption{Simultaneous graphs of $\frac{ \alpha_{j+1}(r)}{\alpha_j(r)}$ for $j=39, \dots,49$ along with $\frac{-1}{2\lambda_3(r)-\lambda_2(r)}$. The black lines correspond to $\frac{\alpha_{j+1} (r) }{ \alpha_j(r)}$, and the dots correspond to $\frac{-1}{2\lambda_3(r)-\lambda_2(r)}$.}
    \label{comboplot}
\end{figure}
\begin{figure}[p]
    \centering
    \includegraphics[width=50mm]{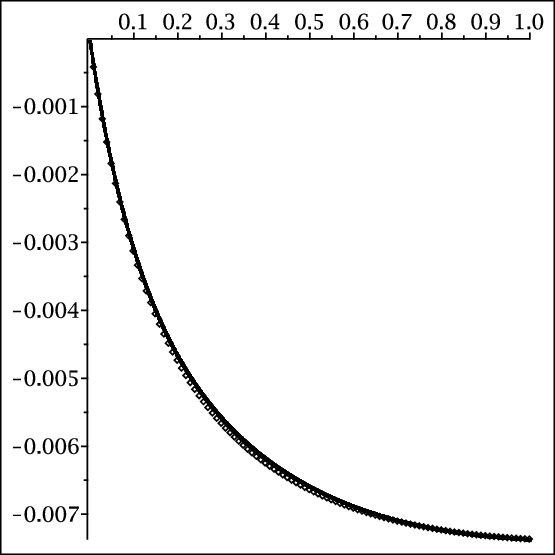}
\includegraphics[width=50mm]{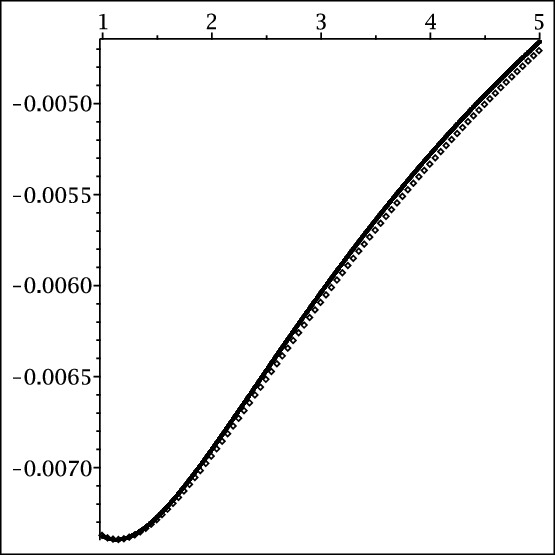}
\includegraphics[width=50mm]{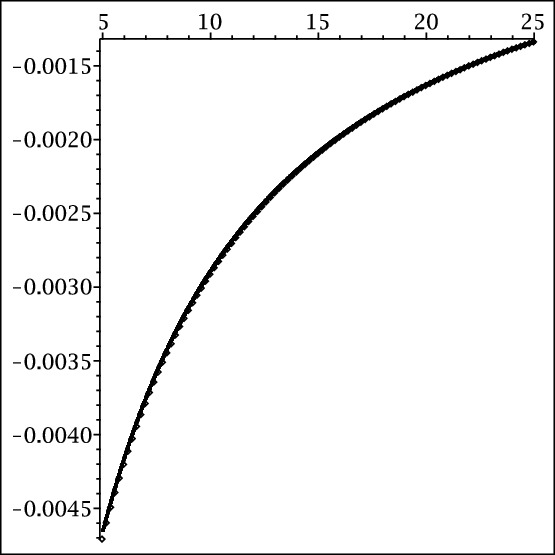}
    \caption{Simultaneous graphs of $\frac{ \beta_{j+1}(r)}{\beta_j(r)}$ for $j=39, \dots, 49$ along with $\frac{-1}{2\lambda_3(r)-\lambda_2(r)}$. The black lines correspond to $\frac{\beta_{j+1} (r) }{ \beta_j(r)}$, and the dots correspond to $\frac{-1}{2\lambda_3(r)-\lambda_2(r)}$.}
    \label{betacomboplot}
\end{figure}

Using the systems of equations (2.17), (2.18), and (2.19), one can obtain explicit formulas for the functions $\alpha_j(r)$, $\beta_j(r)$, and $\gamma_j(r)$, which are all rational functions of $r$. The first nonconstant terms in these sequences are 
$$ \alpha_2(r) = \frac{r(45r^4+233r^3+420r^2+305r+77)}{9(324r^6+2160r^5+5543r^4+7016r^3+4712r^2+1620r+225)},$$
$$\beta_1(r)= -\frac{r (81 r^4+495 r^3+1066 r^2+947 r+291)}{9 (162 r^5+999 r^4+2272 r^3+2372 r^2+1170 r+225)},
$$
$$ \gamma_1(r) =\frac{ r (27 r^3+116 r^2+155 r+62)}{3 (162 r^5+999 r^4+2272 r^3+2372 r^2+1170 r+225)},$$
and beyond these terms the expressions are too large to fit on the page and prohibitively complicated, as the degrees of the both the numerators and denominators grow rapidly. In fact, computing the explicit formulas of $\alpha_j(r)$, $\beta_j(r)$, and $\gamma_j(r)$ for $j$ up to about 15 seemed to be pushing the limit of the available computational power. However, computing $\alpha_j(r), \beta_j(r)$, and $\gamma_j(r)$ for a fixed value of $r$ is much more computationally efficient for larger values of $j$, so the strategy employed here to investigate the behavior of the sequences $\alpha_j(r)$, $\beta_j(r)$, and $\gamma_j(r)$ is to graph these functions by computing their values recursively for many fixed values of $r$, rather than trying to work with the explicit formulas. These graphs were made for the first 50 terms of each of these sequences, $j= 0, \dots, 49$. \par 

In addition to graphing the functions $\alpha_j(r)$, $\beta_j(r)$, and $\gamma_j(r)$ themselves, investigating the ratios between consecutive terms of these sequences is necessary to gain insight about their long term behavior and decay rates. The same method of sampling many values of $r$ was applied to make graphs of $\frac{ \alpha_{j+1} (r)}{\alpha_j(r)}$, $\frac{\beta_{j+1}(r) } {\beta_j(r)}$, and $\frac{ \gamma_{j+1} (r) } {\gamma_j(r)}$ for $j =0, \dots, 49. $ \par 

Patterns emerge in all of these graphs. Beyond about $j=12$, the graphs of $\alpha_j(r)$ all appear qualitatively similar, besides alternating in sign. The $\beta_j(r)$ seem to fall into a similar pattern after about $j=9$. The graphs of $\gamma_j(r)$ all seem qualitatively similar for $j<28$, but then, surprisingly, they undergo a dramatic change before falling into a different pattern. For $j$ sufficiently large, $\alpha_j(r)$ has roots near $r=0.2807764$ and $r=1$, and it appears that these roots converge to $\frac{ \sqrt{17} - 3 }{4} $ and $1$. Once the $\beta_j(r)$ establish a pattern, they have no roots for $r>0$. For large $j$, $\gamma_j(r)$ has several roots, one of which appears to converge to 1. \par 

It appears that the ratios of consecutive terms of these three sequences do not approach 0, except where the functions have roots. This is surprising, since both $\alpha_j(1)$ and $\gamma_j(1)$ are known to approach 0 faster than any exponential. This behavior does not extend to all values of $r$, however, and the fast decay of $\alpha_j(1)$ and $\gamma_j(1)$ is explained by the fact that $\alpha_j(r)$ and $\gamma_j(r)$ both have a root appearing to converge to $1$. \par 

The ratios $\frac{ \alpha_{j+1}(r)}{ \alpha_j(r)} $ and $\frac{\gamma_{j+1} (r) }{ \gamma_j(r)} $ are not continuous functions, as they have vertical asymptotes at the roots of $\alpha_j(r)$ and $\gamma_j(r)$. But, assuming the convergence of the sequences of functions $\{ \frac{ \alpha_{j+1} (r) }{ \alpha_j(r) } \}$ and $\{ \frac{ \gamma_{j+1} (r) }{ \gamma_j(r) } \}$ to functions that are continuous away from the limits of the roots of the denominators, if these roots do indeed converge it would imply that the limits of these ratios converge to functions which have only removable discontinuities corresponding to each of these roots. On the other hand, $\frac{ \beta_{j+1} (r) }{ \beta_j(r)} $ is continuous, since $\beta_j(r) \neq 0$ for $r \in (0 , \infty)$. Interestingly, $\frac{ \beta_{j+1} (r) }{\beta_j(r)} $ has a local maximum near $r=0.2807764$ which appears to converge to $\frac{ \sqrt{17} -3}{4} $. It seems that at this maximum, $\frac{ \beta_{j+1} (r) }{\beta_j(r)}$ attains a value of about $-0.0025$. The width of this peak decreases as $j$ increases, and it appears that the local minimum which occurs at the base of this peak also converges to $\frac{ \sqrt{17}-3}{4}$. This peak gets so thin that it does not appear in the numerical plots for large $j$. If these extrema do indeed converge, it implies that the limit of $\frac{ \beta_{j+1} (r) } {\beta_j(r) } $, if it exists, has at least a removable discontinuity at $r=\frac{ \sqrt{17}-3}{4} $. \par 

The functions $\frac{ \alpha_{j+1}(r)}{ \alpha_j(r)} $ and $\frac{ \beta_{j+1} (r) } {\beta_j(r) } $ appear remarkably similar for large $j$. Moreover, both of these sequences of functions closely approach the function 
$\frac{-1} { 2 \lambda_3 (r) - \lambda_2(r)}$ where $\lambda_2(r)$ is the Neumann eigenvalue of $\Delta_r$ which is the second smallest nonzero Neumann eigenvalue when $r<1$ and $\lambda_3(r)$ is the third smallest Neumann eigenvalue when $r<1$. It is interesting to note that the multiplicity of $\lambda_3(r)$ is 2 and $\lambda _2(r)$ has multiplicity 1, corresponding with their respective coefficients in this expression. 

In addition to the graphs of $\alpha_j(r), \beta_j(r), $ and $\gamma_j(r)$ and the ratios of consecutive terms, it is interesting to look at graphs of the monomials themselves. These graphs were made by calculating the jets of the monomials at each point in $V_1$, and using this information to compute their values at each point in $V_2$, and then repeating this process to compute the values at each point in $V_5$. \par 

Observing these graphs further demonstrates that $r=1$ is an exceptional value. When $r=1$, the graphs of $P_{j,1}^{(r)}$ and $P_{j,2}^{(r)}$ have noticeable qualitative differences, but for $r \neq 1$, and $j$ sufficiently large, $P_{j,1}^{(r)}$ and $P_{j,2}^{(r)}$ look quite similar, even for $r$ close to 1. \par 

Making these graphs requires strenuous computation, so it has only been done for a small selection of values of $r$, and for relatively small $j$. \par 

In addition to the selection of graphs presented here, there are many more which can be found at \url{http://pi.math.cornell.edu/~reuspurweb/}.

\begin{figure}[p]
    \centering
    \includegraphics[width=30mm]{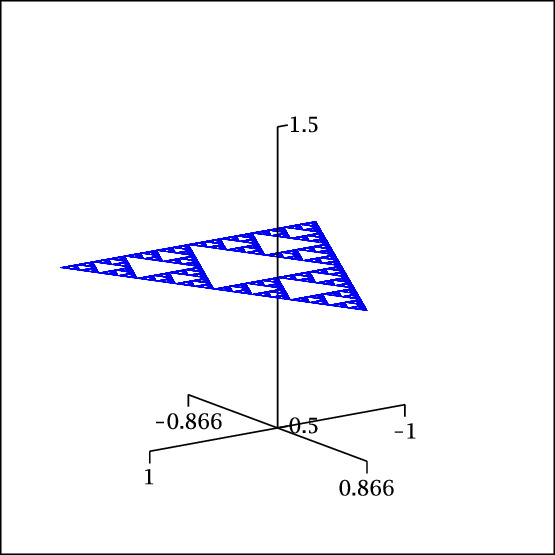}   
    \includegraphics[width=30mm]{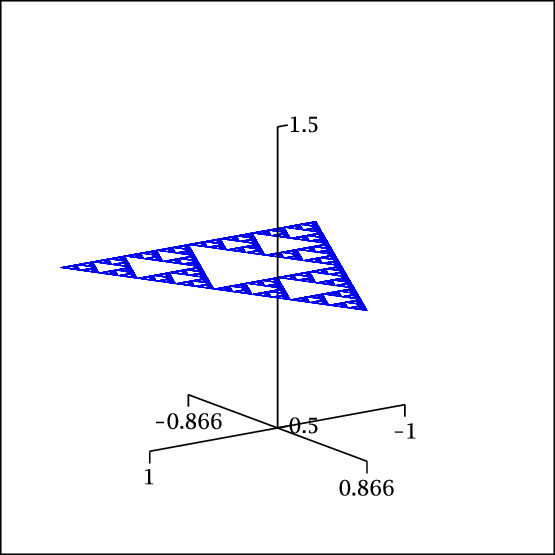}    
    \includegraphics[width=30mm]{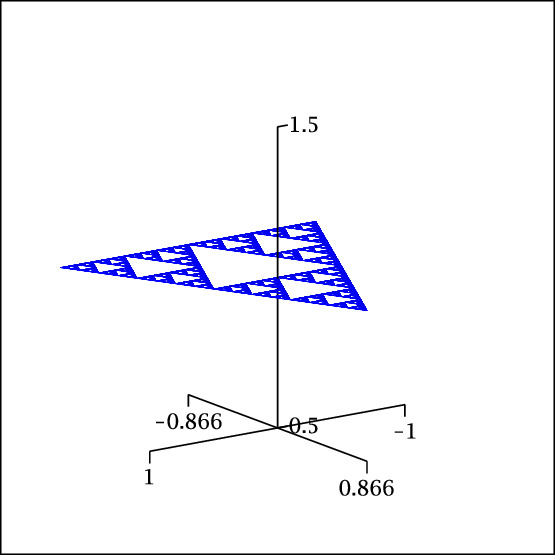}\\
    \includegraphics[width=30mm]{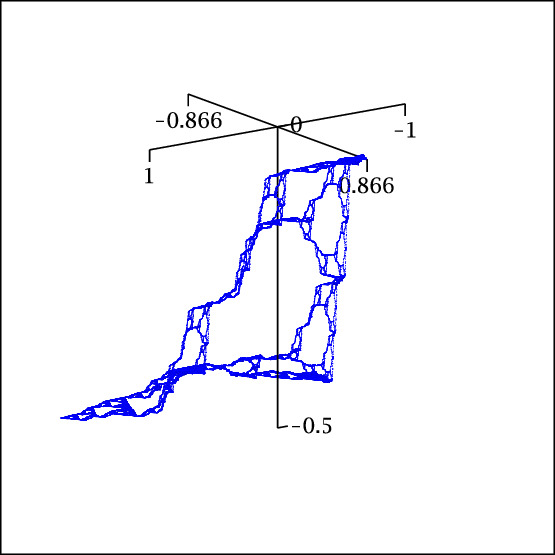}  
     \includegraphics[width=30mm]{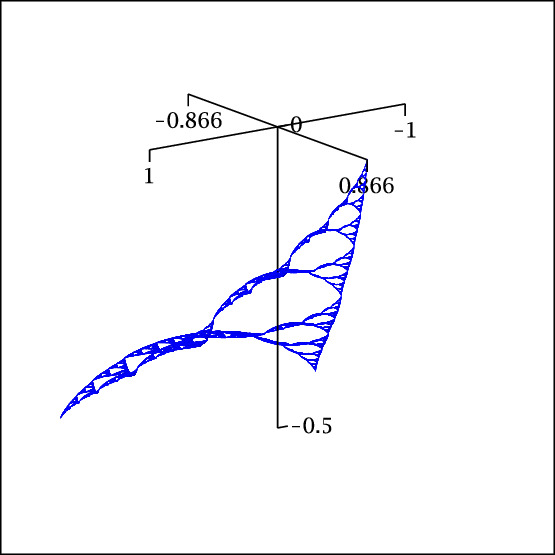}   
      \includegraphics[width=30mm]{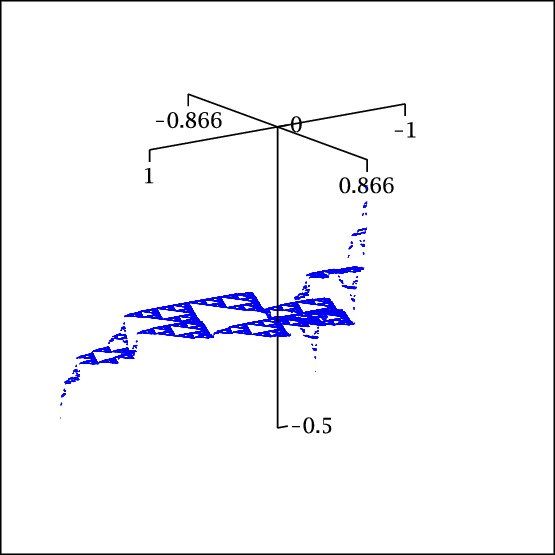}\\  
     \includegraphics[width=30mm]{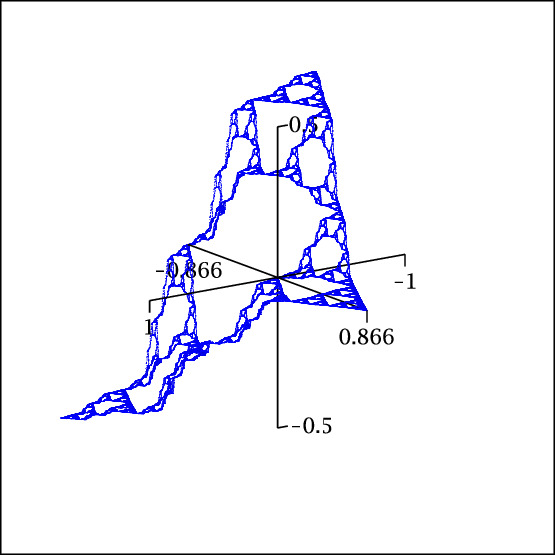}   
      \includegraphics[width=30mm]{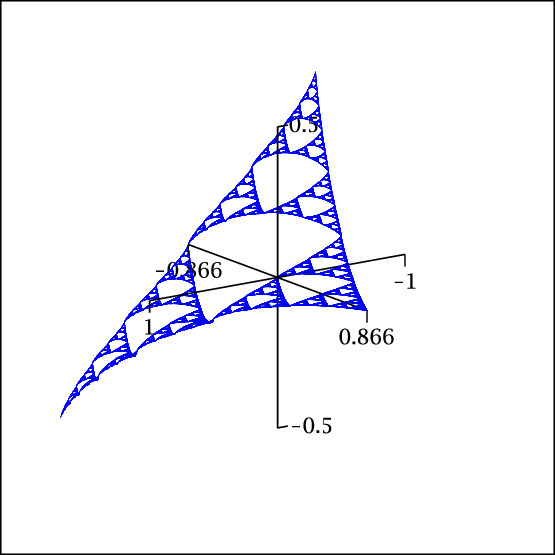}   
        \includegraphics[width=30mm]{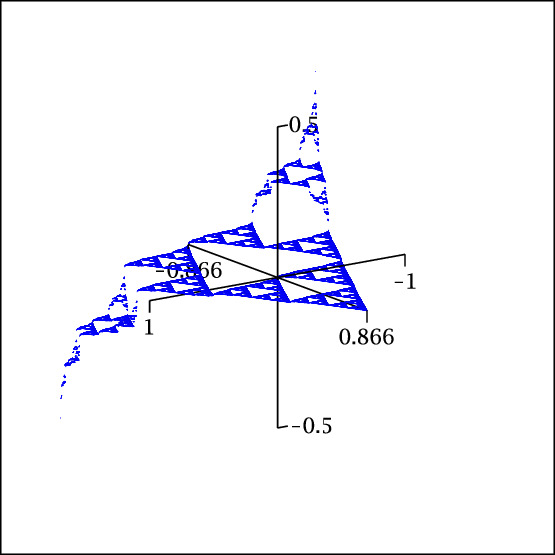}
    \caption{Graphs of $P_{0,1}^{(r)}$, $P_{0,2}^{(r)} \ , P_{0,3}^{(r)}$ (top to bottom) with $r=\frac1{10}$, $r=1$, $r=10$ (left to right).}
\end{figure}

\begin{figure}[p]
    \centering
    \includegraphics[width=30mm]{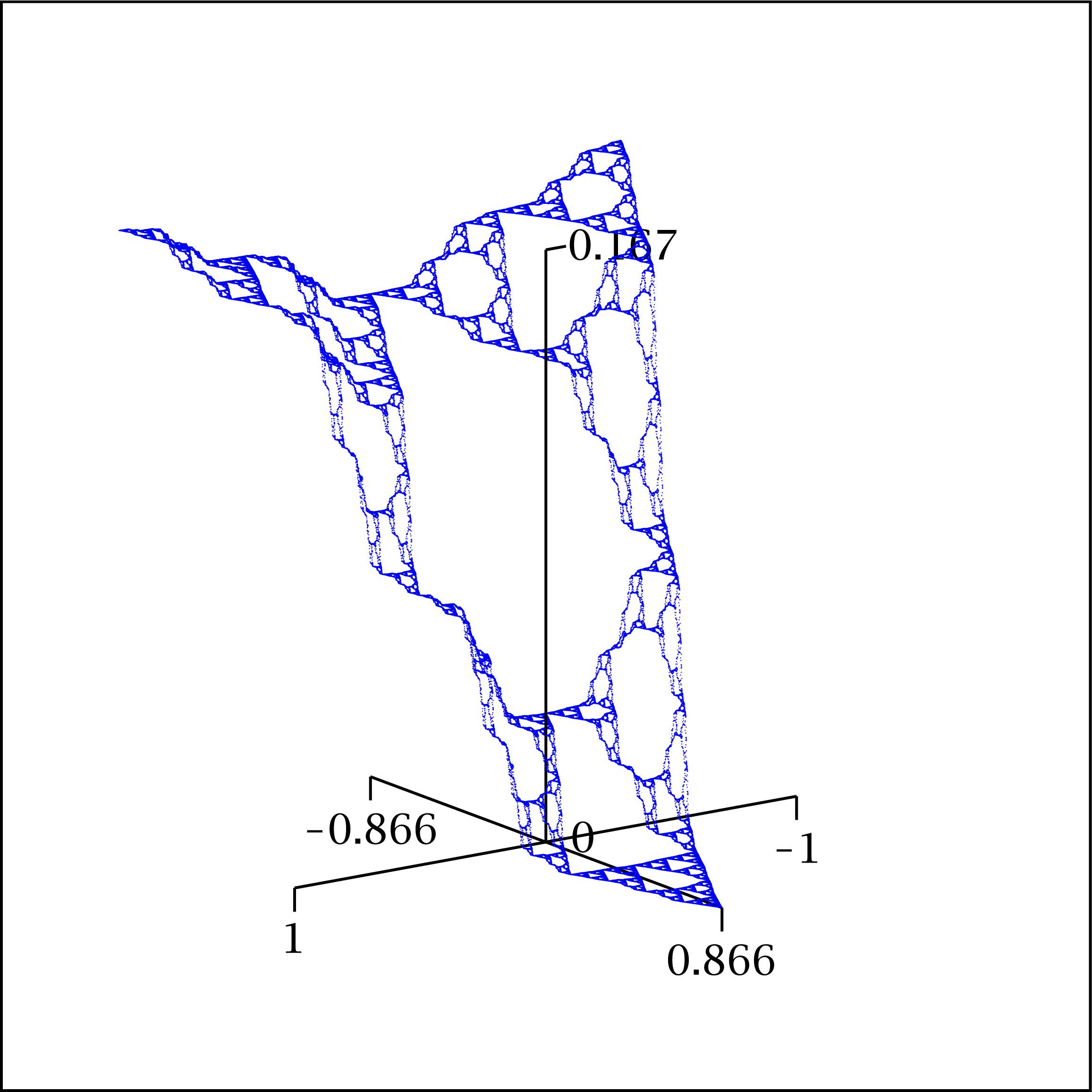}   
    \includegraphics[width=30mm]{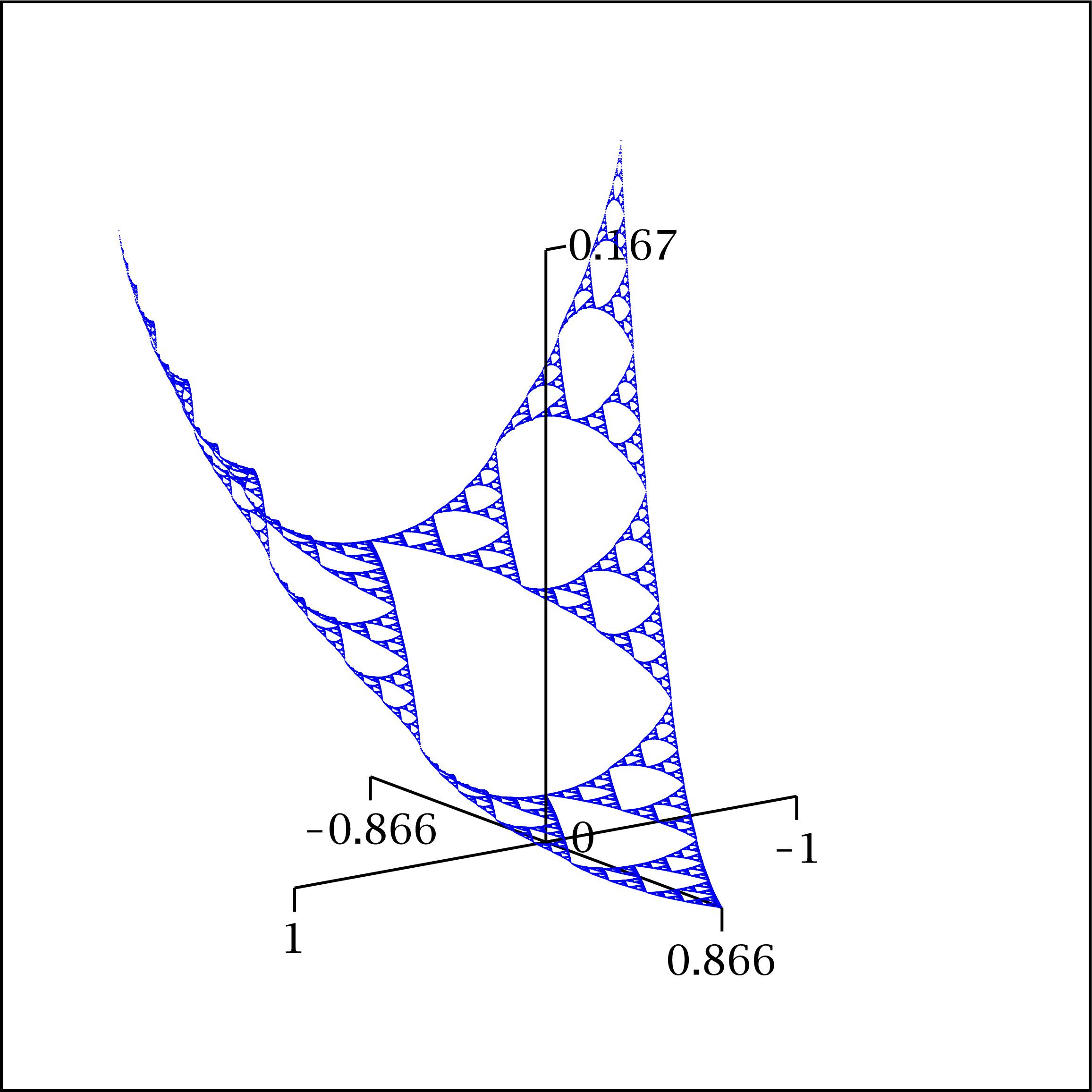}    
    \includegraphics[width=30mm]{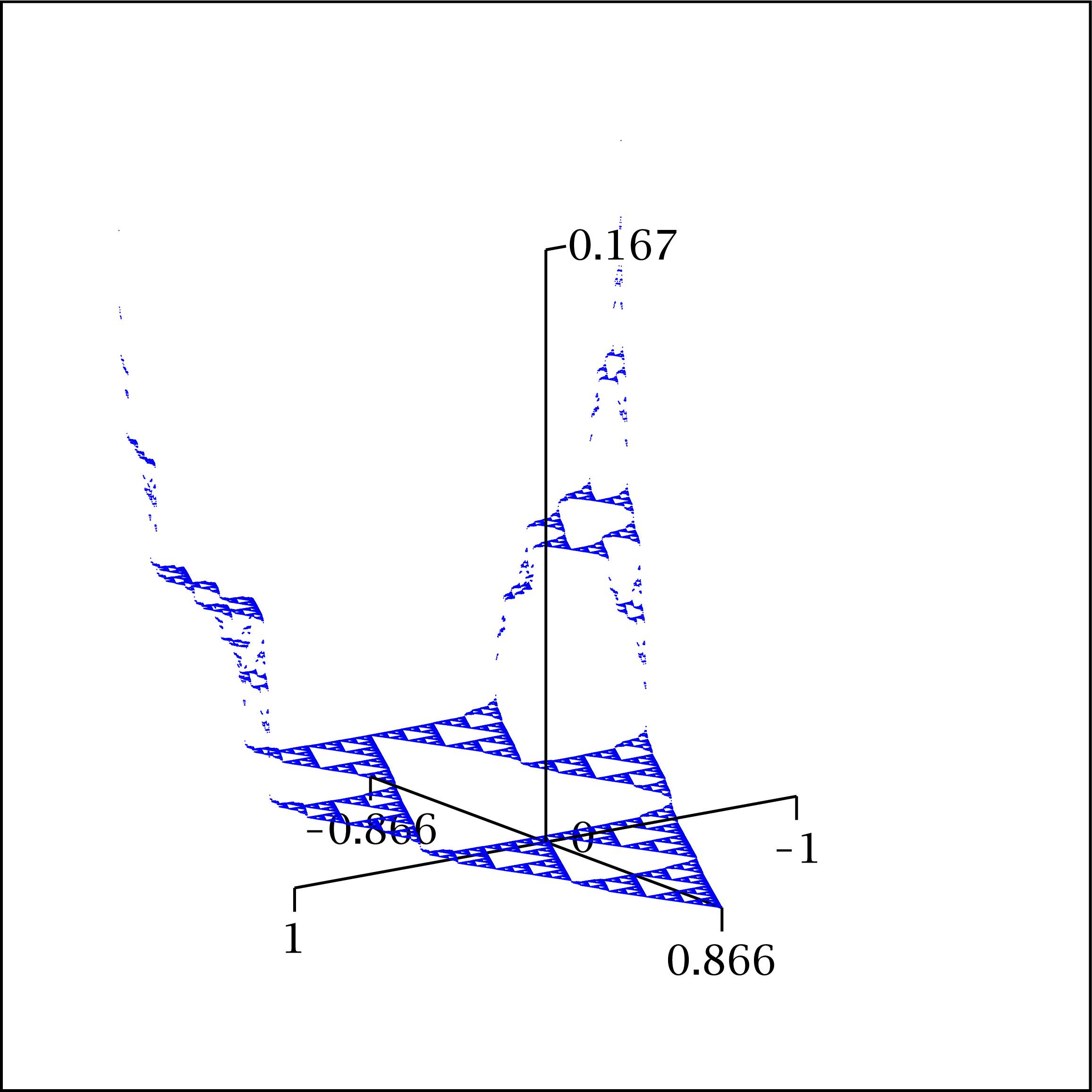}\\
    \includegraphics[width=30mm]{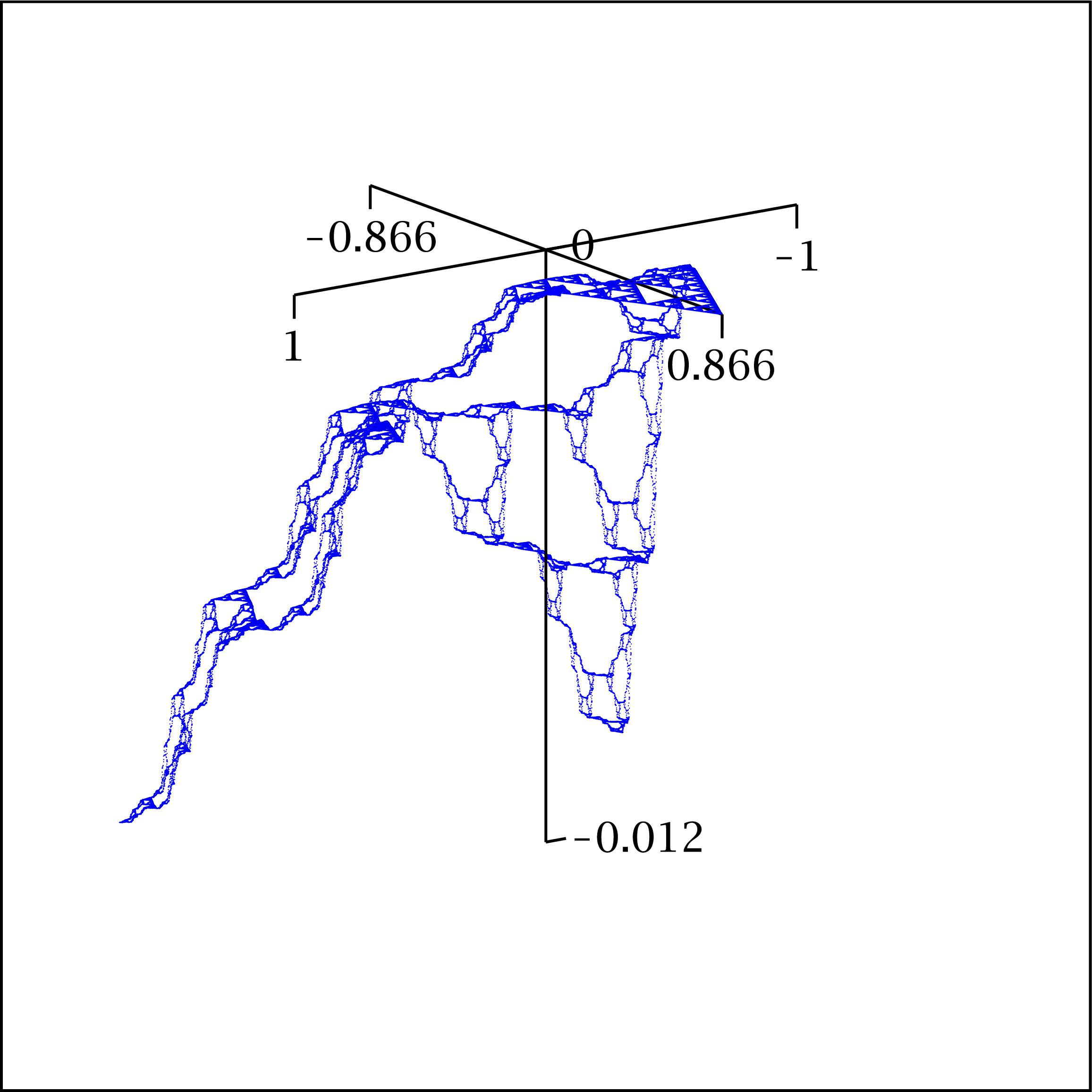}  
     \includegraphics[width=30mm]{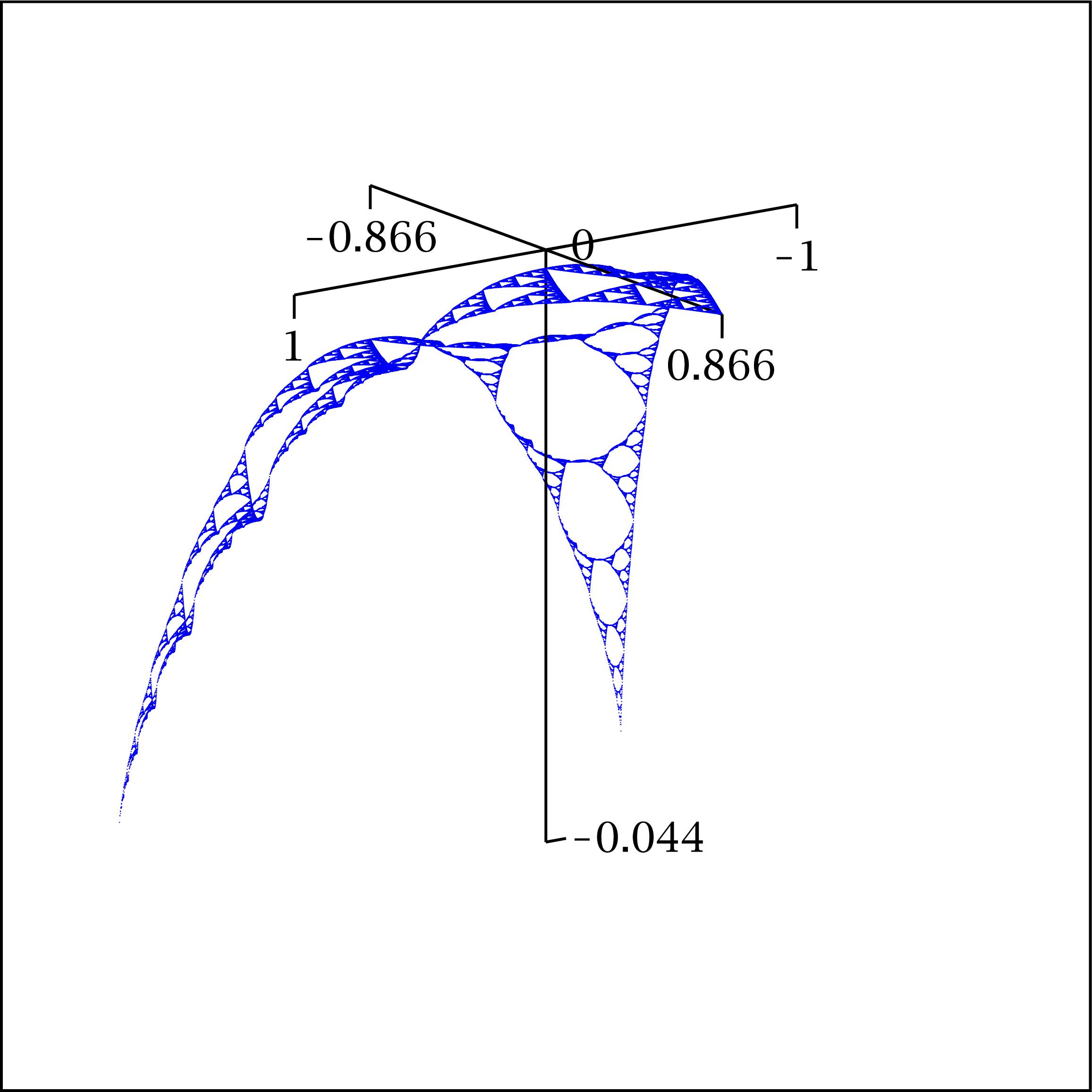}   
      \includegraphics[width=30mm]{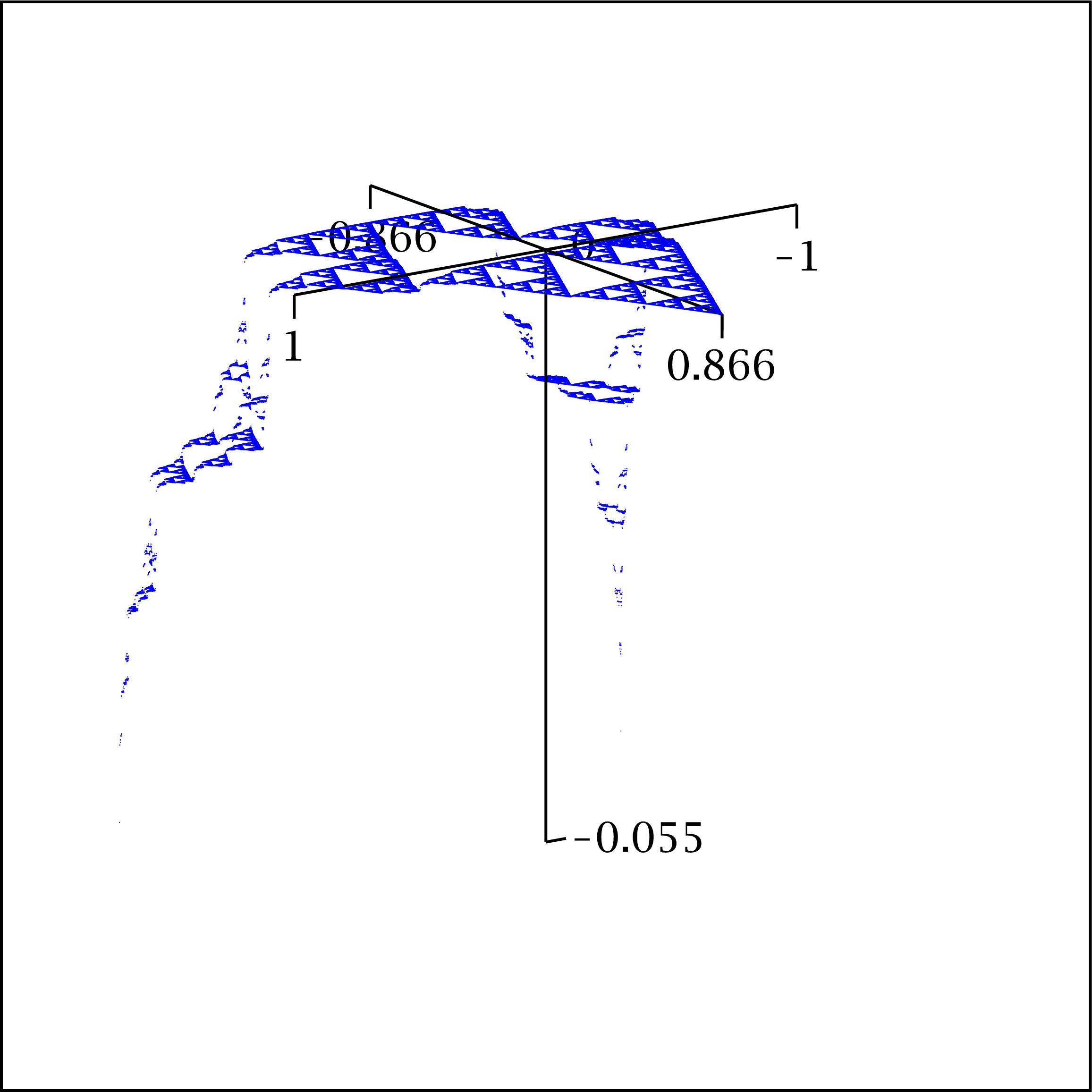}\\  
     \includegraphics[width=30mm]{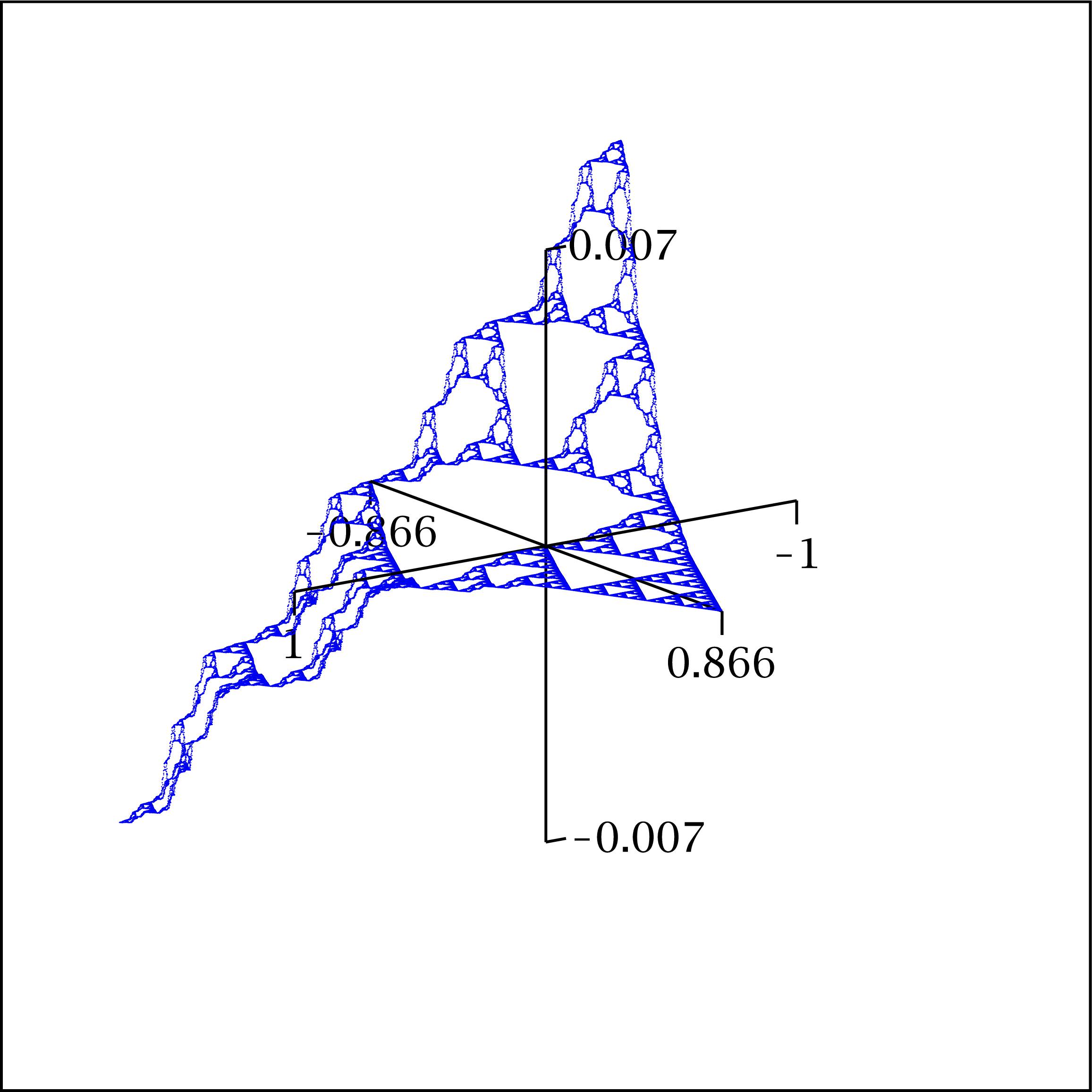}   
      \includegraphics[width=30mm]{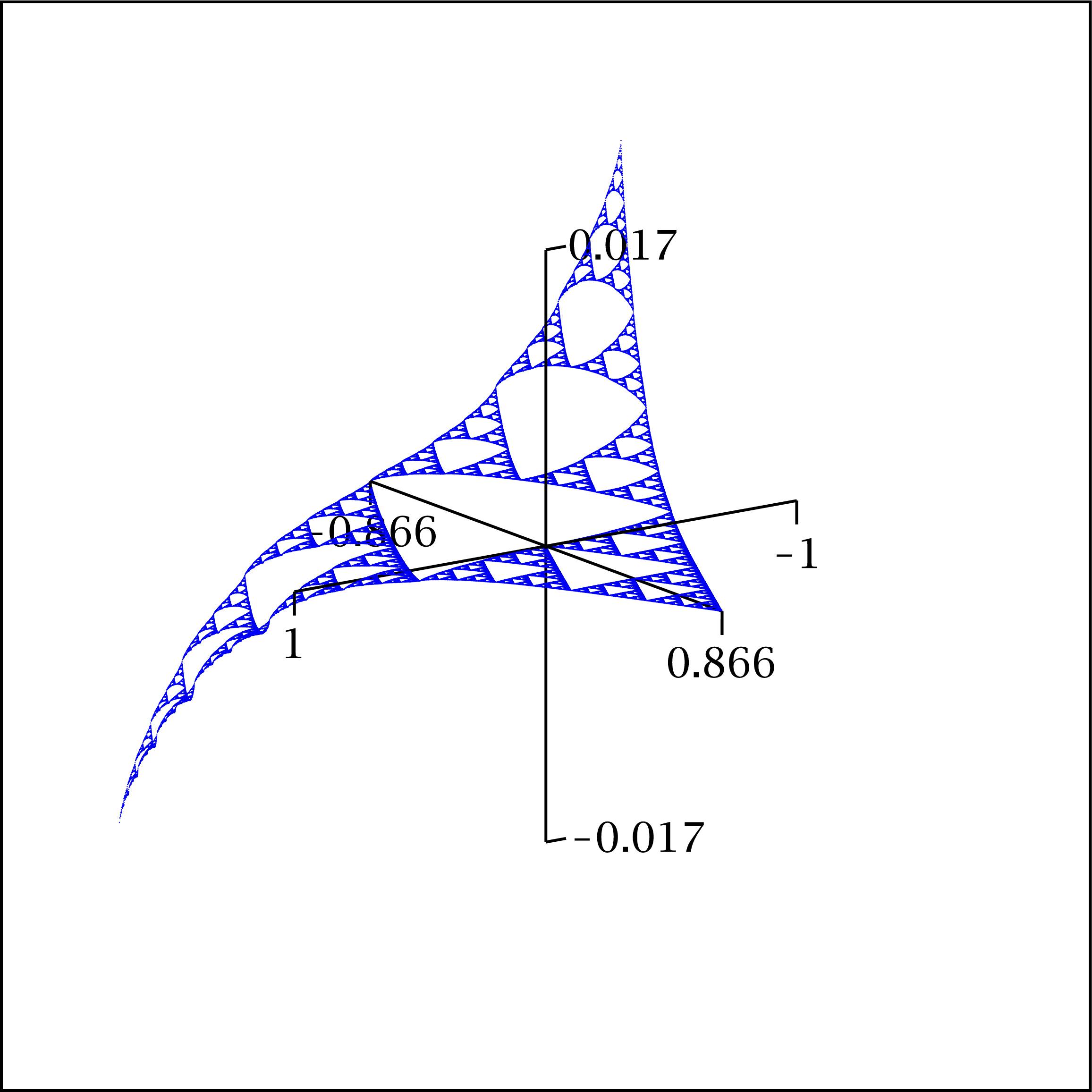}   
        \includegraphics[width=30mm]{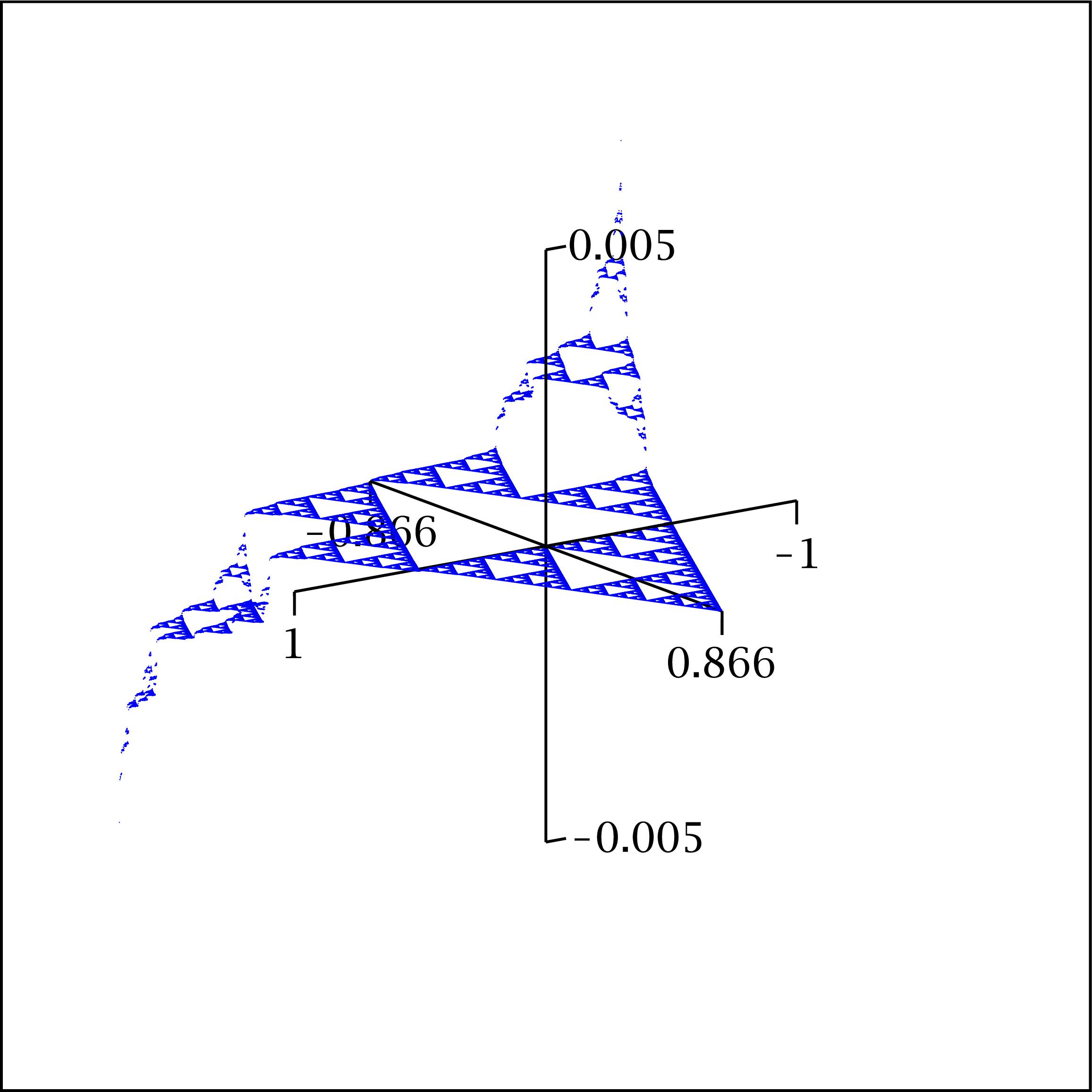}
    \caption{Graphs of $P_{1,1}^{(r)}$, $P_{1,2}^{(r)} \ , P_{1,3}^{(r)}$ (top to bottom) with $r=\frac1{10}$, $r=1$, $r=10$ (left to right).}
\end{figure}

\begin{figure}[p]
    \centering
    \includegraphics[width=30mm]{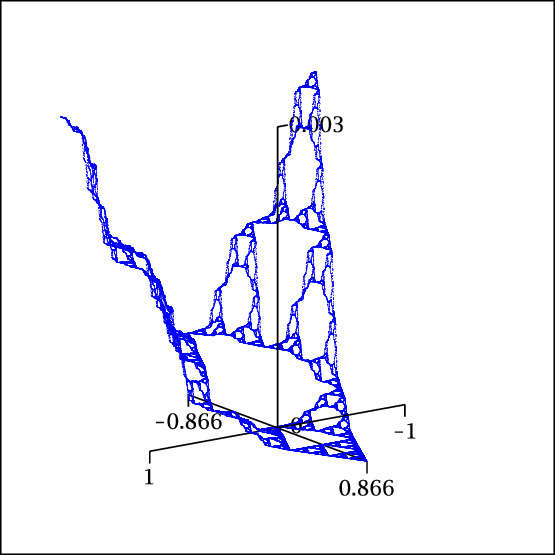}   
    \includegraphics[width=30mm]{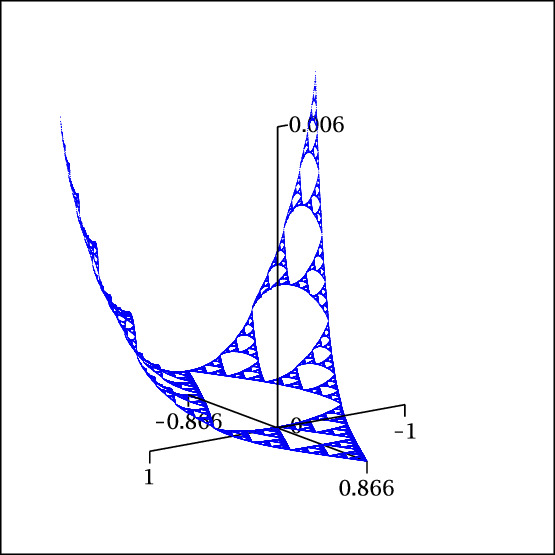}    
    \includegraphics[width=30mm]{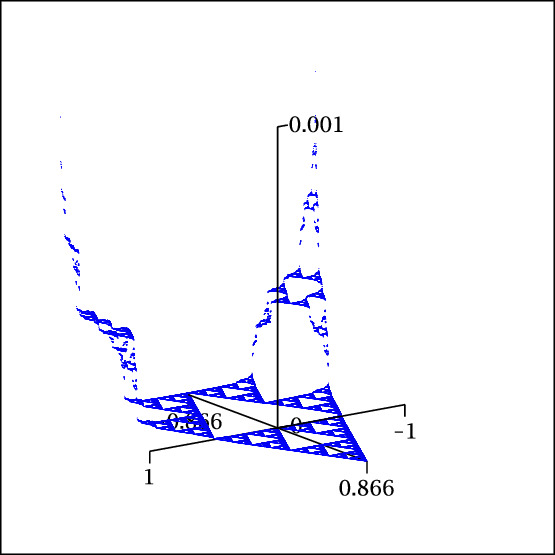}\\
    \includegraphics[width=30mm]{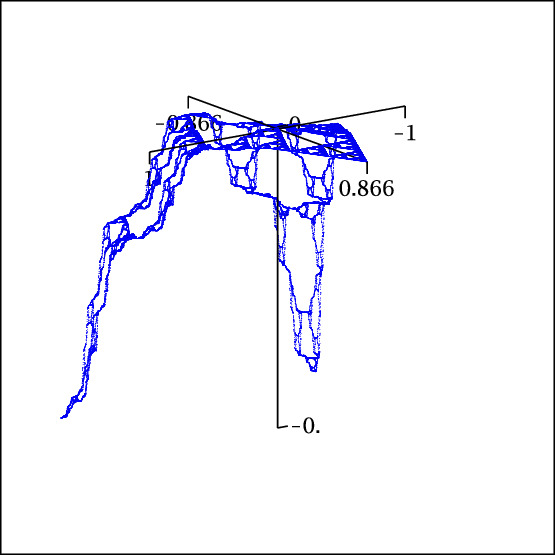}  
     \includegraphics[width=30mm]{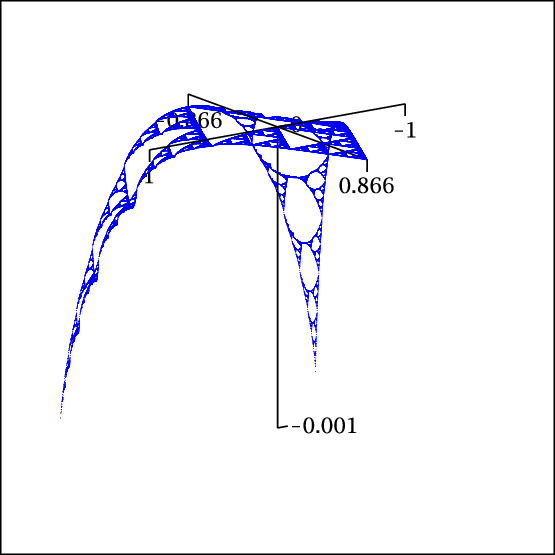}   
      \includegraphics[width=30mm]{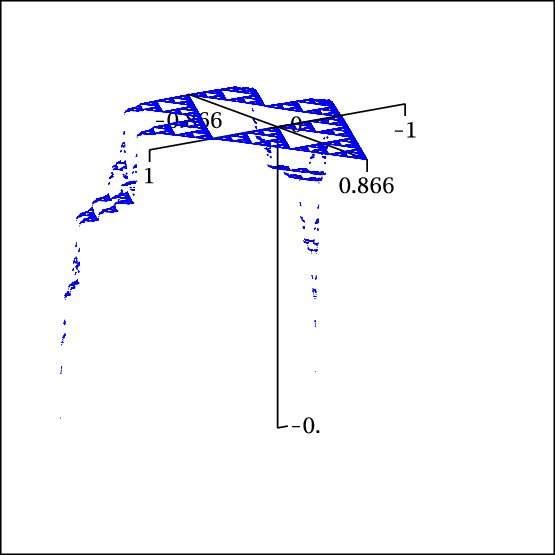}\\  
     \includegraphics[width=30mm]{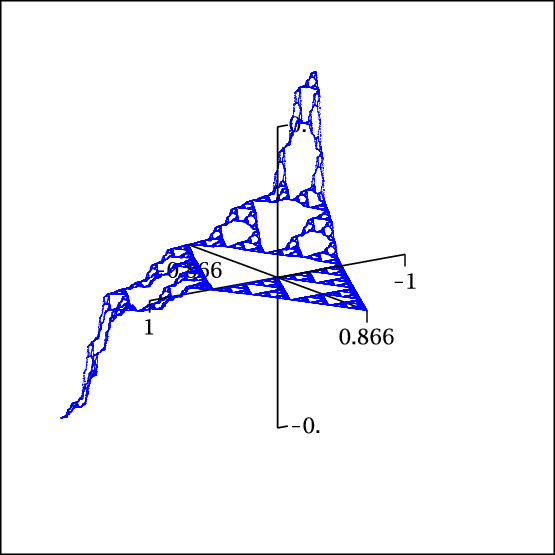}   
      \includegraphics[width=30mm]{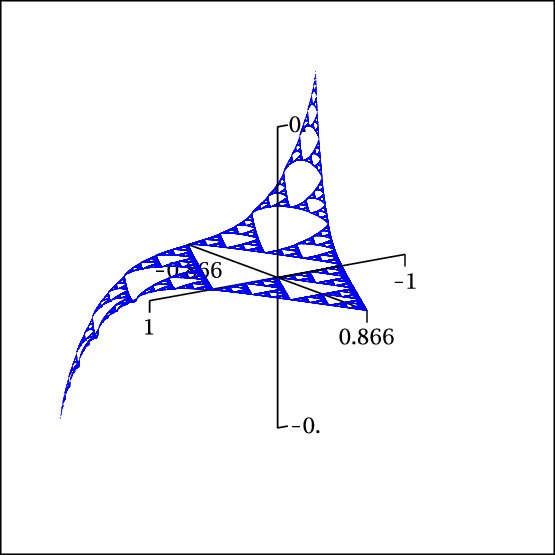}   
        \includegraphics[width=30mm]{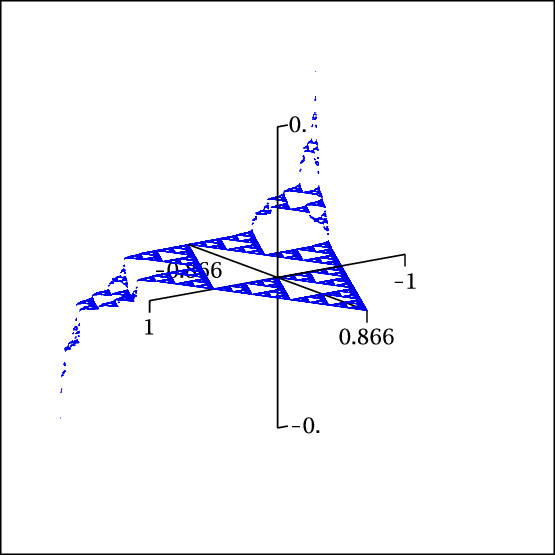}
    \caption{Graphs of $P_{2,1}^{(r)}$, $P_{2,2}^{(r)} \ , P_{2,3}^{(r)}$ (top to bottom) with $r=\frac1{10}$, $r=1$, $r=10$ (left to right).}
\end{figure}

\begin{figure}[p]
    \centering
        \includegraphics[width=30mm]{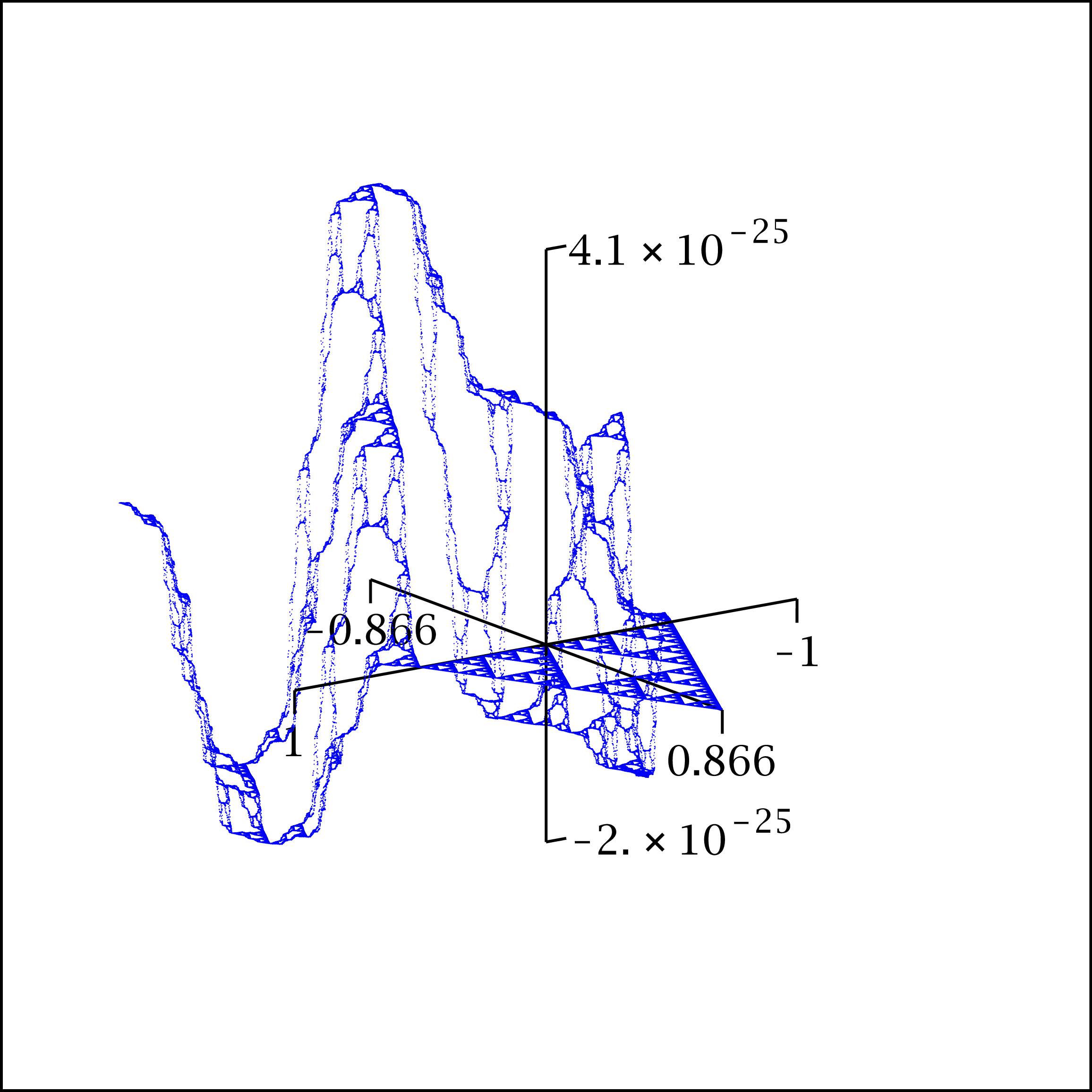}   
    \includegraphics[width=30mm]{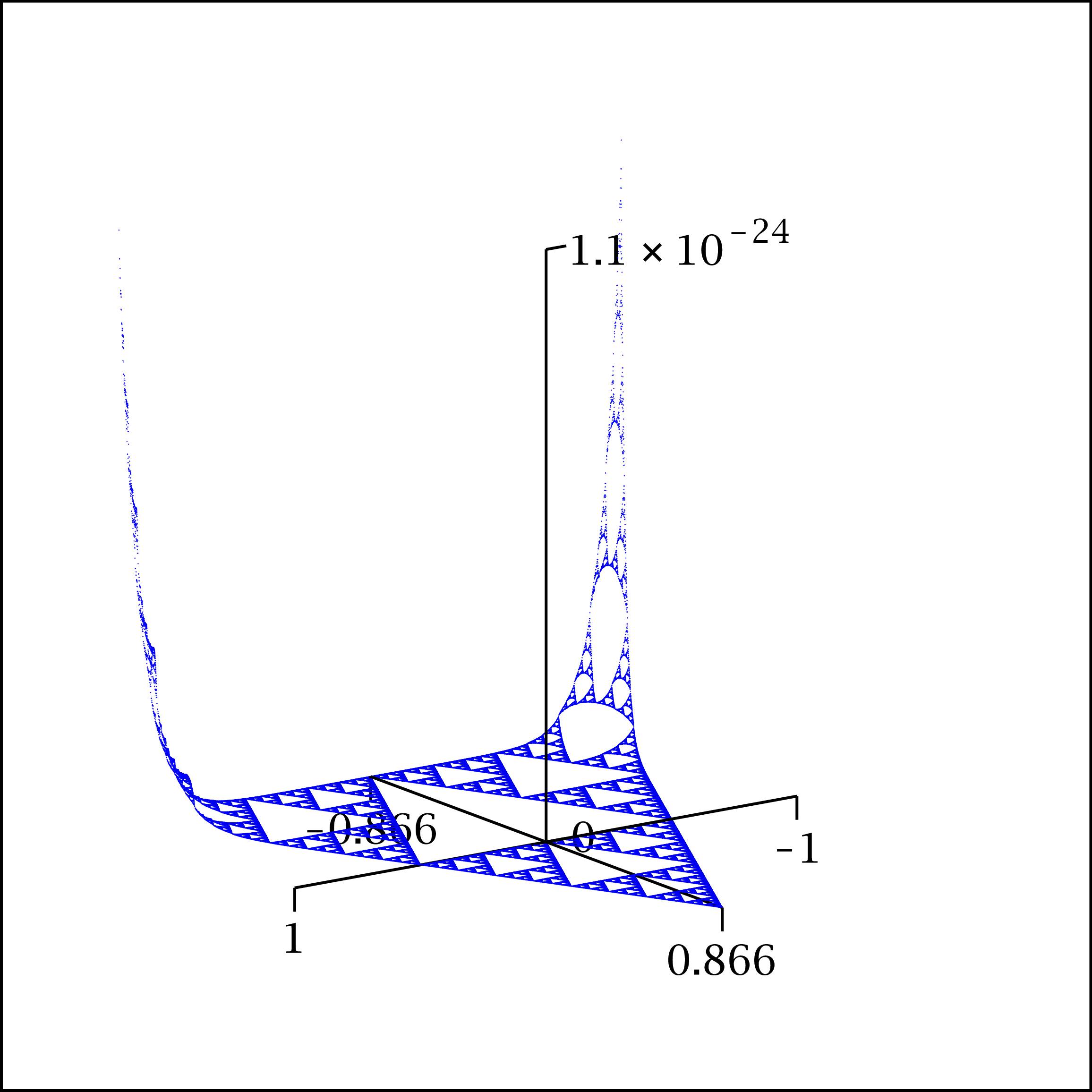}    
    \includegraphics[width=30mm]{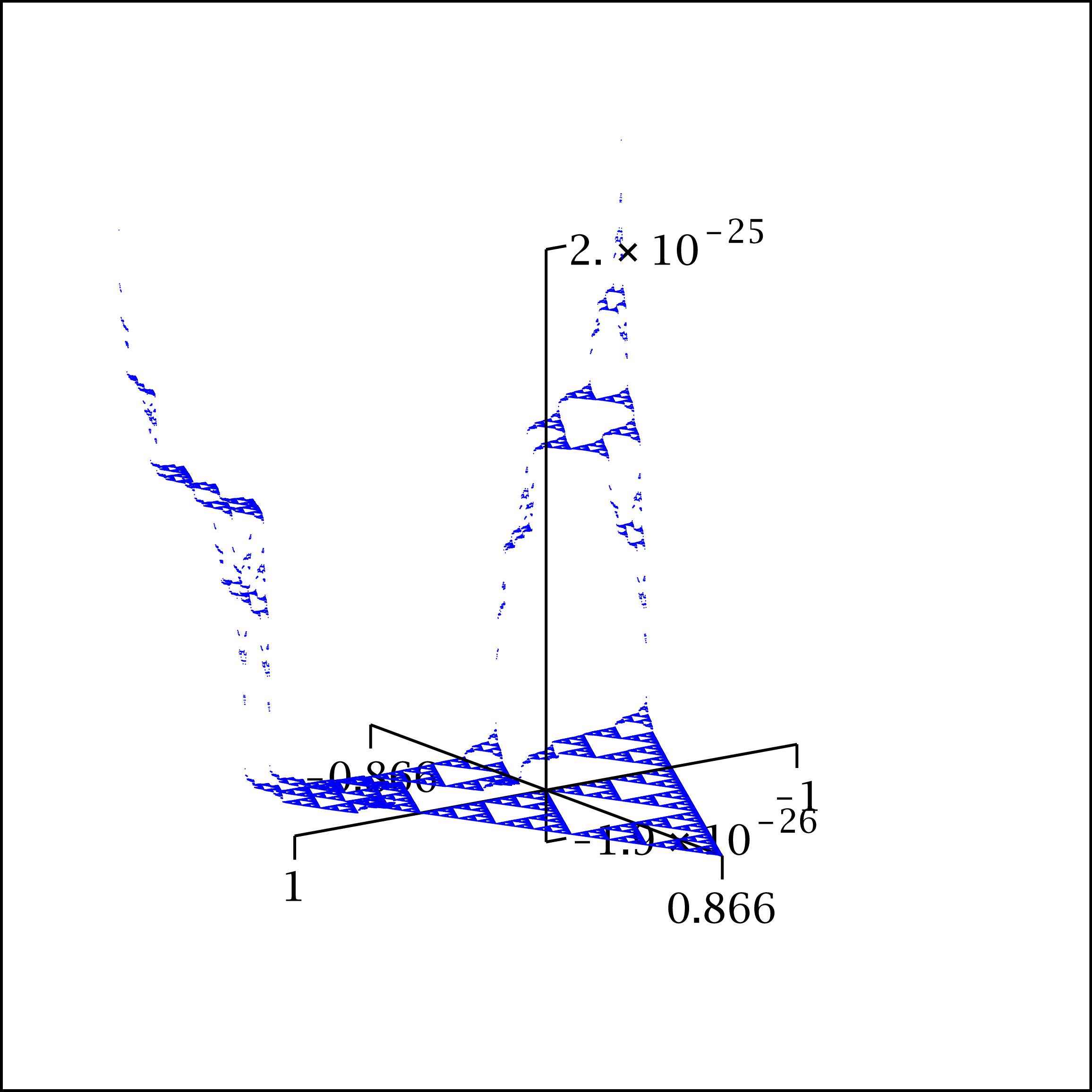}\\
    \includegraphics[width=30mm]{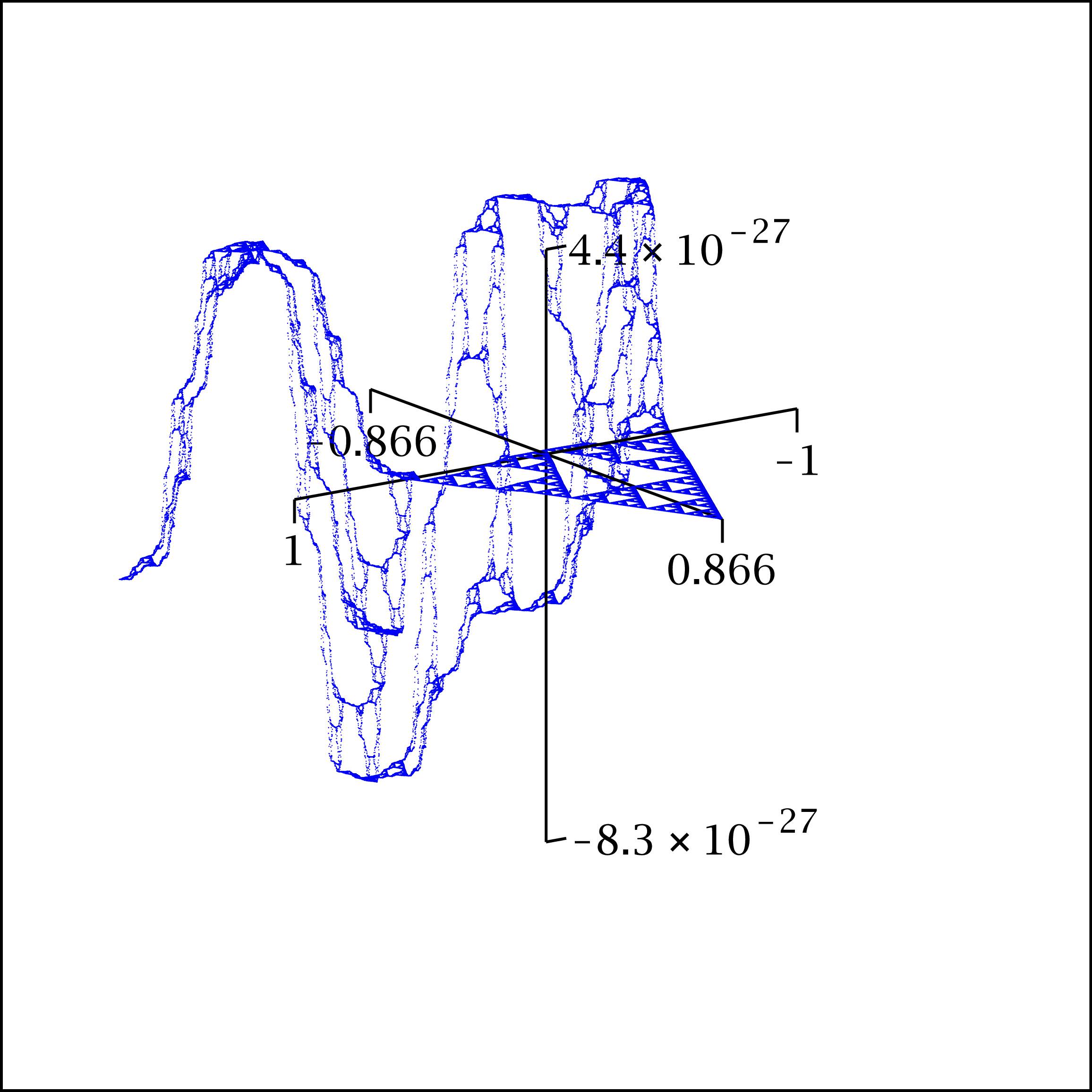}  
     \includegraphics[width=30mm]{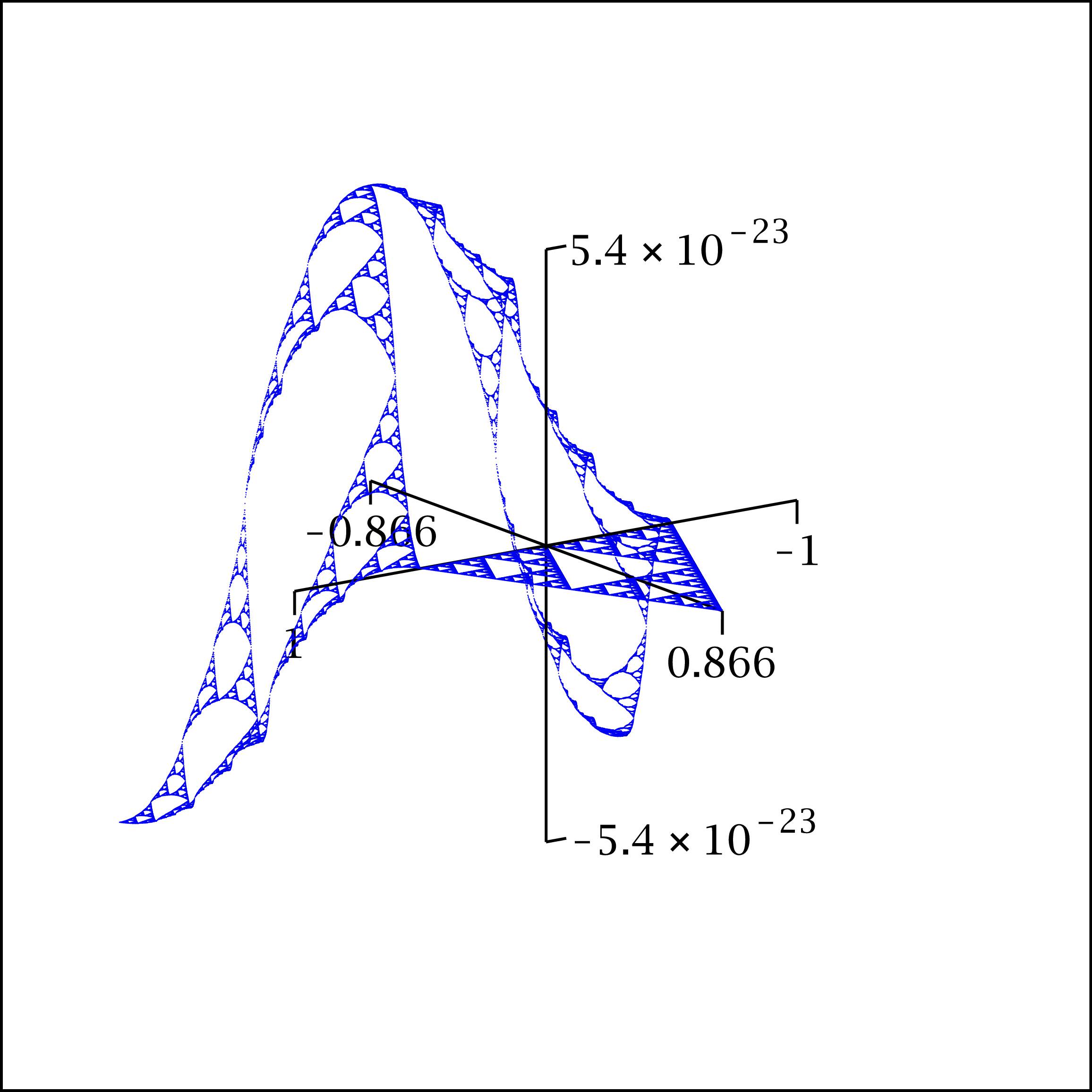}   
      \includegraphics[width=30mm]{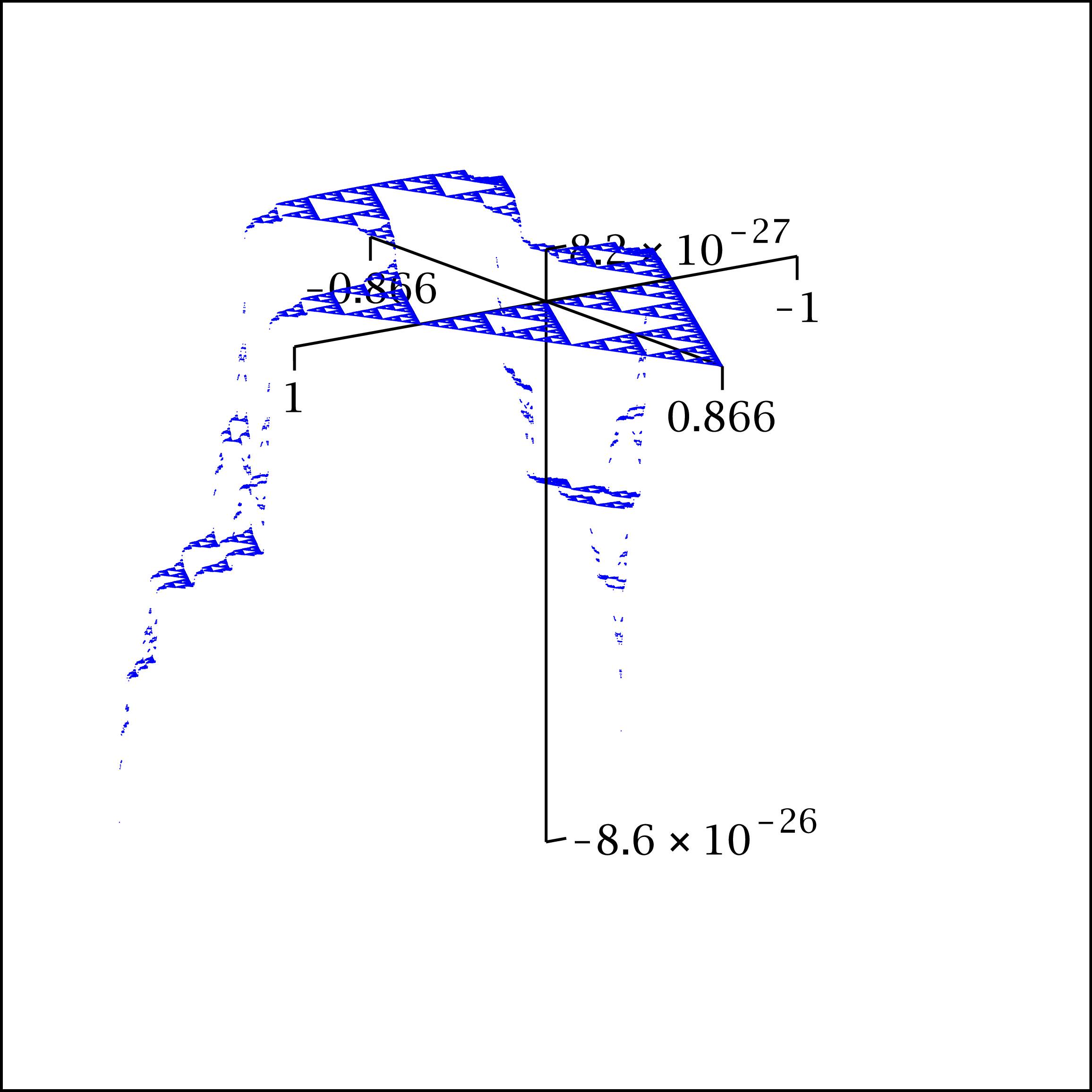}\\  
     \includegraphics[width=30mm]{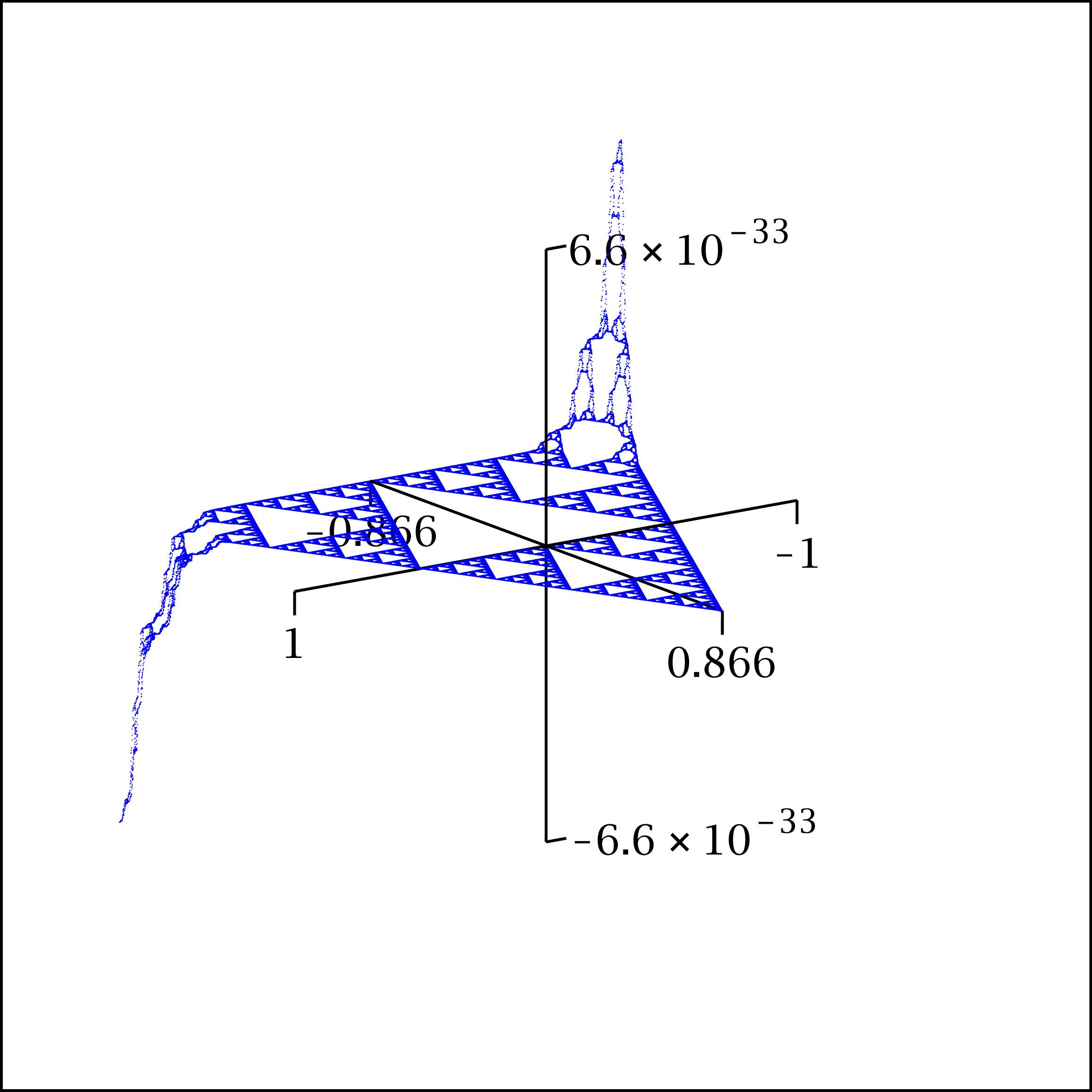}   
      \includegraphics[width=30mm]{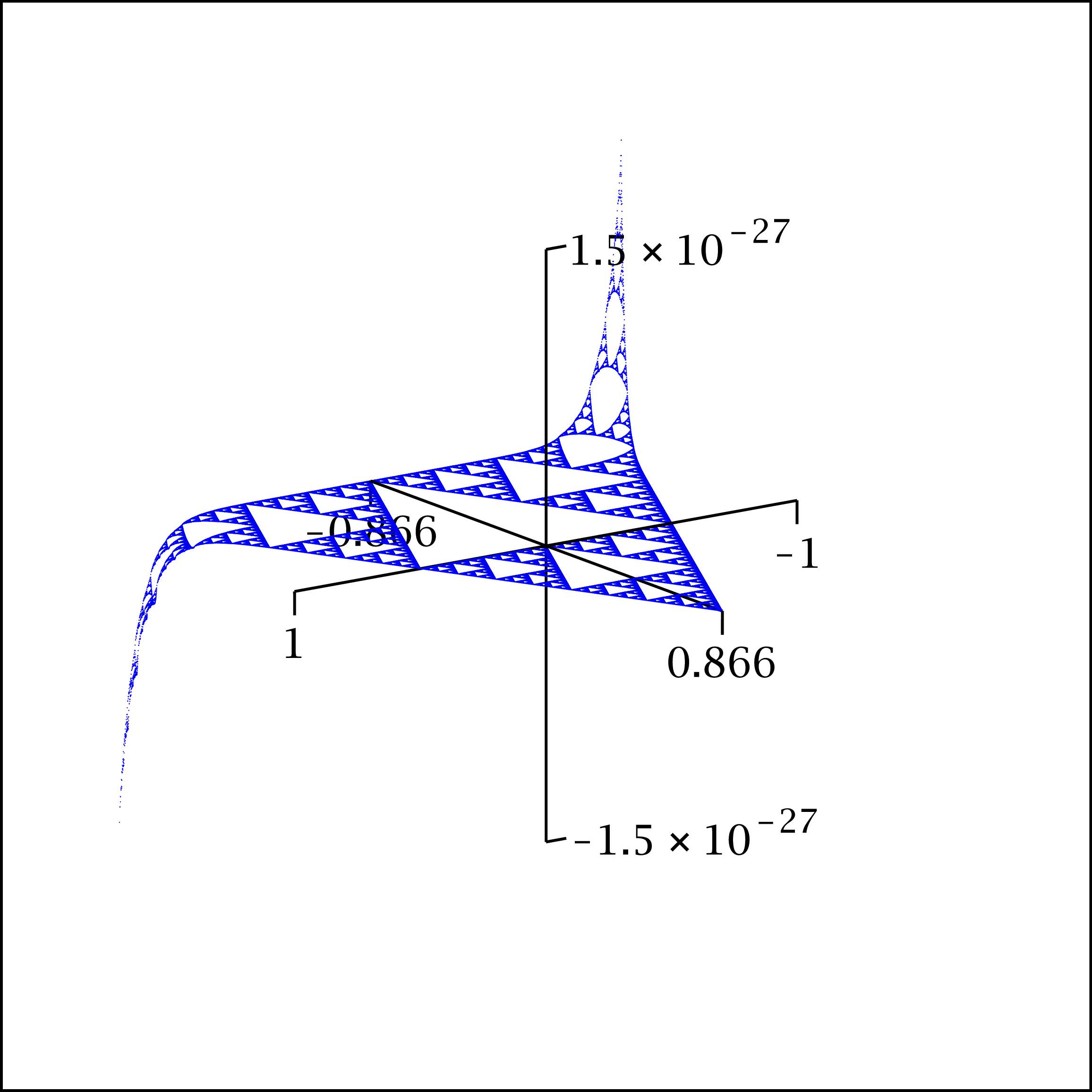}   
        \includegraphics[width=30mm]{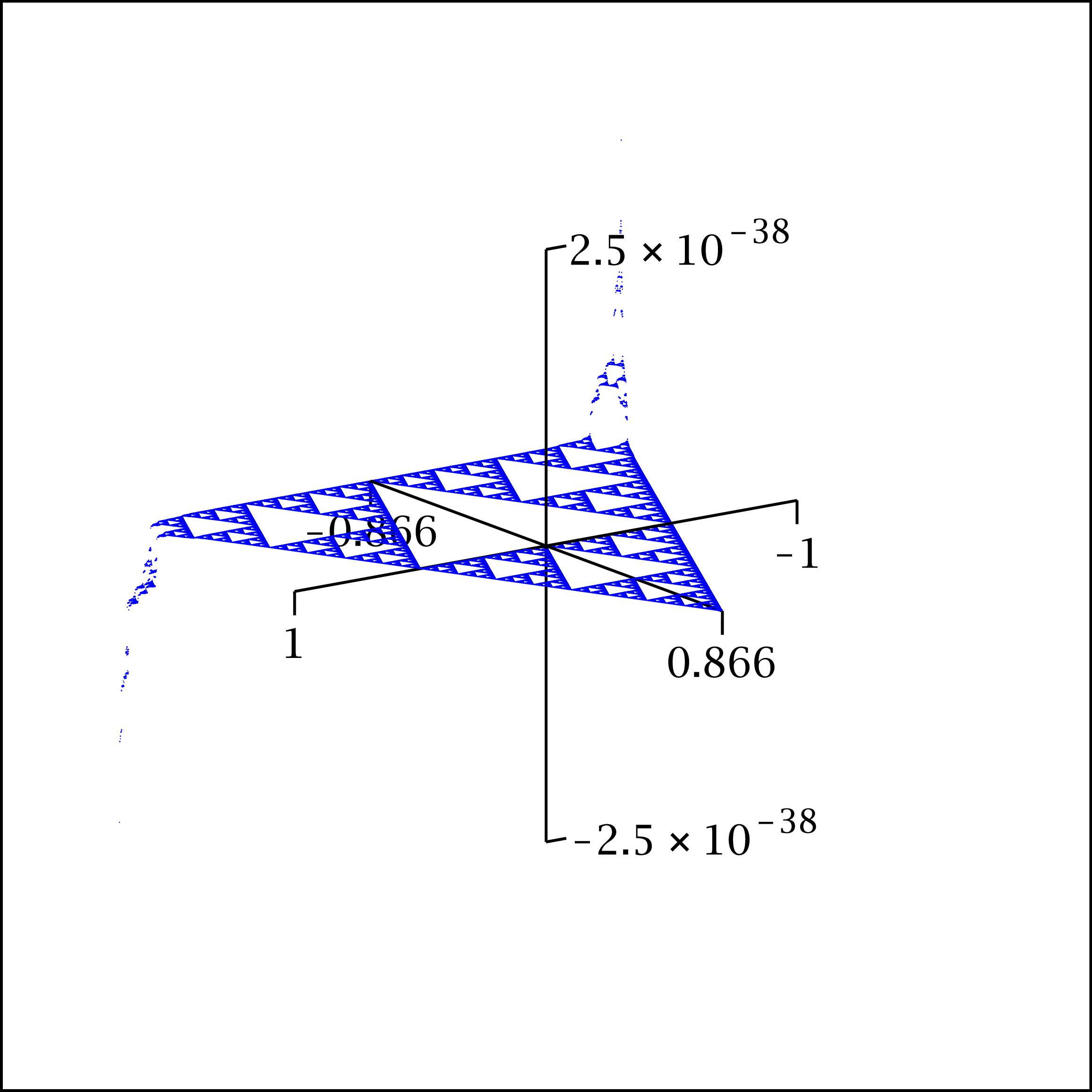}
    \caption{Graphs of  $P_{10,1}^{(r)}$, $P_{10,2}^{(r)} \ , P_{10,3}^{(r)}$ (top to bottom) with $r=\frac1{10}$, $r=1$, $r=10$ (left to right).}
\end{figure}

\section{Conjectures and Further Questions}

The aim of investigating the decay rates of the boundary values of monomials is to understand for which sequences of coefficients $\{c_{j,k}\}$ the sum 
\begin{equation} \sum_{k=1}^3 \sum_{ j=0}^\infty c_{j,k} P_{j,k} ^{(r)} \label{taylor series} \end{equation} 
defines a convergent Taylor series. The data presented in section 4 suggest that for most values of $r$, the coefficients $c_{j,k}$ must satisfy a condition restricting their growth rate to be slower than some exponential. If the only requirement is that \eqref{taylor series} converges, then it seems that the growth of $c_{j,3}$ can be allowed to be faster than that of $c_{j,1}$ and $c_{j,2}$, but if the additional requirement that the series \eqref{taylor series} can be rearranged to a convergent Taylor series about $q_1$ or $q_2$ is enforced, then $c_{j,k}$ must satisfy a growth condition independent of $k$, as in \cite{nsty}. These statements are implied by the following conjectures.

\begin{conjecture} For $r \neq 1, \frac{ \sqrt{17}-3}{4}$, 
\begin{equation} \lim_{j \to \infty } \frac{ \alpha_{j+1} (r) }{ \alpha_j(r)} = \frac{-1} { 2 \lambda_3(r) - \lambda_2(r) }. \end{equation} 
\end{conjecture}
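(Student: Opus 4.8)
The plan is to pass to the generating function $A(x) = \sum_{j=0}^\infty \alpha_j(r)\,x^j$ and study its singularities, since the ratio $\alpha_{j+1}(r)/\alpha_j(r)$ converges to the reciprocal of the location of the dominant singularity of $A$ whenever that singularity is a single simple pole. The recurrence of Theorem 2.8 is built for this translation: every convolution $\sum_{i_1+\dots+i_k=j}\alpha_{i_1}\cdots\alpha_{i_k}$ is the $j$-th Taylor coefficient of $A(x)^k$, while each term carrying a factor $L(r)^{-i}$ produces the rescaled series $\tilde A(x):=A(x/L(r))$. Thus (2.20) is exactly the coefficientwise form of a single functional equation $\Phi(A(x)) + \Psi(A(x))\,A(x/L(r)) = 0$, where $\Phi$ (degree $6$ in $A$) and $\Psi$ are read off from the recurrence with coefficients rational in $r$; this is the scalar shadow of the matrix identity (2.26) already used to prove Theorem 2.8. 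A useful feature is that the low-order factor is explicit: $\Psi(A) = -(8r^2+8r)A^2 - 4rA + 2r^2 = -2r(4(r+1)A^2 + 2A - r)$, whose discriminant is the perfect square $4(2r+1)^2$, so its roots are simply $A = -\tfrac12$ and $A = \tfrac{r}{2(r+1)}$.

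Solving for the rescaled series gives $A(y) = -\Phi(A(L(r)y))/\Psi(A(L(r)y))$. Because $0<L(r)<1$, this expresses the values of $A$ on a disc of radius $\rho$ in terms of its values on the smaller disc of radius $L(r)\rho$, and hence continues $A$ outward from the origin as a meromorphic function; the first step would be to make this continuation rigorous and to show that the poles arise precisely at points $y$ where $A(L(r)y)$ equals a root of $\Psi$ while $\Phi$ is nonvanishing there. The numerically observed alternation in sign of the $\alpha_j(r)$ suggests the dominant singularity lies on the negative real axis, so I would track $A(x)$ for real $x<0$ and locate the first $x_0<0$ at which $A(L(r)x_0)$ reaches $-\tfrac12$ or $\tfrac{r}{2(r+1)}$. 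A separate, standard argument — a Pringsheim/positivity-type analysis of the continued function, or direct control of the residue — is then needed to confirm that this singularity is a simple pole and strictly dominant, which upgrades $\limsup|\alpha_j|^{1/j}=|x_0|^{-1}$ to the genuine limit $\alpha_{j+1}(r)/\alpha_j(r)\to 1/x_0$, and with $x_0 = -(2\lambda_3(r)-\lambda_2(r))$ this is precisely the conjectured value $\tfrac{-1}{2\lambda_3(r)-\lambda_2(r)}$.

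The decisive step is to identify $x_0$ spectrally. Here I would use that $A(x) = G_1(x,q_1)$, where $G_1(x,\cdot):=\sum_j x^j P_{j,1}^{(r)}$ is, by the jet conditions \eqref{pj1 definition}–\eqref{pj3 definition}, the $R_0$-symmetric solution of $\Delta_r u = x u$ normalized by $u(q_0)=1$, $\partial_{n,r}u(q_0)=0$. Its poles in $x$ are exactly the values for which this normalized problem degenerates, i.e. for which there exists a nonzero $R_0$-symmetric solution of $\Delta_r u = x u$ with $u(q_0)=\partial_{n,r}u(q_0)=0$. The plan would be to express this degeneracy condition through the spectral decimation of \cite{fkls}, exploiting that the functional equation above is itself the boundary-value incarnation of the decimation map $x\mapsto x/L(r)$ relating successive graph approximations, and to match the resulting exceptional value with the Neumann data $\lambda_2(r),\lambda_3(r)$.

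The hard part is exactly this last identification: the conjectured pole location $-(2\lambda_3(r)-\lambda_2(r))$ is a \emph{linear combination} of two distinct Neumann eigenvalues rather than a single eigenvalue, so the naive ``pole $=$ eigenvalue'' heuristic fails and genuinely new structural input — presumably the interaction of the multiplicity-$2$ eigenvalue $\lambda_3(r)$ and the multiplicity-$1$ eigenvalue $\lambda_2(r)$ through the decimation polynomial, consistent with the coefficients $2$ and $-1$ in the formula — must be supplied. I would also expect the two excluded parameters to appear in this analysis as degeneracies: $r=1$ as the value where the extra symmetry of the standard Laplacian forces the residue at $x_0$ to vanish, so that $A$ is in fact entire and $\alpha_j(1)$ decays faster than any exponential; and $r=\tfrac{\sqrt{17}-3}{4}$, the positive root of $2r^2+3r-1$, as the value where $\alpha_j(r)$ acquires a vanishing boundary value (matching the root near $0.2807764$ and the disappearing peak in the $\beta$-ratio data) so that the dominant-pole picture degenerates and the limit formula breaks down.
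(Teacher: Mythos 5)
Be aware of what you are being asked to prove: in the paper this statement is Conjecture 5.1, not a theorem. The authors offer no proof; they support it only with the numerical plots of $\frac{\alpha_{j+1}(r)}{\alpha_j(r)}$ against $\frac{-1}{2\lambda_3(r)-\lambda_2(r)}$ in Section 4, and they explicitly hedge that "for $j$ as large as 49, there are still visible differences" and that "the relationship in the above conjectures is only approximate." So there is no argument in the paper to compare yours against, and any submission claiming a proof would be claiming strictly more than the authors do.

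On its own merits, your proposal is a sensible research program but not a proof, and you say as much. The setup is sound: the translation of the recurrence of Theorem 2.8 into a functional equation for $A(x)=\sum_j\alpha_j(r)x^j$ is correct (the semicirculant matrices in the paper's proof of Theorem 2.8 are exactly the multiplication operators on such power series), and your factorization $\Psi(A)=-2r\bigl(4(r+1)A^2+2A-r\bigr)$ with roots $-\tfrac12$ and $\tfrac{r}{2(r+1)}$ checks out, as does the identification of $\tfrac{\sqrt{17}-3}{4}$ as the positive root of $2r^2+3r-1$ and its appearance in the removable discontinuity of the decimation relation (6.3). But every step that would constitute the actual proof is deferred: (i) that $A$ has positive radius of convergence at all; (ii) that the functional iteration really continues $A$ to a meromorphic function and that the first singularity encountered is a simple, strictly dominant pole on the negative axis (without which the ratio limit need not exist even if $\limsup|\alpha_j|^{1/j}$ is identified); and (iii) the decisive spectral identification $x_0=-(2\lambda_3(r)-\lambda_2(r))$, which you correctly observe cannot follow from the naive "pole at $-\lambda$" mechanism of \cite{nsty} because the conjectured location is a linear combination of two eigenvalues of different multiplicities, and for which you concede that "genuinely new structural input must be supplied." That concession is the gap: the one step that would turn numerical observation into theorem is precisely the step you do not supply. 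As it stands your text is a plausible plan of attack on an open problem, and should be presented as such rather than as a proof.
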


\begin{conjecture} For $r \neq \frac{ \sqrt{17}-3}{4} $, \begin{equation} \lim_{j \to \infty } \frac{ \beta_{j+1} (r) }{ \beta_j(r)} = \frac{-1} { 2 \lambda_3(r) - \lambda_2(r) }. \end{equation} 
\end{conjecture}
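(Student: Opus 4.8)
The plan is to turn the recursion into a statement about a single analytic function of $z$ and read off the ratio limit from that function's nearest singularity. First I would introduce the full eigenfunction generating series $G_2(x,z) = \sum_{j=0}^\infty P_{j,2}^{(r)}(x)\, z^j$ alongside the scalar series $B(z) = \sum_{j=0}^\infty \beta_j(r) z^j$. Because $\Delta_r P_{j,2}^{(r)} = P_{j-1,2}^{(r)}$ for $j\geq 1$ and $\Delta_r P_{0,2}^{(r)} = 0$, applying $\Delta_r$ term by term gives
$$\Delta_r\, G_2(\cdot,z) = z\, G_2(\cdot,z),$$
so wherever the series converges $G_2(\cdot,z)$ is the eigenfunction of $\Delta_r$ with eigenvalue $z$ whose jet at $q_0$ is $(0,1,0)$: that is, $G_2(q_0,z)=0$, $\partial_{n,r}G_2(q_0,z)=1$, and $\partial_{T,r}G_2(q_0,z)=0$, the last holding automatically since each $P_{j,2}^{(r)}$ is $R_0$-symmetric (Lemma 2.2). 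In particular $B(z) = G_2(q_1,z)$, so the radius of convergence of $B$ is the modulus of the nearest singularity of $z\mapsto G_2(q_1,z)$. The whole problem reduces to locating that singularity: if it is a simple pole at a real $z_0$ and is the unique singularity of smallest modulus, then $\beta_j(r)\sim C z_0^{-(j+1)}$ and $\beta_{j+1}(r)/\beta_j(r)\to 1/z_0$.

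Next I would make $z_0$ explicit through a Dirichlet-to-Neumann map. For generic $z$ the solutions of $\Delta_r u = zu$ on $SG$ form a three-dimensional space parametrized by the boundary values $(u(q_0),u(q_1),u(q_2))$, and there is a meromorphic matrix $\Lambda(z)$ sending these to $(\partial_{n,r}u(q_0),\partial_{n,r}u(q_1),\partial_{n,r}u(q_2))$, with poles only at Dirichlet eigenvalues. Restricting to the $R_0$-symmetric sector $u(q_1)=u(q_2)$ and imposing $u(q_0)=0$, the condition $\partial_{n,r}u(q_0)=1$ reads $\psi(z)\,u(q_1)=1$ with $\psi(z)=\Lambda_{01}(z)+\Lambda_{02}(z)$, so that
$$B(z) = G_2(q_1,z) = \frac{1}{\psi(z)}.$$
The identical computation for $G_1$ gives $A(z):=\sum_j \alpha_j(r) z^j = -\Lambda_{00}(z)/\psi(z)$. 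Two structural facts already emerge and match the numerics: both $\{\alpha_j(r)\}$ and $\{\beta_j(r)\}$ have singularities at the zeros of the same function $\psi$, which is why Conjectures 5.1 and 5.2 predict the same limit; and the pole of $A$ at a zero $z_0$ of $\psi$ is cancelled exactly when $\Lambda_{00}(z_0)=0$. I expect this cancellation to occur precisely at $r=1$, which is consistent both with the known super-exponential decay of $\alpha_j(1)$ and with $r=1$ being excluded from Conjecture 5.1 but not from 5.2.

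The heart of the argument is then to show that the smallest-modulus zero of $\psi$ occurs at $z_0 = -(2\lambda_3(r)-\lambda_2(r))$. A zero of $\psi$ corresponds to a nonzero $R_0$-symmetric eigenfunction with vanishing value and normal derivative at $q_0$, i.e. vanishing Cauchy data at $q_0$; I would identify these mixed-data eigenvalues with the Neumann spectrum by invoking the spectral decimation of $\Delta_r$ from \cite{fkls} (as set up in the appendix), tracking how eigenvalues and eigenfunctions refine from $\Gamma_m$ to $\Gamma_{m+1}$ and how they organize under the $D_3$-representation decomposition, the multiplicity-$1$ eigenvalue $\lambda_2(r)$ and the multiplicity-$2$ eigenvalue $\lambda_3(r)$ being the data that enter with coefficients $-1$ and $2$. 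I expect this identification, together with verifying that $z_0$ is a simple pole and strictly the closest singularity (so that no conjugate pair of equal modulus spoils the ratio), to be the main obstacle: the appearance of the combination $2\lambda_3(r)-\lambda_2(r)$ rather than a single eigenvalue strongly suggests that $\psi$ factors, or that its leading zero is pinned, through a product over the symmetry-reduced spectra, and the decisive step is to prove such a factorization and then control the resulting transcendental equation well enough to isolate its smallest root. Finally, the excluded value $r=\frac{\sqrt{17}-3}{4}$, which satisfies $2r^2+3r-1=0$, should be exactly where $2\lambda_3(r)=\lambda_2(r)$, so that $z_0\to 0$ and $\psi$ acquires a colliding or higher-order zero and the simple-pole analysis degenerates; confirming this degeneracy would account for the observed thin peak in $\beta_{j+1}(r)/\beta_j(r)$ there and complete the description of the exceptional set.
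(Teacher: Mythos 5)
There is no proof of this statement to compare against: it is Conjecture 5.2, and the paper's only support for it is numerical --- the overlay of the graphs of $\frac{\beta_{j+1}(r)}{\beta_j(r)}$ for $j$ up to $49$ against $\frac{-1}{2\lambda_3(r)-\lambda_2(r)}$ in Figure 4.10, together with the empirical observation that the coefficients $2$ and $1$ match the multiplicities of $\lambda_3(r)$ and $\lambda_2(r)$. The authors explicitly hedge that visible discrepancies remain at $j=49$ and that ``the relationship in the above conjectures is only approximate.'' Your proposal is a sensible research program, and its first half is sound and essentially the route by which the $r=1$ decay rate was obtained in \cite{nsty}: the generating function $G_2(\cdot,z)$ is indeed the $z$-eigenfunction with Cauchy data $(0,1,0)$ at $q_0$, the reduction of $B(z)$ to $1/\psi(z)$ via a Dirichlet-to-Neumann map is correct in the $R_0$-symmetric sector, and the observation that $A(z)=-\Lambda_{00}(z)/\psi(z)$ shares its poles with $B(z)$ except where the numerator cancels one is a genuinely useful structural explanation for why Conjectures 5.1 and 5.2 predict the same limit with $r=1$ excluded only from the former. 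But the decisive step is missing: a zero of $\psi$ is a single spectral parameter attached to an eigenfunction with vanishing Cauchy data at $q_0$, whereas $2\lambda_3(r)-\lambda_2(r)$ is a linear combination of two distinct Neumann eigenvalues and is not itself in any spectrum. Your proposed bridge --- that $\psi$ ``factors through a product over the symmetry-reduced spectra'' --- is exactly the content of the conjecture, and you give no mechanism for it; every load-bearing claim in the third paragraph is an ``I expect.''

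There is also a concrete inconsistency in your treatment of the exceptional value $r_*=\frac{\sqrt{17}-3}{4}$. You propose that $2\lambda_3(r_*)=\lambda_2(r_*)$, so that $z_0\to 0$ and the conjectured limit degenerates; but that would force $\frac{\beta_{j+1}(r)}{\beta_j(r)}\to\frac{1}{z_0}$ to blow up near $r_*$, while the paper reports the opposite: the ratio has a narrow local maximum there with value about $-0.0025$, i.e.\ it moves \emph{toward} zero, indicating faster-than-generic decay. The correct analogue of your own $r=1$ analysis would be a vanishing residue (numerator cancellation) at the leading pole, not a collision of $z_0$ with the origin; and the paper's own diagnosis of $r_*$ is different again, namely that the discontinuity of the spectral decimation function $\lambda_m(\lambda_{m+1},r)$ at $\lambda_{m+1}=\frac{3(r+2)}{r+1}$ becomes removable precisely at $r=1$ and $r=r_*$. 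So the proposal should be read as a plausible strategy whose framework is consistent with the known $r=1$ case, but which leaves the conjecture exactly as open as the paper does, and whose account of the exceptional set contradicts the very data the conjecture rests on.
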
 
The data seem to show this convergence, and given what  is known about the convergence of $\frac{ \beta_{j+1}(1) } { \beta_j(1)}$, this seems natural to expect. However, for $j$ as large as 49, there are still visible differences between the graphs of $ \frac{\alpha_{j+1}(r)} { \alpha_j(r) }$ and $\frac{-1} { 2 \lambda_3(r) - \lambda_2(r) }$ and between  $\frac{ \beta_{j+1} (r) }{ \beta_j(r)}$ and $\frac{-1} { 2 \lambda_3(r) - \lambda_2(r) }$. This suggests the possibility that the relationship between the ratios of consecutive values of $\alpha_j(r)$ and between consecutive values of $\beta_j(r)$ with the Neumann eigenvalues of $\Delta_r$ may be more complicated and that the relationship in the above conjectures is only approximate. 

\begin{conjecture} For all but at most finitely many values of $r$, 
\begin{equation} \lim_{j \to \infty } \frac{ \gamma_{j+1} (r) }{ \gamma_j(r) } \ \text{exists}.  \label{gamma limit} \end{equation}  Moreover, there exists a continuous function, $g(r)$ with $g(r) <0$ for all $r \in (0, \infty) $ such that when \eqref{gamma limit} exists, 
\begin{equation} \lim_{j \to \infty } \frac{ \gamma_{j+1} (r) }{ \gamma_j(r) } =g(r).\end{equation} 
\end{conjecture}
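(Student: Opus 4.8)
The plan is to study the generating function $C_r(z)=\sum_{j=0}^\infty \gamma_j(r)\,z^j$ and to read off the asymptotics of $\gamma_{j+1}(r)/\gamma_j(r)$ from the location and nature of its singularity of smallest modulus. The starting observation is that $\Delta_r P_{j,3}^{(r)}=P_{j-1,3}^{(r)}$, which is immediate from the jet characterization \eqref{pj1 definition}--\eqref{pj3 definition}: the series $v_{z,r}=\sum_{j\geq 0} z^j P_{j,3}^{(r)}$ then satisfies $\Delta_r v_{z,r}=z\,v_{z,r}$, and by lemma 2.1 it is skew-symmetric with $v_{z,r}(q_0)=0$, $\partial_{n,r}v_{z,r}(q_0)=0$, $\partial_{T,r}v_{z,r}(q_0)=1$. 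Thus $C_r(z)=v_{z,r}(q_1)$ is the value at $q_1$ of the skew-symmetric $z$-eigenfunction of $\Delta_r$ normalized by its tangential derivative at $q_0$, and the singularities of $C_r$ in $z$ occur precisely at the (necessarily real, nonpositive) eigenvalues of the associated boundary value problem, where this normalization degenerates. A simple pole of smallest modulus at $z_0(r)<0$ with nonzero residue would give $\gamma_j(r)\sim c(r)\,z_0(r)^{-j}$ and hence $\gamma_{j+1}(r)/\gamma_j(r)\to 1/z_0(r)$.

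To make this rigorous I would use the functional equation coming from \eqref{gamma relation}. Translating that matrix identity into generating functions — a lower-triangular semicirculant matrix with symbol $x_k$ corresponds to $\sum_k x_k z^k$, the operator $\mathbf{L}$ to the substitution $z\mapsto z/L(r)$, and matrix products to Cauchy products — produces a scalar functional equation
\begin{equation*}
(9r^2+26r+15)\,C_r\!\big(z/L(r)\big) = (3r^2-2r+1)\,C_r(z) + R\big(A_r(z);r\big),
\end{equation*}
where $A_r(z)=\sum_j\alpha_j(r)z^j$ and $R(\,\cdot\,;r)$ is the explicit cubic appearing in \eqref{gamma recurrence}. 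Since this equation is linear in $C_r$ with contraction factor $\kappa(r)=(3r^2-2r+1)/(9r^2+26r+15)\in(0,1)$, iterating it (using $L(r)<1$ and $C_r(0)=\gamma_0=\tfrac12$) expresses $C_r$ entirely through $A_r$ and continues it meromorphically wherever $A_r$ continues. This is exactly the spectral-decimation mechanism of \cite{fkls}: the singularities of $C_r$ lie among the decimation preimages of those of $A_r$, so the global pole structure of $C_r$ is controlled once that of $A_r$ is, with the eigenvalues located along the negative real axis.

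With the continuation in hand, the next step is singularity analysis: locate the pole $z_0(r)$ of smallest modulus, show it is real, negative, and generically simple, and apply a transfer theorem to conclude $\gamma_{j+1}(r)/\gamma_j(r)\to 1/z_0(r)=:g(r)$. Negativity of $g$ follows from negativity of the Neumann spectrum, and continuity of $g$ follows from continuity — indeed real-analyticity — of the eigenvalue $z_0(r)$ in $r$, via analytic perturbation theory applied to the decimation recursion. Crucially, $z_0(r)$ is a pole \emph{location}, so $g(r)=1/z_0(r)$ can remain finite and negative even where the limit itself degenerates: the excluded exceptional values are precisely those $r$ at which the residue at $z_0(r)$ vanishes (equivalently, a zero of $C_r$ collides with $z_0(r)$, so that $\gamma_j(r)$ acquires a persistent root). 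This is the analytic shadow of the observed roots of $\gamma_j(\cdot)$ converging to $r=1$: there $\gamma_j(1)=3\alpha_{j+1}(1)$ decays faster than any exponential, so $C_1$ is entire and the ratio tends to $0$ rather than to $g(1)$.

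The hard part will be controlling the exceptional set and the residues. One must show that the residue at $z_0(r)$ vanishes, and that two poles of equal smallest modulus collide (which would give oscillation rather than convergence), for at most finitely many $r\in(0,\infty)$; since each of these is the vanishing of a function of $r$ that is real-analytic away from degeneracies, the crux is to prove that the relevant function is not identically zero and is proper enough as $r\to 0^+$ and $r\to\infty$ to have only finitely many zeros. A secondary, purely analytic difficulty is to verify that the smallest-modulus singularity of $C_r$ is genuinely a simple pole — rather than a branch point or an accumulation of poles generated by the decimation — which demands quantitative estimates on the continuation furnished by the functional equation, beyond the formal iteration sketched here.
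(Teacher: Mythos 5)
The first thing to say is that the statement you are proving is Conjecture 5.3 of the paper: the authors do not prove it, and offer only numerical evidence (the graphs of $\gamma_{j+1}(r)/\gamma_j(r)$ in Section 4). So there is no proof of record to compare against, and your proposal must stand on its own as a proof --- which it does not. You yourself flag the decisive steps as open: that the smallest-modulus singularity of $C_r$ is a genuine simple pole rather than a branch point or an accumulation of poles, that the residue there vanishes for only finitely many $r$, and that no two singularities of equal modulus coexist. These are not technical afterthoughts; they are the entire content of the conjecture. There is also a circularity at the very first step: to treat $C_r(z)=\sum_j\gamma_j(r)z^j$ and $v_{z,r}=\sum_j z^jP^{(r)}_{j,3}$ as analytic objects you need an a priori exponential bound on $\gamma_j(r)$ (indeed on $\|P^{(r)}_{j,3}\|_\infty$), which is essentially Conjecture 5.4 of the same paper; without it the series is only formal and ``singularity analysis'' has no meaning. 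The assertion that the singularities of $C_r$ sit at real nonpositive eigenvalues of ``the associated boundary value problem'' also needs justification: the degeneration you describe is the existence of a nonzero skew-symmetric solution of $\Delta_r u=zu$ with $u(q_0)=\partial_{n,r}u(q_0)=\partial_{T,r}u(q_0)=0$, which is not one of the standard self-adjoint Dirichlet or Neumann problems, so realness and negativity of that spectrum must be argued, not assumed.

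Second, there is a concrete error in your functional equation. Reading generating functions off \eqref{gamma recurrence}, every term other than the $\mathbf{L}$-term and the $\gamma_j$-term is a convolution of $\gamma$ against powers of $\alpha$, so the cubic in $A_r(z)$ \emph{multiplies} $C_r(z)$; there is no inhomogeneous term $R(A_r(z);r)$. The correct identity is
\begin{equation*}
(9r^2+26r+15)\,C_r\!\left(z/L(r)\right)=\Bigl(32(r+1)^2A_r(z)^3-16(r+1)^2A_r(z)^2-(10r^2+4r+2)A_r(z)+(3r^2-2r+1)\Bigr)C_r(z)
\end{equation*}
(note that \eqref{gamma relation} as printed in the paper drops the factors of $\boldsymbol\gamma$ that are visibly present in \eqref{gamma recurrence}; the displayed form above is consistent with the recurrence and with $A_r(0)=\alpha_0=1$). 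Iterating a multiplicative equation yields an infinite product $C_r(z)=\tfrac12\prod_{k\ge1}M_r(L(r)^kz)$ rather than the linear, Neumann-series continuation you describe, and this changes the analysis materially: the zeros of $C_r$ --- which are exactly what govern your exceptional set and the observed roots of $\gamma_j(\cdot)$ accumulating at $r=1$ --- arise from zeros of the multiplier $M_r$ along the orbit $\{L(r)^kz\}_{k\ge1}$, while the singularities arise from poles of $M_r$ along that orbit together with possible divergence of the product. None of this is fatal to the program, which is sensible and consistent with the decimation structure of \cite{fkls} and with the paper's own speculation that $g(r)$ is tied to eigenvalues higher in the spectrum; but as written the proposal is a research plan with its central difficulties unresolved, not a proof.
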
 

Perhaps there is a relationship between $\frac{ \gamma_{j+1}(r)}{ \gamma_j(r)} $ and the spectrum of $\Delta_r$. If such a relationship exists, it seems that it would involve eigenvalues higher up in the spectrum of $\Delta_r$, since $\frac{ \gamma_{j+1}(r)}{ \gamma_j(r)}$ apparently converges to smaller values than the ratios of the $\alpha_j(r)$ and $\beta_j(r)$ do. 

\begin{conjecture} For $r \neq 1, \frac{ \sqrt{17}-3}{4} $, 
\begin{equation} 
\| P_{j,1} ^{(r)} \| _ \infty =O ( |2\lambda_3 (r) - \lambda_2(r)|^{-j} ). \end{equation} 
For $r \neq \frac{\sqrt{17}-3}{4}$
\begin{equation} 
\| P_{j,2} ^{(r)} \| _ \infty = O (  |2\lambda_3 (r) - \lambda_2(r)|^{-j} ). \end{equation} 
For all but at most finitely many values of $r$, 
\begin{equation} 
\| P_{j,3}^{(r)} \| _ \infty = O (| g(r)|^{j} ) . \end{equation} 
\end{conjecture}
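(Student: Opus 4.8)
This is a strategy for what is stated as a conjecture, so the plan necessarily rests on the three preceding conjectures and isolates the genuinely new difficulties. The plan is to recast the monomials through a single generating function and to read off the sup-norm decay from the radius of convergence of that generating function, viewed as a Banach-space-valued power series. For each $k \in \{1,2,3\}$ set
\[ u_k(z,x) = \sum_{j=0}^{\infty} z^{j} P_{j,k}^{(r)}(x). \]
Since $\Delta_r P_{j,k}^{(r)} = P_{j-1,k}^{(r)}$ for $j \geq 1$ and $\Delta_r P_{0,k}^{(r)} = 0$, the series formally satisfies $\Delta_r u_k = z\, u_k$, with the normalized jet $u_k(q_0) = \delta_{k1}$, $\partial_{n,r} u_k(q_0) = \delta_{k2}$, $\partial_{T,r} u_k(q_0) = \delta_{k3}$ at $q_0$. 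Thus $u_k(z,\cdot)$ is the solution of the eigenvalue equation $\Delta_r u = z u$ carrying unit data in one slot of the jet at $q_0$ and zero in the other two, and the scalar traces $u_1(z,q_1) = \sum_j \alpha_j(r) z^j$, $u_2(z,q_1) = \sum_j \beta_j(r) z^j$, and $u_3(z,q_1) = \sum_j \gamma_j(r) z^j$ are its boundary values at $q_1$. The first step is to justify, for $z$ near $0$, that these series converge in $C(SG)$ and that $u_k(z,\cdot)$ genuinely solves $\Delta_r u = zu$, so that $z \mapsto u_k(z,\cdot)$ is a holomorphic $C(SG)$-valued function on a maximal disk $\{|z| < R_k\}$.

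The second step is to identify the boundary radii $R_k$. The map sending a jet at $q_0$ to the corresponding solution of $\Delta_r u = zu$ is holomorphic in $z$ and degenerates exactly at those $z$ for which a nonzero solution has vanishing jet at $q_0$; these critical values should be computable from the spectral decimation used in the appendix. Granting Conjectures 5.1, 5.2, and 5.3, the ratio test gives that $\sum_j \alpha_j(r) z^j$ and $\sum_j \beta_j(r) z^j$ have radius of convergence $|2\lambda_3(r) - \lambda_2(r)|$, while $\sum_j \gamma_j(r) z^j$ has radius of convergence $1/|g(r)|$. The substance of those conjectures is precisely that the nearest singularity of these particular traces sits at that modulus; the emergence of the combination $2\lambda_3(r) - \lambda_2(r)$ rather than a single eigenvalue should reflect a residue cancellation forced by the $D_3$ symmetry together with the jet identities of Theorem 2.9, notably $n_{j,2}(r) = -\alpha_j(r)$ (Theorem 3.4) and $t_{j,3}(r) = 0$ for $j \geq 1$, which I would use to express all of the jet data at $q_1$ and $q_2$ in terms of $\alpha_j$, $\beta_j$, and $\gamma_j$ alone.

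The third step promotes this boundary information to a uniform bound over all of $SG$ by propagating the jet data inward cell by cell. On a level-one cell $F_{j_1 j_2}SG$, the cell expansion (2.12) from the proof of Lemma 2.4 writes the restriction of $P_{j,k}^{(r)}$ as a combination of the monomials $P_{i,k}^{(r)}$ whose coefficients are $L(r)^i\Delta_r^i P_{j,k}^{(r)}(x)$, $L(r)^i \lsym(r)\,\partial_{n,r}^{j_1 j_2}\Delta_r^i P_{j,k}^{(r)}(x)$, and $L(r)^i \lskew(r)\,\partial_{T,r}^{j_1 j_2}\Delta_r^i P_{j,k}^{(r)}(x)$. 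Because $L(r), \lsym(r), \lskew(r) < 1$ and $\Delta_r^i P_{j,k}^{(r)} = P_{j-i,k}^{(r)}$, writing $M_j^{(m)} = \max_{v \in V_m} |P_{j,k}^{(r)}(v)|$ yields a recursion that bounds $M_j^{(m+1)}$ by a convolution of the $M_i^{(m)}$ against weights carrying the geometrically small factor $L(r)^i$. Closing this recursion by induction on $m$ and $j$ should show that $\lim_m M_j^{(m)} = \|P_{j,k}^{(r)}\|_\infty$ grows no faster than $R_k^{-j}$, equivalently that the $C(SG)$-valued radius of convergence of $u_k(z,\cdot)$ equals the scalar boundary radius $R_k$; the Banach-valued Cauchy estimate
\[ \|P_{j,k}^{(r)}\|_\infty = \Big\| \frac{1}{2\pi i}\oint_{|z| = R_k - \epsilon} z^{-j-1} u_k(z,\cdot)\, dz \Big\|_\infty \leq C_\epsilon (R_k - \epsilon)^{-j} \]
then delivers the three stated estimates.

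The main obstacle is twofold, and it is what keeps the statement a conjecture. First, the exact location of the nearest singularity --- that it is $|2\lambda_3(r) - \lambda_2(r)|$ for $k = 1,2$ and $1/|g(r)|$ for $k = 3$ --- is the unproven content of Conjectures 5.1--5.3; proving it requires the precise residue structure of the jet-to-solution map at the eigenvalues and a verification of the symmetry-forced cancellation that elevates the combination $2\lambda_3 - \lambda_2$ over $\lambda_2$ by itself. Second, even granting the boundary radii, one must rule out that interior values of $u_k(z,\cdot)$ acquire singularities strictly inside $|z| = R_k$. This is a genuine uniformity statement about the solution operator on all of $SG$, and the cell-expansion induction is the natural but delicate vehicle for it: one has to show that the geometric weights $L(r)^i$ dominate the growth of the lower-degree boundary data at every scale, uniformly in $j$, which for $k = 3$ is further complicated by the dramatic change observed near $j = 28$ in the numerics for $\gamma_j(r)$.
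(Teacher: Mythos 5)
The paper does not prove this statement: it is Conjecture 5.4, and the only support the authors offer is numerical --- the observation that $|P_{j,1}^{(r)}|$ and $|P_{j,2}^{(r)}|$ appear to attain their maxima at one of the finitely many points $q_1$, $q_2$, $F_{12}q_2$, and that the ratios $P_{j,1}^{(r)}(F_{12}q_2)/\alpha_j(r)$ and $P_{j,2}^{(r)}(F_{12}q_2)/\beta_j(r)$ appear to converge as $j \to \infty$. That implied route (reduce $\|P_{j,k}^{(r)}\|_\infty$ to the values at a fixed finite set of points whose ratios to $\alpha_j$, $\beta_j$ stabilize, then invoke Conjectures 5.1--5.2) is genuinely different from your generating-function route, and is in some ways more modest: it needs control at three points rather than a Banach-space-valued radius-of-convergence statement for $z \mapsto u_k(z,\cdot)$. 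Your proposal is a coherent program and you correctly isolate the dependence on Conjectures 5.1--5.3 and the interior-uniformity problem, but two specific issues deserve flagging.

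First, the Cauchy estimate on $|z| = R_k - \epsilon$ yields only $\|P_{j,k}^{(r)}\|_\infty \le C_\epsilon (R_k - \epsilon)^{-j}$, which is strictly weaker than the asserted $O(R_k^{-j})$; to reach the sharp rate you must know that the singularity on the circle $|z| = R_k$ is a pole whose principal part can be subtracted off, which is exactly the ``precise residue structure'' you defer. Second, and more seriously, your framework predicts that the nearest singularity of $z \mapsto u_k(z,\cdot)$ sits at an exceptional value of the jet-to-solution map --- a spectral-type quantity --- whereas $2\lambda_3(r) - \lambda_2(r)$ is a linear combination of two distinct Neumann eigenvalues and is not itself an eigenvalue of anything in sight. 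A symmetry-forced residue cancellation at $-\lambda_2(r)$ would push the singularity out to the \emph{next} exceptional value, not to $-(2\lambda_3(r) - \lambda_2(r))$. The authors hedge on precisely this point, writing after Conjecture 5.2 that the relationship with the Neumann eigenvalues ``may be more complicated and that the relationship in the above conjectures is only approximate.'' So your scheme, if carried out, would more likely either refute the stated constants or replace them with the true exceptional values of the jet-to-solution map; either outcome would be interesting, but it would not be a proof of the conjecture exactly as written.
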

There is some numerical evidence suggesting that $| P_{j,1}^{(r)}| $ and $|P_{j,2}^{(r)} | $ attain the values 
$\| P_{j,1}^{(r)}\|_\infty  $ and $\|P_{j,2}^{(r)} \|_ \infty  $, respectively, at either $q_1 $ and $q_2$ or $F_{12}(q_2)$ depending on the values of $j$ and $r$, and that the limits as $j \to \infty$ of the ratios $\frac{ P_{j,1}^{(r) } ( F_{12} q_2) }{ \alpha_j(r) } $ and $\frac{ P_{j,2}^{(r) } ( F_{12} q_2 ) }{ \beta_j(r) } $ exist except for exceptional values of $r$, which would imply conjecture 5.4. In particular, there is explicit evidence that conjecture 2.10 of \cite{nsty} does not hold when $r$ is not equal to 1. It is not clear what the behavior at the exceptional points (other than $r=1$, of course) is like. For small values of $j$, $P_{j,3}^{(r)}$ attains its maximum at $q_1$, but it seems reasonable to expect that once the $\gamma_j(r)$ settle into their complicated pattern, beyond $j=30$, the behavior of $P_{j,3}^{(r)} $ becomes more complicated as well. Particularly, since $\gamma_j(r)$ has roots for $j$ large enough, this implies that the maximum value of $P_{j,3}^{(r)}$ cannot be attained at the boundary, and the skew-symmetry excludes the possibility of the maximum being attained at $F_{12}q_2$. \par

\section{Appendix: Neumann Spectrum of $\Delta_r$ }

The focus of \cite{fkls} is the spectrum of the family of self-similar symmetric Laplacians and the fact that these Laplacians exhibit spectral decimation. This spectral decimation means the spectrum of the Laplacian can be computed via a process that extends an eigenfunction of $\Delta_r^{(m)}$, which is defined on $V_m$, to a function defined on $V_{m+1}$ which is an eigenfunction of $\Delta_r^{(m+1)}$ (\cite{fs}). 
The eigenvalues corresponding to eigenfunctions which satisfy Dirichlet boundary conditions are computed in \cite{fkls}. Here, with the hope of generalizing the fact from \cite{nsty} that $\frac{ \beta_j(1) }{ \beta_{j+1} (1) } \to - \lambda_2$ as $j \to \infty$, where $\lambda_2$ is the second nonzero Neumann eigenvalue of the standard Laplacian $\Delta_1$, the eigenvalues of $\Delta_r$ corresponding to eigenfunctions which satisfy Neumann boundary conditions ($\partial_{n,r} u( q_i) =0$) are of interest. \par
If $u$ is assumed to satisfy $\Delta_r u  = \lambda u$, with $u(q_0) =x_0, u(q_1) = x_1, u(q_2) =x_2$ given, then equations (7.5), (7.6), and (7.7) of \cite{fkls} give an algorithm for extending $u$ as an eigenfunction of $\Delta_r$. Thus the problem of determining the Neumann spectrum of $\Delta_r$ is reduced to the problem of determining which values of $x_0, x_1, x_2$ yield extensions to Neumann eigenfunctions. \par 
As is demonstrated in \cite{str}, the condition that $u$ is a Neumann eigenfunction can be restated as the condition that $u$ extends evenly across the boundary of $SG$ to an eigenfunction. By considering $u$ as a function on $SG$ with an extra cell of level 1 attached at each boundary point and defining $u$ on each of these extra cells to be the reflection across the boundary of the restriction of $u$ to the adjacent cell, the eigenvalue equation can be applied not only at junction points, but at the boundary of $SG$ as well to obtain a Neumann eigenfunction. \par
Labeling the values of $u$ at each point in $V_1$, and labeling the values of $u$ at the level 1 points of the extra cells as in figure 6.1, and writing the equation 
\begin{equation} 
L(r) \Delta_r^{(1)} u(x) = \lambda_1 u(x) 
\end{equation} 
in terms of these values for each $x \in V_1$ gives a system of 15 equations in 16 variables, but when the left hand side of this system of equations is represented as a matrix, it is clear that the permissible values of $\lambda_1$ are the eigenvalues of this matrix, and the boundary values of Neumann eigenfunctions come from the corresponding eigenvectors of this matrix. \par
 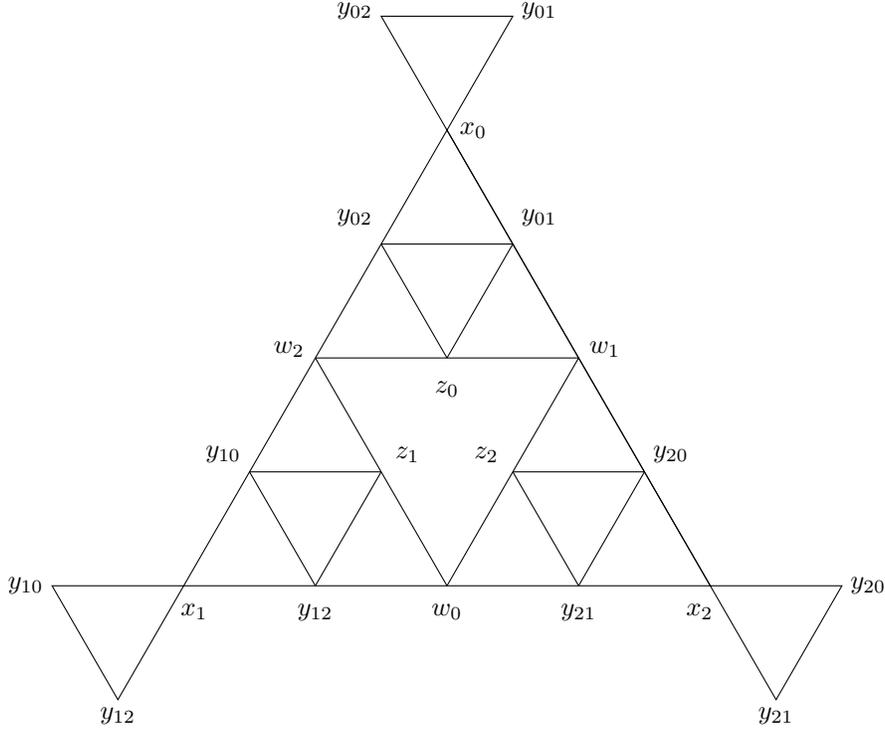
\begin{figure} 
 \begin{center}
 \begin{tikzpicture}[scale=7]
\draw[ -] (-.5,0) -- (.5,0);
\draw[-](-.5,0) -- (-.75,0)--(-.625,-.21651)--(-.5,0);
\draw[-](.5,0) -- (.75,0)--(.625,-.21651)--(.5,0);
\draw[-](0,.86603)--(.125, 1.08254)--(-.125,1.08254)--(0,.86603);
\draw[ -](-.5,0) --(0,.86603);
\draw[ -](.5,0) --(0,.86603);
\draw[ -](.5,0) --(0,.86603);
\draw[ -](0,0) --(.25, .43302);
\draw[ -](0,0) --(-.25, .43302);
\draw[ -](.25, .43302) --(-.25, .43302);
\draw[ -](0, .43302) --(-.125, .64952);
\draw[ -](0, .43302) --(.125, .64952);
\draw[ -](.125, .64952) --(-.125, .64952);
\draw[ -](-.25, 0)-- (-.125,.21651);
\draw[ -](-.25, 0)-- (-.375,.21651);
\draw[ -](-.375,.21651)-- (-.125,.21651);
\draw[ -](.25, 0)-- (.125,.21651);
\draw[ -](.25, 0)-- (.375,.21651);
\draw[ -](.375,.21651)-- (.125,.21651);
\filldraw (.48,.-.05) circle (0pt) node {$x_2$};
\filldraw (-.48,.-.05) circle (0pt) node { $x_1$};
\filldraw (0.05,.86603) circle (0pt) node { $x_0$};
\filldraw (.175,.69952) circle (0pt) node {$y_{01}$};
\filldraw (-.175,.69952) circle (0pt) node {$y_{02}$};
\filldraw (.175,1.09254) circle (0pt) node {$y_{01}$};
\filldraw (-.175,1.09254) circle (0pt) node {$y_{02}$};
\filldraw (.3,.45) circle (0pt) node {$w_1$};
\filldraw (-.3,.45) circle (0pt) node {$w_2$};
\filldraw (0,.375) circle (0pt) node {$z_0$};
\filldraw (.425,.25) circle (0pt) node {$y_{20}$};
\filldraw (-.425,.25) circle (0pt) node {$y_{10}$};
\filldraw (.8,0) circle (0pt) node {$y_{20}$};
\filldraw (-.8,.0) circle (0pt) node {$y_{10}$};
\filldraw (.075,.25) circle (0pt) node {$z_2$};
\filldraw (-.075,.25) circle (0pt) node {$z_1$};
\filldraw (.25,.-.05) circle (0pt) node {$y_{21}$};
\filldraw (-.25,.-.05) circle (0pt) node { $y_{12}$};
\filldraw (.625,-.24651) circle (0pt) node { $y_{21}$};
\filldraw (-.625,-.24651) circle (0pt) node {$y_{12}$};
\filldraw (0,.-.05) circle (0pt) node {$w_0$};
\end{tikzpicture}
\caption{The extension of $u$ to $V_1$ and its even extension across the boundary.}
\end{center}
\end{figure}
These eigenvalues and their respective multiplicities, as well as graphs of the eigenvalues as functions of $r$ are given in figure 6.2. 

\begin{figure}
\begin{center}
\begin{subfigure}{0.4 \textwidth}
\begin{center}
 \begin{tabular}{|c|c|}
\hline
 Eigenvalue & Multiplicity \\ \hline
 $0$ & $1$ \\ \hline
 $\frac{3(2r+1)}{r+1}$ & $1$ \\ \hline   $\frac{3(2r+3)}{r+1}$ & $1$ \\ \hline   $\frac{15r+15-3\sqrt[]{9r^2+18r+15}}{4(r+1)}$ & $2$ \\ \hline   $\frac{15r+15+3\sqrt[]{9r^2+18r+15}}{4(r+1)}$ & $2$  \\ \hline    $\frac{9}{2}$ & $2$ \\ \hline $9$ & $6$ \\ 
 \hline
\end{tabular}
\end{center}
\end{subfigure}
\begin{subfigure}{0.55 \textwidth}
\begin{center}
\includegraphics[width=50mm]{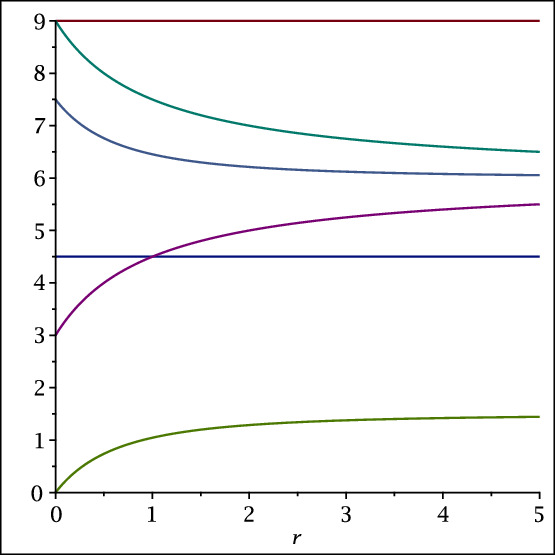} 
\end{center}
\end{subfigure}
\end{center}
\caption{The eigenvalues of the level 1 graph Laplacian as functions of $r$.}
\end{figure}

It is interesting to note that the second smallest nonzero Neumann eigenvalue of $\Delta_r$ changes when $r=1$, so it is reasonable to expect that for $r \neq 1$, both the second and third smallest nonzero Neumann eigenvalues of the Laplacian are related to the decay rate of $\alpha_j(r) $ and $\beta_j(r)$. This means that the eigenvalues of $\Delta_1$ occur with different multiplicities than those of $\Delta_r$ for $r\neq 1$, demonstrating further that the standard Laplacian is exceptional. \par 
From the eigenvalues of the level 1 graph Laplacian, the eigenvalues of the continuous Laplacian are computed as 
\begin{equation} \lambda= \lim_{m \to \infty } L(r)^{-m} \lambda_m
\end{equation} 
where 
\begin{equation} \label{spectraldec} \lambda_m(\lambda_{m+1} , r ) =- \frac{ 1}{27r ((r+1) \lambda_{m+1} -3r-6)} \bigg ( 2 \lambda_{m+1} (( r+1) \lambda_{m+1} - 6r - 3 ) \bigg ( (r+1)^2 \lambda_{m+1}^3\end{equation} $$ - 15 (r+1)^2 \lambda_{m+1}^2 + \bigg ( \frac{ 243}{4}  r^2 + \frac{ 279}{2} r + \frac{ 279}{4} \bigg ) \lambda_{m+1} - \frac{ 243}{4} r^2 - \frac{351}{2} r -\frac{ 405}{4} \bigg ) \bigg ) .$$
Solving for $\lambda_{m+1} $ in terms of $\lambda_m$ involves inverting a quintic polynomial and must be done numerically. Depending on $r$ and $\lambda_m$, there may be several values of $\lambda_{m+1}$ which satisfy \eqref{spectraldec}, but to obtain the first few eigenvalues in the spectrum of $\Delta_r$, one must select the smallest value of $\lambda_{m+1}$ at each step in this process. The fact that $\lambda_{m+1}=0$ satisfies $\lambda_m( \lambda_{m+1} ,r) =0$ combined with the fact that  
\begin{equation} 
\frac{\partial \lambda_{m}( \lambda_{m+1},r) }{\partial \lambda_{m+1} } \bigg \vert_{\lambda_{m+1} =0} = \frac{ (2r+1)(9r^2+26r+15)}{2r(r+2)} >0
\end{equation}
implies that the first 6 nonzero eigenvalues of $\Delta_r$ appear in the same order as the eigenvalues of $\Delta_r^{(1)} $ shown in figure 6.2. A graph of the first 6 nonzero Neumann eigenvalues of $\Delta_r$ is shown in figure 6.3. 

\begin{figure}[t]
\begin{center}
\includegraphics[width=50mm]{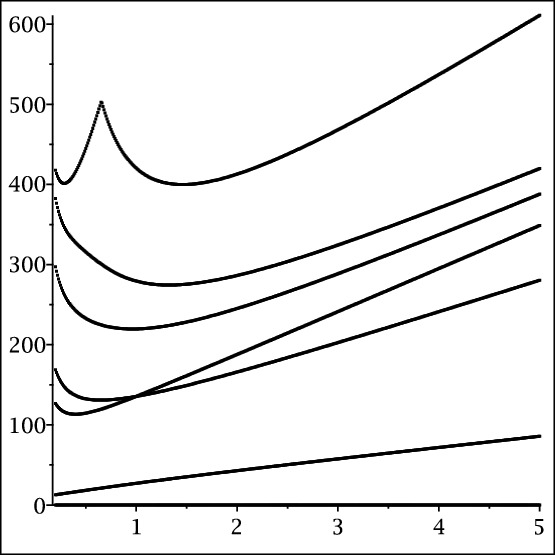} 

\end{center}
\caption{The first 6 nonzero Neumann eigenvalues of $\Delta_r$ as functions of $r$.}
\end{figure}

Other than the apparent relationship between the Neumann eigenvalues of the Laplacians and the decay rates of $\alpha_j(r)$ and $\beta_j(r)$, there are other surprising connections between the behavior of the monomials and the eigenvalues of $\Delta_r$. The function $\lambda_{m} ( \lambda_{m+1} , r)$ defined by \eqref{spectraldec}, when viewed as a function of only the variable $\lambda_{m+1}$ for fixed $r$, has a discontinuity when $\lambda_{m+1} = \frac{ 3(r+2) }{r+1}$. For most values of $r$, this discontinuity is an asymptote on the graph of $\lambda_{m} ( \lambda_{m+1} , r) $, but when $r=1$ and $r = \frac{ \sqrt{17}-3}{4}=0.2807764 $, this discontinuity turns out to be removable. This is notable because in section 4, it was noted that the functions $\alpha_j(r)$ have roots that appear to converge to $1$ and $0.2807764$ as $j$ increases, and the functions $\frac{ \beta_{j+1} (r)}{\beta_j(r) }$ have a local maximum that appears to converge to $0.2807764$ as $j$ increases. Moreover, it appears that the ratio 
$\frac{ \alpha_{j+1}(r) } {\alpha_j(r)} $ converges to a function which is continuous except at $r= \frac{ \sqrt{17}-3}{4}$ and $r=1$, and $\frac{ \beta_{j+1} (r)}{\beta_j(r) }$ appears to converge to a function which is continuous except at $r=\frac{ \sqrt{17}-3} 4$. So, in addition to the exceptional behavior of the standard Laplacian, $\frac{\sqrt{17} -3 }{4}$ seems to be another exceptional value of $r$.

\vspace{5mm}

\noindent Christian Loring, Department of Mathematics, Penn State University, McAllister Building, \\
University Park, State College, PA 16802, USA \\
email: \href{mailto: cnr5164@psu.edu} {\texttt{cnr5164@psu.edu}}\\ \\
W. Jacob Ogden, School of Mathematics, University of Minnesota, Vincent Hall, \\
Minneapolis, MN 55455, USA \\
email: \href{mailto: ogden048@umn.edu}{\texttt{ogden048@umn.edu} }\\\\
Ely Sandine, Department of Mathematics, Cornell University, Malott Hall, \\
Ithaca, NY, 14853, USA\\
email: \href{mailto: ebs95@cornell.edu}{\texttt{ebs95@cornell.edu}} \\\\
Robert S. Strichartz, Department of Mathematics, Cornell University, Malott Hall, \\
Ithaca, NY, 14853, USA\\
email: \href{mailto: str@math.cornell.edu} {\texttt{str@math.cornell.edu} }

\end{document}